\Crefname{assumption}{Assumption}{Assumptions}
\Crefname{lemma}{Lemma}{Lemmas}
\theoremstyle{plain}
\newtheorem{theorem}{Theorem}[section]
\newtheorem{lemma}[theorem]{Lemma}
\newtheorem{corollary}[theorem]{Corollary}
\newtheorem{proposition}[theorem]{Proposition}
\theoremstyle{definition}
\newtheorem{definition}{Definition}[section]
\newtheorem{assumption}{Assumption}
\theoremstyle{remark}
\newcommand{\saferef}[2]{%
	#1~\texorpdfstring{\ref{#2}}{\ref*{#2}}}
\DeclarePairedDelimiter\norm{\lVert}{\rVert}
\DeclarePairedDelimiter\fnorm{\lVert}{\rVert_{\rm{F}}}
\DeclarePairedDelimiter\opnorm{\lVert}{\rVert_{\rm{op}}}
\def\tD{{\tilde D}}
\def\defas{\stackrel{\text{\tiny def}}{=}}
\def\df{\hat{\mathsf{df}}}
\def\Rem{{\mathrm{Rem}}}
\def\argmin{\mathop{\rm arg\, min}}
\def\iid{i.i.d.\@\xspace}
\def\eg{e.g.\@\xspace}
\def\ie{i.e.\@\xspace}
\def\resp{resp.\@\xspace}
\def\R{{\mathbb{R}}}
\def\E{{\mathbb{E}}}
\def\N{{\mathbb{N}}}
\def\P{{\mathbb{P}}}
\DeclareMathOperator{\trace}{Tr}
\DeclareMathOperator{\range}{range}
\DeclareMathOperator{\rank}{rank}
\let\vec\relax
\DeclareMathOperator{\vec}{\mathbf{vec}}
\DeclareMathOperator{\diag}{\mathbf{diag}}
\let\div\relax
\DeclareMathOperator{\div}{div}
\DeclareMathOperator{\supp}{supp}
\def\mathbold{\boldsymbol} 
\def\ba{\mathbold{a}}
\def\bA{\mathbold{A}}
\def\hbA{\smash{\widehat{\mathbf A}}}
\def\bb{\mathbold{b}}
\def\bB{\mathbold{B}}
\def\hbB{{\widehat{\bB}}}
\def\bC{\mathbold{C}}
\def\bE{\mathbold{E}}
\def\bff{\mathbold{f}}
\def\bF{\mathbold{F}}\def\calF{{\cal F}}
\def\bg{\mathbold{g}}
\def\bG{\mathbold{G}}
\def\bH{\mathbold{H}}
\def\hbH{{\widehat{\bH}}}\def\tbH{{\tilde{\bH}}}
\def\bI{\mathbold{I}}
\def\bJ{\mathbold{J}}
\def\bK{\mathbold{K}}
\def\bL{\mathbold{L}}
\def\bM{\mathbold{M}}
\def\bN{\mathbold{N}}
\def\calN{{\cal N}}
\def\bP{\mathbold{P}}
\def\bQ{\mathbold{Q}}
\def\bR{\mathbold{R}}\def\hbR{{\widehat{\bR}}}
\def\bS{\mathbold{S}}\def\hbS{{\widehat{\bS}}}
\def\bT{\mathbold{T}}
\def\be{\mathbold{e}}
\def\bu{\mathbold{u}}
\def\tbu{{\widetilde{\bu}}}
\def\bU{\mathbold{U}}
\def\calU{{\cal U}}
\def\bv{\mathbold{v}}
\def\bV{\mathbold{V}}
\def\bW{\mathbold{W}}
\def\bx{\mathbold{x}}
\def\bX{\mathbold{X}}
\def\by{\mathbold{y}}
\def\bY{\mathbold{Y}}
\def\bz{\mathbold{z}}
\def\bZ{\mathbold{Z}}
\def\bbeta{\mathbold{\beta}}
\def\hbbeta{\hat{\bbeta}{}}
\def\ep{\varepsilon}
\def\bep{ {\mathbold{\ep} }}
\def\bfeta{\mathbold{\eta}}
\def\bLambda{\mathbold{\Lambda}}
\def\bxi{\mathbold{\xi}}
\def\brho{\mathbold{\rho}}
\def\hsigma{\widehat{\sigma}}
\def\bSigma{\mathbold{\Sigma}}\def\hbSigma{{\widehat{\bSigma}}}
\def\hphi{\widehat{\phi}}
\title{Noise Covariance Estimation in Multi-Task High-dimensional Linear Models}
\author{%
  Kai Tan \\
  Department of Statistics\\
  Rutgers University\\
  Piscataway, NJ 08854 \\
  \texttt{kai.tan@rutgers.edu} \\
   \And
   Gabriel Romon \\
   CREST, ENSAE, IP Paris \\
   Palaiseau 91120 Cedex, France \\
   \texttt{gabriel.romon@ensae.fr} \\
   \AND
   Pierre C Bellec\\
    Department of Statistics\\
    Rutgers University\\
    Piscataway, NJ 08854 \\
   \texttt{pierre.bellec@rutgers.edu} \\
}
\begin{document}
\maketitle

\begin{abstract}
  This paper studies the multi-task high-dimensional linear regression models where the noise among different tasks is correlated, in the moderately high dimensional regime where sample size $n$ and dimension $p$ are of the same order. 
  Our goal is to estimate the covariance matrix of the noise random vectors, or equivalently the correlation of the noise variables on any pair of two tasks. Treating the regression coefficients as a nuisance parameter, we leverage the multi-task elastic-net and multi-task lasso estimators to estimate the nuisance. By precisely understanding the bias of the squared residual matrix and by correcting this bias, we develop a novel estimator of the noise covariance that converges in Frobenius norm at the rate $n^{-1/2}$ when the covariates are Gaussian. This novel estimator is efficiently computable.
  
  Under suitable conditions, the proposed estimator of the noise covariance attains the same rate of convergence as the ``oracle'' estimator that knows in advance the regression coefficients of the multi-task model. The Frobenius error bounds obtained in this paper also illustrate the advantage of this new estimator compared to a method-of-moments estimator that does not attempt to estimate the nuisance.
  
  As a byproduct of our techniques, we obtain an estimate of the generalization error of the multi-task elastic-net and multi-task lasso estimators. Extensive simulation studies are carried out to illustrate the numerical performance of the proposed method.
\end{abstract}

\section{Introduction}

\subsection{Model and estimation target}
Consider a multi-task linear model with $T$ tasks and $n$ \iid observations $(\bx_i, Y_{i1}, Y_{i2},\dots, Y_{iT})$, $\forall i=1,...,n$, where $\bx_i\in \R^p$ is a random feature vector and $Y_{i1}, \ldots, Y_{iT}$ are responses
in the model 
\begin{equation}\label{eq: model}
    \begin{aligned}
        Y_{it} &= \bx_i^\top \bbeta^{(t)} + E_{it} &&\text{for each } t = 1, ..., T; i=1,...,n
               &&\text{(scalar form)},
    \\
        \by\smash{{}^{(t)}}                             &= \bX \bbeta\smash{{}^{(t)}} + \bep\smash{{}^{(t)}}
                                          &&\text{for each } t = 1, ..., T
                                          &&\text{(vector form)},
    \\
        \bY &= \bX\bB^* + \bE
        &&
        &&\text{(matrix form)},
    \end{aligned}
\end{equation}
where $\bX\in\R^{n\times p}$ is the design matrix with rows $(\bx_i^{\top})_{i=1,...,n}$, 
$\by^{(t)} = (Y_{1t},..., Y_{nt})^\top$ is the response vector for task $t$,
$\bep^{(t)} = (E_{1t},...,E_{nt})^\top$ is the noise vector for task $t$,
$\bbeta^{(t)} \in \R^{p}$ is an unknown fixed coefficient vector for task $t$. 
In matrix form, 
$\bY\in \R^{n\times T}$ is the response matrix with columns $\by^{{(1)}},...,\by^{(T)}$,
$\bE\in \R^{n\times T}$ has columns $\bep^{{(1)}},...,\bep^{(T)}$,
and $\bB^*\in\R^{p\times T}$ is an unknown coefficient matrix with
columns $\bbeta^{{(1)}},...,\bbeta^{(T)}$. The three forms in \eqref{eq: model} are equivalent.

While the $n$ vectors $(\bx_i^\top, y_i^{(1)}, \ldots, y_i^{(T)})_{i=1,...,n}$ of dimension $p+T$ are \iid, 
we assume that for each observation $i=1,...,n$, the noise random variables $E_{i1},...,E_{iT}$ are centered and correlated. 

The focus of the present paper is on estimation of the
noise covariance matrix $\bS\in\R^{T\times T}$, which has entries
$\bS_{tt'} = \E[\varepsilon_1^{(t)}\varepsilon_1^{(t')}]$ for any pair $t,t'=1,\ldots,T$, or equivalently
$$
\bS = \E[\tfrac1n\bE^\top\bE].
$$

The noise covariance plays a crucial role in multi-task linear models because it characterizes the noise level and correlation between different tasks:
if tasks $t=1,...,T$ represent time this captures temporal correlation; 
if tasks $t=1,...,T$ represent different activation areas in the brain (\eg, \cite{Salmon2019handlings}) this captures spatial correlation.

Since $\bS$ is the estimation target,
we view $\bB^*$ as an unknown nuisance parameter.
If $\bB^*=\mathbf0$,  then $\bY = \bE$, hence $\bE$ is directly observed and a natural estimator is the sample covariance
$\frac1n\bE^\top\bE$.
There are other possible choices for the sample covariance;
ours coincides with the maximum likelihood estimator of the centered Gaussian model  
where the $n$ samples are \iid from $\mathcal N_T(\bf 0, \bS)$.
In the presence of a nuisance parameter $\bB^*\neq \bf 0$,
	the above sample covariance is not computable since we only observe $(\bX, \bY)$ and do not have access to $\bE$. 
	Thus we will refer to 
	$\frac1n\bE^\top\bE \in\R^{T\times T}$
	as the \textit{oracle estimator} for $\bS$, and its error
	$\frac1n\bE^\top\bE - \bS$ 
	will serve as a benchmark.

The nuisance parameter $\bB^*$ is not of interest by itself, but if an estimator $\hbB$ is available that provides good estimation
of $\bB^*$, we would hope to leverage $\hbB$ to estimate the nuisance 
and improve estimation of $\bS$. 
For instance given an estimate $\hbB$ such that
$\fnorm{\bX(\hbB-\bB^*)}^2/n\to0$, one may use the estimator
\begin{equation}
	\textstyle
	\label{naive}
	\hbS_{(\text{naive})} = \frac1n (\bY - \bX\hbB)^\top(\bY - \bX\hbB)
\end{equation}
to consistently estimate $\bS$ in Frobenius norm.
We refer to this estimator as the \textit{naive estimator} since it is obtained by simply replacing the noise $\bE$ in the oracle estimator $\frac1n \bE^\top\bE$ with the residual matrix $\bY - \bX\hbB$. 
However,
in the regime $p/n\to\gamma$ of interest in the present paper,
the convergence $\fnorm{\bX(\hbB-\bB^*)}^2/n\to0$ is not true even for $T=1$
and common high-dimensional estimators such as Ridge regression \cite{dobriban2018high} or the Lasso \cite{bayati2012lasso,miolane2018distribution}.
Simulations in \Cref{sec: simulation} will show that
\eqref{naive} presents a major bias for estimation of $\bS$.
One goal of this paper is to develop estimator $\hbS$ of $\bS$ 
by exploiting a commonly used estimator $\hbB$ of the nuisance,
so that in the regime $p/n\to\gamma$ the error $\hbS-\bS$ is comparable to the benchmark $\frac1n\bE^\top\bE - \bS$.

\subsection{Related literature}
If $T=1$, the above model \eqref{eq: model} reduces to the standard linear model
with $\bX\in\R^{n\times p}$ and response vector $\by^{(1)}\in \R^n$.
We will refer to the $T=1$ case as the single-task linear model and drop the superscript $^{(1)}$ for brevity, \ie, 
	$y_i = \bx_i^\top \bbeta^* + \ep_i$,
where $\ep_i$ are \iid with mean $0$, and unknown variance $\sigma^2$. The coefficient vector $\bbeta^*$ is typically assumed to be $s$-sparse, \ie, $\bbeta^*$ has at most $s$ nonzero entries. In this single-task linear model, estimation of noise covariance $\bS$ reduces to estimation of the noise variance  $\sigma^2=\E[\ep_i^2]$, which has been studied in the literature. \citet{fan2012variance} proposed a consistent estimator for $\sigma^2$ based on a refitted cross validation method, which assumes the support of $\bbeta^*$ is correctly recovered; \cite{belloni2011square} and  \cite{sun2012scaled} introduced square-root Lasso (scaled Lasso) to jointly estimate the coefficient $\bbeta^*$ and noise variance $\sigma^2$ by 
\begin{equation}\label{eq: scaled-lasso}
    \textstyle
	(\hbbeta, \hsigma) =\argmin_{\bbeta\in \R^p, \sigma>0} \frac{\norm{\by - \bX\bbeta}^2}{2n\sigma} + \frac{\sigma}{2} + \lambda_0\norm{\bbeta}_1.
\end{equation}
This estimator $\hsigma$ is consistent only when the prediction error $\norm{\bX(\hbbeta - \bbeta^*)}^2 /n$ goes to 0, which requires $s\log(p)/n\to 0$. 
Estimation of $\sigma^2$ without assumption on $\bX$ was proposed in \cite{yu2019estimating} by utilizing natural parameterization of the penalized likelihood of the linear model. Their estimator can be expressed as the minimizer of the Lasso problem: 
$\hsigma^2_{\lambda} = \min_{\bbeta\in \R^p} \frac{1}{n} \norm{\by - \bX\bbeta}^2 + 2\lambda\norm{\bbeta}_1.$ Consistency of these estimators \cite{sun2012scaled,belloni2011square,belloni2014,yu2019estimating} requires $s\log(p)/n \to 0$ and does not hold in the high-dimensional proportional regime $p/n\to\gamma\in (0, \infty)$.
For this proportional regime $p/n \to \gamma \in (0, \infty)$, \cite{dicker2014variance} introduced a method-of-moments estimator $\hsigma^2$ of $\sigma^2$, 
\begin{equation}\label{eq: est-dicker14}
	\hsigma^2 = \frac{n+p+1}{n(n+1)} \norm{\by}^2 - \frac{1}{n(n+1)}\norm{\bSigma^{-\frac12}\bX^\top\by}^2, 
\end{equation}
which is unbiased, consistent, and asymptotically normal in high-dimensional linear models with Gaussian predictors and errors. Moreover, \cite{janson2017eigenprism} developed an EigenPrism procedure for the same task as well as confidence intervals for $\sigma^2$.
The estimation procedures in these two papers don't attempt to estimate the nuisance parameter $\bbeta^*$, and require no sparsity on $\bbeta^*$ and isometry structure on $\bSigma$, but assume $\|\bSigma^{\frac 12}\bbeta^*\|^2$ is bounded. 
Maximum Likelihood Estimators (MLEs) were studied in
\cite{dicker2016maximum} for joint estimation of noise level and signal strength in high-dimensional linear models with fixed effects; they showed that a classical MLE for random-effects models may also be used effectively in fixed-effects models.

In the proportional regime,
\cite{bayati2013estimating,miolane2018distribution} used the Lasso to estimate the nuisance $\bbeta^*$ and produce estimator for $\sigma^2$. Their approach requires an uncorrelated Gaussian design assumption with $\bSigma = \bI_p$. 
\citet{bellec2020out} provided consistent estimators of a similar nature for $\sigma^2$ using more general M-estimators with convex penalty without requiring $\bSigma = \bI_p$. In the special case of the squared loss, this estimator has the form \cite{bayati2013estimating,miolane2018distribution,bellec2020out}
\begin{equation}\label{eq: est-bellec20}
	\hsigma^2 = (n -\df)^{-2}\big\{ \norm{\by-\bX\hbbeta}^2 (n+p -2\df) - \norm{\bSigma^{-\frac12}(\by-\bX\hbbeta)}\big\},
\end{equation}
where $\df = \trace[(\partial/\partial \by) \bX\hbbeta]$ denotes the degrees of freedom. This estimator coincides with the method-of-moments estimator in \cite{dicker2014variance} when $\hbbeta = \bf0$. 

For multi-task high-dimensional linear model \eqref{eq: model} with $T\ge 2$, the estimation of $\bB^*$ is studied in \cite{lounici2011oracle}, \cite{obozinski2011support}, \cite{simon2013blockwise}. These works suggest to use a joint convex optimization problem over the tasks to estimate $\bB^*$. A popular choice is the multi-task elastic-net, which
solves the convex optimization problem 
\begin{equation}\label{eq: hbB}
	\hbB=\argmin_{\bB\in\R^{p\times T}}
	\Big(
	\frac{1}{2n}\fnorm*{\bY - \bX\bB }^2 + \lambda \norm{\bB}_{2,1}
	+ \frac{\tau}{2} \fnorm{\bB}^2
	\Big),
\end{equation}
where $\|\bB\|_{2,1} = \sum_{j=1}^p \|{\bB^{\top} \be_j}\|_2$, and $\fnorm{\cdot}$ denotes the Frobenius norm of a matrix. 
This optimization problem can be efficiently solved by existing statistical packages, for instance, scikit-learn \citep{scikit-learn}, and glmnet  \citep{friedman2010regularization}. 
Note that  \eqref{eq: hbB} is also referred to as multi-task (group) Lasso and multi-task Ridge if $\tau = 0$ and $\lambda =0$, respectively.
\citet{van2016chi} extended square-root Lasso \citep{belloni2011square} and scaled Lasso \citep{sun2012scaled} to multi-task setting by solving the following problem 
\begin{align}\label{eq: multi-ScaledLasso} 
	(\hbB, \hbS) = 
	\argmin_{\bB,\bS \succ 0} \Big\{\frac{1}{n} \trace\big((\bY - \bX\bB)\bS^{-\frac 12}(\bY - \bX\bB)^\top\big) + \trace(\bS^{\frac 12}) + 2\lambda_0\norm{\bB}_1 \Big\},
\end{align}
where $\norm{\bB}_1 = \sum_{j,t} |B_{jt}|$. 
Note that the covariance estimator in \eqref{eq: multi-ScaledLasso} is constrained to be positive definite.
\citet{molstad2019new} studied the same problem and proposed to estimate $\bS$ by \eqref{naive} with $\hbB$ in \eqref{eq: multi-ScaledLasso}, which is consistent under Frobenius norm loss when $\fnorm{\bX(\hbB-\bB^*)}^2/n\to0$. 
In a recent paper, \citet{bellec2021chi} studied the multi-task Lasso problem and proposed confidence intervals for single entries of $\bB^*$ and confidence ellipsoids for single rows of $\bB^*$ under the assumption that $\bS$ is proportional to the identity, which may be restrictive in practice. This literature generalizes degrees of freedom adjustments from single-task to multi-task models, which we will illustrate in \Cref{sec: NewEstimator}. 


Noise covariance estimation in the high dimensional multi-task linear model is a difficult problem. If the estimand $\bS$ is known to be diagonal, 
estimating $\bS$ reduces to the estimation of noise variance for each task, 
in which the existing methods for single-task high-dimensional linear models can be applied.  
Nonetheless, for general positive semi-definite matrix $\bS$, the noise among different tasks may be correlated, 
hence the existing methods are not readily applicable, and a more careful analysis is called for to incorporate the correlation between different tasks. \citet{fourdrinier2021covariance} considered estimating $\bS$ for the multi-task model \eqref{eq: model} where rows of $\bE$ have elliptically symmetric distribution and in the classical regime $p\le n$. However, their estimator has no statistical guarantee under Frobenius norm loss. 

Recently, for the proportional regime $p/n \to \gamma \in (0, \infty)$, \cite{celentano2021cad} generalized the estimator $\hsigma^2$ in \cite{bayati2013estimating} to the multi-task setting with $T=2$. Their work covers correlated Gaussian designs, where a Lasso or Ridge regression is used to estimate $\bbeta^{(1)}$ for the first task, and another Lasso or Ridge regression is used to estimate $\bbeta^{(2)}$ for the second task. 
In other words, they estimate the coefficient vector for each task separately instead of using a multi-task estimator like \eqref{eq: hbB}. 
It is not trivial to adapt their estimator from the setting $T=2$ to larger $T$, and allow $T$ to increase with $n$. 
This present paper takes a different route and aims to fill this gap by proposing a novel noise covariance estimator with theoretical guarantees. 
Of course, our method applies directly to the 2-task linear model considered in \cite{celentano2021cad}. 

\subsection{Main Contributions}
\label{sec:contributions}

The present paper introduces a novel estimator $\hbS$ in \eqref{eq: hbS} of the noise covariance $\bS$, which provides consistent estimation of $\bS$ in Frobenius norm,
in the regime where $p$ and $n$ are of the same order. 
The estimator $\hbS$ is based on the multi-task elastic-net 
estimator $\hbB$ in \eqref{eq: hbB} of the nuisance, and can be seen
as a de-biased version of the naive estimator \eqref{naive}. 
The naive estimator \eqref{naive} suffers from a strong bias in the regime
where $p$ and $n$ are of the same order, 
and the estimator $\hbS$
is constructed by precisely understanding this bias and correcting it.


After introducing this novel estimator $\hbS$ in \Cref{def: hbS} below,
we prove several rates of convergence for the Frobenius error
$\fnorm{\hbS-\bS}$, which is comparable, in terms of rate of convergence,
to the benchmark $\fnorm{\frac 1 n \bE^\top\bE - \bS}$ under suitable assumptions.
%

As a by-product of the techniques developed for the construction of $\hbS$,
we obtain estimates of the generalization error 
%
of $\hbB$, which {are} of independent interest and can be used for parameter tuning.

\subsection{Notation}
Basic notation and definitions that will be used in the rest of the paper are given here. Let $[n] = \{1, 2,\ldots, n \}$ for all $n\in\N$. 
The vectors $\be_i\in \R^n,\be_j\in\R^p, \be_t\in\R^T$ denote the canonical basis vector of the corresponding index. 
We consider restrictions of vectors (\resp, of matrices)
by zeroing the corresponding entries (\resp, columns). 
More precisely, for $\bv\in\R^p$ and index set $B\subset [p]$, $\bv_B\in\R^p$ is the vector
with $(\bv_B)_j = 0$ if $j\notin B$ and $(\bv_B)_j = v_j$ if $j\in B$.
If $\bX\in\R^{n\times p}$ and $B\subset[p]$, $\bX_B\in\R^{n\times p}$
is 
such that $(\bX_B)\be_j = {\mathbf 0}$ if $j\notin B$ and
$(\bX_B)\be_j= \bX\be_j$ if $j\in B$.
For a real vector $\ba \in \R^p$, 
$\norm*{\ba}$ 
denotes its Euclidean norm. 
For any matrix $\bA$, $\bA^\dagger$ is its Moore–Penrose inverse;  
$\fnorm*{\bA}$ 
,$\opnorm*{\bA}$, 
$\norm*{\bA}_*$ 
denote its Frobenius, operator and nuclear norm, respectively. 
Let $\norm{\bA}_0$ be the number of non-zero rows of $\bA$. 
Let $\bA\otimes \bB$ be the Kronecker product of $\bA$ and $\bB$, and $\langle \bA, \bB\rangle = \trace(\bA^\top\bB)$
is the Frobenius inner product for matrices of identical size. 
For $\bA$ symmetric, $\phi_{\min}(\bA)$ and $\phi_{\max}(\bA)$ denote its smallest and largest eigenvalues, respectively. 
Let $\bI_n$ denote the identity matrix of size $n$ for all $n\in\N$.
For a random sequence $\xi_n$, we write $\xi_n = O_P(a_n)$ if $\xi_n/a_n$ is stochastically bounded. 
$C$ denotes an absolute constant and 
$C(\tau, \gamma)$ stands for a generic positive constant depending on $\tau,\gamma$;
their expression may vary from place to place.



\subsection{Organization}
The rest of the paper is organized as follows. 
\Cref{sec: NewEstimator} introduces our proposed estimator for noise covariance. 
\Cref{sec: main-results} presents our main theoretical results on proposed estimator and some relevant estimators. 
\Cref{sec: simulation} demonstrates through numerical experiments that our estimator outperforms several existing methods in the literature, which corroborates our theoretical findings in \Cref{sec: main-results}. 
\Cref{sec: discussion} provides discussion and points out some future research directions. 
Proofs of all the results stated in the main body are given in the supplementary, which starts with an outline for ease of navigation. 

\section{Estimating noise covariance, with possibly diverging number of tasks  \texorpdfstring{$\bT$}{T}}\label{sec: NewEstimator}
Before we can define our noise covariance estimator, we need to introduce the following building blocks.
Let $\hat{\mathscr{S}} = \{k\in [p]: {\hbB{}^{\top} \be_k} \neq 0\}$  
denote the set of nonzero rows of $\hbB$ in \eqref{eq: hbB}, and  let $|\hat{\mathscr{S}}|$ denote the cardinality of $\hat{\mathscr{S}}$. 
For each $k\in \hat{\mathscr{S}}$, define $\bH^{(k)}=\lambda\|\hbB{}^\top \be_k\|^{-1}(\bI_T - \hbB{}^\top\be_k \be_k^\top\hbB ~  \|\hbB{}^\top\be_k\|^{-2} )$, which is the Hessian of the map $\bu \mapsto \lambda\norm*{\bu}$ at $\bu = \hbB{}^\top \be_k$ when $\bu\ne \mathbf0$.
Define $\bM,\bM_1\in\R^{pT \times pT}$ by
\begin{equation}
    \textstyle
\bM_1 = \bI_T \otimes (\bX_{\hat{\mathscr{S}}}^\top\bX_{\hat{\mathscr{S}}} + \tau n \bP_{\hat{\mathscr{S}}})
,
\qquad
    \bM = \bM_1 + n\sum_{k\in\hat{\mathscr{S}}} (\bH^{(k)} \otimes \be_k\be_k^\top) 
\end{equation}
where $\bP_{\hat{\mathscr{S}}} = \sum_{k\in\hat{\mathscr{S}}} \be_k\be_k^\top\in\R^{p\times p}$. 
Define the residual matrix $\bF$, the error matrix $\bH$, and $\bN$ by
\begin{equation}
   \bF=\bY-\bX\hbB, 
   \qquad
    \bH = \bSigma^{1/2}(\hbB - \bB),
   \qquad
\bN = (\bI_T \otimes \bX)\bM^\dagger (\bI_T \otimes \bX^\top)
\in \R^{Tn \times Tn}.
\label{eq:def F H N}
\end{equation}
To construct our estimator we also make use of the so-called interaction matrix $\hbA\in \R^{T\times T}$.
\begin{definition}[\cite{bellec2021chi}]\label{def: A}
	The \textit{interaction matrix} $\hbA\in \R^{T\times T}$ of the estimator $\hbB$ in  \eqref{eq: hbB} is defined by
	\begin{align}\label{eq: hbA-matrix}
		\hbA 
                = \sum_{i=1}^n (\bI_T \otimes \be_i^\top \bX) \bM^{\dagger} (\bI_T \otimes \bX^\top\be_i)
                = \sum_{i=1}^n (\bI_T \otimes \be_i^\top)\bN(\bI_T \otimes \be_i).
	\end{align}
	
\end{definition}

The matrix $\hbA$ was introduced in \cite{bellec2021chi}, where it is used alongside the multi-task Lasso estimator ($\tau=0$ in \eqref{eq: hbB}).
It generalizes the degrees of freedom from \citet{stein1981estimation} to the multi-task case. 
Intuitively, it captures the correlation between the residuals on different tasks \cite[Lemma F.1]{bellec2021chi}. 
Our definition of the noise covariance estimator involves $\hbA$, 
although our statistical purposes differ greatly from the confidence intervals developed in \cite{bellec2021chi}.

We are now ready to introduce our estimator $\hbS$ of the noise covariance $\bS$.
\begin{definition}[Noise covariance estimator]\label{def: hbS}
	 With $\bF=\bY-\bX\hbB$ and $\hbA$ as above, define
	 \begin{equation}\label{eq: hbS}
	 	 \hbS = (n\bI_T - \hbA)^{-1} \Bigl[\bF^\top \big( (p+n)\bI_n - \bX \bSigma^{-1}\bX^\top\big) \bF - \hbA\bF^\top\bF 
	 	- \bF^\top\bF\hbA 
	 	\Bigr](n\bI_T - \hbA)^{-1}.
	 \end{equation}
\end{definition}

Efficient solvers (\eg, \verb|sklearn.linear_model.MultiTaskElasticNet| in \cite[]{scikit-learn}) are available to compute $\hbB$.
Computation of $\bF$ is then straightforward, and computing the matrix $\hbA$
only requires inverting a matrix of size $|\hat{\mathscr{S}}|$ \cite[Section 5]{bellec2021chi}.
The estimator $\hbS$ generalizes the scalar estimator
\eqref{eq: est-bellec20} to the multi-task setting  
in the sense that for $T=1$, $\hbS$ is exactly equal to \eqref{eq: est-bellec20}.
Note that unlike in \eqref{eq: est-bellec20}, here $\bF^\top\bF$, $\hbA$ and $(n\bI_T - \hbA)$ are matrices of size $T\times T$: the order of matrix multiplication in $\hbS$ matters and should not be switched.
    This non-commutativity is not present for $T=1$ in \eqref{eq: est-bellec20} where matrices in $\R^{T\times T}$ are reduced to scalars.
Another special case of $\hbS$ can be seen in \cite{celentano2021cad} for $T=2$ where the matrix $\hbA\in\R^{2\times 2}$ is diagonal and the two columns of $\hbB\in\R^{p\times 2}$ are two Lasso or Ridge estimators computed independently of each other, one for each task. Except in these two special cases --- \eqref{eq: est-bellec20} for $T=1$, \cite{celentano2021cad} for $T=2$ and two Lasso/Ridge --- we are not aware of previously proposed estimators of the same form as $\hbS$. 

\section{Theoretical analysis}\label{sec: main-results}

\subsection{Oracle and method-of-moments estimator}

Before moving on to the theoretical analysis of $\hbS$, we state our randomness assumptions for $\bE, \bX$ and we study two preliminary estimators:
the oracle $\frac 1 n \bE^\top\bE$ and another estimator obtained by the method of moments.

\begin{assumption}[Gaussian noise]\label{assu: noise}
	$\bE\in \R^{n\times T}$ is a Gaussian noise matrix with \iid $\mathcal N_T(\bf0,\bS)$ rows, where $\bS\in \R^{T\times T}$ is an unknown positive semi-definite matrix. 
\end{assumption}
An oracle with access to the noise matrix $\bE$ may compute the oracle estimator $\hbS_{\rm{(oracle)}} \defas \frac 1n \bE^\top \bE$, with convergence rate given by the following theorem, which will serve as a benchmark.
\begin{proposition}[Convergence rate of $\hbS_{\text{(oracle)}}$] \label{prop: oracle}
Under \Cref{assu: noise},
	\begin{equation}
		\E\big[	\fnorm{\hbS_{\rm{(oracle)}}- \bS}^2 \big] = \tfrac1n [(\trace(\bS))^2 + \trace(\bS^2)].
                \label{eq: bound oracle}
	\end{equation}
	Consequently, $n^{-1} (\trace(\bS))^2 \le 
	\E\big[ \fnorm{\hbS_{\rm{(oracle)}} - \bS}^2 \big]  \le 2 n^{-1} (\trace(\bS))^2$. 
\end{proposition}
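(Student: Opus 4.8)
The plan is to exploit that $\hbS_{\rm{(oracle)}} = \frac1n\bE^\top\bE = \frac1n\sum_{i=1}^n \bz_i\bz_i^\top$ is an empirical average of i.i.d.\ rank-one terms, where $\bz_i\in\R^T$ denotes the $i$-th row of $\bE$, so that $\bz_i\sim\mathcal N_T(\mathbf0,\bS)$ and $\E[\bz_i\bz_i^\top]=\bS$. Writing the error as $\hbS_{\rm{(oracle)}}-\bS=\frac1n\sum_{i=1}^n(\bz_i\bz_i^\top-\bS)$ and noting each summand is centered, the cross terms vanish in expectation by independence. I would therefore reduce the computation to a single row:
\[
\E\big[\fnorm{\hbS_{\rm{(oracle)}}-\bS}^2\big] = \tfrac1n\,\E\big[\fnorm{\bz_1\bz_1^\top - \bS}^2\big].
\]

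Next I would expand the Frobenius norm on the right, using that $\langle \bz_1\bz_1^\top,\bS\rangle$ has expectation $\langle\bS,\bS\rangle=\trace(\bS^2)$ and that $\fnorm{\bS}^2=\trace(\bS^2)$ (symmetry), which gives
\[
\E\big[\fnorm{\bz_1\bz_1^\top - \bS}^2\big] = \E\big[\fnorm{\bz_1\bz_1^\top}^2\big] - \trace(\bS^2).
\]
Since $\fnorm{\bz_1\bz_1^\top}^2=(\bz_1^\top\bz_1)^2=\norm{\bz_1}^4$, the whole calculation boils down to the fourth moment $\E[\norm{\bz_1}^4]$.

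The main (and only genuinely computational) step is evaluating this fourth moment of a Gaussian quadratic form. Writing $\bz_1=\bS^{1/2}\bg$ with $\bg\sim\mathcal N_T(\mathbf0,\bI_T)$, one has $\norm{\bz_1}^2=\bg^\top\bS\bg$, and the standard identity for Gaussian quadratic forms (via Wick's theorem) gives $\E[\bg^\top\bS\bg]=\trace(\bS)$ and $\operatorname{Var}(\bg^\top\bS\bg)=2\trace(\bS^2)$, whence $\E[\norm{\bz_1}^4]=(\trace\bS)^2+2\trace(\bS^2)$. Substituting back yields $\E[\fnorm{\bz_1\bz_1^\top-\bS}^2]=(\trace\bS)^2+\trace(\bS^2)$, and hence the claimed identity \eqref{eq: bound oracle}. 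I expect this fourth-moment evaluation to be the crux of the argument, though it is routine.

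Finally, the two-sided bound follows from elementary facts about the positive semi-definite matrix $\bS$: its eigenvalues $\lambda_1,\dots,\lambda_T\ge0$ satisfy $0\le\trace(\bS^2)=\sum_j\lambda_j^2\le(\sum_j\lambda_j)^2=(\trace\bS)^2$, so that $(\trace\bS)^2\le(\trace\bS)^2+\trace(\bS^2)\le 2(\trace\bS)^2$. Dividing by $n$ gives the stated sandwich, with no further obstacle.
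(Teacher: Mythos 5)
Your proof is correct, but it takes a different route from the paper's, so a comparison is worth recording. The paper first diagonalizes $\bS=\sum_t \sigma_t^2\bu_t\bu_t^\top$ and expands the squared Frobenius norm entrywise in this eigenbasis, $\fnorm{\bE^\top\bE-n\bS}^2=\sum_{t,t'}\bigl[\bu_{t'}^\top(\bE^\top\bE-n\bS)\bu_t\bigr]^2$; each entry is then a sum of $n$ i.i.d.\ centered variables distributed as $z_{t'}z_t-\bu_{t'}^\top\bS\bu_t$ with $(z_t,z_{t'})$ independent Gaussian coordinates, so the only probabilistic input is the variance of a product of independent Gaussians (the $\chi^2_1$ variance when $t=t'$). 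You instead exploit the row structure first: writing the error as $\frac1n\sum_i(\bz_i\bz_i^\top-\bS)$, killing the cross terms by independence and centering, and reducing to the single-row quantity $\E\fnorm{\bz_1\bz_1^\top-\bS}^2=\E\norm{\bz_1}^4-\trace(\bS^2)$, which you evaluate coordinate-free via the Gaussian quadratic-form identity $\operatorname{Var}(\bg^\top\bS\bg)=2\trace(\bS^2)$. The two arguments are computationally equivalent (your quadratic-form identity is itself typically proved by exactly the paper's diagonalization, or by Wick's theorem), but yours is more modular: the i.i.d.-sample reduction to $\frac1n\E\fnorm{\bz_1\bz_1^\top-\bS}^2$ is distribution-free, so Gaussianity is confined to a single invocation of a textbook fourth-moment formula, whereas the paper's computation is self-contained at the cost of carrying the eigenbasis through every step. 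Your derivation of the final sandwich bound, $0\le\trace(\bS^2)\le(\trace\bS)^2$ from positive semi-definiteness of $\bS$, is the same as the paper's.
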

The next assumption concerns the design matrix $\bX$ with rows $\bx_1^\top,\ldots,\bx_n^\top$.
\begin{assumption}[Gaussian design]\label{assu: design}
	$\bX\in \R^{n\times p}$ is a Gaussian design matrix with \iid $\mathcal N_p(\mathbf 0,\bSigma)$ rows, where  $\bSigma$ is a known positive definite matrix. The matrices $\bE$ and $\bX$ are independent.
\end{assumption}

Under the preceding assumptions, we obtain the following method-of-moments estimator, which extends the estimator for noise variance in \cite{dicker2014variance} to the multi-task setting. Its error will also serve as a benchmark. 

\begin{proposition}\label{prop: mom}
        Under \Cref{assu: noise,assu: design}, the method-of-moments estimator defined as
	\begin{equation}
            \label{hbS_mm}
		\hbS_{\rm{(mm)}} = \frac{(n+1+p)}{n(n+1)} \bY^\top \bY - \frac{1}{n(n+1)}  \bY^\top\bX \bSigma^{-1}\bX^\top\bY 
	\end{equation}
        is unbiased for $\bS$, \ie, $\E [\hbS_{\rm{(mm)}} ] = \bS.$ Furthermore, the Frobenius error is bounded from below as
	\begin{align}
		\E [\fnorm{\hbS_{\rm{(mm)}} - \bS}^2 ] \ge \frac{p-2}{(n+1)^2} \big[\trace(\bS) + \fnorm{\bSigma^{\frac12}\bB^*}^2\big]^2. 
                \label{eq:lower-boud-mom}
	\end{align}
\end{proposition}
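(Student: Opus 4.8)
The plan is to prove unbiasedness by a direct Gaussian/Wishart moment computation, and the variance lower bound by a law-of-total-variance decomposition that isolates the error caused by the random design, followed by a fourth-moment calculation.

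\textbf{Unbiasedness.} First I would substitute $\bY=\bX\bB^*+\bE$ into \eqref{hbS_mm} and expand $\bY^\top\bY$ and $\bY^\top\bX\bSigma^{-1}\bX^\top\bY$. Because $\bE$ is centered and independent of $\bX$ (\Cref{assu: noise,assu: design}), every term of odd degree in $\bE$ vanishes in expectation, so only the pure-signal and pure-noise terms survive. The three moments I need are $\E[\bX^\top\bX]=n\bSigma$, the fourth-order Wishart moment $\E[\bX^\top\bX\,\bSigma^{-1}\bX^\top\bX]=n(n+p+1)\bSigma$ (from Isserlis'/Wick's formula applied rowwise, or the standard Wishart moment identity), and $\E[\bX\bSigma^{-1}\bX^\top]=p\bI_n$, which gives $\E[\bE^\top\bX\bSigma^{-1}\bX^\top\bE]=np\,\bS$ after conditioning on $\bX$. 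Combining these yields $\E[\bY^\top\bY]=n(\bB^{*\top}\bSigma\bB^*+\bS)$ and $\E[\bY^\top\bX\bSigma^{-1}\bX^\top\bY]=n(n+p+1)\bB^{*\top}\bSigma\bB^*+np\,\bS$; the coefficients in \eqref{hbS_mm} are arranged exactly so that the $\bB^{*\top}\bSigma\bB^*$ contributions cancel and the $\bS$ contributions combine to $\bS$, giving $\E[\hbS_{\rm{(mm)}}]=\bS$.

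\textbf{Reduction for the lower bound.} The same computation carried out conditionally on $\bE$ (averaging over $\bX$ only) gives the clean identity $\E[\hbS_{\rm{(mm)}}\mid\bE]=\tfrac1n\bE^\top\bE$: the design-averaged method-of-moments estimator reproduces the oracle. Writing $\bA\defas\bS+\bB^{*\top}\bSigma\bB^*$ (so that the rows of $\bY$ are \iid $\mathcal N_T(\mathbf0,\bA)$ and $\trace\bA=\trace\bS+\fnorm{\bSigma^{1/2}\bB^*}^2$ is exactly the quantity in the bound), the law of total variance splits the error,
\begin{equation}
\E\fnorm{\hbS_{\rm{(mm)}}-\bS}^2=\E\fnorm{\tfrac1n\bE^\top\bE-\bS}^2+\E\fnorm{\hbS_{\rm{(mm)}}-\tfrac1n\bE^\top\bE}^2,
\end{equation}
the cross term vanishing after conditioning on $\bE$. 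Dropping the nonnegative oracle term, it suffices to lower bound the design-induced error $\E\fnorm{\hbS_{\rm{(mm)}}-\tfrac1n\bE^\top\bE}^2$. Expanding $n(n+1)(\hbS_{\rm{(mm)}}-\tfrac1n\bE^\top\bE)$ with $\bY=\bX\bB^*+\bE$ produces a pure-signal part (a quadratic form built from the centered Wishart matrix $\bX^\top\bX-n\bSigma$ and $\bB^*$), a signal--noise cross part, and a pure-noise part $\bE^\top(p\bI_n-\bX\bSigma^{-1}\bX^\top)\bE$, each of conditional mean zero given $\bE$. A Wick fourth-moment computation of their Frobenius second moments shows that the noise part supplies the $(\trace\bS)^2$ piece, the signal part the $(\fnorm{\bSigma^{1/2}\bB^*}^2)^2$ piece, and the cross contributions the mixed term $\trace\bS\cdot\fnorm{\bSigma^{1/2}\bB^*}^2$, which assemble into $(\trace\bA)^2$; tracking the coefficients and retaining only this combination gives $\frac{p-2}{(n+1)^2}(\trace\bA)^2$.

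\textbf{Main obstacle.} The delicate point is that one \emph{cannot} collapse to a single scalar functional: taking the trace of $\hbS_{\rm{(mm)}}-\bS$ and applying Cauchy--Schwarz loses a factor $T$, while bounding $\bu^\top(\hbS_{\rm{(mm)}}-\bS)\bu$ for a fixed unit vector $\bu$ (which is exactly a single-task estimator of the form \eqref{eq: est-dicker14} with total energy $\bu^\top\bA\bu$) recovers only $\phi_{\max}(\bA)^2$ in place of $(\trace\bA)^2$. The full $T\times T$ matrix structure, i.e.\ the correlated fluctuations of all $T^2$ entries, must therefore be retained, and it is the summation over entries that upgrades $\phi_{\max}(\bA)^2$ to $(\trace\bA)^2$. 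The bookkeeping is heavier than in the single-task case because the Wishart fluctuation $\bX^\top\bX-n\bSigma$ enters the signal part both linearly and quadratically, with a nonvanishing third-moment cross term, so all pieces are of the same order; combining them (and noting that the linear contribution alone degenerates near $p=n-1$) is what pins down the exact constant $p-2$.
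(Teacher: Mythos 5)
Your unbiasedness argument is correct and is essentially the paper's own (same Wishart moments $\E[\bX^\top\bX\bSigma^{-1}\bX^\top\bX]=n(n+p+1)\bSigma$ and $\E[\bX\bSigma^{-1}\bX^\top]=p\bI_n$; the paper merely reduces to $\bSigma=\bI_p$ first). For the lower bound, your opening reduction is also correct and is a genuinely different route: one can indeed verify $\E[\hbS_{\rm{(mm)}}\mid\bE]=\tfrac1n\bE^\top\bE$, so the law of total variance legitimately reduces the problem to lower bounding the design-induced term $\E\fnorm{\hbS_{\rm{(mm)}}-\tfrac1n\bE^\top\bE}^2$. The gap is that everything after this point — which is where all the difficulty lives — is asserted rather than proved. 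Writing $n(n+1)(\hbS_{\rm{(mm)}}-\tfrac1n\bE^\top\bE)=S+C+N$ (signal, cross, noise parts), the quantity you must control is $\E\fnorm{S}^2+\E\fnorm{C}^2+\E\fnorm{N}^2+2\E\langle S,N\rangle$: the terms $\E\langle S,C\rangle$ and $\E\langle C,N\rangle$ vanish by odd symmetry in $\bE$ (you should say so), but $\E\langle S,N\rangle$ survives — it is a covariance between a centered Wishart functional and $\fnorm{\bX}^2$ — and its sign is not obvious a priori; in a \emph{lower} bound you cannot simply "retain only" the favorable combination without computing it. Likewise, the claim that the entrywise sums "assemble into $(\trace(\bS)+\fnorm{\bSigma^{1/2}\bB^*}^2)^2$ with constant exactly $p-2$" is unsubstantiated, and your stated mechanism for $p-2$ (a degeneracy of the linear Wishart contribution near $p=n-1$) does not correspond to any identifiable step; your route would naturally yield constants like $p$ or $2p$ (which would suffice, but is not what you argue).

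For comparison, the paper gets the constant by a much lighter mechanism that sidesteps all fourth-moment Wishart bookkeeping. Since $\hbS_{\rm{(mm)}}$ is unbiased, the Frobenius risk is the sum of entrywise variances; for each pair $(t,t')$ the coordinates are rotated so that $\bbeta^{(t)},\bbeta^{(t')}$ lie in the span of $\be_1,\be_2$, and one conditions on $\calF=\sigma(\by^{(t)},\by^{(t')},\bX\be_1,\bX\be_2)$. Then $[\by^{(t)}]^\top\bX\bX^\top\by^{(t')}$ equals an $\calF$-measurable term plus a Gaussian quadratic form $\bxi^\top\bLambda\bxi$ in the remaining $p-2$ columns of $\bX$, with $\bLambda=\bI_{p-2}\otimes\by^{(t')}[\by^{(t)}]^\top$, so its conditional variance is at least $\fnorm{\bLambda}^2=(p-2)\norm{\by^{(t)}}^2\norm{\by^{(t')}}^2$ — this is exactly where $p-2$ comes from. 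Summing over $(t,t')$ gives $\E\fnorm{\bY}^4$, and Jensen's inequality $\E\fnorm{\bY}^4\ge(\E\fnorm{\bY}^2)^2=n^2\big[\trace(\bS)+\fnorm{\bSigma^{1/2}\bB^*}^2\big]^2$ finishes. Note this also contradicts your "main obstacle" paragraph: the proof does collapse to scalar entries, and the upgrade to the squared trace is a one-line Jensen step rather than delicate tracking of correlations across the $T^2$ entries.
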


By \eqref{eq:lower-boud-mom}, a larger norm $\fnorm{\bSigma^{1/2}\bB^*}$ induces a larger variance for $\hbS_{\rm{(mm)}}$.
Our goal with an estimate $\hbS$, when a good estimator $\hbB$ of the nuisance is available, is to improve upon the right-hand side of \eqref{eq:lower-boud-mom}
when the estimation error $\fnorm{\bSigma^{1/2}(\hbB-\bB^*)}$ is smaller than
$\fnorm{\bSigma^{1/2}\bB^*}$. 

A high-probability upper bound of the form
$\fnorm{\hbS_{\rm{(mm)}} - \bS}^2
\le C \frac{n+p}{n^2}[\trace(\bS) + \fnorm{\bSigma^{\frac12}\bB^*}^2]^2
$,
that matches the lower bound \eqref{eq:lower-boud-mom} when $p>n$,
is a consequence of our main result below. 
Indeed, when $\hbB=\bf 0$ then $\hbA=\bf0$ and our estimator $\hbS$ from \Cref{def: hbS}
coincides with $\hbS_{\rm{(mm)}}$ up to the minor modification
of replacing $n+1$ by $n$ in \eqref{hbS_mm}.
This replacement is immaterial
compared to the right-hand side in \eqref{eq:lower-boud-mom}.
Furthermore, such $\hbS$ 
corresponds to one of $\tau$ or $\lambda$ being $+\infty$ in \eqref{eq: hbB} 
and the aforementioned upper bound
follows by taking $\tau=+\infty$ in the proof of \Cref{thm:covariance} below.
The empirical results in \Cref{sec: simulation} confirm that $\hbS$ has smaller variance compared to $\hbS_{\rm{(mm)}}$ in simulations.



\subsection{Theoretical results for proposed estimator \texorpdfstring{$\hbS$}{} }
We have established lower bounds for the oracle estimator and the
method-of-moments estimator that will serve as benchmarks. 
We turn to the analysis of the estimator $\hbS$ from \Cref{def: hbS}
under the following additional assumptions.

\begin{assumption}[High-dimensional regime]\label{assu: regime}
	$n,p$ satisfy $p/n \le \gamma$ for a constant $\gamma\in (0, \infty)$. 
\end{assumption}

For asymptotic statements such as those involving the stochastically
bounded notation $O_p(\cdot)$ or the convergence in probability in \eqref{eq: convergence proba generalization error} below, we implicitly consider a sequence
of multi-task problems indexed by $n$ where $p,T,\bB^*,\hbB,\bS$ all implicitly
depend on $n$. The Assumptions, such as $p/n\le\gamma$ above, are required to hold at
all points of the sequence. In particular, $p/n\to\gamma'$ is allowed for any limit $\gamma'\le \gamma$ under \Cref{assu: regime}, although our results do not require a specific value for the limit.

\begin{assumption}\label{assu: tau}
	Assume either one of the following:
	\begin{enumerate}[label = \roman*), topsep=0pt,itemsep=-1ex,partopsep=1ex,parsep=1ex]
            \item $\tau>0$ in the penalty of estimator \eqref{eq: hbB}, and let $\tau' = \tau/\opnorm{\bSigma}$. 
            \item $\tau=0$ and for $c>0$, $P(U_1) \ge 1-\frac 1T$ and $P(U_1)\to1$ as $n\to\infty$, where $U_1 = \{\norm{\hbB}_0 \le n(1-c)/2 \}$ is the event that $\hbB$ has at most $n(1-c)/2$ nonzero rows. Finally, $T \le e^{\sqrt{n}}$. 
	\end{enumerate}
\end{assumption}

\Cref{assu: tau}(i) requires that the Ridge penalty in \eqref{eq: hbB} be enforced,
so that the objective function is strongly convex.
\Cref{assu: tau}(ii), on the other hand, does not require strong convexity
but that the number of nonzero rows of $\hbB$ is small enough with high-probability,
which is a reasonable assumption when the tuning parameter $\lambda$ in \eqref{eq: hbB} is large enough and $\bB^*$ is sparse enough. While we do not prove
in the present paper that $\P(U_1)\to1$ under assumptions on the tuning parameter $\lambda$ and the sparsity of $\bB^*$, results of a similar nature have been obtained previously in several group-Lasso settings
\cite[Theorem 3.1]{lounici2011oracle},
\cite[Lemma 6]{liu2009estimation},
\cite[Lemma C.3]{bellec2021chi},
\cite[Proposition 3.7]{bellec2019first}.

\begin{theorem}
	\label{thm:covariance}
        Suppose that \Cref{assu: noise,assu: design,assu: regime,assu: tau} hold for all $n, p$ as $n\to\infty$, then almost surely
        \begin{equation}
	\fnorm{(\bI_T - \hbA/n) (\hbS -\bS)(\bI_T - \hbA/n)} \le \Theta_1 n^{-\frac 12} \big( \fnorm{\bF}^2/n + \fnorm{\bH}^2 +\trace(\bS)\big)
        \label{eq:thm33}
        \end{equation}
        for some non-negative random variable $\Theta_1$ of constant order, in the sense that $\E [\Theta_1^2] \le C(\tau')(T \wedge (1 + \frac pn))(1 + \frac pn))\le C(\gamma, \tau')$
         under \Cref{assu: tau}(i), 
        and ${\E [I(\Omega)\Theta_1^2]} \le C(\gamma,c)$ under \Cref{assu: tau}(ii), where $I(\Omega)$ is the indicator function of an event $\Omega$
        with $\P(\Omega)\to 1$.
\end{theorem}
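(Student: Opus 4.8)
The plan is first to strip off the conjugation. Factoring $\bI_T - \hbA/n = n^{-1}(n\bI_T - \hbA)$ and using that, by the very definition \eqref{eq: hbS}, the product $(n\bI_T-\hbA)\hbS(n\bI_T-\hbA)$ equals the bracketed matrix there, the left-hand side of \eqref{eq:thm33} is exactly $n^{-2}\fnorm{\bG}$, where
\begin{equation*}
\bG = \bF^\top\big((p+n)\bI_n - \bX\bSigma^{-1}\bX^\top\big)\bF - \hbA\bF^\top\bF - \bF^\top\bF\hbA - (n\bI_T - \hbA)\bS(n\bI_T - \hbA).
\end{equation*}
It therefore suffices to prove $\fnorm{\bG} \le \Theta_1\, n^{3/2}\big(\fnorm{\bF}^2/n + \fnorm{\bH}^2 + \trace(\bS)\big)$. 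Next I would substitute the decomposition $\bF = \bE - \bX\bSigma^{-1/2}\bH$, which follows from $\bF = \bY - \bX\hbB$, $\bY = \bX\bB^* + \bE$ and $\bH = \bSigma^{1/2}(\hbB - \bB^*)$, and expand $\bG$ into contributions that are quadratic in $\bE$, bilinear in $(\bE,\bH)$, and quadratic in $\bH$.

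The analytical engine is Gaussian integration by parts, applied in both the noise $\bE$ (\Cref{assu: noise}) and the design $\bX$ (\Cref{assu: design}), in the spirit of the second-order Stein / degrees-of-freedom calculus of \cite{bellec2020out,bellec2021chi}. The weighting matrix was engineered so that $\E_{\bX}[(p+n)\bI_n - \bX\bSigma^{-1}\bX^\top] = n\bI_n$; consequently the purely-noise part $\bE^\top\big((p+n)\bI_n - \bX\bSigma^{-1}\bX^\top\big)\bE$ has expectation exactly $n^2\bS$, matching the leading $n^2\bS$ inside $(n\bI_T - \hbA)\bS(n\bI_T - \hbA)$ and the method-of-moments heuristic behind \Cref{prop: mom}. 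The cross and quadratic-in-$\bH$ terms carry the dependence of $\hbB$ on the two Gaussians; to control them I would differentiate the KKT (stationarity) equations of \eqref{eq: hbB} implicitly. On the active set $\hat{\mathscr{S}}$ the stationarity condition is smooth, and its differentiation produces precisely the matrix $\bM^\dagger$, hence $\bN$ and the interaction matrix $\hbA$ of \Cref{def: A}, as the Jacobian-type object governing the variation of $\bX\hbB$ in $\bY$. Feeding these derivatives into the Stein identities, the leading bias of the cross and quadratic-in-$\bH$ contributions is matched term-by-term by the correction matrices $-\hbA\bF^\top\bF - \bF^\top\bF\hbA$ and by the $\hbA$-dependent pieces of $(n\bI_T - \hbA)\bS(n\bI_T - \hbA)$, so that $\bG$ collapses to a genuine remainder.

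What remains is to show this remainder is of order $n^{3/2}\big(\fnorm{\bF}^2/n + \fnorm{\bH}^2 + \trace(\bS)\big)$ with a constant-order random prefactor $\Theta_1$. The three scales are the natural ones: $\trace(\bS)$ is the noise level $\E[n^{-1}\fnorm{\bE}^2]$, $\fnorm{\bH}^2$ is the nuisance prediction error, and $\fnorm{\bF}^2/n$ bundles the two. I would bound the remainder by Gaussian concentration --- the Gaussian--Poincar\'e inequality and concentration of Lipschitz functions of $(\bE,\bX)$ --- together with spectral control of $\bM^\dagger$, which is exactly where the two branches of \Cref{assu: tau} enter. Under \Cref{assu: tau}(i) the Ridge term $\tau>0$ makes the objective in \eqref{eq: hbB} strongly convex, yielding the deterministic estimate $\opnorm{\bM^\dagger}\lesssim (\tau n)^{-1}$ and the second-moment bound $\E[\Theta_1^2]\le C(\gamma,\tau')$; under \Cref{assu: tau}(ii) the event $U_1$ forces $\hat{\mathscr{S}}$ to have at most $n(1-c)/2$ elements, which on $\Omega$ keeps the relevant block of $\bM$ well-conditioned and gives $\E[I(\Omega)\Theta_1^2]\le C(\gamma,c)$, the residual probability $\P(\Omega^c)$ being absorbed via $T\le e^{\sqrt n}$.

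The main obstacle is the exact, term-by-term cancellation of the $O(n)$ and $O(n^{1/2}\cdot)$ bias contributions through the Stein identities. This requires differentiating the non-smooth $\ell_{2,1}$-penalized problem, which forces the analysis onto the active set $\hat{\mathscr{S}}$, brings in the group-norm Hessians $\bH^{(k)}$, and demands a careful almost-everywhere differentiability justification together with a uniform control of all second-order remainders so that they collapse into the single prefactor $\Theta_1$. Verifying that these remainders are dominated by $\fnorm{\bF}^2/n + \fnorm{\bH}^2 + \trace(\bS)$, rather than by a larger non-oracle scale, is the delicate quantitative heart of the argument.
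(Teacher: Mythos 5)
Your proposal is correct and follows essentially the same route as the paper's proof: the paper likewise reduces the left-hand side of \eqref{eq:thm33} to the bias-corrected matrix $(n\bI_T-\hbA)(\hbS-\bS)(n\bI_T-\hbA)$ and controls it by Gaussian integration by parts in both $\bE$ and $\bZ=\bX\bSigma^{-1/2}$, with the differentiated KKT conditions of \eqref{eq: hbB} producing $\bM^\dagger$, $\bN$ and $\hbA$, and with strong convexity (under \Cref{assu: tau}(i)) or the sparsity event $U_1$ (under \Cref{assu: tau}(ii)) supplying the spectral and Lipschitz control behind $\Theta_1$. The only organizational difference is bookkeeping: rather than expanding $\bF=\bE-\bZ\bH$ into quadratic and bilinear pieces, the paper packages the two Stein computations into the pivotal matrices $\bQ_1$ and $\bQ_2$ of \Cref{prop: V1,prop: V6}, whose explicit linear combination equals the conjugated error exactly (the cross terms $\bH^\top\bZ^\top\bF$ cancelling between them), so the theorem follows by the triangle inequality with $\Theta_1=\fnorm{\bQ_1}+\fnorm{\bQ_2}$.
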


Above, $\Theta_1\ge 0$ is said to be of constant order because $\Theta_1=O_P(1)$
follows from $\E[\Theta_1^2]\le C(\gamma,\tau')$ or from $\E[I(\Omega)\Theta_1^2]\le C(\gamma,c)$ if the stochastically bounded notation $O_P(1)$ is allowed to
hide constants depending on $(\gamma,\tau')$ or $(\gamma,c)$ only.
In the left-hand side of \eqref{eq:thm33}, multiplication by $\bI_T - \hbA/n$ on both sides of the error $\hbS -\bS$ can be further removed, as
\begin{equation}
\fnorm{\hbS -\bS}
\le
\fnorm{(\bI_T - \hbA/n) (\hbS -\bS)(\bI_T - \hbA/n)}
\opnorm{(\bI_T - \hbA/n)^{-1}}^2
\label{eq: ineq op norm}
\end{equation}
and the fact that $\opnorm{(\bI_T - \hbA/n)^{-1}}$ is bounded from above with high probability by a constant depending on $\gamma, \tau', c$ only. 
Upper bounds on $\opnorm{(\bI_T - \hbA/n)^{-1}}$ are formally stated in the supplementary material. 


\subsection{Understanding the right-hand side of \texorpdfstring{\eqref{eq:thm33}}{(14)}, and the multi-task generalization error}
\label{sec: gen error}

Before coming back to upper bounds on the error $\fnorm{\hbS-\bS}$,
let us study the quantities appearing in the right-hand side of \eqref{eq:thm33}. By \eqref{eq:def F H N}, $\fnorm{\bF}^2/n$
is the mean squared norm of the residuals and is observable, while the squared error
$\fnorm{\bH}^2=\fnorm{\bSigma^{1/2}(\hbB-\bB^*)}^2$ and $\trace[\bS]$ are unknown.
By analogy with single task models, we define the \textit{generalization error} as the matrix $\bH^\top\bH + \bS$ of size $T\times T$, whose $(t,t')$-th entry is
    $\E[(Y^{new}_t - \bx_{new}^T\hbB\be_t)(Y^{new}_{t'} - \bx_{new}^T\hbB\be_{t'})|(\bX,\bY)]$
    where $(Y^{new}_t,Y^{new}_{t'},\bx_{new})$ is independent of $(\bX,\bY)$
    and has the same distribution
    as $(Y_{it},Y_{it'},\bx_i)$ for some $i=1,...,n$.
    Estimating the generalization error is useful for parameter tuning:
    since 
    \begin{equation}
     \trace[\bH^T\bH + \bS] = \fnorm{\bSigma^{1/2}(\hbB-\bB^*)}^2 + \trace[\bS],  
     \label{eq:trace-gen}
    \end{equation}
    minimizing an estimator of $\trace[\bH^T\bH + \bS]$ is a useful proxy
    to minimize the Frobenius error $\fnorm{\bSigma^{1/2}(\hbB-\bB^*)}^2$
    of $\hbB$.
The following theorem gives an estimate for the generalization error matrix
as well as a consistent estimator for its trace \eqref{eq:trace-gen}.
\begin{theorem}[Generalization error]
	\label{thm:generalization error}
        Let \Cref{assu: noise,assu: tau,assu: design,assu: regime} be fulfilled. Then
	\begin{align*}
		\fnorm{ \bF^\top\bF/n - (\bI_T - \hbA/n) (\bH^\top\bH + \bS) (\bI_T - \hbA/n)} \le \Theta_2 n^{-\frac 12} \big(\fnorm*{\bF}^2/n + \fnorm{\bH}^2 + \trace(\bS)\big),
	\end{align*}
for some non-negative random variable $\Theta_2$ of constant order, in the sense that $\E [\Theta_2] \le C(\gamma,\tau')$ under \Cref{assu: tau}(i), 
and with ${\E [I(\Omega)\Theta_2]} \le C(\gamma,c)$ under \Cref{assu: tau}(ii),  where $I(\Omega)$ is the indicator function of an event $\Omega$
with $\P(\Omega)\to 1$.

Furthermore, if $T = o(n)$ as $n, p\to \infty$ while $\tau', \gamma, c$ stay constant,
 then 
 \begin{equation}
	\frac{\trace(\bS) + \fnorm{\bH}^2}{\fnorm{(\bI_T - \hbA/n)^{-1}\bF^\top}^2/n} \overset{p}{\to} 1.
        \label{eq: convergence proba generalization error}
 \end{equation}
\end{theorem}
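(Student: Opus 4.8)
The plan is to prove the two assertions in order: the Frobenius bound is the substantive claim, and the convergence \eqref{eq: convergence proba generalization error} will follow as a corollary. Throughout, abbreviate $\bG = \bI_T - \hbA/n$, which is symmetric because $\bM$, hence $\bM^\dagger$ and $\hbA$, are symmetric. The Frobenius bound is equivalent to the master expansion
\begin{equation}
\tfrac1n \bF^\top\bF = \bG(\bH^\top\bH + \bS)\bG + \bDelta,
\qquad
\fnorm{\bDelta} \le \Theta_2\, n^{-\frac12}\big(\fnorm{\bF}^2/n + \fnorm{\bH}^2 + \trace(\bS)\big),
\label{eq:master-plan}
\end{equation}
so the goal is to exhibit a remainder $\bDelta$ with this bound. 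I would begin from the residual decomposition $\bF = \bE - \bZ\bH$, where $\bZ = \bX\bSigma^{-1/2}$ has \iid $\mathcal N_p(\mathbf{0},\bI_p)$ rows, giving $\bF^\top\bF = \bE^\top\bE - \bE^\top\bZ\bH - \bH^\top\bZ^\top\bE + \bH^\top\bZ^\top\bZ\bH$. The diagonal term $\bE^\top\bE$ concentrates at $n\bS$ with fluctuations of order $\sqrt n$ by (the proof of) \Cref{prop: oracle}, while the other three terms must be reorganized using that $\hbB$, hence $\bH$ and $\bF$, are almost-everywhere differentiable functions of the Gaussian entries of $(\bZ,\bE)$ whose derivatives are governed by $\bM^\dagger$ and $\bN$.

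The main device is Gaussian integration by parts (Stein's identity) in $(\bZ,\bE)$ applied to the cross term $\bE^\top\bZ\bH$ and the quadratic term $\bH^\top\bZ^\top\bZ\bH$. Differentiating $\hbB$ through the KKT conditions of \eqref{eq: hbB} shows that the derivatives of the fitted values with respect to the Gaussian entries are expressed through $\bM^\dagger$; summing these derivatives over the $n$ observations produces exactly the interaction matrix $\hbA$ of \Cref{def: A}, and the net effect of the first-order corrections is to surround $\bH^\top\bH + \bS$ by the two factors $\bG = \bI_T - \hbA/n$. The second-order and fluctuation contributions are collected into $\bDelta$ and controlled by a variance bound (a second-order Gaussian Poincar\'e inequality), which supplies the rate $n^{-1/2}$ together with the random constant $\Theta_2$; the moment bounds $\E[\Theta_2]\le C(\gamma,\tau')$ and $\E[I(\Omega)\Theta_2]\le C(\gamma,c)$ come from controlling $\opnorm{\bM^\dagger}$ and $\opnorm{\hbA}$ under \Cref{assu: tau}(i) and (ii) respectively. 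I expect this step to be the main obstacle: tracking the derivative of the non-smooth $\ell_{2,1}$-penalized estimator (where the Hessian blocks $\bH^{(k)}$ and the active set $\hat{\mathscr{S}}$ enter through $\bM$) and showing that the accumulated Stein corrections telescope \emph{precisely} into the two factors $\bG$, with everything else absorbed into $\bDelta$. This is the same Gaussian-calculus core that underlies \Cref{thm:covariance}; indeed the design correction $(p+n)\bI_n - \bX\bSigma^{-1}\bX^\top$ appearing in \eqref{eq: hbS} is the image, at the level of $\hbS$, of exactly these corrections.

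Granting \eqref{eq:master-plan}, the convergence \eqref{eq: convergence proba generalization error} follows by a trace computation. Set $N = \trace(\bS) + \fnorm{\bH}^2 = \trace(\bH^\top\bH + \bS)$, which satisfies $N \ge \trace(\bS) > 0$, and use $\bG = \bG^\top$ together with the cyclic property of the trace to write
\begin{equation}
D := \tfrac1n\fnorm{\bG^{-1}\bF^\top}^2 = \trace\!\big(\bG^{-2}\,\tfrac1n\bF^\top\bF\big) = N + \trace(\bG^{-2}\bDelta),
\label{eq:Dident}
\end{equation}
where the last equality substitutes \eqref{eq:master-plan} and telescopes $\trace(\bG^{-2}\bG(\bH^\top\bH+\bS)\bG) = \trace(\bH^\top\bH+\bS) = N$. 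It remains to prove $\trace(\bG^{-2}\bDelta)/N \overset{p}{\to} 0$. Cauchy--Schwarz for the Frobenius inner product and $\fnorm{\bG^{-2}}\le \sqrt T\,\opnorm{\bG^{-1}}^2$ give $|\trace(\bG^{-2}\bDelta)| \le \sqrt T\,\opnorm{\bG^{-1}}^2\,\Theta_2\, n^{-1/2}(\fnorm{\bF}^2/n + N)$. A one-line bootstrap closes the argument: from \eqref{eq:master-plan}, $\fnorm{\bF}^2/n = \trace((\bH^\top\bH+\bS)\bG^2) + \trace(\bDelta) \le \opnorm{\bG}^2 N + \sqrt T\,\fnorm{\bDelta}$, and since $\fnorm{\bDelta}$ carries the factor $n^{-1/2}$ this yields $\fnorm{\bF}^2/n \le C\,N$ with high probability. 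Hence $|\trace(\bG^{-2}\bDelta)|/N \lesssim \sqrt{T/n}\,\opnorm{\bG^{-1}}^2\,\Theta_2$. Under $T = o(n)$ with $\gamma,\tau',c$ fixed, $\sqrt{T/n}\to 0$ while $\opnorm{\bG^{-1}} = O_P(1)$ (by the bound on $\opnorm{(\bI_T-\hbA/n)^{-1}}$ recorded after \Cref{thm:covariance}) and $\Theta_2 = O_P(1)$, so $\trace(\bG^{-2}\bDelta)/N \overset{p}{\to} 0$. By \eqref{eq:Dident} this gives $N/D \overset{p}{\to} 1$, which is \eqref{eq: convergence proba generalization error}.
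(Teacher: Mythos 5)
Your proposal for the main Frobenius bound is a plan, not a proof, and the gap sits exactly where you flag it yourself. You correctly identify the machinery (Gaussian integration by parts in $(\bZ,\bE)$, differentiation of $\hbB$ through the KKT conditions, the interaction matrix $\hbA$ arising as a sum of derivatives, second-order Poincar\'e-type variance control), but you never exhibit the remainder $\bDelta$ nor prove the moment bounds $\E[\Theta_2]\le C(\gamma,\tau')$ and $\E[I(\Omega)\Theta_2]\le C(\gamma,c)$. In the paper this is precisely the content of \Cref{prop: V1,prop: V6,prop: V8}: the theorem's own proof is then a short algebraic identity showing that a specific linear combination of $\bQ_1$, $\bQ_2^\top$ and $\bQ_3$ equals $\tfrac1n\bF^\top\bF-(\bI_T-\hbA/n)(\bH^\top\bH+\bS)(\bI_T-\hbA/n)$ exactly, followed by the triangle inequality with $\Theta_2=\fnorm{\bQ_1}+\fnorm{\bQ_2}+\fnorm{\bQ_3}$. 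Proving those propositions is where all the difficulty lives: one needs the explicit derivative formulas $\partial F_{lt}/\partial z_{ij}=D_{ij}^{lt}+\Delta_{ij}^{lt}$ (\Cref{lem: Dijlt}), the Lipschitz property of $\bZ\mapsto \bF/D$ with constants depending on $\tau'$ or on the event $U_1\cap U_2$ (\Cref{lem: lipschitz elastic-net,lem: lipschitz-Lasso}), a matrix second-order Stein bound (\Cref{lem:steinX,lem: Chi2type}), and careful control of the extra terms $\bJ_1,\bJ_2,\bJ_3$ and $\bW_0,\dots,\bW_3$ that do \emph{not} telescope and must be shown to be of the right size. Asserting that ``the accumulated Stein corrections telescope precisely into the two factors $\bG$, with everything else absorbed into $\bDelta$'' is the theorem restated, not established; as written, the proposal does not constitute a proof of the first display.

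By contrast, your derivation of \eqref{eq: convergence proba generalization error} from the (granted) master expansion is correct and complete, and it is a genuine, mildly different route from the paper's. The paper compares the nuclear norms of $\hbR=n^{-1}(\bI_T-\hbA/n)^{-1}\bF^\top\bF(\bI_T-\hbA/n)^{-1}$ and $\bR=\bH^\top\bH+\bS$, using $|\norm{\hbR}_*-\norm{\bR}_*|\le\sqrt{2T}\fnorm{\hbR-\bR}$ together with the self-bounding inequality $\fnorm{\bF}^2/n\le\norm{\hbR}_*$; you instead use the trace identity $D=N+\trace(\bG^{-2}\bDelta)$, Cauchy--Schwarz with $\fnorm{\bG^{-2}}\le\sqrt T\,\opnorm{\bG^{-1}}^2$, and a bootstrap giving $\fnorm{\bF}^2/n\le CN$ with high probability. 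Both arguments consume the same three ingredients ($T=o(n)$, $\opnorm{(\bI_T-\hbA/n)^{-1}}=O_P(1)$, and the $\sqrt T$ conversion from Frobenius to trace-type norms), and yours is equally clean. But since this part is conditional on the unproven expansion \eqref{eq:master-plan}, the proposal as a whole has a real gap.
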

In the above theorem, $\bS$ and $\bH$ are unknown, while $\hbA$ and $\bF$ can be computed from the observed data $(\bX, \bY)$.
Thus \eqref{eq: convergence proba generalization error} shows that $\fnorm{(\bI_T - \hbA/n)^{-1}\bF^\top}^2/n$ is a consistent estimate for 
the unobserved quantity $\trace(\bS) + \fnorm{\bH}^2$.

\subsection{Back to bounds on \texorpdfstring{$\fnorm{\hbS-\bS}$}{estimation error}}
We are now ready to present our main result on the error bounds for $\hbS$.
It is a consequence of \eqref{eq:thm33}, \eqref{eq: ineq op norm} and \eqref{eq: convergence proba generalization error}.
\begin{theorem}
	\label{thm:main}
     Let \Cref{assu: noise,assu: tau,assu: design,assu: regime} be fulfilled and $T = o(n)$. Then
	\begin{align}
		\fnorm{\hbS - \bS} &\le O_P(n^{-\frac 12}) (\fnorm*{\bF}^2/n),\\
		\fnorm{\hbS - \bS} &\le O_P(n^{-\frac 12}) [\trace(\bS) + \fnorm{\bH}^2].
                \label{eq:upper bound trS+H2}
	\end{align}
        Here the $O_P(n^{-\frac12})$ notation involves constants depending on $\gamma,\tau',c$.
\end{theorem}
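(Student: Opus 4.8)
The plan is to obtain both displays of \Cref{thm:main} as direct consequences of the three results the statement already points to: the bias-corrected bound \eqref{eq:thm33}, the operator-norm reduction \eqref{eq: ineq op norm}, and the generalization-error ratio \eqref{eq: convergence proba generalization error}. The only genuine work is a two-sided comparison showing that the observable quantity $\fnorm{\bF}^2/n$ and the unknown quantity $\trace(\bS)+\fnorm{\bH}^2$ are equivalent up to $O_P(1)$ factors; once this is in hand, the two bounds follow by substituting one for the other in a single master inequality.

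First I would combine \eqref{eq: ineq op norm} with \eqref{eq:thm33}. Writing $K_n\defas\opnorm{(\bI_T-\hbA/n)^{-1}}$, the supplementary bounds referenced after \eqref{eq: ineq op norm} give $K_n=O_P(1)$ with constants depending only on $\gamma,\tau',c$, while \Cref{thm:covariance} gives $\Theta_1=O_P(1)$ (since $\E[\Theta_1^2]$, or $\E[I(\Omega)\Theta_1^2]$ under \Cref{assu: tau}(ii), is bounded). Multiplying the two inequalities yields the master bound
\begin{equation}
\fnorm{\hbS-\bS}\le K_n^2\,\Theta_1\, n^{-\frac12}\bigl(\fnorm{\bF}^2/n+\fnorm{\bH}^2+\trace(\bS)\bigr)=O_P(n^{-\frac12})\bigl(\fnorm{\bF}^2/n+\fnorm{\bH}^2+\trace(\bS)\bigr).
\label{eq:plan-master}
\end{equation}

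Second I would establish the equivalence $\fnorm{\bF}^2/n\asymp \trace(\bS)+\fnorm{\bH}^2$ up to $O_P(1)$ factors. Inserting $\bI_T=(\bI_T-\hbA/n)(\bI_T-\hbA/n)^{-1}$, using $\fnorm{\bF}=\fnorm{\bF^\top}$ and submultiplicativity $\fnorm{\bA\bB}\le\opnorm{\bA}\fnorm{\bB}$, I get the sandwich
\begin{align}
\fnorm{\bF}^2/n&\le \opnorm{\bI_T-\hbA/n}^2\,\fnorm{(\bI_T-\hbA/n)^{-1}\bF^\top}^2/n,\\
\fnorm{(\bI_T-\hbA/n)^{-1}\bF^\top}^2/n&\le K_n^2\,\fnorm{\bF}^2/n.
\end{align}
Since $\bM\succeq 0$ forces $\hbA\succeq 0$, and the same supplementary bounds that control $K_n$ give $\hbA\preceq(1-\delta)n\bI_T$ with high probability, we have $\opnorm{\bI_T-\hbA/n}\le 1$ on that event. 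Because $T=o(n)$, the ratio \eqref{eq: convergence proba generalization error} gives $\fnorm{(\bI_T-\hbA/n)^{-1}\bF^\top}^2/n=(1+o_P(1))(\trace(\bS)+\fnorm{\bH}^2)$. Feeding this into the sandwich produces the two comparisons
\begin{equation}
\fnorm{\bF}^2/n\le O_P(1)\bigl(\trace(\bS)+\fnorm{\bH}^2\bigr),\qquad \trace(\bS)+\fnorm{\bH}^2\le O_P(1)\,\fnorm{\bF}^2/n.
\label{eq:plan-equiv}
\end{equation}

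Finally I would conclude by plugging each half of \eqref{eq:plan-equiv} into the master bound \eqref{eq:plan-master}: replacing $\fnorm{\bH}^2+\trace(\bS)$ by $O_P(1)\fnorm{\bF}^2/n$ gives the first display $\fnorm{\hbS-\bS}\le O_P(n^{-1/2})\fnorm{\bF}^2/n$, while replacing $\fnorm{\bF}^2/n$ by $O_P(1)(\trace(\bS)+\fnorm{\bH}^2)$ gives the second, \eqref{eq:upper bound trS+H2}. The products of $O_P(1)$ terms remain $O_P(1)$ with constants depending only on $\gamma,\tau',c$, as claimed. I expect the only delicate point to be the a priori operator-norm control, namely that $0\preceq\hbA\preceq(1-\delta)n\bI_T$ holds with high probability so that both $\opnorm{\bI_T-\hbA/n}$ and $K_n$ are simultaneously bounded; this is what legitimizes swapping $\fnorm{\bF}^2/n$ with $\trace(\bS)+\fnorm{\bH}^2$, and is precisely where the hypothesis $T=o(n)$ enters through \eqref{eq: convergence proba generalization error}. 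All the analytically heavy estimates are already absorbed into \Cref{thm:covariance,thm:generalization error}, so the present argument is purely a matter of chaining inequalities.
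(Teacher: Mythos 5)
Your proposal is correct and follows essentially the same route as the paper's own proof: both chain the master bound of \Cref{thm:covariance} with the operator-norm reduction \eqref{eq: ineq op norm} and the bounds $\opnorm{\bI_T-\hbA/n}\le 1$, $\opnorm{(\bI_T-\hbA/n)^{-1}}=O_P(1)$, and then use the ratio \eqref{eq: convergence proba generalization error} to swap $\fnorm{\bF}^2/n$ and $\trace(\bS)+\fnorm{\bH}^2$ in either direction. Your explicit two-sided sandwich is exactly what the paper compresses into its displayed substitution for the first bound and the phrase ``a similar argument'' for the second.
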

It is instructive at this point to compare \eqref{eq:upper bound trS+H2}
with the lower bound \eqref{eq:lower-boud-mom} on the Frobenius error
of the method-of-moments estimator. When $p\ge n$ then $\E[\fnorm{\hbS_{\rm{(mm)}}-\bS}^2]\ge \frac{c}{n}[\trace[\bS] + \fnorm{\bSigma^{1/2}\bB^*}^2]^2$;
this is the situation where the Statistician does not attempt to estimate
$\bB^*$, and pays a price of $[\trace[\bS] + \fnorm{\bSigma^{1/2}\bB^*}^2]^2/n$.
On the other hand, by definition of $\bH$ in \eqref{eq:def F H N}, the right-hand side of \eqref{eq:upper bound trS+H2},
when squared, is of order
$n^{-1}[\trace[\bS] + \fnorm{\bSigma^{1/2}(\hbB-\bB^*)}^2]^2$.
Here the error bound only depends on $\bB^*$ through
the estimation error for the nuisance $\fnorm{\bSigma^{1/2}(\hbB-\bB^*)}^2$.
This explains that when $\hbB$ is a good estimator of $\bB^*$
and $\fnorm{\bSigma^{1/2}(\hbB-\bB^*)}^2$ is smaller compared to $\fnorm{\bSigma^{1/2}\bB^*}^2$, the estimator $\hbS$ that leverages $\hbB$ will outperform
the method-of-moments estimator $\hbS_{\rm{(mm)}}$ which does not attempt to estimate the nuisance. 

Finally, the next results show that under additional assumptions,
the estimator $\hbS$ enjoys Frobenius error bounds similar to the oracle estimator $\frac1n\bE^\top\bE$.

\begin{assumption}\label{assu: SNR}
	$\text{SNR }\le \mathfrak{snr}$ for some positive constant $\mathfrak{snr}$ independent of $n, p, T$, where  $\text{SNR}=  \fnorm{\bSigma^{\frac12}\bB^*}^2/\trace(\bS)$ denotes the signal-to-noise ratio of the multi-task linear model \eqref{eq: model}.
\end{assumption}

\begin{corollary}\label{cor36}
     Suppose that Assumptions~\ref{assu: noise}, \ref{assu: design}, \ref{assu: regime},  \ref{assu: tau}(i), \ref{assu: SNR} and $T=o(n)$ hold, then
	\begin{equation}
		\fnorm{\hbS - \bS} \le O_P(n^{-\frac 12}) \trace(\bS),
	\end{equation}
        where $O_P(\cdot)$ hides constants depending on $\gamma,\tau',\mathfrak{snr}$.
	Furthermore,  
	\begin{align*}
		&\fnorm{\hbS - \bS}^2 \le O_P(T/n) \fnorm{\bS}^2 = o_P(1)\fnorm{\bS}^2,\\
		&\big|\norm{\hbS}_* - \trace(\bS)\big| \le O_P(\sqrt{T/n}) \trace(\bS) = o_P(1) \trace(\bS). 
	\end{align*}
\end{corollary}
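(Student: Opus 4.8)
The plan is to deduce the corollary from \Cref{thm:main}, whose second bound \eqref{eq:upper bound trS+H2} reads $\fnorm{\hbS-\bS}\le O_P(n^{-1/2})[\trace(\bS)+\fnorm{\bH}^2]$. Every quantity on the right-hand side is already of order $\trace(\bS)$ except the nuisance estimation error $\fnorm{\bH}^2=\fnorm{\bSigma^{1/2}(\hbB-\bB^*)}^2$. Hence the crux is to establish $\fnorm{\bH}^2=O_P(\trace(\bS))$ under \Cref{assu: SNR}; once this is in hand, the three displays follow from elementary norm inequalities.

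First I would bound $\fnorm{\bH}^2$ using the Ridge penalty enforced by \Cref{assu: tau}(i). Comparing the objective in \eqref{eq: hbB} at $\hbB$ with its value at $\mathbf{0}$ and discarding the nonnegative data-fit and $\ell_{2,1}$ terms gives $\tfrac{\tau}{2}\fnorm{\hbB}^2\le\tfrac{1}{2n}\fnorm{\bY}^2$, i.e. $\fnorm{\hbB}^2\le\tfrac{1}{\tau n}\fnorm{\bY}^2$. Writing $\bX=\bZ\bSigma^{1/2}$ with $\bZ$ having \iid $\mathcal N(0,1)$ entries (\Cref{assu: design}) yields $\E\fnorm{\bY}^2=n[\fnorm{\bSigma^{1/2}\bB^*}^2+\trace(\bS)]$, so with $\tau'=\tau/\opnorm{\bSigma}$,
\begin{equation*}
\E\fnorm{\bH}^2\le 2\opnorm{\bSigma}\,\E\fnorm{\hbB}^2+2\fnorm{\bSigma^{1/2}\bB^*}^2\le \tfrac{2}{\tau'}\big[\fnorm{\bSigma^{1/2}\bB^*}^2+\trace(\bS)\big]+2\fnorm{\bSigma^{1/2}\bB^*}^2.
\end{equation*}
Invoking \Cref{assu: SNR}, $\fnorm{\bSigma^{1/2}\bB^*}^2\le\mathfrak{snr}\,\trace(\bS)$, this collapses to $\E\fnorm{\bH}^2\le C(\tau',\mathfrak{snr})\trace(\bS)$, whence $\fnorm{\bH}^2=O_P(\trace(\bS))$ by Markov's inequality (recall $\bS$ is deterministic, and $C(\tau',\mathfrak{snr})$ is uniform over the sequence of problems since $\tau',\mathfrak{snr}$ stay constant).

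Given this, the first display is immediate: substituting $\trace(\bS)+\fnorm{\bH}^2=O_P(1)\trace(\bS)$ into \eqref{eq:upper bound trS+H2} gives $\fnorm{\hbS-\bS}\le O_P(n^{-1/2})\trace(\bS)$, with hidden constants now also depending on $\mathfrak{snr}$. For the second display I would square this and use the PSD Cauchy--Schwarz bound $(\trace\bS)^2=(\sum_t\lambda_t(\bS))^2\le T\sum_t\lambda_t(\bS)^2=T\fnorm{\bS}^2$, giving $\fnorm{\hbS-\bS}^2\le O_P(T/n)\fnorm{\bS}^2$, which is $o_P(1)\fnorm{\bS}^2$ since $T=o(n)$. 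For the third display I would use the reverse triangle inequality for the nuclear norm together with $\trace(\bS)=\norm{\bS}_*$ (as $\bS$ is PSD) and $\norm{\cdot}_*\le\sqrt{T}\fnorm{\cdot}$ on $T\times T$ matrices: $|\norm{\hbS}_*-\trace(\bS)|\le\norm{\hbS-\bS}_*\le\sqrt{T}\fnorm{\hbS-\bS}\le O_P(\sqrt{T/n})\trace(\bS)=o_P(1)\trace(\bS)$.

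The only genuine work is the bound on $\fnorm{\bH}^2$, and this is exactly where \Cref{assu: tau}(i), $\tau>0$, is essential: the strong convexity contributed by the Ridge term lets one control $\fnorm{\hbB}^2$ by $\fnorm{\bY}^2$ without invoking any sparsity or restricted-eigenvalue argument. (Under the companion \Cref{assu: tau}(ii) one would instead have to bound $\fnorm{\bH}^2$ through the support size of $\hbB$, which is why the corollary is stated under (i).) Everything after this step is bookkeeping with the elementary inequalities $(\trace\bS)^2\le T\fnorm{\bS}^2$ and $\norm{\cdot}_*\le\sqrt{T}\fnorm{\cdot}$.
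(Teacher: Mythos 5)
Your proof is correct, but it takes a genuinely different route from the paper's. The paper works from the \emph{first} bound of \Cref{thm:main}, $\fnorm{\hbS-\bS}\le O_P(n^{-1/2})\fnorm{\bF}^2/n$, and controls the residual norm directly: the basic inequality $L(\hbB)\le L(\mathbf 0)$ gives $\fnorm{\bF}^2\le\fnorm{\bY}^2$ (note this step does not use $\tau>0$ at all), and then $\fnorm{\bY}^2/n\le 5\trace(\bSigma_{\by})$, with $\bSigma_{\by}=(\bB^*)^\top\bSigma\bB^*+\bS$, is established with probability $1-e^{-n}$ via the Laurent--Massart concentration inequality for Gaussian quadratic forms, before invoking \Cref{assu: SNR}. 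You instead work from the \emph{second} bound \eqref{eq:upper bound trS+H2} and control the nuisance error $\fnorm{\bH}^2$, using the Ridge term to get $\tfrac{\tau}{2}\fnorm{\hbB}^2\le\tfrac{1}{2n}\fnorm{\bY}^2$, the exact first-moment identity $\E\fnorm{\bY}^2=n[\fnorm{\bSigma^{1/2}\bB^*}^2+\trace(\bS)]$, and Markov's inequality. Both arguments are sound, and your closing steps (squaring with $(\trace\bS)^2\le T\fnorm{\bS}^2$, and $\norm{\cdot}_*\le\sqrt{T}\fnorm{\cdot}$ with the reverse triangle inequality) coincide with the paper's. The trade-off: your route is more elementary, needing only a mean computation and Markov rather than a concentration inequality, but it genuinely requires $\tau>0$ at the step bounding $\fnorm{\bH}^2$ (as you correctly flag); the paper's route yields exponentially-high-probability control of $\fnorm{\bF}^2/n$ and, since its key inequality $\fnorm{\bF}^2\le\fnorm{\bY}^2$ holds for any $\tau\ge 0$, it isolates the dependence on \Cref{assu: tau}(i) entirely inside \Cref{thm:main}, which is conceptually closer to how the companion result \Cref{cor37} is then handled under \Cref{assu: tau}(ii).
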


\begin{corollary}\label{cor37}
	Suppose that Assumptions~\ref{assu: noise}, \ref{assu: design}, \ref{assu: regime}, \ref{assu: tau}(ii)
        and $T = o(n)$ hold.
        If $\norm{\bB^*}_0\le (1-c)n/2$ and the tuning parameter $\lambda$ is of the form $\lambda=\mu\sqrt{\trace(\bS)/n}$ for some positive constant $\mu$, 
        then
	\begin{equation}
		\fnorm{\hbS - \bS} \le O_P(n^{-\frac12}) (1 + \mu^2) \trace(\bS), 
                \label{eq: cor37}
	\end{equation}
where $O_P(\cdot)$ hides constants depending on $c,\gamma, \phi_{\min}(\bSigma)$.

\end{corollary}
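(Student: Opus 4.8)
The plan is to feed a multi-task Lasso prediction-error bound into the master inequality \eqref{eq:upper bound trS+H2} of \Cref{thm:main}. Since the hypotheses here include those of \Cref{thm:main} under \Cref{assu: tau}(ii), we already have $\fnorm{\hbS-\bS}\le O_P(n^{-1/2})[\trace(\bS)+\fnorm{\bH}^2]$, so it suffices to prove that, on an event of probability tending to one,
\begin{equation*}
\fnorm{\bH}^2=\fnorm{\bSigma^{1/2}(\hbB-\bB^*)}^2\le C(c,\gamma,\phi_{\min}(\bSigma))\,(1+\mu^2)\,\trace(\bS);
\end{equation*}
substituting this into \eqref{eq:upper bound trS+H2} and using $\trace(\bS)+\fnorm{\bH}^2\lesssim(1+\mu^2)\trace(\bS)$ then gives \eqref{eq: cor37}. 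All the estimates below are made on the intersection of the event $U_1=\{\norm{\hbB}_0\le(1-c)n/2\}$ of \Cref{assu: tau}(ii), the event of \Cref{thm:main}, and two further high-probability events (one for a sparse-eigenvalue bound and one for noise concentration).

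First I would exploit sparsity together with the optimality of $\hbB$. Write $\bDelta=\hbB-\bB^*$ and $S=\hat{\mathscr{S}}\cup\supp(\bB^*)$; the assumption $\norm{\bB^*}_0\le(1-c)n/2$ together with $U_1$ forces $\norm{\bDelta}_0\le|S|\le(1-c)n$, so $\bDelta$ is supported on at most $(1-c)n$ rows. The basic inequality for \eqref{eq: hbB} with $\tau=0$, after substituting $\bY=\bX\bB^*+\bE$ and cancelling $\fnorm{\bE}^2$, reads
\begin{equation*}
\tfrac{1}{2n}\fnorm{\bX\bDelta}^2\le\langle\tfrac1n\bX^\top\bE,\bDelta\rangle+\lambda\bigl(\norm{\bB^*}_{2,1}-\norm{\hbB}_{2,1}\bigr)\le\Bigl(\fnorm{\tfrac1n\bX^\top\bE}+\lambda\sqrt{(1-c)n}\Bigr)\fnorm{\bDelta},
\end{equation*}
where I bound the noise term by Cauchy--Schwarz, use $\norm{\bB^*}_{2,1}-\norm{\hbB}_{2,1}\le\norm{\bDelta}_{2,1}$, and finally $\norm{\bDelta}_{2,1}\le\sqrt{|S|}\,\fnorm{\bDelta}$. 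By concentration, $\fnorm{n^{-1}\bX^\top\bE}^2$ is of order $n^{-1}\trace(\bSigma)\trace(\bS)\le\gamma(\max_j\Sigma_{jj})\trace(\bS)$, while $\lambda\sqrt{(1-c)n}=\mu\sqrt{(1-c)\,\trace(\bS)}$ by the prescribed $\lambda=\mu\sqrt{\trace(\bS)/n}$; hence the bracket is at most $(C_0+\mu)\sqrt{\trace(\bS)}$ for a constant $C_0=C_0(\gamma,\bSigma)$.

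Next I would convert the in-sample term into $\fnorm{\bH}^2$ and cancel one factor of $\fnorm{\bDelta}$. A restricted (sparse) eigenvalue bound for the Gaussian design does both: since the columns of $\bX$ indexed by $S$ have the law $\bW\bSigma_{SS}^{1/2}$ for a standard Gaussian $\bW\in\R^{n\times|S|}$, any $\bv\in\R^{p\times T}$ supported on $S$ satisfies $\tfrac1n\fnorm{\bX\bv}^2\ge\phi_{\min}(\tfrac1n\bW^\top\bW)\fnorm{\bSigma^{1/2}\bv}^2$, and $\phi_{\min}(\tfrac1n\bW^\top\bW)\ge\kappa(c,\gamma)>0$ uniformly over all $|S|\le(1-c)n$ on a high-probability event. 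Applying this to $\bDelta$ gives $\tfrac1n\fnorm{\bX\bDelta}^2\ge\kappa\fnorm{\bH}^2$, and combining with $\fnorm{\bDelta}\le\fnorm{\bH}/\sqrt{\phi_{\min}(\bSigma)}$ in the previous display yields $\tfrac{\kappa}{2}\fnorm{\bH}^2\le(C_0+\mu)\sqrt{\trace(\bS)}\,\fnorm{\bH}/\sqrt{\phi_{\min}(\bSigma)}$. Dividing by $\fnorm{\bH}$ gives $\fnorm{\bH}\le 2\kappa^{-1}\phi_{\min}(\bSigma)^{-1/2}(C_0+\mu)\sqrt{\trace(\bS)}$, i.e.\ the displayed bound on $\fnorm{\bH}^2$, which completes the argument.

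The main obstacle is the uniform sparse-eigenvalue step. Because $|S|$ is a constant fraction of $n$ rather than $o(n/\log p)$, off-the-shelf restricted-eigenvalue results for sparse regression do not apply, and one must check that the Gaussian concentration of $\phi_{\min}(n^{-1}\bW^\top\bW)$ (around $(1-\sqrt{|S|/n})^2$) survives a union bound over the $\binom{p}{|S|}$ supports, whose log-cardinality is $O(n)$; this is what produces the positive constant $\kappa(c,\gamma)$ and implicitly limits how close the sparsity fraction $1-c$ may be to $\gamma$. The remaining pieces---the basic inequality, the concentration of $\fnorm{n^{-1}\bX^\top\bE}$, and the passage through $\phi_{\min}(\bSigma)$---are routine.
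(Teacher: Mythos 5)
Your skeleton is exactly the paper's proof of \Cref{cor37}: feed a bound on $\fnorm{\bH}^2$ into \eqref{eq:upper bound trS+H2} of \Cref{thm:main}, where the bound on $\fnorm{\bH}^2$ comes from the basic inequality for \eqref{eq: hbB} with $\tau=0$, the row-sparsity $\norm{\hbB-\bB^*}_0\le(1-c)n$, and a uniform lower eigenvalue over $(1-c)n$-row-sparse matrices. However, the step you yourself call the ``main obstacle'' is a genuine gap, not a routine verification. You propose to get the restricted eigenvalue constant $\kappa(c,\gamma)$ from edge concentration of $\phi_{\min}(n^{-1}\bW^\top\bW)$ (deviations $e^{-nt^2/2}$ below $(1-\sqrt{|S|/n})^2$) plus a union bound over the $\binom{p}{|S|}$ supports. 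Since $\log\binom{p}{(1-c)n}\le(1-c)n\log(e\gamma/(1-c))$ is of order $n$, this argument only closes when the affordable deviation $t<1-\sqrt{1-c}$ exceeds the union-bound cost $\sqrt{2(1-c)\log(e\gamma/(1-c))}$, i.e.\ under an extra restriction tying $1-c$ to $\gamma$ --- and you concede this (``implicitly limits how close the sparsity fraction $1-c$ may be to $\gamma$''). But \Cref{cor37} is asserted for every $\gamma$ in \Cref{assu: regime} and every $c>0$ in \Cref{assu: tau}(ii), with no such restriction. The paper sidesteps this by working on the event $U_2$ of \Cref{sec:opnorm-bound}, whose bound $\P(U_2)\ge1-C(\gamma,c)e^{-n/C(\gamma,c)}$ with $\eta=\eta(\gamma,c)>0$ is imported from \cite[Lemma B.1]{bellec2021biasing}; making that event hold for arbitrary $(\gamma,c)$ requires a different tool from edge concentration, namely a small-ball estimate for $\sigma_{\min}$ of rectangular Gaussian matrices of the form $\P\bigl(\sigma_{\min}(\bW)\le\epsilon\sqrt n\bigr)\le(C\epsilon)^{cn+1}$, whose exponent $cn$ can absorb the $\binom{p}{(1-c)n}\le2^{\gamma n}$ union bound once $\epsilon$ is a sufficiently small constant depending on $(\gamma,c)$. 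Your Marchenko--Pastur-edge concentration cannot, so as written your proof covers only part of the parameter range of the corollary.

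A second, smaller defect: your Cauchy--Schwarz pairing of the noise term loses the stated constant dependence. Bounding $\langle n^{-1}\bX^\top\bE,\bDelta\rangle\le\fnorm{n^{-1}\bX^\top\bE}\fnorm{\bDelta}$ with $\E\fnorm{n^{-1}\bX^\top\bE}^2=n^{-1}\trace(\bSigma)\trace(\bS)$ makes your $C_0$ scale with $\gamma\max_j\Sigma_{jj}$, i.e.\ with the upper scale of $\bSigma$, whereas \Cref{cor37} allows hidden constants depending only on $c,\gamma,\phi_{\min}(\bSigma)$. The paper instead pairs $\langle\bE,\bZ\bH\rangle\le\fnorm{\bE}\fnorm{\bZ\bH}\le\fnorm{\bS^{1/2}}\opnorm{\bE\bS^{-1/2}}\fnorm{\bZ\bH}$, keeps the prediction-error factor $\fnorm{\bZ\bH}$ so it can be cancelled against the left-hand side after the lower bound $\fnorm{\bZ\bH}\ge\sqrt{n\eta}\,\fnorm{\bH}$ on $U_2$, and controls $\opnorm{\bE\bS^{-1/2}}=O_P(\sqrt n)$ by Davidson--Szarek together with $T=o(n)$; the only trace of $\bSigma$ is the $\phi_{\min}(\bSigma)$ arising from $\norm{\hbB-\bB^*}_{2,1}\le\sqrt{(1-c)n}\,\fnorm{\bH}/\sqrt{\phi_{\min}(\bSigma)}$. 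Both defects are repairable --- invoke the paper's event $U_2$ rather than re-deriving it, and swap the Cauchy--Schwarz pairing --- but as proposed the argument neither covers all $(\gamma,c)$ nor yields the advertised constants.
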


Comparing \Cref{cor36,cor37} with \Cref{prop: oracle}, we conclude
that $\fnorm{\hbS-\bS}^2$ is of the same order as the Frobenius error of the oracle estimator
in \eqref{eq: bound oracle} up to constants depending on the signal-to-noise ratio, $\gamma$, and $\tau'$ under \Cref{assu: tau}(i), and up to constants depending on $\mu$, $c, \gamma, \phi_{\min}(\bSigma)$
under \Cref{assu: tau}(ii).

The error bounds in \eqref{eq:upper bound trS+H2}-\eqref{eq: cor37} are measured in Frobenius norm, similarly to existing works on 
noise covariance estimation \cite{molstad2019new}. 
Outside the context of linear regression models, much work has been devoted to covariance estimation 
in the operator norm. 
By the loose bound $\opnorm*{\bM}\leq \fnorm*{\bM}$, our upper bounds carry over to the operator norm.
The same cannot be said for lower bounds, since for instance
$\E\big[\opnorm{\hbS_{\rm{(oracle)}}- \bS}^2 \big] \asymp n^{-1} \opnorm{\bS} \trace(\bS)$ (see, \eg, \cite[Corollary 2]{koltchinskii2017concentration}).

\begin{figure}[h]
	\centering
	\subfloat[\centering Boxplot for estimating a \emph{full rank} $\bS$ ]{{\includegraphics[width=.5\textwidth]{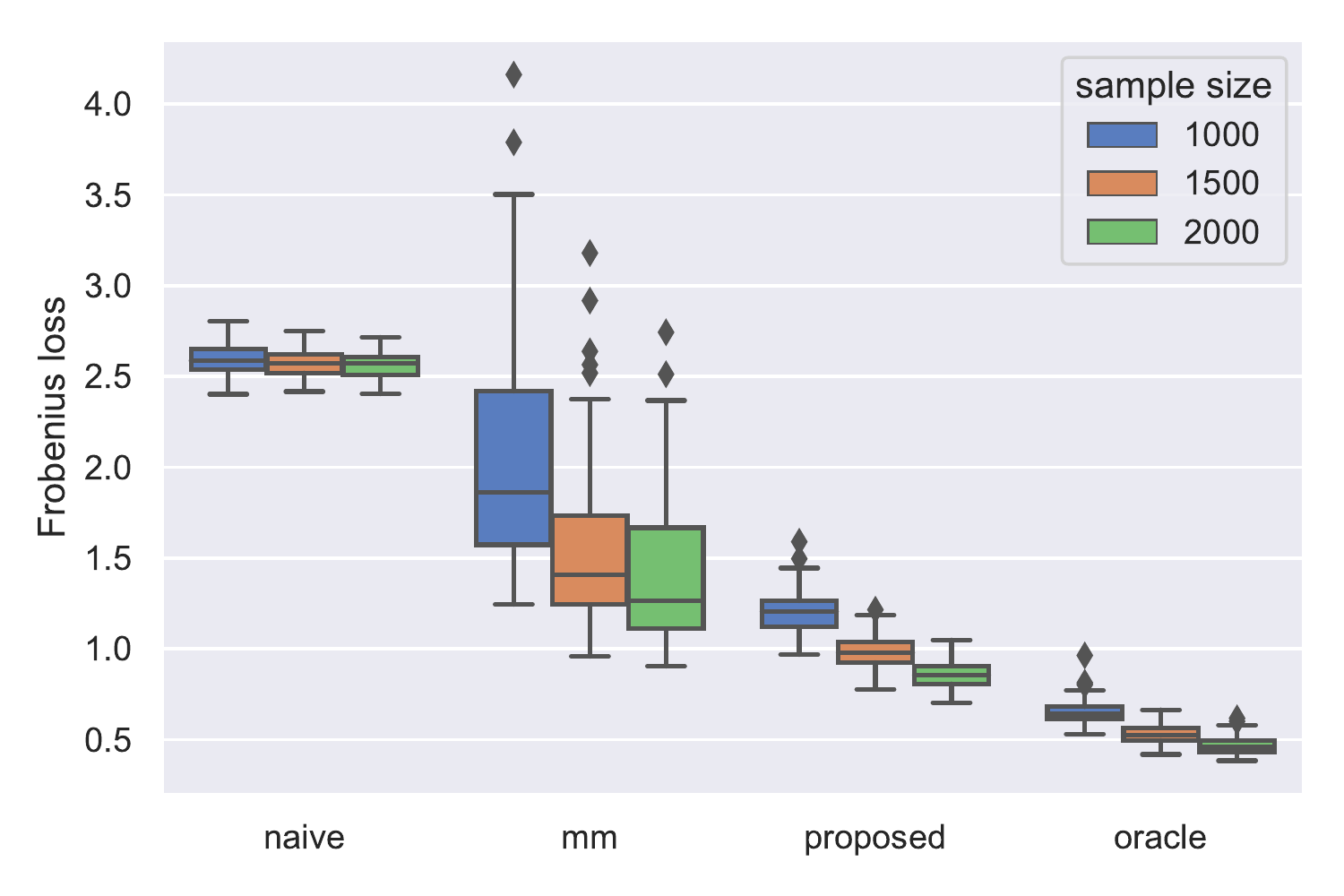} }}%
	~
	\subfloat[\centering Boxplot for estimating a \emph{low rank} $\bS$ ]{{\includegraphics[width=.5\textwidth]{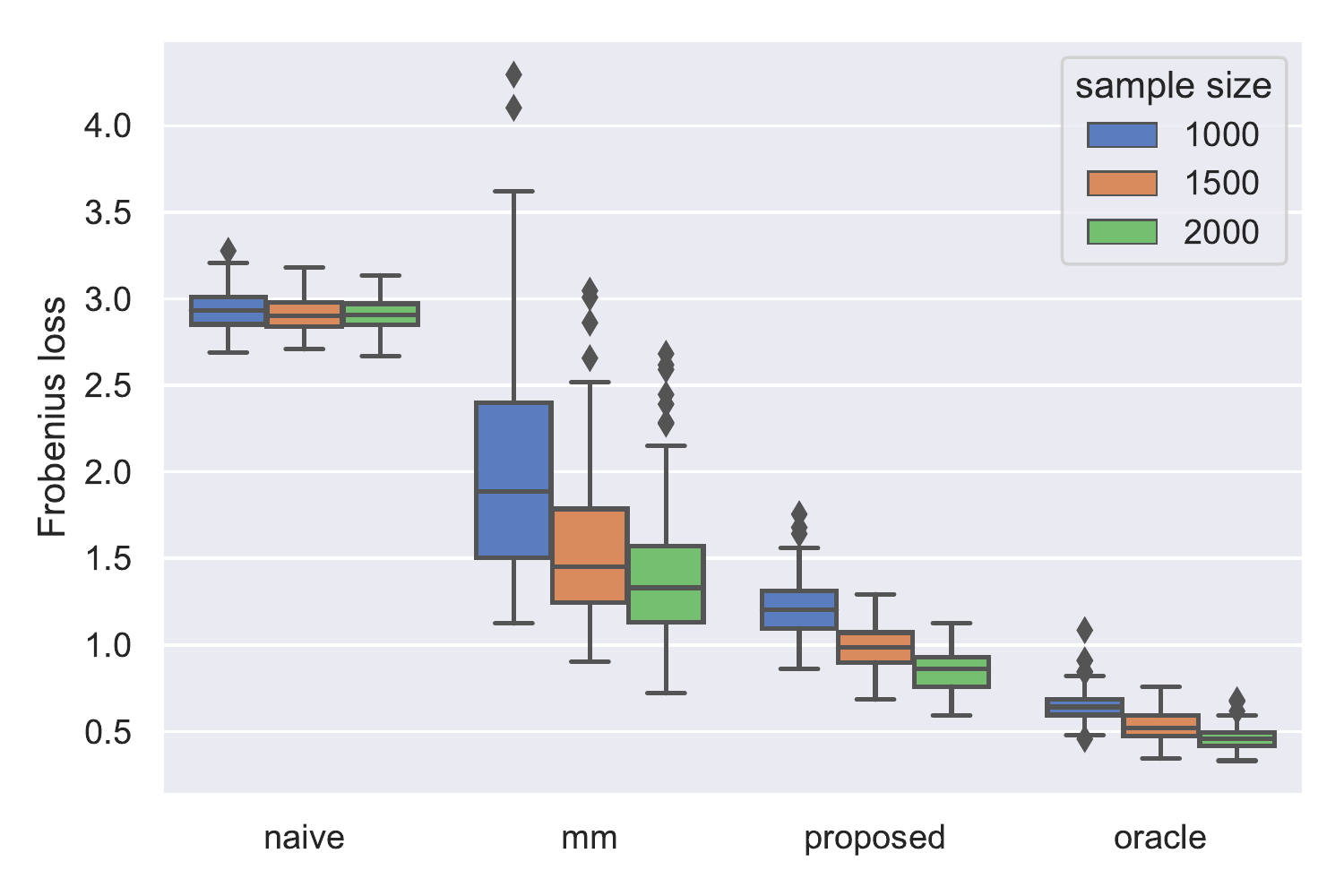} }}%
	\caption{Boxplots for Frobenius norm loss over 100 repetitions.}%
	\label{Fig:0}%
\end{figure}

\section{Numerical experiments} \label{sec: simulation}

Regarding parameters for our simulations, 
we set $T=20$, $p=1.5n$ and $n$ equals successively $1000,1500,2000$. 
We consider two settings for the noise covariance matrix:
$\bS$ is full-rank and $\bS$ is low-rank.
The complete construction of $\bS$, $\bB^*$ and $\bX$,
as well as implementation details are given in the supplementary material. 

We compare our proposed estimator $\hbS$ with relevant estimators including 
(1) the naive estimate $\hbS_{\text{(naive)}} = n^{-1}\bF^\top\bF$, 
(2) the method-of-moments estimate $\hbS_{\rm{(mm)}}$ defined in Proposition \ref{prop: mom}, and 
(3) the oracle estimate $\hbS_{\rm{(oracle)}} = n^{-1}\bE^\top\bE$. 
The performance of each estimator is measured in Frobenius norm: for instance,  $\fnorm{\hbS-\bS}$ is the loss for proposed estimator $\hbS$. 
\Cref{Fig:0} displays the boxplots of the Frobenius loss from the different methods over 100 repetitions. 
\Cref{Fig:0} shows that, besides the oracle estimator, our proposed estimator has the best performance with significantly smaller loss  compared to the naive and method-of-moments estimators. 

Since the estimation target is a $T\times T$ matrix, we also want to compare different estimators in terms of the bias and standard deviation for each entry of $\bS$.
\Cref{Fig:1} presents the heatmaps of bias and standard deviation from different estimators for full rank $\bS$ with $n=1000$. 
The remaining heatmaps for different $n$ and for estimation of low rank $\bS$ are available in the supplementary material.

As expected, the oracle estimator has best performance in \Cref{Fig:0}
and smallest bias and variance in \Cref{Fig:1}.
The naive estimator has large bias as we see in \Cref{Fig:1},
 though it has small standard deviation. 
The method-of-moments estimator is unbiased but its variance is relatively large, 
which means its performance is not stable, as was reflected in \Cref{Fig:0}. 
Our proposed estimator improves on both the naive and method-of-moments estimators 
because it has much smaller bias than the former, 
while having smaller standard deviation than the latter.

\begin{figure}[H]
	\centering
	\subfloat[\centering Heatmap of \emph{bias} for each entry ]{{\includegraphics[width=.48\textwidth]{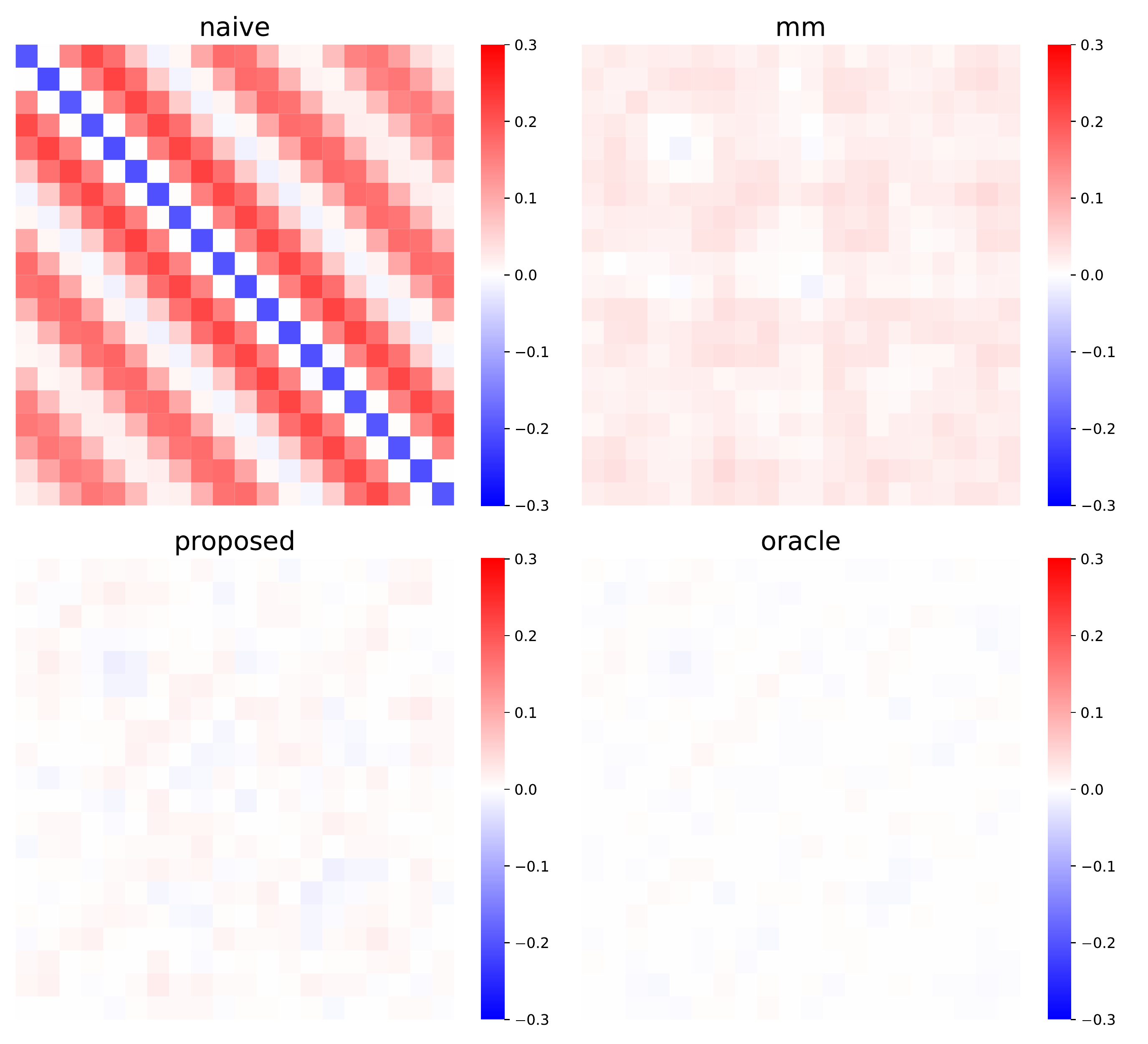} }}%
	~
	\subfloat[\centering Heatmap of \emph{standard deviation} for each entry ]{{\includegraphics[width=.48\textwidth]{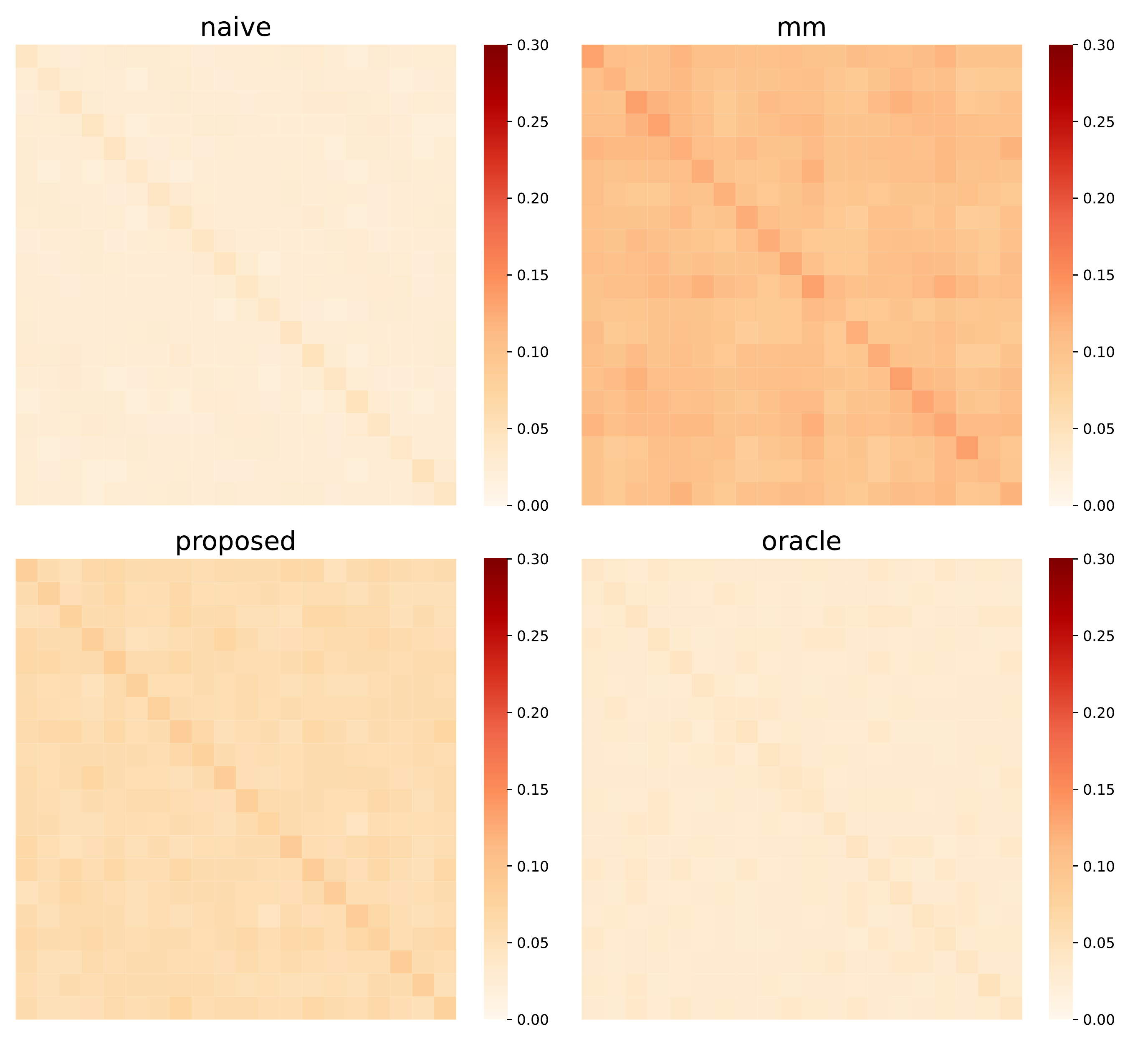} }}%
	\caption{Heatmaps for estimation of full rank $\bS$ with $n=1000$ over 100 repetitions.}%
	\label{Fig:1}%
\end{figure}

\section{Limitations and future work}\label{sec: discussion}
One limitation of the proposed estimator $\hbS$ is 
that its construction necessitates the knowledge of $\bSigma$.
Let us first mention that the estimator
$n^{-1}\fnorm{(\bI_T - \hbA/n)^{-1}\bF^\top}$ of $\trace(\bS) + \fnorm{\bSigma^{1/2}(\hbB-\bB^*)}^2$ in \Cref{thm:generalization error} does not require knowing $\bSigma$. Thus, this estimator can further be used as a proxy of the error $\fnorm{\bSigma^{1/2}(\hbB-\bB^*)}^2$,
say for parameter tuning, without the knowledge of $\bSigma$. 
The problem of estimating $\bS$ with known
$\bSigma$ was studied in \cite{celentano2021cad} for $T=2$: in this 
inaccurate covariate model
and for $p/n\le \gamma$, our results yield the convergence rate $n^{-1/2}$
for $\bS$ which improves upon the rate $n^{-c_0}$ for a non-explicit constant $c_0>0$ in \cite[Theorem 2.1]{celentano2021cad}.

In order to use $\hbS$ when $\bSigma$ is unknown, one may plug-in
an estimator $\hbSigma$ in \Cref{eq: hbS}, resulting in an extra term
of order $\opnorm{\hbSigma{}^{-1} - \bSigma^{-1}}\fnorm{\bF}$ for the Frobenius error. See \cite[\S4]{dicker2014variance} for related discussions in the $T=1$ (single-task) case. While, under the proportional regime $p/n\to \gamma$, 
no estimator is consistent for all covariance matrices $\bSigma$ in operator norm, consistent estimators do exist under additional structural assumptions \cite{bickel2008covariance, el2008operator, cai2010optimal}.
If available, additional unlabeled samples $(\bx_i)_{i\ge n+1}$ can also
be used to construct norm-consistent estimator of $\bSigma$.

Future directions include extending estimator $\hbS$ to utilize 
other estimators of the nuisance $\bB^*$
than the multi-task elastic-net \eqref{eq: hbB}; for instance 
\eqref{eq: multi-ScaledLasso} or the estimators studied in \cite{van2016chi,molstad2019new,Salmon2019handlings}. In the simpler case where
columns of $\bB^*$ are estimated independently on each task, e.g.,
if the $T$ columns of $\hbB$ are Lasso estimators 
$(\hbbeta^{(t)})_{t\in[T]}$ each computed from $\by{}^{(t)}$,
then minor modifications of our proof yield that the estimator \eqref{eq: hbS}
with $\hbA=\diag(\|\hbbeta^{(1)}\|_0,...,\|\hbbeta^{(T)}\|_0)$ enjoys similar
Frobenius norm bounds of order $n^{-1/2}$.


%


\bibliography{paper}
\bibliographystyle{plainnat}

\appendix
\section*{SUPPLEMENT}
This supplement is organized as follows:
\begin{itemize}
	\item In \Cref{sec:appendix-experiments} we provide details regarding the setting of our simulations, as well as additional experiment results.
	
	\item In \Cref{sec:out-of-sample} we establish an upper bound for the out-of-sample error, which we could not put in the full paper due to page length limit. 
	
	\item \Cref{sec:opnorm-bound} provides the upper bound for $\opnorm{(\bI_T - \hbA/n)^{-1}}$ mentioned after \Cref{thm:covariance} in the full paper, and 
	Appendices \ref{sec:Lipschitz-fix-E}, \ref{sec:Lipschitz-fix-X}, \ref{sec:proba-tools} contain preliminary theoretical statements, which will be useful for proving our main results in the full paper. 

	\item \Cref{sec: proof-thms} contains proofs of main results in \Cref{sec: main-results} of the full paper and \Cref{sec:out-of-sample}. 

	\item \Cref{sec:appendix-preliminary} contains proofs of preliminary results in Appendices \ref{sec:opnorm-bound} to \ref{sec:proba-tools}.
\end{itemize}

\textbf{Notation.} 
Here we introduce basic notations that will be used throughout this supplement. 
We use indexes $i$ and $l$ only to loop or sum over $[n] =  \{1, 2, \ldots, n\}$, use $j$ and $k$ only to loop or sum over $[p] =  \{1, 2, \ldots, p\}$, 
use $t$ and $t'$ only to loop or sum over $[T] =  \{1, 2, \ldots, T\}$, so that $\be_i$ (and $\be_l$) refer to the $i$-th (and $l$-th) canonical basis vector in $\R^n$, $\be_j$ (and $\be_k$) refer to the $j$-th (and $k$-th) canonical basis vector in $\R^p$, $\be_t$ (and $\be_{t'}$) refer to the $t$-th (and $t'$-th) canonical basis vector in $\R^T$. 
For any two real numbers $a$ and $b$, let $a\vee b = \max(a,b)$, and $a\wedge b = \min(a,b)$. 
Positive constants that depend on $\gamma, \tau'$ only are denoted by $C(\gamma, \tau')$, and positive constants that depend on $\gamma, c$ only are denoted by $C(\gamma, c)$. The values of these constants
may vary from place to place.

\section{Experiment details and additional simulation results}
\label{sec:appendix-experiments}

\subsection{Experimental details}
\label{sec:appendix-experiment-details}
This section provides more experimental detail for \Cref{sec: simulation} of the full paper. 

We consider two types of noise covariance matrix: 
(i) $\bS$ is full-rank with $(t,t')$-th entry $\bS_{t,t'} = \frac{\cos(t-t')}{1 + \sqrt{|t-t'|}}$; 
(ii) $\bS$ is low-rank with $\bS = \bu\bu^\top$, where $\bu\in \R^{T\times 10}$ has \iid entries from $\calN(0, 1/T)$. 

To build the coefficient matrix $\bB^*$, we first set its sparsity pattern, \ie, we define the support $\mathscr{S}$ of cardinality $|\mathscr{S}| = 0.1 p$,
then we generate an intermediate matrix $\bB\in \R^{p\times T}$.
The $j$-th row of $\bB$ is sampled from $\calN_T(\mathbold0, p^{-1}\bI_T)$ if $j\in \mathscr{S}$,
otherwise we set it to be the zero vector. 
Finally we let $\bB^* =\bB [\trace(\bS)/ \trace(\bB^\top\bSigma \bB)]^{\frac 12}$,
which forces a signal-to-noise ratio of exactly $1$.

The design matrix $\bX$ is constructed by independently sampling its rows from $\calN_p(\mathbold 0, \bSigma)$ with $\bSigma_{jk} = |j-k|^{0.5}$. 

The Python library Scikit-learn \cite{scikit-learn} is used to calculate $\hbB$ in \eqref{eq: hbB}. 
More precisely we invoke \texttt{MultiTaskElasticNetCV} to
obtain $\hbB$ by 5-fold cross-validation with parameters \texttt{l1-ratio=[0.5, 0.7, 0.9, 1]}, \texttt{n\_alpha=100}. 
To compute the interaction matrix $\hbA$ 
we used the efficient implementation described in \cite[Section 5]{bellec2021chi}. 

The full code needed to reproduce our experiments is part of the supplementary material.
A detailed Readme file is located in the corresponding folder.

The simulations results reported in the full paper and this supplementary material were run on a cluster of 50 CPU-cores (each is an Intel Xeon E5-2680 v4 @2.40GHz) equipped with a total of 150 GB of RAM. 
It takes approximately six hours to obtain all of our simulation results.

\subsection{Numerical results of Frobenius norm loss}

While \Cref{Fig:0} in the full paper provides boxplots of Frobenius norm loss for 100 repetitions,
we report in following \Cref{table:t1} the mean and standard deviation of the Frobenius norm loss over 100 repetitions. 
\begin{table}[H]
	\caption{Comparison of different methods for estimation of $\bS$}
	\label{table:t1}
	\centering
	\begin{tabular}{cc|cc|cc|cc}
		\toprule
		& & \multicolumn{2}{c|}{$n=1000$} & \multicolumn{2}{c|}{$n=1500$}  & \multicolumn{2}{c}{$n=2000$} \\
		\hline
		$\bS$ &method & mean & sd & mean & sd & mean & sd \\
		\midrule
		\multirow{4}*{full rank}
		&naive  			&2.593 &0.090   & 2.572&0.076& 2.562 &0.070 \\
		&mm     			& 2.030 &0.616   & 1.554 &0.421  & 1.413 &0.405\\
		&proposed	& 1.207 &0.119   & 0.984 &0.084  & 0.858 &0.072 \\
		&oracle 			& 0.652 &0.061  & 0.534 &0.052  & 0.469 &0.045\\
		\hline
		\multirow{4}*{low rank}
		&naive  			&2.942 &0.119   & 2.912&0.102& 2.908 &0.094 \\
		&mm     			& 2.027 &0.686   & 1.561 &0.435  & 1.423 &0.414\\
		&proposed	& 1.216 &0.172   & 0.989 &0.125  & 0.854 &0.118 \\
		&oracle 			& 0.654 &0.096  & 0.531 &0.081  & 0.464 &0.065\\
		\bottomrule
	\end{tabular}
\end{table}
The numerical results in  \Cref{table:t1} are consistent with the boxplots in \Cref{Fig:0}. 
It is clear from \Cref{table:t1} that our proposed estimator has significantly smaller loss than the  naive estimator and method-of-moments estimator. 
These comparisons again show the superior performance of our proposed estimator. 

\newpage 
\subsection{Additional heatmaps for estimating full rank \texorpdfstring{$\bS$}{S}}
When estimating the full rank $\bS$ with $(t,t')$-th entry $\bS_{t,t'} = \frac{\cos(t-t')}{1 + \sqrt{|t-t'|}}$, the heatmaps for different estimators from $n=1500$ and $n=2000$ are presented in \Cref{Fig:2,Fig:3}, respectively. 
The comparison patterns in \Cref{Fig:2,Fig:3} are similar to the case $n=1000$ in \Cref{Fig:1} of the full paper; our proposed estimator outperforms the naive estimator and method-of-moments estimator.
\begin{figure}[H]
	\centering
	\subfloat[\centering Heatmap of \emph{bias} for each entry ]{{\includegraphics[width=.48\textwidth]{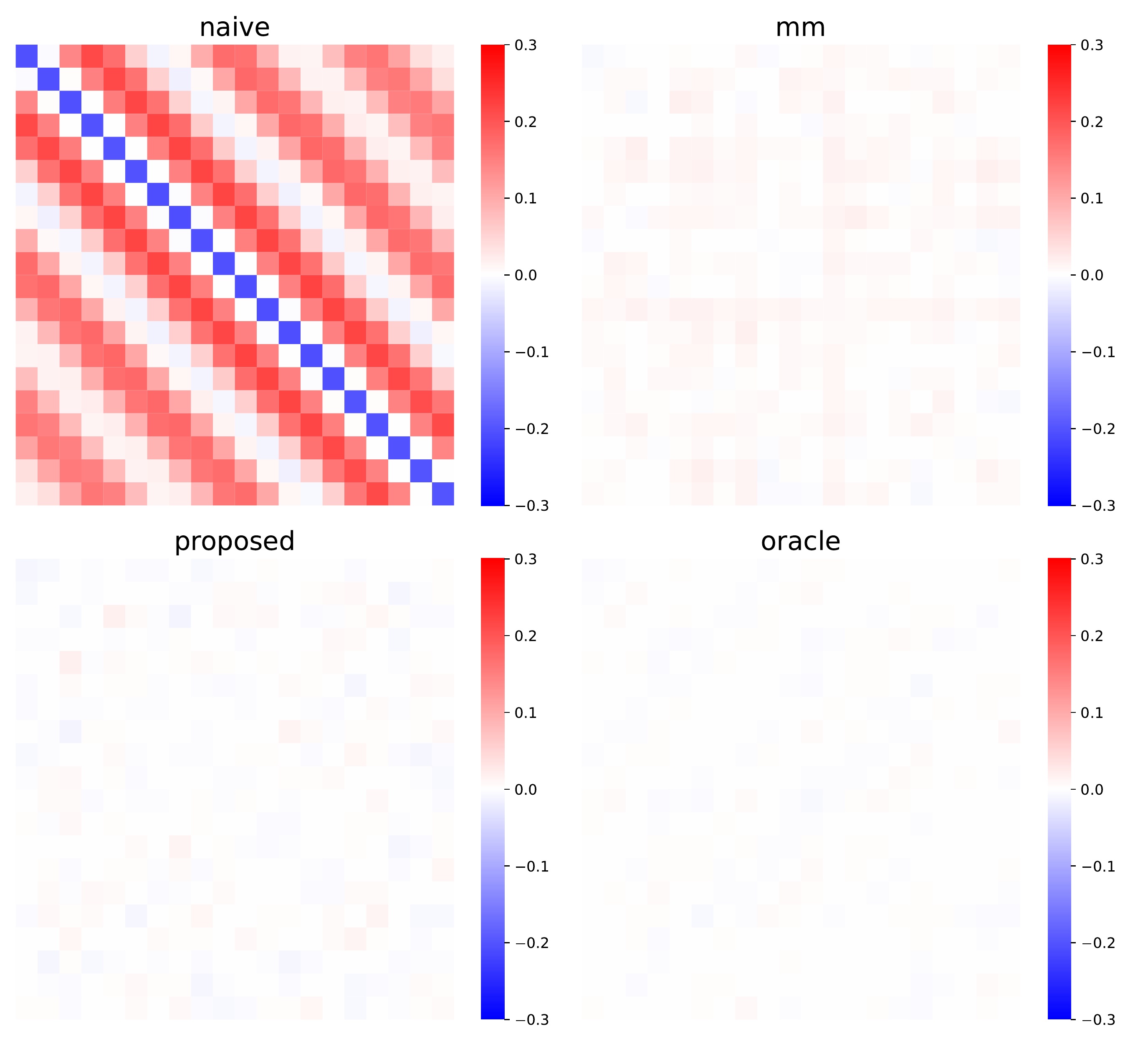} }}%
	~
	\subfloat[\centering Heatmap of \emph{standard deviation} in each entry ]{{\includegraphics[width=.48\textwidth]{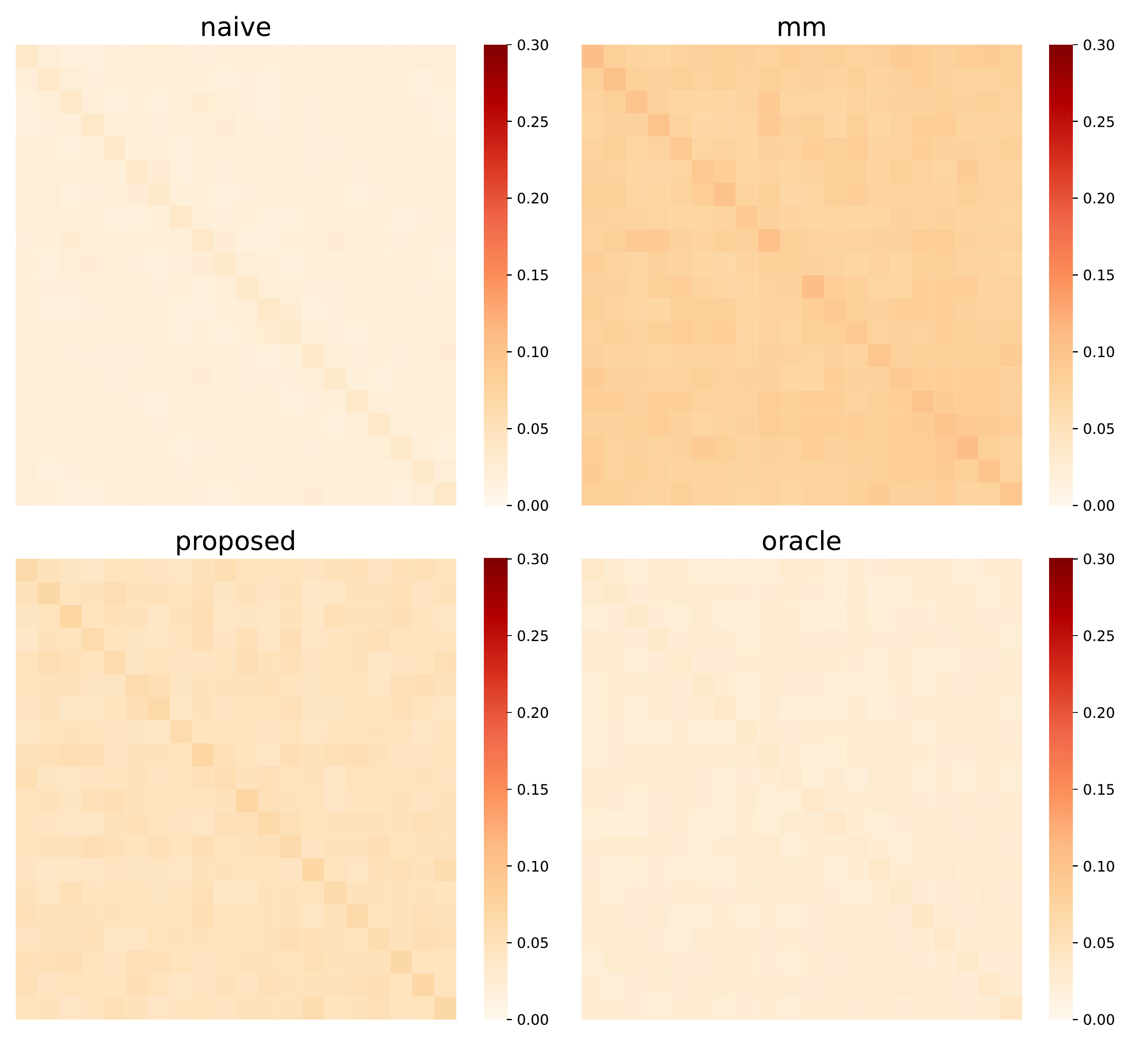} }}%
	\caption{Heatmaps for estimation of full rank $\bS$ with $n=1500$ over 100 repetitions.}%
	\label{Fig:2}%
\end{figure}

\begin{figure}[H]
	\centering
	\subfloat[\centering Heatmap of \emph{bias} for each entry ]{{\includegraphics[width=.48\textwidth]{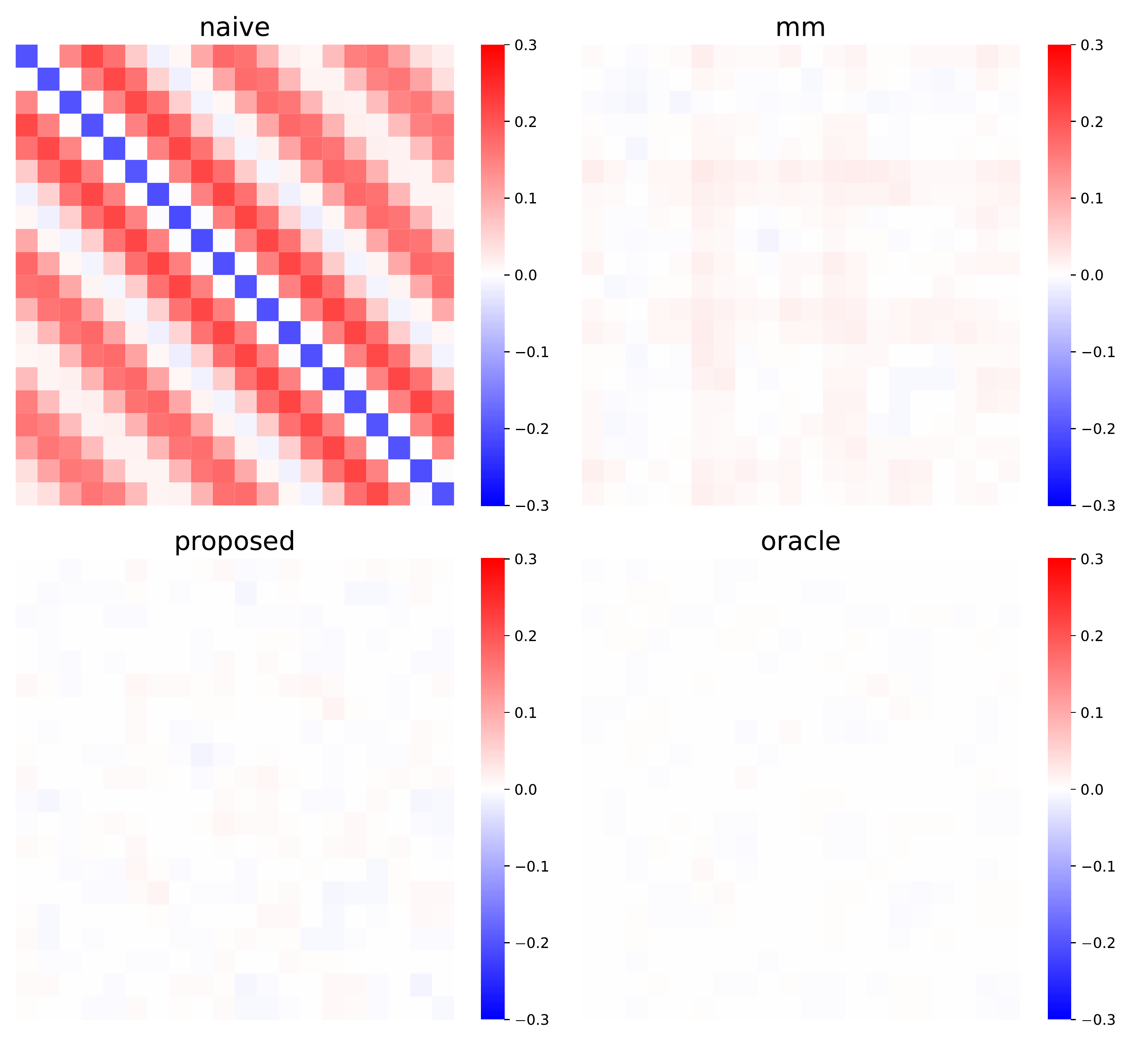} }}%
	~
	\subfloat[\centering Heatmap of \emph{standard deviation} in each entry ]{{\includegraphics[width=.48\textwidth]{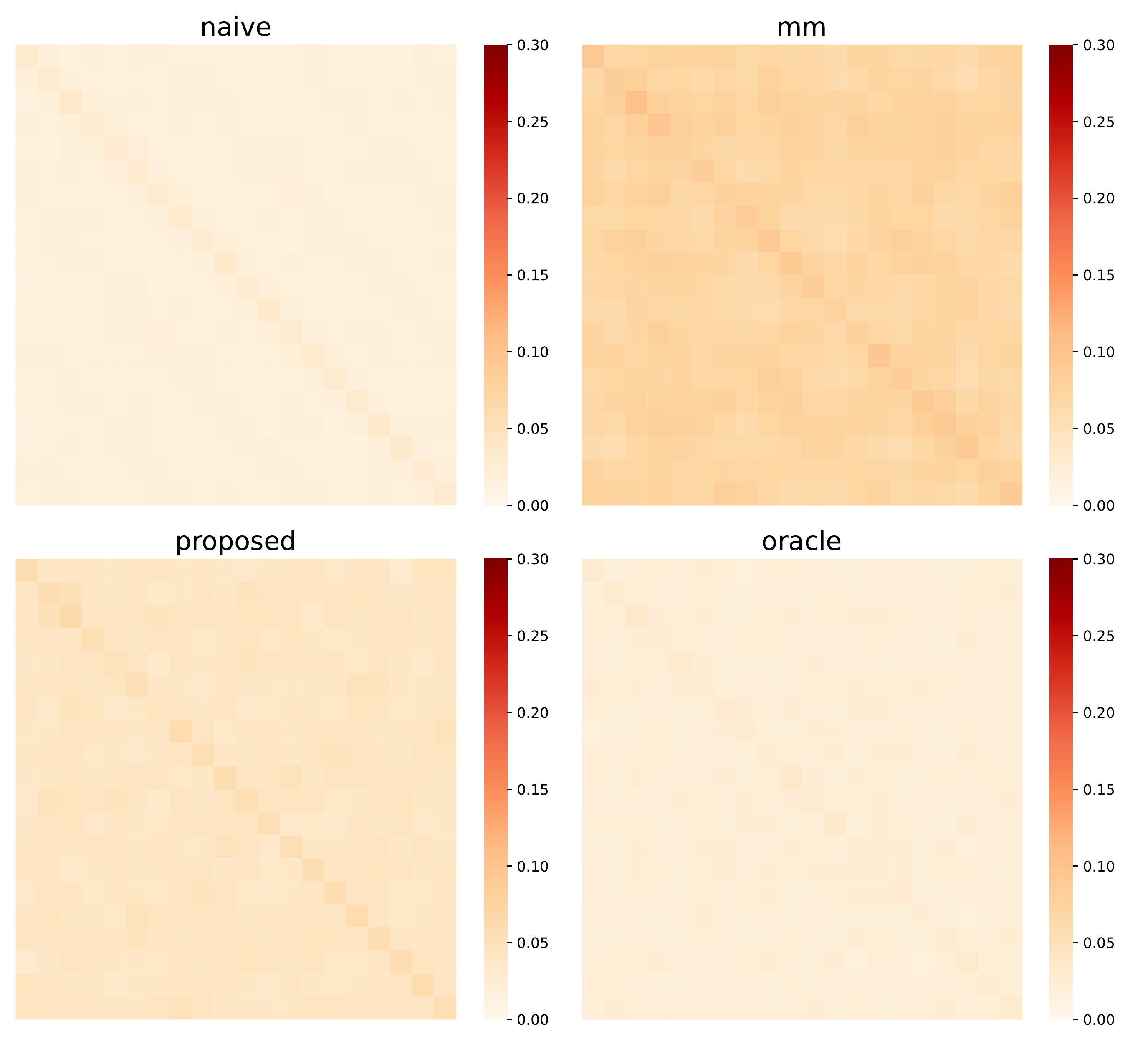} }}%
	\caption{Heatmaps for estimation of full rank $\bS$ with $n=2000$ over 100 repetitions.}%
	\label{Fig:3}%
\end{figure}
\newpage

\subsection{Heatmaps for estimating low rank \texorpdfstring{$\bS$}{S}}

When estimating the low rank with $\bS = \bu\bu^\top$, and $\bu\in \R^{T\times 10}$ with entries are \iid from $\calN(0, 1/T)$. 
We present the heatmaps for different estimators with $n=1000, 1500, 2000$ in \Cref{Fig:4,Fig:5,Fig:6} below. All of these figures convince us that besides the oracle estimator, the proposed estimator has the best performance. 
\begin{figure}[H]
	\centering
	\subfloat[\centering Heatmap of \emph{bias} for each entry ]{{\includegraphics[width=.48\textwidth]{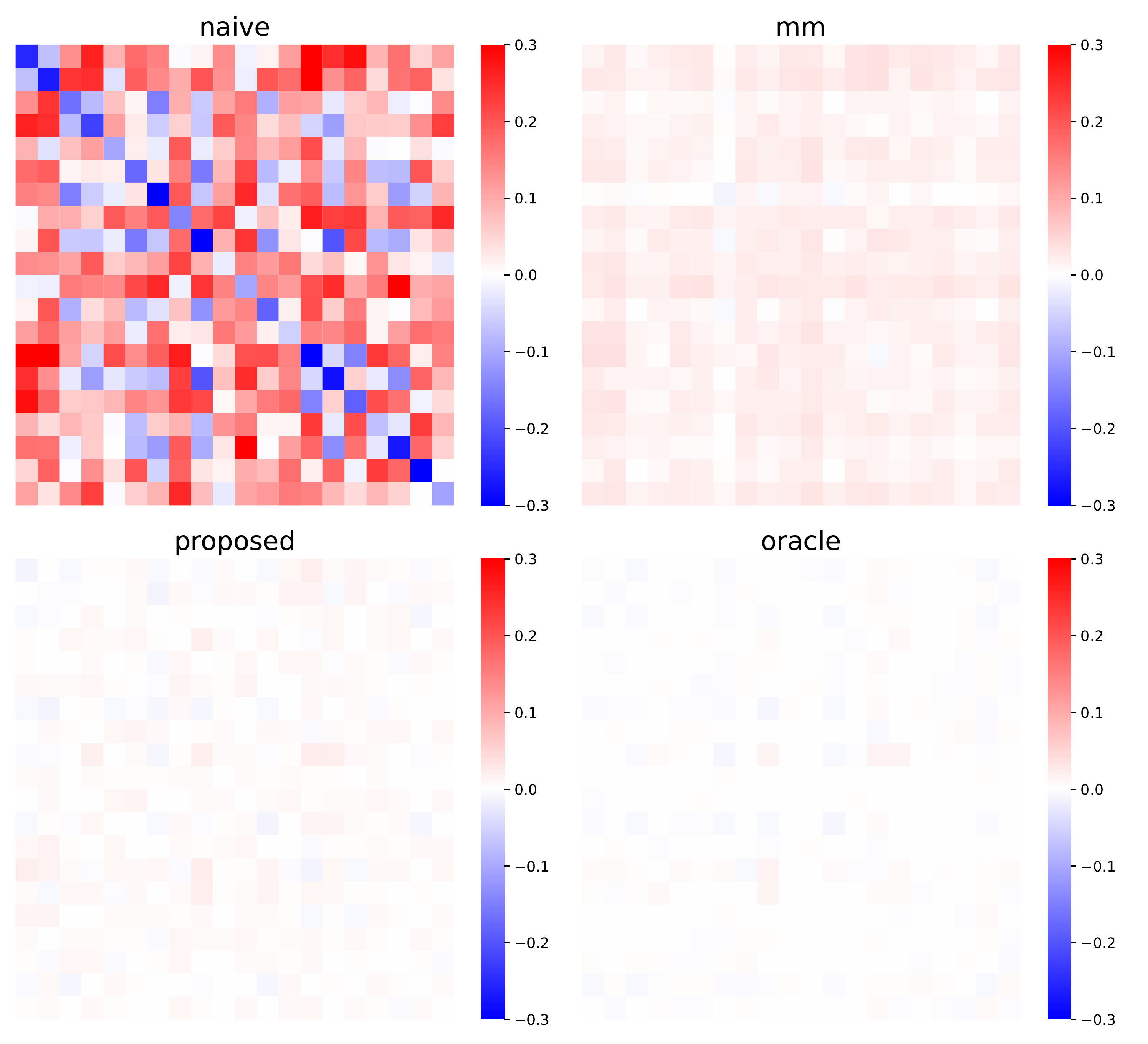} }}%
	~
	\subfloat[\centering Heatmap of \emph{standard deviation} in each entry ]{{\includegraphics[width=.48\textwidth]{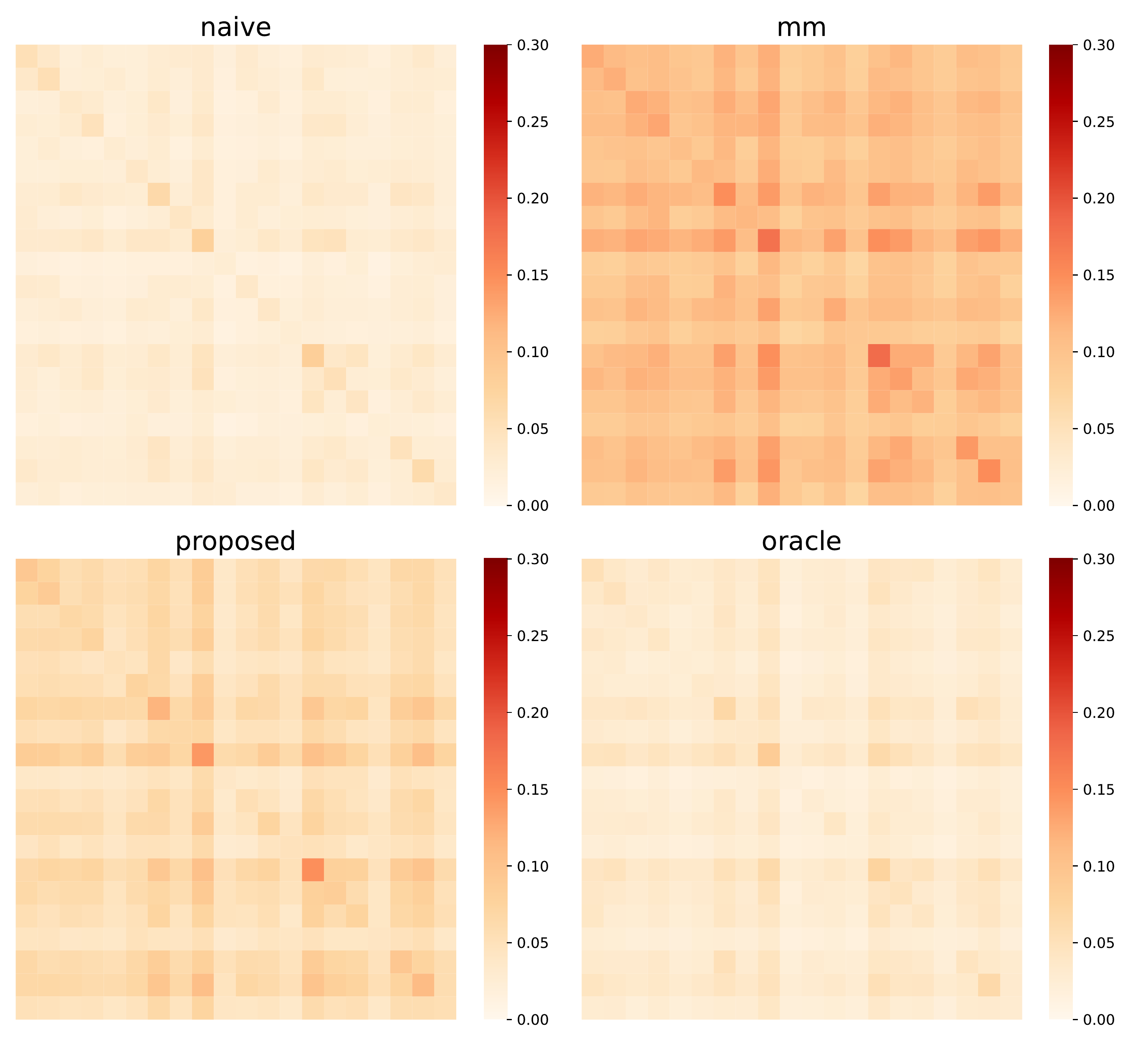} }}%
	\caption{Heatmaps for estimation of low rank $\bS$ with $n=1000$ over 100 repetitions.}%
	\label{Fig:4}%
\end{figure}

\begin{figure}[H]
	\centering
	\subfloat[\centering Heatmap of \emph{bias} for each entry ]{{\includegraphics[width=.48\textwidth]{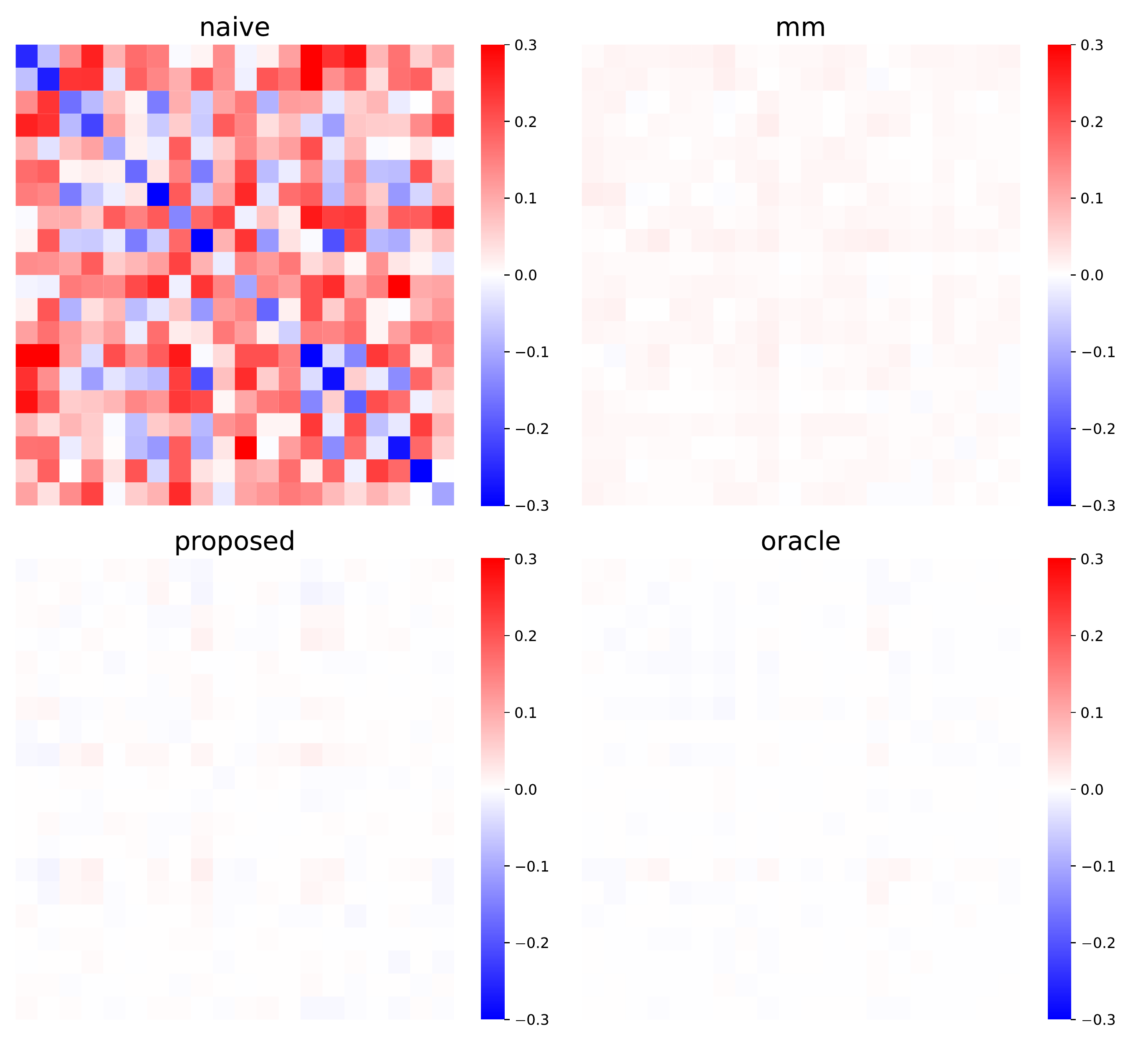} }}%
	~
	\subfloat[\centering Heatmap of \emph{standard deviation} in each entry ]{{\includegraphics[width=.48\textwidth]{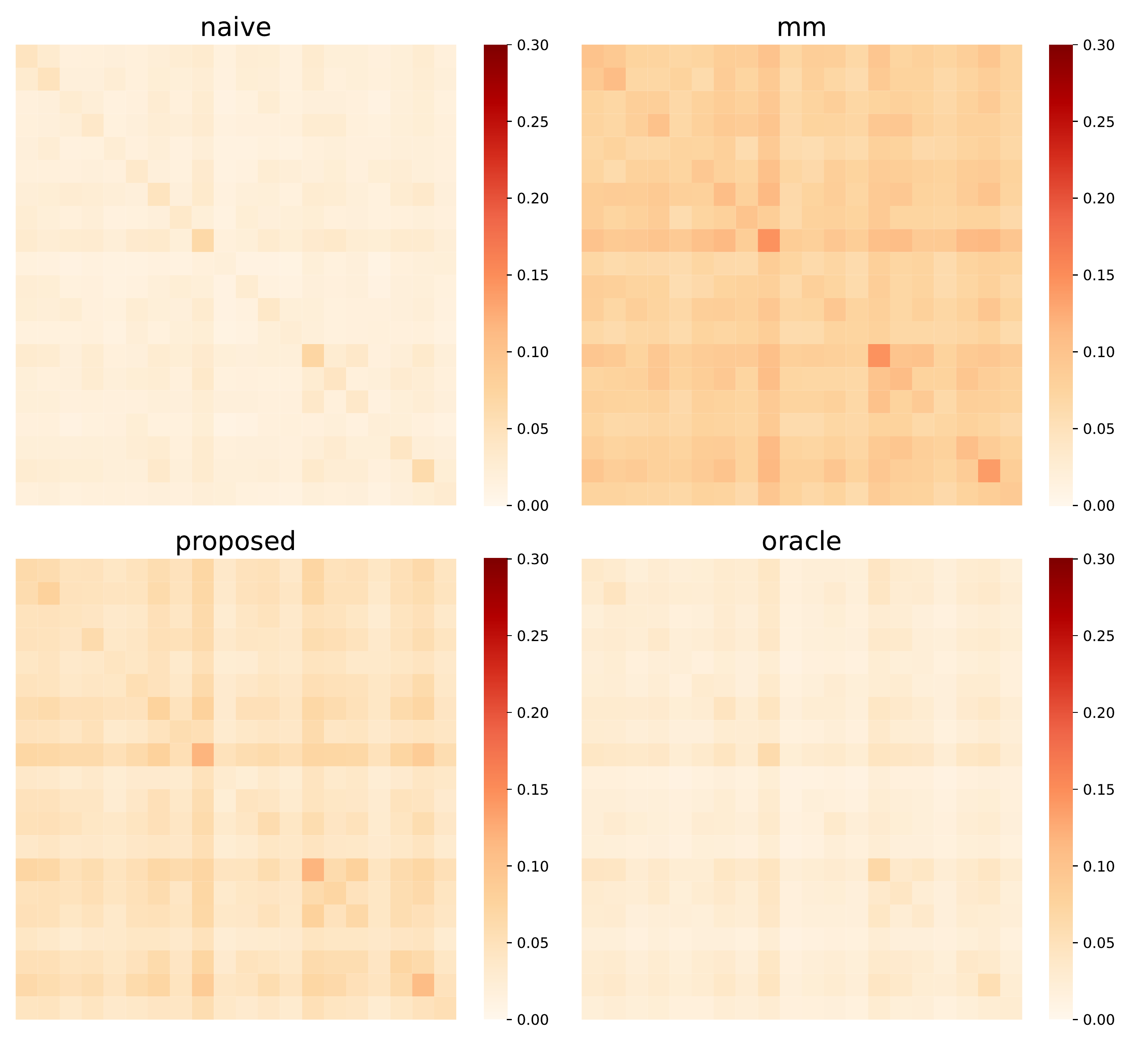} }}%
	\caption{Heatmaps for estimation of low rank $\bS$ with $n=1500$ over 100 repetitions.}%
	\label{Fig:5}%
\end{figure}

\begin{figure}[H]
	\centering
	\subfloat[\centering Heatmap of \emph{bias} for each entry ]{{\includegraphics[width=.48\textwidth]{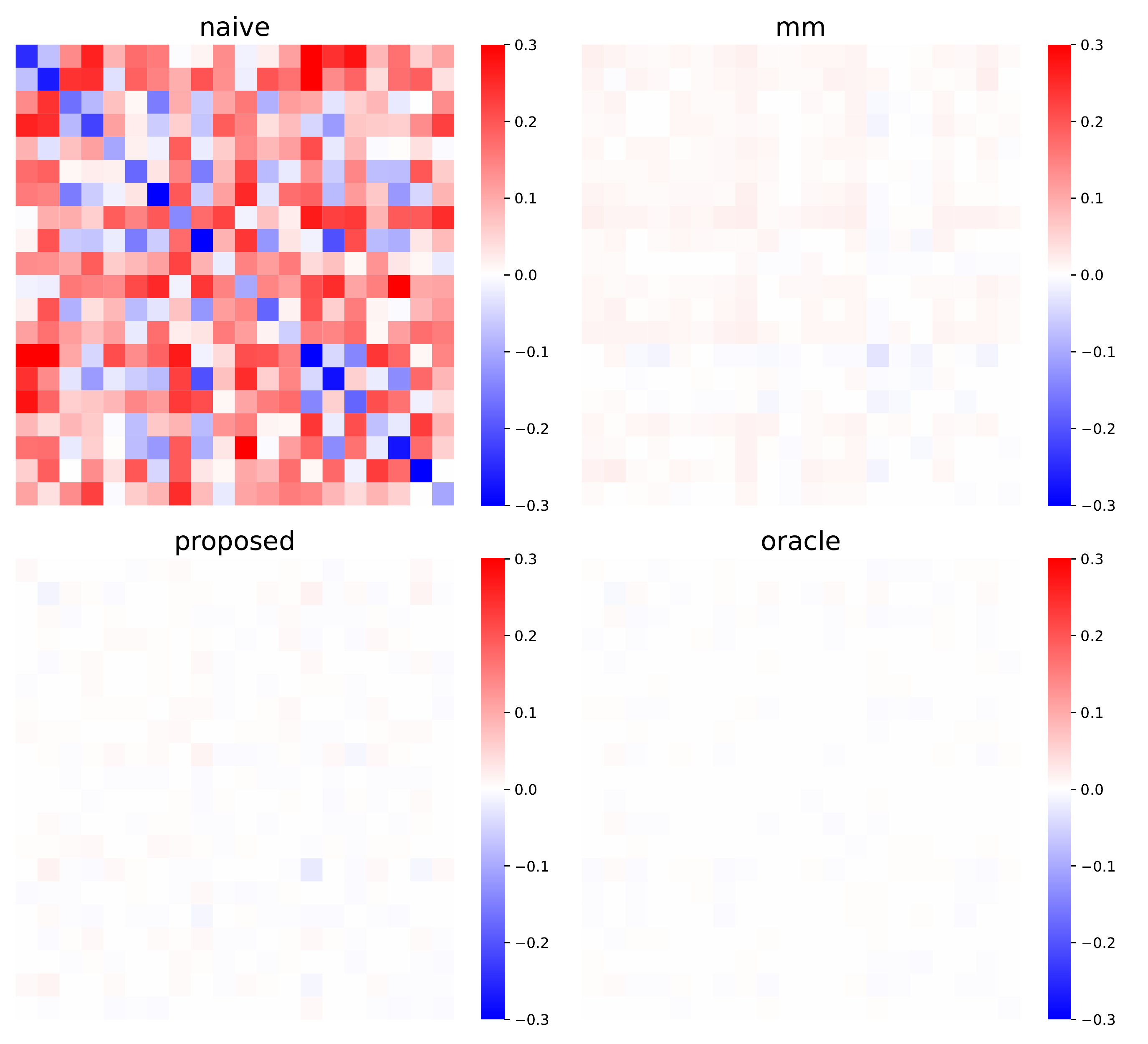} }}%
	~
	\subfloat[\centering Heatmap of \emph{standard deviation} in each entry ]{{\includegraphics[width=.48\textwidth]{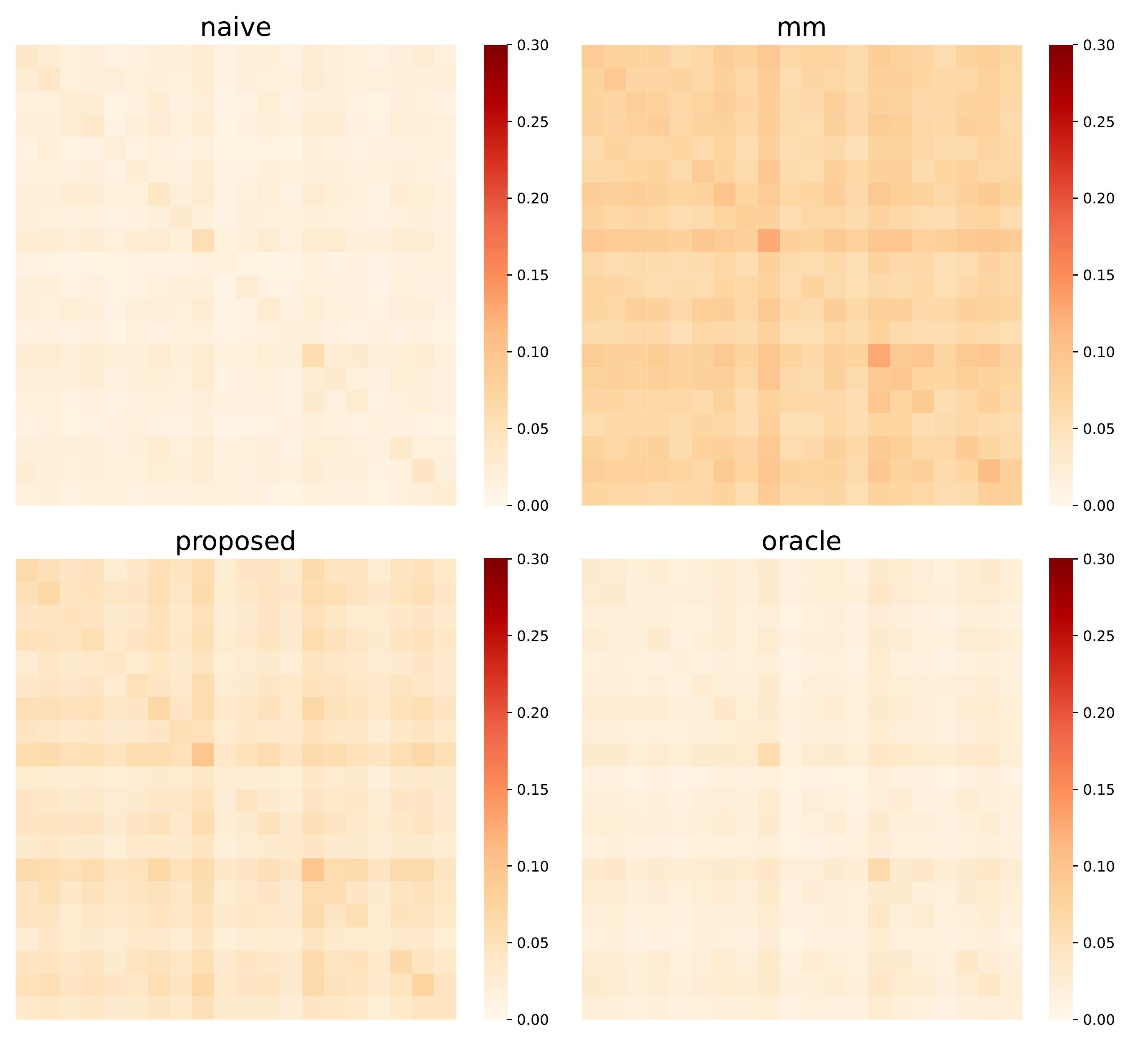} }}%
	\caption{Heatmaps for estimation of low rank $\bS$ with $n=2000$ over 100 repetitions.}%
	\label{Fig:6}%
\end{figure}

\section{Out-of-sample error estimation}
\label{sec:out-of-sample}

In this appendix, we present a by-product of our techniques for estimating the noise covariance.  
Suppose we wish to evaluate the performance of a regression method on a new data, we define the out-of-sample error for the multi-task linear model \eqref{eq: model} as
$$
\E \big[(\hbB - \bB^*)^\top\bx_{\text{new}} \bx_{\text{new}}^\top (\hbB - \bB^*)|(\bX, \bY)\big] = \bH^\top \bH,
$$
where $\bx_{\text{new}}$ is independent of the data $(\bX; \bY)$ with the same distribution as any row of $\bX$. 

 The following theorem on estimation of out-of-sample error is an by-product of our technique for constructing $\hbS$.  
\begin{theorem}[Out-of-sample error]
	\label{thm:out-of-sample}
	Under the same conditions of \Cref{thm:covariance}, with $\bZ = \bX \bSigma^{-\frac12}$, we have 
	{\small
		\begin{align*}
			&\fnorm{
				(\bI_T -\hbA/n)\bH^\top\bH (\bI_T -\hbA/n) -  \frac{1}{n^2} \big( \bF^\top \bZ \bZ^\top \bF + \hbA \bF^\top \bF + \bF^\top \bF \hbA - 
				p\bF^\top \bF\big)}\\
			\le &  \Theta_3 n^{-\frac 12} (\fnorm{\bH}^2 + \fnorm*{\bF}^2/n)
		\end{align*}
	}
	for some non-negative random variable $\Theta_3$ of constant order, in the sense that $\E [\Theta_3] \le C(\gamma,\tau')$ under \Cref{assu: tau}(i), 
	and with ${\E [I(\Omega)\Theta_3]} \le C(\gamma,c)$ under \Cref{assu: tau}(ii),  where $I(\Omega)$ is the indicator function of an event $\Omega$
	with $\P(\Omega)\to 1$.
\end{theorem}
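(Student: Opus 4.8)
The plan is to recognize that the matrix subtracted inside the Frobenius norm is exactly the difference between the naive generalization-error proxy $\bF^\top\bF/n$ and the conjugated covariance estimator $(\bI_T-\hbA/n)\hbS(\bI_T-\hbA/n)$, so that the whole statement reduces to combining \Cref{thm:covariance} and \Cref{thm:generalization error}. Concretely, since $(n\bI_T-\hbA)^{-1}=n^{-1}(\bI_T-\hbA/n)^{-1}$ and $\bX\bSigma^{-1}\bX^\top=\bZ\bZ^\top$, \Cref{def: hbS} gives
\[
(\bI_T-\hbA/n)\hbS(\bI_T-\hbA/n)=\tfrac{1}{n^2}\bigl[(p+n)\bF^\top\bF-\bF^\top\bZ\bZ^\top\bF-\hbA\bF^\top\bF-\bF^\top\bF\hbA\bigr],
\]
and a one-line rearrangement shows that $\bF^\top\bF/n-(\bI_T-\hbA/n)\hbS(\bI_T-\hbA/n)$ equals precisely $\tfrac1{n^2}(\bF^\top\bZ\bZ^\top\bF+\hbA\bF^\top\bF+\bF^\top\bF\hbA-p\bF^\top\bF)$. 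Hence the left-hand side of the theorem equals
\[
\fnorm{(\bI_T-\hbA/n)(\bH^\top\bH+\hbS)(\bI_T-\hbA/n)-\bF^\top\bF/n},
\]
i.e.\ the generalization-error estimand of \Cref{thm:generalization error} with the true $\bS$ replaced by the estimator $\hbS$.

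First I would split this by the triangle inequality as
\[
\fnorm{(\bI_T-\hbA/n)(\bH^\top\bH+\bS)(\bI_T-\hbA/n)-\bF^\top\bF/n}+\fnorm{(\bI_T-\hbA/n)(\hbS-\bS)(\bI_T-\hbA/n)},
\]
and bound the first summand by \Cref{thm:generalization error} and the second by \Cref{thm:covariance}. Both are at most a constant-order random variable times $n^{-1/2}(\fnorm{\bF}^2/n+\fnorm{\bH}^2+\trace(\bS))$, so this already yields the claimed inequality except that the bracket carries an extra $\trace(\bS)$ that is absent from the target right-hand side.

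The hard part will be removing this $\trace(\bS)$ summand, since eliminating it through the trace of the generalization-error matrix would cost a factor $\sqrt{T}$ and force $T=o(n)$, which is not assumed here. Instead I would use the decomposition $\bF=\bE-\bZ\bH$ (valid because $\bY=\bX\bB^*+\bE$ and $\bZ\bH=\bX(\hbB-\bB^*)$) together with two standard Gaussian facts: the chi-squared concentration $\fnorm{\bE}^2/n\ge\tfrac12\trace(\bS)$ with probability at least $1-8/n$ (using $\trace(\bS^2)\le\trace(\bS)^2$, hence no dependence on $T$), and the operator-norm bound $\opnorm{\bZ}^2\le Cn$ with high probability for $p\le\gamma n$. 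From $\fnorm{\bE}\le\fnorm{\bF}+\fnorm{\bZ\bH}$ and $\fnorm{\bZ\bH}^2\le\opnorm{\bZ}^2\fnorm{\bH}^2$ one then obtains $\trace(\bS)\le 2\fnorm{\bE}^2/n\le C'(\fnorm{\bF}^2/n+\fnorm{\bH}^2)$ on a high-probability event $\Omega$, which lets me absorb $\trace(\bS)$ into $\fnorm{\bF}^2/n+\fnorm{\bH}^2$ at the cost of a deterministic constant.

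Finally I would set $\Theta_3$ to be the resulting combination $(\Theta_1+\Theta_2)(1+C')$ on $\Omega$ and control it off $\Omega$ separately. The required moment bounds $\E[\Theta_3]\le C(\gamma,\tau')$ under \Cref{assu: tau}(i) and $\E[I(\Omega)\Theta_3]\le C(\gamma,c)$ under \Cref{assu: tau}(ii) then follow from the corresponding bounds on $\Theta_1,\Theta_2$ in \Cref{thm:covariance,thm:generalization error} together with $\P(\Omega)\to1$; this bookkeeping is routine once the constant $C'$ is pinned down on $\Omega$.
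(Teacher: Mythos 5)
Your proposal is correct in outline but takes a genuinely different route from the paper. The paper never passes through \Cref{thm:covariance} or \Cref{thm:generalization error}: its proof observes that, by the very definitions of $\bQ_2$ and $\bQ_3$ in \Cref{prop: V6,prop: V8}, the sum $\bQ_2+\bQ_2^\top+\bQ_3$ equals \emph{exactly} the matrix in the statement divided by $n^{-\frac12}(\fnorm{\bH}^2+\fnorm{\bF}^2/n)$, so it takes $\Theta_3=2\fnorm{\bQ_2}+\fnorm{\bQ_3}$ and invokes the moment bounds of those two propositions. Since the normalizations of $\bQ_2,\bQ_3$ never involve $\trace(\bS)$ (only $\bQ_1$ of \Cref{prop: V1} does, and it is not used here), the $\trace(\bS)$ term you must fight never appears. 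Your route instead uses the identity $\frac{1}{n^2}\bigl(\bF^\top\bZ\bZ^\top\bF+\hbA\bF^\top\bF+\bF^\top\bF\hbA-p\bF^\top\bF\bigr)=\bF^\top\bF/n-(\bI_T-\hbA/n)\hbS(\bI_T-\hbA/n)$, which I checked is correct, then the triangle inequality through the two stated theorems, and finally absorbs $\trace(\bS)$ via $\bF=\bE-\bZ\bH$, the lower tail of $\fnorm{\bE}^2$ (your Chebyshev bound $8/n$, free of $T$, is right), and $\opnorm{\bZ}\le 2\sqrt n+\sqrt p$ on $U_3$. This is more modular, reusing stated theorems instead of internal propositions, and it goes through cleanly under \Cref{assu: tau}(ii), where only $\E[I(\Omega)\Theta_3]$ is required and you may simply intersect your good event with those of $\Theta_1,\Theta_2$.

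The step that is thinner than you claim is the bookkeeping off the good event under \Cref{assu: tau}(i), where the theorem demands $\E[\Theta_3]\le C(\gamma,\tau')$ with no indicator. Off your event, $\Theta_3\le(\Theta_1+\Theta_2)(1+R)$ with $R=\trace(\bS)/(\fnorm{\bH}^2+\fnorm{\bF}^2/n)$ unbounded, and you cannot separate $\Theta_2$ from $R$ by Cauchy--Schwarz or H\"older because \Cref{thm:generalization error} (inheriting from \Cref{prop: V8}) controls only the \emph{first} moment of $\Theta_2$. The fix is to abandon $(\Theta_1+\Theta_2)$ off the event and bound the ratio itself crudely: under \Cref{assu: tau}(i) one has $\opnorm{\bI_T-\hbA/n}\le1$ and $\opnorm{\hbA}\le n$ almost surely (\Cref{lem: aux-tau>0}), whence the left-hand side is at most $(\fnorm{\bH}^2+\fnorm{\bF}^2/n)\bigl(\opnorm{\bZ}^2/n+2+p/n\bigr)$ and the ratio is at most $\sqrt n\,\bigl(\opnorm{\bZ}^2/n+2+p/n\bigr)$ deterministically; since your bad event $B^c$ has probability $O(1/n)$ and $\E[\opnorm{\bZ}^4/n^2]\le C(\gamma)$, Cauchy--Schwarz on the indicator gives $\E[I(B^c)\cdot\mathrm{ratio}]\le C(\gamma)$. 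With this addition your argument is complete; without it, the \Cref{assu: tau}(i) half of the statement is not actually proven.
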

\Cref{thm:out-of-sample} generalizes the result in \cite{bellec2020out} to multi-task setting. 
While the out-of-sample error $\bH^\top\bH$ is unknown, the quantities $\bZ$, $\bF$, $\hbA$ are observable. {Since typically the quantity $(\fnorm{\bH}^2 + \fnorm*{\bF}^2/n)$ is of a constant order}, \Cref{thm:out-of-sample} suggests the following estimate of $\bH^\top\bH$:
\[
\frac{1}{n^2} (\bI_T - \hbA/n)^{-1} \big( \bF^\top \bZ\bZ^\top \bF + \hbA \bF^\top \bF + \bF^\top \bF \hbA - 
p\bF^\top \bF\big) (\bI_T - \hbA/n)^{-1},
\]
which can further be used for parameter tuning in multi-task linear model. 

We present the proof of \Cref{thm:out-of-sample} in \Cref{sec: out-of-sample proof}
\section{Useful operator norm bounds}\label{sec:opnorm-bound}
Let us first introduce two events besides the event $U_1 = \big\{ \norm{\hbB}_0 \le n(1-c)/2 \big\}$ in \Cref{assu: tau}(ii), we define events $U_2$ and $U_3$ as below,
\begin{align*}
	U_2 &= \big\{\inf_{\bb\in \R^p: \| \bb\|_0 \le (1-c)n}  \|\bX \bb\|^2/(n \|\bSigma^{\frac 12} \bb\|^2)  > \eta\big\},\\
	U_3 &= \big\{ \opnorm{\bX\bSigma^{-\frac 12}} < 2 \sqrt n  + \sqrt p\big\}.
\end{align*}
Under \Cref{assu: design,assu: regime}, \cite[Lemma B.1]{bellec2021biasing} guarantees $\P(U_2) \ge 1 - C(\gamma, c)e^{-C(\gamma, c)n}$ for some constant $\eta$ that only depends on constants $\gamma, c$.  
Under \Cref{assu: design}, \cite[Theorem II.13]{DavidsonS01} guarantees $\P(U_3) > 1 - e^{-n/2}$ and there exists a random variable $z\sim \calN(0,1)$ s.t. $\opnorm{\bX\bSigma^{-\frac 12}} \le \sqrt{n} + \sqrt{p} + z$ almost surely. Therefore, under \Cref{assu: design,assu: regime}, we have
\begin{equation}\label{eq: opnorm-Z}
	\E[ \opnorm{n^{-\frac 12}\bX\bSigma^{-\frac 12}}^2] \le (1 + \sqrt{p/n})^2 + n^{-1} \le C(\gamma). 
\end{equation}
Furthermore, under \Cref{assu: regime,,assu: design,,assu: tau}(ii),  $\P(U_1\cap U_2\cap U_3)\to1$ by a union bound, and for large enough $n$, 
\begin{align}
	\P\big\{(U_1\cap U_2\cap U_3)^c\big\} \notag
	<& 1/T + C(\gamma, c)e^{-n/C(\gamma, c)} \notag\\
	=& \frac 1T (1 + TC(\gamma, c)e^{-n/C(\gamma, c)} )\notag\\
	<& \frac 1T  (1 + C(\gamma, c)e^{\sqrt{n} -n/C(\gamma, c)})\notag \\
	<&  \frac 1T C(\gamma, c). \label{eq: P-Omega}
\end{align}

Now we provide the operator norm bounds for $\bI_T - \hbA/n$ and $(\bI_T - \hbA/n)^{-1}$.  
\begin{lemma} \label{lem: aux-tau>0}
	Suppose that \Cref{assu: design} holds. If $\tau >0$ in \eqref{eq: hbB} with $\tau' = \tau /\opnorm{\bSigma}$, then 
	\begin{enumerate}[label = (\roman*)]
		\item $\opnorm{\bI_T - \hbA/n} \le 1$. 
		\item In the event $U_3$, we have $\opnorm{(\bI_T - \hbA/n)^{-1}} \le 1 + (\tau')^{-1} (2 + \sqrt{p/n})^2$. Furthermore, $\E [\opnorm{(\bI_T - \hbA/n)^{-1}} ]\le 1 + (\tau')^{-1} [(1 + \sqrt{p/n})^2 +n^{-1}].$
	\end{enumerate}
\end{lemma}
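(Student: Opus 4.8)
The plan is to derive both bounds from a single positive-semidefinite sandwich for $\bN$, obtained by discarding the lasso-Hessian part of $\bM$. First I would note that each block $\bH^{(k)}$ is positive semidefinite: it equals $\tfrac{\lambda}{\norm{\hbB^\top\be_k}}$ times the orthogonal projection onto the complement of $\hbB^\top\be_k$, being the Hessian at $\hbB^\top\be_k$ of the convex map $\bu\mapsto\lambda\norm{\bu}$. Consequently $\bM=\bM_1+n\sum_{k\in\hat{\mathscr{S}}}\bH^{(k)}\otimes\be_k\be_k^\top\succeq\bM_1\succeq0$. Because the added term is supported on the $\hat{\mathscr{S}}$-block and, for $\tau>0$, $\bX_{\hat{\mathscr{S}}}^\top\bX_{\hat{\mathscr{S}}}+\tau n\bP_{\hat{\mathscr{S}}}$ is positive definite on $\range(\bP_{\hat{\mathscr{S}}})$, both $\bM$ and $\bM_1$ share the common range $\range(\bI_T\otimes\bP_{\hat{\mathscr{S}}})$. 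By anti-monotonicity of the Moore--Penrose inverse under a matching range, $\bM^\dagger\preceq\bM_1^\dagger$, and therefore
\begin{equation*}
\bN=(\bI_T\otimes\bX)\bM^\dagger(\bI_T\otimes\bX^\top)\preceq(\bI_T\otimes\bX)\bM_1^\dagger(\bI_T\otimes\bX^\top)=\bI_T\otimes\bG,
\end{equation*}
where $\bG=\bX_{\hat{\mathscr{S}}}(\bX_{\hat{\mathscr{S}}}^\top\bX_{\hat{\mathscr{S}}}+\tau n\bP_{\hat{\mathscr{S}}})^\dagger\bX_{\hat{\mathscr{S}}}^\top$ is the ridge ``hat'' matrix on the selected columns, whose nonzero eigenvalues are $\sigma_j^2/(\sigma_j^2+\tau n)\in[0,1)$.

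For part (i) I would use $\bM^\dagger\succeq0$, hence $\bN\succeq0$, so that every summand of the partial trace $\hbA=\sum_{i=1}^n(\bI_T\otimes\be_i^\top)\bN(\bI_T\otimes\be_i)$ is positive semidefinite and $\hbA\succeq0$. The sandwich gives $\bN\preceq\bI_T\otimes\bG\preceq\bI_{Tn}$ since $\bG\preceq\bI_n$, so each summand satisfies $(\bI_T\otimes\be_i^\top)\bN(\bI_T\otimes\be_i)\preceq\bI_T$; summing over the $n$ indices yields $\hbA\preceq n\bI_T$. Combining, $0\preceq\bI_T-\hbA/n\preceq\bI_T$, which is exactly $\opnorm{\bI_T-\hbA/n}\le1$.

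For part (ii) I would read the same sandwich as a partial trace: $\hbA\preceq\sum_{i=1}^n(\bI_T\otimes\be_i^\top)(\bI_T\otimes\bG)(\bI_T\otimes\be_i)=\trace(\bG)\,\bI_T$, so $\phi_{\max}(\hbA)\le\trace(\bG)<n$ and hence $\opnorm{(\bI_T-\hbA/n)^{-1}}\le n/(n-\trace(\bG))$. The key scalar step is $n/(n-\trace(\bG))\le1+\opnorm{\bX_{\hat{\mathscr{S}}}}^2/(\tau n)$, which I would verify by writing $\trace(\bG)=\sum_j\sigma_j^2/(\sigma_j^2+\tau n)$ over the $r=\rank\bX_{\hat{\mathscr{S}}}$ nonzero eigenvalues $\sigma_j^2$ of $\bX_{\hat{\mathscr{S}}}^\top\bX_{\hat{\mathscr{S}}}$, noting $n-\trace(\bG)=(n-r)+\sum_j\tau n/(\sigma_j^2+\tau n)$, and then using $\opnorm{\bX_{\hat{\mathscr{S}}}}^2=\sigma_{\max}^2\ge\sigma_j^2$ to get $\opnorm{\bX_{\hat{\mathscr{S}}}}^2\sum_j(\sigma_j^2+\tau n)^{-1}\ge\trace(\bG)$. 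This already yields the almost sure bound $\opnorm{(\bI_T-\hbA/n)^{-1}}\le1+\opnorm{\bX_{\hat{\mathscr{S}}}}^2/(\tau n)\le1+\opnorm{\bX}^2/(\tau n)$. On the event $U_3$ I would bound $\opnorm{\bX}^2\le\opnorm{\bSigma}\opnorm{\bX\bSigma^{-1/2}}^2<\opnorm{\bSigma}(2\sqrt n+\sqrt p)^2$ and use $\tau'=\tau/\opnorm{\bSigma}$ to reach $1+(\tau')^{-1}(2+\sqrt{p/n})^2$; the expectation bound then follows by taking expectations in the almost sure bound and invoking \eqref{eq: opnorm-Z}.

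The main obstacle is justifying the pseudo-inverse anti-monotonicity $\bM^\dagger\preceq\bM_1^\dagger$, which fails without the matching-range condition; this is precisely where $\tau>0$ (making the ridge block invertible on its support) and the positive semidefiniteness of the $\bH^{(k)}$ blocks (keeping the range unchanged when passing from $\bM_1$ to $\bM$) are indispensable. Once the sandwich $\bN\preceq\bI_T\otimes\bG$ is established, both claims reduce to elementary spectral bookkeeping for the ridge hat matrix $\bG$ together with the control of $\opnorm{\bX\bSigma^{-1/2}}$ already recorded in \eqref{eq: opnorm-Z} and in the definition of $U_3$.
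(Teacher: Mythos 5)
Your proposal is correct and follows essentially the same route as the paper's proof: both hinge on the comparison $\bM^\dagger \preceq \bM_1^\dagger$ (dropping the group-penalty Hessian), reduce $\hbA$ to spectral bookkeeping for the ridge hat matrix $\bX_{\hat{\mathscr{S}}}(\bX_{\hat{\mathscr{S}}}^\top\bX_{\hat{\mathscr{S}}}+\tau n\bP_{\hat{\mathscr{S}}})^\dagger\bX_{\hat{\mathscr{S}}}^\top$, and arrive at the identical intermediate bound $1+\opnorm{\bX_{\hat{\mathscr{S}}}}^2/(\tau n)$ before invoking $U_3$ and \eqref{eq: opnorm-Z}. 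The only substantive differences are cosmetic improvements on your side: you justify the pseudo-inverse anti-monotonicity via the matching-range condition (a step the paper uses without comment), and you bound $\hbA$ by the partial trace $\trace(\bG)\,\bI_T$ plus a scalar inequality, where the paper instead bounds $\opnorm{\hbA/n}\le\hphi_1/(\hphi_1+\tau)$ directly from the trace of the quadratic form.
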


\begin{lemma}  \label{lem: aux-tau=0}
	Suppose that \Cref{assu: design} holds. 
	If $\tau=0$ in \eqref{eq: hbB}, then 
	\begin{enumerate}[label = (\roman*)]
		\item In the event $U_1$, we have $\opnorm{\bI_T - \hbA/n} \le 1$. 
		\item In the event $U_1$, $\opnorm{(\bI_T - \hbA/n)^{-1}} \le C(c)$. Hence, $\E [I(U_1)\opnorm{(\bI_T - \hbA/n)^{-1}} ]\le C(c).$
	\end{enumerate}
\end{lemma}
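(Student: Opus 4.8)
The key is to read $\hbA=\sum_{i}(\bI_T\otimes\be_i^\top)\bN(\bI_T\otimes\be_i)$ as a partial trace of $\bN=(\bI_T\otimes\bX)\bM^\dagger(\bI_T\otimes\bX^\top)$ and to exploit positive semi-definiteness throughout. First I would check $\bM\succeq0$: for $\tau=0$ the block $\bM_1=\bI_T\otimes(\bX_{\hat{\mathscr{S}}}^\top\bX_{\hat{\mathscr{S}}})$ is a Kronecker product of PSD matrices, and each $\bH^{(k)}$, being the Hessian at a nonzero point of the convex map $\bu\mapsto\lambda\norm{\bu}$, is PSD; hence $\bM=\bM_1+n\sum_k\bH^{(k)}\otimes\be_k\be_k^\top\succeq0$. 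Then $\bM^\dagger\succeq0$, so $\bN\succeq0$, and as a sum of congruences $(\bI_T\otimes\be_i^\top)\bN(\bI_T\otimes\be_i)$ we obtain $\hbA\succeq0$; this already gives $\bI_T-\hbA/n\preceq\bI_T$ for free.

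Second, I would control the largest eigenvalue of $\hbA$ through its quadratic form. For a unit $\bv\in\R^T$, using $(\bI_T\otimes\bX^\top\be_i)\bv=\bv\otimes(\bX^\top\be_i)$ together with $\sum_i\be_i\be_i^\top=\bI_n$, a Kronecker computation yields $\bv^\top\hbA\bv=\trace(\bM^\dagger(\bv\bv^\top\otimes\bX^\top\bX))$. Since $\bM$ is supported on the active block, that is, fixed by $\bullet\mapsto(\bI_T\otimes\bP_{\hat{\mathscr{S}}})\bullet(\bI_T\otimes\bP_{\hat{\mathscr{S}}})$, so is $\bM^\dagger$, and the trace collapses to $\trace(\bM^\dagger(\bv\bv^\top\otimes\bG))$ with $\bG\defas\bX_{\hat{\mathscr{S}}}^\top\bX_{\hat{\mathscr{S}}}=\bP_{\hat{\mathscr{S}}}\bX^\top\bX\bP_{\hat{\mathscr{S}}}$.

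Third, the crucial inequality is $\bv\bv^\top\otimes\bG\preceq\bI_T\otimes\bG\preceq\bM$, where the second step holds because $\bM-\bI_T\otimes\bG=n\sum_k\bH^{(k)}\otimes\be_k\be_k^\top\succeq0$ when $\tau=0$. By cyclicity of the trace I can write $\bv^\top\hbA\bv=\trace((\bM^\dagger)^{1/2}(\bv\bv^\top\otimes\bG)(\bM^\dagger)^{1/2})$, and the matrix inside is PSD, dominated under congruence by $(\bM^\dagger)^{1/2}\bM(\bM^\dagger)^{1/2}$, which is the orthogonal projection onto $\range(\bM)$ and hence $\preceq\bI$; it also has rank at most $\rank(\bv\bv^\top\otimes\bG)=\rank(\bG)$. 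Its trace is therefore at most $\rank(\bG)=\rank(\bX_{\hat{\mathscr{S}}})\le|\hat{\mathscr{S}}|=\norm{\hbB}_0$, so $\opnorm{\hbA}\le\norm{\hbB}_0$ on the whole probability space.

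Finally, I would invoke $U_1$. On this event $\norm{\hbB}_0\le n(1-c)/2$, so combined with $\hbA\succeq0$ we get $0\preceq\hbA/n\preceq\tfrac{1-c}{2}\bI_T$. The eigenvalues of $\bI_T-\hbA/n$ then lie in $[\tfrac{1+c}{2},1]$, which proves part (i), and $\phi_{\min}(\bI_T-\hbA/n)\ge\tfrac{1+c}{2}$ gives $\opnorm{(\bI_T-\hbA/n)^{-1}}\le\tfrac{2}{1+c}=:C(c)$ for part (ii); since this is a deterministic bound on $U_1$, $\E[I(U_1)\opnorm{(\bI_T-\hbA/n)^{-1}}]\le C(c)$. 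The delicate point is paragraph three: because $\bM$ is genuinely rank-deficient for $\tau=0$, the manipulations must go through the Moore--Penrose inverse with the range inclusions (implied by the Loewner domination $\bv\bv^\top\otimes\bG\preceq\bM$) checked carefully, and it is the rank-one factor $\bv\bv^\top$ that sharpens the trace bound from the naive $T\rank(\bG)$ down to $\rank(\bG)$, which is precisely what keeps the eigenvalue ceiling at $n$ rather than $Tn$.
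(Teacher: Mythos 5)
Your proof is correct, and it reaches the paper's conclusion through a genuinely different mechanism at the key step, even though the overall skeleton is the same: bound $\sup_{\norm{\bv}=1}\bv^\top\hbA\bv$ by $|\hat{\mathscr{S}}|=\norm{\hbB}_0$, then invoke $U_1$ to get $\opnorm{\hbA/n}\le(1-c)/2$, hence eigenvalues of $\bI_T-\hbA/n$ in $[(1+c)/2,1]$, giving both (i) and (ii) with the same constant $C(c)=2/(1+c)$ as the paper. The paper's proof bounds the quadratic form by replacing $\bM^\dagger$ with $\bM_1^\dagger=\bI_T\otimes(\bX_{\hat{\mathscr{S}}}^\top\bX_{\hat{\mathscr{S}}})^\dagger$, i.e.\ it relies on monotonicity of the Moore--Penrose inverse under $\bM\succeq\bM_1$, after which the block-diagonal structure of $\bM_1^\dagger$ contracts the Kronecker factor to $\norm{\bu}^2\trace[\bX_{\hat{\mathscr{S}}}^\top\bX_{\hat{\mathscr{S}}}(\bX_{\hat{\mathscr{S}}}^\top\bX_{\hat{\mathscr{S}}})^\dagger]=\norm{\bu}^2\rank(\bX_{\hat{\mathscr{S}}})$. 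For singular PSD matrices that monotonicity step is delicate: it requires range compatibility (or restriction of the quadratic form to $\range(\bM_1)$, which is what implicitly saves the paper's computation, since the relevant vectors do lie in that range), and for $\tau=0$ this is tied to the column rank of $\bX_{\hat{\mathscr{S}}}$, which the paper only controls on $U_1\cap U_2$ elsewhere in the supplement. Your argument sidesteps this entirely: you dominate the forward matrix, $\bv\bv^\top\otimes\bG\preceq\bI_T\otimes\bG\preceq\bM$, push the inequality through the congruence by $(\bM^\dagger)^{1/2}$, identify $(\bM^\dagger)^{1/2}\bM(\bM^\dagger)^{1/2}$ with the orthogonal projection onto $\range(\bM)$, and finish with $\trace\le\rank\cdot\opnorm{\cdot}$, the rank-one factor $\bv\bv^\top$ supplying exactly the reduction from $T\rank(\bG)$ to $\rank(\bG)$ that the paper obtains from the Kronecker contraction. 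What this buys you: $\opnorm{\hbA}\le\norm{\hbB}_0$ holds deterministically on the whole probability space, with no rank or range caveats and with $U_1$ entering only at the very end — a cleaner and marginally stronger statement than the one the paper proves.
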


\begin{lemma}\label{lem: MN}
	With $\bN = (\bI_T \otimes \bX) \bM^\dagger (\bI_T \otimes \bX^\top)$, we have $\opnorm{\bN}\le 1$. 
\end{lemma}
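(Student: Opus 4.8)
The plan is to show that $\bN$ is symmetric positive semidefinite and satisfies $\bN^2 \preceq \bN$; since this forces every eigenvalue $\nu \ge 0$ of $\bN$ to obey $\nu^2 \le \nu$, i.e.\ $\nu \le 1$, it immediately gives $\opnorm{\bN} = \phi_{\max}(\bN) \le 1$. The first step is a bookkeeping reduction replacing $\bX$ by its restriction $\bX_{\hat{\mathscr{S}}}$. Every summand defining $\bM$ — namely $\bI_T \otimes (\bX_{\hat{\mathscr{S}}}^\top \bX_{\hat{\mathscr{S}}})$, $\bI_T \otimes \tau n \bP_{\hat{\mathscr{S}}}$, and the terms $\bH^{(k)} \otimes \be_k \be_k^\top$ for $k \in \hat{\mathscr{S}}$ — has range contained in the coordinate block $\R^T \otimes \mathrm{span}\{\be_k : k \in \hat{\mathscr{S}}\}$ fixed by the projection $\bPi := \bI_T \otimes \bP_{\hat{\mathscr{S}}}$. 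Hence $\range(\bM) \subseteq \range(\bPi)$, and since $\bM$ is symmetric so is $\range(\bM^\dagger) = \range(\bM)$, giving $\bPi \bM^\dagger = \bM^\dagger = \bM^\dagger \bPi$. Using $(\bI_T \otimes \bX)\bPi = \bI_T \otimes \bX_{\hat{\mathscr{S}}} =: \bG$, I can then rewrite $\bN = \bG \bM^\dagger \bG^\top$.

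Next I would establish the matrix inequality $\bM \succeq \bG^\top \bG$. On one hand $\bG^\top \bG = \bI_T \otimes (\bX_{\hat{\mathscr{S}}}^\top \bX_{\hat{\mathscr{S}}})$, so $\bM_1 - \bG^\top \bG = \tau n (\bI_T \otimes \bP_{\hat{\mathscr{S}}}) \succeq 0$ because $\tau \ge 0$. On the other hand each $\bH^{(k)}$ is the Hessian at a nonzero point of the convex map $\bu \mapsto \lambda \norm{\bu}$; explicitly $\bH^{(k)} = \lambda \norm{\hbB^\top \be_k}^{-1}\bigl(\bI_T - \hbB^\top \be_k \be_k^\top \hbB \, \norm{\hbB^\top \be_k}^{-2}\bigr)$ is a nonnegative multiple of an orthogonal projection, hence positive semidefinite, so $n \sum_{k \in \hat{\mathscr{S}}} \bH^{(k)} \otimes \be_k \be_k^\top \succeq 0$. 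Summing the two contributions gives $\bM \succeq \bM_1 \succeq \bG^\top \bG$.

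Finally I would run the standard ``hat matrix'' argument. From $\bM \succeq \bG^\top \bG \succeq 0$ we get $\ker(\bM) \subseteq \ker(\bG^\top \bG) = \ker(\bG)$, hence $\range(\bG^\top) \subseteq \range(\bM)$ and therefore $\bM \bM^\dagger \bG^\top = \bG^\top$. Conjugating $\bM - \bG^\top \bG \succeq 0$ by the symmetric matrix $\bM^\dagger$ yields $\bM^\dagger \bG^\top \bG \bM^\dagger \preceq \bM^\dagger \bM \bM^\dagger = \bM^\dagger$, and sandwiching this between $\bG$ and $\bG^\top$ gives $\bN^2 = \bG (\bM^\dagger \bG^\top \bG \bM^\dagger) \bG^\top \preceq \bG \bM^\dagger \bG^\top = \bN$, as desired.

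I expect the main obstacle to be the reduction in the first paragraph rather than the concluding eigenvalue argument: the estimator uses the full design $\bX$ in $\bN$ but only $\bX_{\hat{\mathscr{S}}}$ inside $\bM$, and the naive comparison of $\bM$ against $\bI_T \otimes \bX^\top \bX$ fails since $\bX_{\hat{\mathscr{S}}}^\top \bX_{\hat{\mathscr{S}}} \preceq \bX^\top \bX$ points the wrong way. Getting the pseudoinverse support bookkeeping right — so that $\bN$ genuinely only sees $\bX_{\hat{\mathscr{S}}}$ and the clean ordering $\bM \succeq \bG^\top \bG$ applies — is the crux; everything after that is routine.
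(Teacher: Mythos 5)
Your proof is correct, but it takes a genuinely different route from the paper's. The paper first invokes the pseudo-inverse comparison $\bM^\dagger \preceq \bM_1^\dagger = \bI_T \otimes (\bX_{\hat{\mathscr{S}}}^\top\bX_{\hat{\mathscr{S}}} + \tau n \bP_{\hat{\mathscr{S}}})^\dagger$, then applies the sandwich inequality $\opnorm{\bA\bB\bA^\top} \le \opnorm{\bA\bC\bA^\top}$ for $0 \preceq \bB \preceq \bC$, and finishes by bounding the explicit ridge hat matrix $\opnorm{\bX_{\hat{\mathscr{S}}}(\bX_{\hat{\mathscr{S}}}^\top\bX_{\hat{\mathscr{S}}} + \tau n \bP_{\hat{\mathscr{S}}})^\dagger \bX_{\hat{\mathscr{S}}}^\top} \le 1$ by spectral calculus. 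You instead keep $\bM^\dagger$ intact, do the support reduction $\bN = \bG\bM^\dagger\bG^\top$ at the level of $\bM^\dagger$ via the range inclusion $\range(\bM^\dagger) \subseteq \range(\bPi)$, prove the forward ordering $\bM \succeq \bG^\top\bG$, and conclude with the idempotent-domination trick $\bN^2 \preceq \bN$. What your route buys is robustness: it uses only forward conjugation ($\bA \succeq \bB \Rightarrow \bC\bA\bC^\top \succeq \bC\bB\bC^\top$), which holds unconditionally, whereas anti-monotonicity of the pseudo-inverse ($\bM \succeq \bM_1 \Rightarrow \bM^\dagger \preceq \bM_1^\dagger$) is \emph{false} in general and is valid here only because $\ker(\bM) = \ker(\bM_1)$ — a fact the paper establishes separately (in the proof of Lemma~\ref{lem: Dijlt}) but does not cite in the proof of Lemma~\ref{lem: MN}; your argument is thus more self-contained. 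What the paper's route buys is brevity given that machinery, and it exhibits the explicit hat-matrix form of the bound. One small remark: the projection identity $\bM\bM^\dagger\bG^\top = \bG^\top$ you derive in your last paragraph is never actually used — the two conjugation steps plus $\bM^\dagger\bM\bM^\dagger = \bM^\dagger$ already close the argument — so you could delete it.
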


\section{Lipschitz and differential properties for a given, fixed noise matrix \texorpdfstring{$\bE$}{E} }
\label{sec:Lipschitz-fix-E}

We need to study Lipschitz and differential properties of certain mappings when the noise matrix $\bE$ is fixed.
Let
$g:\R^{p\times T}\to \R$ defined by
$g(\bB)=\tau \fnorm{\bB}^2/2 + \lambda \norm{\bB}_{2,1}$ be the penalty in
\eqref{eq: hbB}.
For a fixed value of $\bE$,
define the mappings 
\begin{align}
\bZ
&\longmapsto
\bH(\bZ) = \argmin_{\bar\bH\in\R^{p\times T}} \frac{1}{2n}\fnorm{\bE - \bZ\bar\bH}^2 + g(\bSigma^{-1/2}\bar\bH)
&(
\R^{n\times p}
\to
\R^{p\times T}
)
\\
\bZ
&\longmapsto
\bF(\bZ) = \bE - \bZ\bH(\bZ)
&(
\R^{n\times p}
\to
\R^{n\times T}
)
\\
\bZ
&\longmapsto
D(\bZ) = (\fnorm{\bH(\bZ)}^2 + \fnorm{\bF(\bZ)}^2/n)^{1/2}
&(
\R^{n\times p}
\to
\R
).
\end{align}
Next, define the random variable $\bZ = \bX\bSigma^{-\frac12}\in\R^{n\times p}$, and let us use the convention that if arguments of $\bH,\bF$ or $D$ are omitted then these mappings are implicitly taken at the realized value of the random variable $\bZ = \bX\bSigma^{-\frac12}\in\R^{n\times p}$ where $\bX$ is the observed design matrix. With this convention and by definition of the above mappings, we then have
$\bH = \bH(\bZ) = \bSigma^{1/2}(\hbB - \bB^*)$ as well as 
$\bF = \bF(\bZ) = \bY - \bX\hbB$
and $D = [\fnorm{\bH}^2 + \fnorm{\bF}^2/n]^{1/2}$
so that the notation is consistent with the rest of the paper
(in particular, with \eqref{eq:def F H N}).

Finally, denote the $(i,j)$-th entry of $\bZ$ by $z_{ij}$
throughout this appendix, and the corresponding partial derivatives of the above mappings
by $\frac{\partial}{\partial z_{ij}}$.

\subsection{Lipschitz properties}

\begin{lemma}\label{lem: lipschitz elastic-net}
	For multi-task elastic-net (\ie, $\tau>0$ in \eqref{eq: hbB}), the mapping $\bZ \mapsto D^{-1}\bF/\sqrt{n}$ is $n^{-\frac12}L$-Lipschitz with $L = 8 \max(1, (2\tau')^{-1})$, where $\tau' = \tau/\opnorm{\bSigma}$.
\end{lemma}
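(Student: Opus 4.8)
The plan is to bound the operator norm of the Jacobian of $f(\bZ) := D^{-1}\bF/\sqrt n$, since for a locally Lipschitz map this quantity, taken as a supremum over points of differentiability, equals the Lipschitz constant. Because $\tau>0$, the penalty $\bar\bH\mapsto g(\bSigma^{-1/2}\bar\bH)$ is $\tau'$-strongly convex (its quadratic part has Hessian $\tau\bSigma^{-1}\succeq\tau'\bI$ with $\tau'=\tau/\opnorm{\bSigma}$), so the minimizer $\bH(\bZ)$ is unique and Lipschitz in $\bZ$; in particular $\bH,\bF$ and $D>0$ are differentiable for almost every $\bZ$. I would therefore fix a direction $\bG\in\R^{n\times p}$ with $\fnorm{\bG}\le1$ and bound the directional derivative $\dot{f}$ \emph{uniformly}, rather than the sum of squared partial derivatives: the latter Hilbert--Schmidt bound is too lossy here, since the term $-\be_i\be_j^\top\bH$ appearing in $\partial/\partial z_{ij}\,\bF$ alone contributes $\fnorm{\bH}^2/D^2=O(1)$ to the squared HS norm rather than $O(1/n)$.

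\textbf{Differentiating stationarity.} Writing $\bF=\bE-\bZ\bH$, the stationarity condition is $\tfrac1n\bZ^\top\bF\in\bSigma^{-1/2}\partial g(\bSigma^{-1/2}\bH)$. On the (almost everywhere locally constant) set of nonzero rows of the solution the $\ell_{2,1}$ part is smooth, with row-wise Hessians given by the matrices $\bH^{(k)}$; differentiating in the direction $\bG$, and using $\dot{\bF}=-\bG\bH-\bZ\dot{\bH}$, gives
\[
\Bigl(\tfrac1n\bZ^\top\bZ+\nabla^2 G(\bH)\Bigr)[\dot{\bH}]=\tfrac1n\bG^\top\bF-\tfrac1n\bZ^\top\bG\bH,
\]
where $\nabla^2 G(\bH)\succeq\tau'\bI$ by strong convexity. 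Denoting this self-adjoint operator by $\bM_{\mathrm{op}}\succeq\tau'\bI$, I obtain $\dot{\bH}=\bM_{\mathrm{op}}^{-1}\bigl[\tfrac1n\bG^\top\bF-\tfrac1n\bZ^\top\bG\bH\bigr]$ and, with the hat operator $\bP:=\tfrac1n\bZ\bM_{\mathrm{op}}^{-1}\bZ^\top$,
\[
\dot{\bF}=-(\bI-\bP)\bG\bH-\tfrac1n\bZ\bM_{\mathrm{op}}^{-1}\bG^\top\bF.
\]

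\textbf{The key $\bZ$-free bounds.} The crucial point --- and the reason $L$ is independent of $\opnorm{\bZ}$ and $p$, making the statement genuinely deterministic --- is that $\tfrac1n\bZ^\top\bZ\preceq\bM_{\mathrm{op}}$ yields two contractions: $0\preceq\bP\preceq\bI$ (the same mechanism behind \Cref{lem: MN}, $\opnorm{\bN}\le1$) and $\opnorm{n^{-1/2}\bZ\bM_{\mathrm{op}}^{-1/2}}\le1$, while $\opnorm{\bM_{\mathrm{op}}^{-1/2}}\le(\tau')^{-1/2}$. Combined with $\opnorm{\bG}\le\fnorm{\bG}\le1$ and the elementary $\fnorm{\bH}\le D$, $\fnorm{\bF}/\sqrt n\le D$, these give $\fnorm{(\bI-\bP)\bG\bH}\le\fnorm{\bH}$ and $\fnorm{\tfrac1n\bZ\bM_{\mathrm{op}}^{-1}\bG^\top\bF}\le(\tau')^{-1/2}\fnorm{\bF}/\sqrt n$, hence $\fnorm{\dot{\bF}}\le(1+(\tau')^{-1/2})D$ and $\fnorm{D^{-1}\dot{\bF}/\sqrt n}\le n^{-1/2}(1+(\tau')^{-1/2})$.

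\textbf{Normalization term and conclusion.} It remains to handle $\dot{f}=D^{-1}\dot{\bF}/\sqrt n-D^{-2}\dot{D}\,\bF/\sqrt n$ with $\dot{D}=D^{-1}(\langle\bH,\dot{\bH}\rangle+\langle\bF,\dot{\bF}\rangle/n)$. Estimating the two inner products by the same contraction bounds gives $|\langle\bH,\dot{\bH}\rangle|+|\langle\bF,\dot{\bF}\rangle|/n\le n^{-1/2}D^2\,C(\tau')$, so $|\dot{D}|\le n^{-1/2}D\,C(\tau')$ and, using $\fnorm{\bF}/\sqrt n\le D$, the second contribution is $\le n^{-1/2}C(\tau')$; notably the normalization by $D$ cancels the ``radial'' part, keeping these terms $O(n^{-1/2})$. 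Summing both contributions and bounding the $\tau'$-dependence crudely collapses the constant into the form $8\max(1,(2\tau')^{-1})$, yielding $\fnorm{\dot{f}}\le n^{-1/2}L$ for every unit direction $\bG$, which is the claimed bound. The main obstacle is twofold: justifying differentiation through the non-smooth $\ell_{2,1}$ penalty via the active-set argument (invoking the local smoothness encoded by the $\bH^{(k)}$), and extracting the $\bZ$-free contractions $0\preceq\bP\preceq\bI$ and $\opnorm{n^{-1/2}\bZ\bM_{\mathrm{op}}^{-1/2}}\le1$ that prevent any factor of $\opnorm{\bZ}$ or $\sqrt p$ from surviving in $L$.
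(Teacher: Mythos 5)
Your proposal is correct in its essential content, but it takes a genuinely different route from the paper. The paper's proof never differentiates anything: for fixed $\bE$ it compares the solutions at two designs $\bZ$ and $\bar\bZ$ directly, using that the objective minus the quadratic fit term $\frac{1}{2n}\fnorm{\bX(\hbB-\bB)}^2$ is $\tau$-strongly convex, so optimality at each design gives an inequality; summing the two inequalities makes the penalty terms cancel, and Cauchy--Schwarz on the remaining cross terms yields
\begin{equation*}
\fnorm{\bZ(\bH-\bar\bH)}^2+\fnorm{\bar\bZ(\bH-\bar\bH)}^2+2n\tau'\fnorm{\bH-\bar\bH}^2
\le \bigl[\,2D\max(1,(2\tau')^{-1/2})\,\opnorm{\bZ-\bar\bZ}\,\bigr]^2,
\end{equation*}
from which the bounds on $\fnorm{\bF-\bar\bF}$, $\fnorm{\bH-\bar\bH}$, $|D-\bar D|$, and finally on the normalized map follow by triangle inequalities. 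You instead bound the operator norm of the a.e.\ Jacobian by differentiating the KKT system on the locally constant active set --- essentially the machinery the paper develops later in \Cref{lem: Dijlt} (derivative formulas) and \Cref{lem: MN} (the contraction $\opnorm{\bN}\le 1$, which is your $0\preceq\bP\preceq\bI$), but which the paper deliberately does \emph{not} use to prove this lemma. Your formulas for $\dot\bH$ and $\dot\bF$, the $\bZ$-free contractions, and the cancellation of the radial part in $\dot D$ are all sound, and your diagnosis of why a Hilbert--Schmidt bound on the Jacobian cannot give the $n^{-1/2}$ rate (the direction must enter through $\opnorm{\bG}\le\fnorm{\bG}$) is exactly right.

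Three caveats keep your sketch from being a complete proof. First, and most importantly, there is a latent circularity in the order of logic: the identity ``Lipschitz constant equals essential supremum of the Jacobian norm'' requires knowing \emph{beforehand} that $\bZ\mapsto(\bH,\bF,D)$ is locally Lipschitz, so that Rademacher's theorem applies and the derivative can be integrated along segments. You assert this from strong convexity, which is true but needs its own argument; note that in the paper the logical order is the reverse --- the present lemma is proved first by the two-point argument, and only then is a.e.\ differentiability invoked to justify \Cref{lem: Dijlt}. A non-circular repair is to quote a qualitative stability result for strongly convex programs, or to run the two-point computation once locally; either way this step must be supplied. Second, differentiating through the $\ell_{2,1}$ penalty requires local constancy of $\hat{\mathscr{S}}$ and strict dual feasibility on the inactive rows, which the paper justifies (citing prior work) in \Cref{lem: Dijlt}. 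Third, the final constant is not actually verified: with the bounds as you state them one gets roughly $(1+(\tau')^{-1/2})+(1+(\tau')^{-1/2})^2$, which at $\tau'=1/2$ is about $8.24$ and exceeds $8\max(1,(2\tau')^{-1})=8$; this is immaterial downstream, where only some $C(\tau')$ is needed, but the lemma as stated pins the constant and the paper's route delivers it. In exchange, your route produces the derivative formulas and contraction structure as a by-product --- information the paper needs anyway --- whereas the paper's two-point argument is shorter, requires no regularity bookkeeping, and is the cleaner way to obtain exactly this statement.
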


\begin{lemma}\label{lem: lipschitz-Lasso}
	For multi-task group Lasso (\ie, $\tau=0$ in \eqref{eq: hbB}). 
we have 
\begin{itemize}
	\item[(1)] In $U_1\cap U_2$, the map $\bZ \mapsto D^{-1}\bF/\sqrt{n}$ is $n^{-\frac12}L$-Lipschitz with $L = 8 \max(1, (2\eta)^{-1})$. 
	\item[(2)] In $U_1\cap U_2 \cap U_3$,  the map $\bZ \mapsto D^{-1}\bZ^\top\bF/n$ is $n^{-1/2} (1 + (2 +\sqrt{p/n})L)$-Lipschitz, where $L = 8 \max(1, (2\eta)^{-1})$ as in (1). 
\end{itemize}
\end{lemma}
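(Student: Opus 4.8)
The plan is to reduce both claims to a pointwise bound on the Jacobian of the relevant map and to reuse, almost verbatim, the argument behind the elastic-net case in \Cref{lem: lipschitz elastic-net}. The only structural difference is the source of strong convexity: for $\tau>0$ the ridge term makes the objective defining $\bH(\bZ)$ globally $\tau'$-strongly convex, whereas for $\tau=0$ no global strong convexity is available and one must instead exploit restricted strong convexity valid on $U_1\cap U_2$. Concretely, the substitution $\bar\bH=\bSigma^{1/2}\bar\bB$ turns the data-fidelity term $\frac1{2n}\fnorm{\bE-\bZ\bar\bH}^2$ into $\frac1{2n}\fnorm{\bE-\bX\bar\bB}^2$, and for any $\bar\bB$ whose rows are supported on an index set of cardinality at most $(1-c)n$ the event $U_2$ gives $\frac1n\fnorm{\bX\bar\bB}^2\ge \eta\fnorm{\bSigma^{1/2}\bar\bB}^2=\eta\fnorm{\bar\bH}^2$. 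Thus on $U_1\cap U_2$ the restricted eigenvalue $\eta$ plays exactly the role that $\tau'$ plays in \Cref{lem: lipschitz elastic-net}, which is why the Lipschitz constant is $L=8\max(1,(2\eta)^{-1})$, obtained from the elastic-net constant by replacing $\tau'$ with $\eta$.

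For part (1), I would first record that the solution map $\bZ\mapsto\bH(\bZ)$ of this convex program is piecewise smooth and differentiable at almost every $\bZ$ (by Rademacher, together with the fact that the active set --- the nonzero rows of the penalized variable --- is locally constant off a null set). Differentiating the stationarity condition $\frac1n\bZ^\top\bF\in\bSigma^{-1/2}\partial g(\bSigma^{-1/2}\bH)$ in a direction $\dot\bZ$ yields a linear system for $\dot\bH$ whose solution is expressed through $\bM^\dagger$, and hence the directional derivatives of $\bF$ and $D$ through the matrix $\bN=(\bI_T\otimes\bX)\bM^\dagger(\bI_T\otimes\bX^\top)$; the normalization by $D$ is handled by the quotient rule exactly as in the elastic-net case. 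The two ingredients controlling the operator norm of the Jacobian of $D^{-1}\bF/\sqrt n$ are then the unconditional bound $\opnorm{\bN}\le1$ from \Cref{lem: MN} and the lower bound $\bM\succeq\bI_T\otimes\bX_{\hat{\mathscr{S}}}^\top\bX_{\hat{\mathscr{S}}}$ (valid since $\tau=0$ and each $\bH^{(k)}\succeq0$), whose restriction to the active subspace is bounded below by $\eta n$ in the $\bSigma^{1/2}$-metric on $U_1\cap U_2$. The factor-two gap between the cardinality bound $n(1-c)/2$ in $U_1$ and the cardinality $(1-c)n$ allowed in $U_2$ is what makes the restricted eigenvalue applicable uniformly as the active set is perturbed (the union of nearby active sets has size at most twice $|\hat{\mathscr{S}}|$). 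Assembling these as in \Cref{lem: lipschitz elastic-net} gives the Jacobian bound $n^{-1/2}L$ and hence the claimed Lipschitz constant.

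For part (2), I would deduce the bound for $\bZ\mapsto D^{-1}\bZ^\top\bF/n=\bZ^\top(D^{-1}\bF/\sqrt n)/\sqrt n$ from part (1) by a product rule. Writing $\psi(\bZ)=D^{-1}\bF/\sqrt n$, for two designs $\bZ_1,\bZ_2$ in the operator-norm ball defining $U_3$,
\begin{align*}
\fnorm{\tfrac1{\sqrt n}\bZ_1^\top\psi(\bZ_1)-\tfrac1{\sqrt n}\bZ_2^\top\psi(\bZ_2)}
&\le \tfrac1{\sqrt n}\opnorm{\bZ_1}\fnorm{\psi(\bZ_1)-\psi(\bZ_2)}+\tfrac1{\sqrt n}\fnorm{\bZ_1-\bZ_2}\fnorm{\psi(\bZ_2)}.
\end{align*}
Here $\fnorm{\psi(\bZ_2)}\le1$ because $D\ge\fnorm{\bF}/\sqrt n$, and $\opnorm{\bZ_1}\le 2\sqrt n+\sqrt p$ on $U_3$; combining with the $n^{-1/2}L$-Lipschitz bound from part (1) gives $n^{-1/2}\big[(2+\sqrt{p/n})L+1\big]\fnorm{\bZ_1-\bZ_2}$, which is the stated constant.

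The main obstacle is the content of the first paragraph: establishing the effective strong convexity in the absence of a ridge term. Everything downstream is a rerun of the elastic-net computation, so the real work is (i) verifying that, on $U_1\cap U_2$, the active-set Hessian $\bM$ is bounded below by $\eta n$ in the correct ($\bSigma$-weighted) metric, and (ii) controlling the non-smoothness of the group penalty so that the almost-everywhere differentiation and the uniform Jacobian bound survive perturbations of the active set --- this is precisely where the cardinality slack between $U_1$ and $U_2$ is used.
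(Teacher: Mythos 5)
Your part (2) is exactly the paper's argument: write $\bV = n^{-1/2}\bZ^\top\bU$ with $\bU = D^{-1}\bF/\sqrt n$, split by the triangle inequality, use $\fnorm{\bU}\le 1$ and $\opnorm{\bZ}\le 2\sqrt n+\sqrt p$ on $U_3$, and invoke part (1). Part (1), however, has a genuine gap. You propose to differentiate the KKT conditions, bound the Jacobian of $\bZ\mapsto D^{-1}\bF/\sqrt n$ almost everywhere on $U_1\cap U_2$, and conclude Lipschitzness. But the lemma asserts a \emph{two-point} property on the event: for any two designs $\bZ,\bar\bZ$ (same noise $\bE$) whose data both lie in $U_1\cap U_2$, one needs $\fnorm{\psi(\bZ)-\psi(\bar\bZ)}\le n^{-1/2}L\,\opnorm{\bZ-\bar\bZ}$. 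An a.e.\ Jacobian bound yields this only if you can integrate the derivative along a path from $\bZ$ to $\bar\bZ$ that stays inside the event, and $U_1\cap U_2$ (viewed as a set of designs for fixed $\bE$) is not convex: along the segment joining $\bZ$ and $\bar\bZ$ the group-Lasso solution may have far more than $n(1-c)/2$ nonzero rows and the restricted-eigenvalue property may fail, so your pointwise bound is unavailable precisely where you need it. Your remark that ``the union of nearby active sets has size at most twice $|\hat{\mathscr{S}}|$'' is a local statement and cannot bridge two far-apart designs. Relatedly, your premise that everything downstream is a rerun of the elastic-net computation misreads that proof: \Cref{lem: lipschitz elastic-net} is not a Jacobian argument but a direct two-point comparison, obtained by summing the optimality inequality of $\hbB$ (tested at $\bar\bB$) with that of $\bar\bB$ (tested at $\hbB$), which is exactly what makes it insensitive to what happens between the two designs.

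The paper's proof of part (1) runs this same two-point scheme under $\tau=0$: on $U_1$ both $\hbB$ and $\bar\bB$ have at most $n(1-c)/2$ nonzero rows, hence $\hbB-\bar\bB$ has at most $(1-c)n$ nonzero rows, so the event $U_2$ gives $\eta n\fnorm{\bH-\bar\bH}^2\le\fnorm{\bX(\hbB-\bar\bB)}^2$ (and likewise with $\bar\bX$); summing the two optimality inequalities bounds $\fnorm{\bZ(\bH-\bar\bH)}^2+\fnorm{\bar\bZ(\bH-\bar\bH)}^2$ by the same right-hand side as in the elastic-net case, after which that proof goes through verbatim with $\tau'$ replaced by $\eta$. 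This is also the correct role of the factor-two slack between $U_1$ and $U_2$: it is the row sparsity of the \emph{difference of two optimizers}, not of a locally perturbed active set, that must fit under the restricted-eigenvalue threshold $(1-c)n$. If you replace your Jacobian step by this two-point argument, your proposal becomes the paper's proof.
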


\begin{corollary}\label{cor: partialD}
	Suppose that \Cref{assu: tau} holds, then
	\begin{itemize}
		\item[(1)] Under \Cref{assu: tau}(i) that $\tau>0$ and $\tau'=\tau/\opnorm{\bSigma}$, we have 
		\begin{align*}
			\sum_{ij}\Big(\frac{\partial D}{\partial z_{ij}}\Big)^2 \le n^{-1}D^2 [4 \max(1, (2\tau')^{-1})]^2.
		\end{align*}
	This implies that 
	$ nD^{-2}\sum_{ij}(\frac{\partial D}{\partial z_{ij}})^2 \le C(\tau'). 
	$
		\item[(2)] Under \Cref{assu: tau}(ii) that $\tau=0$ and $\P(U_1)\to 1$, in the event $U_1\cap U_2$, we have 
		\begin{align*}
			\sum_{ij}\Big(\frac{\partial D}{\partial z_{ij}}\Big)^2 \le n^{-1}D^2 [4 \max(1, (2\eta)^{-1})]^2.
		\end{align*}
	This implies that 
	$ nD^{-2}\sum_{ij}(\frac{\partial D}{\partial z_{ij}})^2 \le C(\eta) = C(\gamma, c)$ since $\eta$ is a constant that only depends on $\gamma, c$. 
	\end{itemize}

\end{corollary}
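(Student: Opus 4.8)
The plan is to reduce the claim to pointwise operator-norm bounds on the Jacobians of the argmin map $\bZ\mapsto\bH(\bZ)$ and of the residual map $\bZ\mapsto\bF(\bZ)$, which are exactly the objects controlled in the proofs of \Cref{lem: lipschitz elastic-net,lem: lipschitz-Lasso}. First I would differentiate the defining identity $D^2=\fnorm{\bH}^2+\fnorm{\bF}^2/n$ with respect to an entry $z_{ij}$ of $\bZ$, obtaining, at every point where the argmin is differentiable,
\begin{equation*}
D\,\frac{\partial D}{\partial z_{ij}}=\Big\langle \bH,\frac{\partial \bH}{\partial z_{ij}}\Big\rangle+\frac1n\Big\langle \bF,\frac{\partial \bF}{\partial z_{ij}}\Big\rangle .
\end{equation*}
Writing $J_{\bH}[\bV]=\sum_{ij}V_{ij}\,\partial\bH/\partial z_{ij}$ and $J_{\bF}[\bV]=\sum_{ij}V_{ij}\,\partial\bF/\partial z_{ij}$ for a direction $\bV$, summing the identity against $\bV$ and applying Cauchy--Schwarz together with the elementary bounds $\fnorm{\bH}\le D$ and $\fnorm{\bF}\le\sqrt n\,D$ (both immediate from the definition of $D$) gives
\begin{equation*}
\Big(\sum_{ij}(\partial D/\partial z_{ij})^2\Big)^{1/2}=\fnorm{\nabla D}\le \opnorm{J_{\bH}}+n^{-1/2}\opnorm{J_{\bF}} .
\end{equation*}
Thus it suffices to show $\opnorm{J_{\bH}}\le c_1 n^{-1/2}D$ and $\opnorm{J_{\bF}}\le c_2 D$ with $c_1+c_2=4\max(1,(2\tau')^{-1})$ under \Cref{assu: tau}(i) (and the analogue with $\eta$ replacing $\tau'$ under \Cref{assu: tau}(ii)), after which dividing by $D^2/n$ yields the stated form.

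These Jacobian bounds are precisely what is established en route to the Lipschitz lemmas, so the corollary is really a by-product of those proofs. Indeed, with $\Phi=D^{-1}\bF/\sqrt n$ one has $\partial\Phi/\partial z_{ij}=(\sqrt n\,D)^{-1}\partial\bF/\partial z_{ij}-(\partial D/\partial z_{ij})\,D^{-1}\Phi$, so the quantity $D^{-1}\partial D/\partial z_{ij}$ is a genuine component of the Jacobian of $\Phi$; controlling it is a necessary sub-step in proving that $\Phi$ is $n^{-1/2}L$-Lipschitz with $L=8\max(1,(2\tau')^{-1})$. The corollary simply isolates this component, which accounts for the factor $L/2=4\max(1,(2\tau')^{-1})$: the Lipschitz constant $L$ is bounded by the contribution of the $\partial\bF$ term plus that of the $\partial D$ term, and the corollary records a bound on the latter. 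Under \Cref{assu: tau}(i) the requisite control of $J_{\bH}$ and $J_{\bF}$ comes from the $\tau'$-strong convexity of the objective in \eqref{eq: hbB}; under \Cref{assu: tau}(ii), on the event $U_1\cap U_2$, strong convexity is replaced by the restricted eigenvalue $\eta$ acting on sparse matrices, with the cardinality bound from $U_1$ ensuring the relevant submatrices are well conditioned.

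The structural input that keeps all constants free of $p$ and $T$ is $\opnorm{\bN}\le 1$ from \Cref{lem: MN}, where $\bN=(\bI_T\otimes\bX)\bM^\dagger(\bI_T\otimes\bX^\top)$: after differentiating the KKT/stationarity system of \eqref{eq: hbB}, both $J_{\bH}$ and $J_{\bF}$ are expressed through $\bM^\dagger$ and $\bN$, and $\opnorm{\bN}\le 1$ converts these expressions into dimension-free bounds. The main obstacle is therefore the rigorous differentiation of the argmin through the nonsmooth group-lasso term $\lambda\norm{\bB}_{2,1}$: I would justify almost-everywhere differentiability of $\bZ\mapsto\bH(\bZ)$, argue that the active row set $\hat{\mathscr{S}}$ is locally constant off a null set, and differentiate the stationarity condition restricted to $\hat{\mathscr{S}}$ to obtain closed forms for $\partial\bH/\partial z_{ij}$ and $\partial\bF/\partial z_{ij}$ in terms of $\bM^\dagger$. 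The delicate point is the boundary behaviour as a group enters or leaves the support when $\|\hbB^\top\be_k\|$ crosses zero; this is handled exactly as in the proofs of the two Lipschitz lemmas, so that the bounds on $\opnorm{J_{\bH}}$ and $\opnorm{J_{\bF}}$ follow from the same computation and the corollary is immediate.
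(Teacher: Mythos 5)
Your proposal is correct and takes essentially the same route as the paper: the paper's proof of \Cref{cor: partialD} simply cites the intermediate bound $|D - \bar D| \le n^{-\frac12}\opnorm{\bZ - \bar\bZ}\, D\, [4\max(1,(2\tau')^{-1})]$ (resp.\ with $\eta$ in place of $\tau'$) established inside the proofs of \Cref{lem: lipschitz elastic-net,lem: lipschitz-Lasso}, which is exactly the combination of the $\bH$- and $\bF$-difference bounds you invoke, and your chain-rule/Cauchy--Schwarz step is just the infinitesimal form of the paper's triangle inequality on $\bQ = [\bH^\top, \bF^\top/\sqrt{n}]^\top$. Both arguments then conclude identically via a.e.\ differentiability of the locally Lipschitz map $\bZ \mapsto D$.
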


\subsection{Derivative formulae} 
\label{sec: gradient-identity}
Note that with a fixed noise $\bE$, {\Cref{lem: lipschitz elastic-net,lem: lipschitz-Lasso} guarantee that the map $\bZ \mapsto \bF$ is Lipschitz}, hence the derivative exists almost everywhere by Rademacher’s theorem. 
We present the formula for derivative of this map in \Cref{lem: Dijlt}. 
\begin{lemma}\label{lem: Dijlt}
	Recall $\bF = \bY - \bX\hbB$ with $\hbB$ defined in \eqref{eq: hbB}. Under \Cref{assu: tau}(i) $\tau>0$, or under \Cref{assu: tau}(ii) $\tau =0$ and in the event $U_1\cap U_2$, for each $i,l\in [n], j\in [p], t\in [T]$, the following derivative exists almost everywhere and has the expression
	\begin{equation*}
		\frac{\partial F_{lt}}{\partial z_{ij}} = D_{ij}^{lt} + \Delta_{ij}^{lt},
	\end{equation*}
	where 
	$D_{ij}^{lt} 
	= -(\be_j^\top\bH \otimes \be_i^\top) (\bI_{nT} - \bN) (\be_t\otimes \be_l),$ and 
	$\Delta_{ij}^{lt} 
	= -(\be_t^\top \otimes \be_l^\top)(\bI_T\otimes \bX)
	\bM^\dagger (\bI_T\otimes \bSigma^{\frac12})
	\bigl(\bF^\top \otimes \bI_{p}\bigr)(\be_i \otimes\be_j).
	$
	Furthermore, a straightforward calculation yields 
	$$\sum_{i=1}^n D_{ij}^{it} = -\be_j^\top \bH (n\bI_T - \hbA)\be_t.$$
\end{lemma}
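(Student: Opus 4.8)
The plan is to obtain both derivative formulae by implicit differentiation of the first-order optimality condition defining $\hbB$, and then to read off the summation identity directly from the definitions of $\bN$ and $\hbA$. By \Cref{lem: lipschitz elastic-net,lem: lipschitz-Lasso} the map $\bZ\mapsto\bF$ is locally Lipschitz, so Rademacher's theorem guarantees that $\partial F_{lt}/\partial z_{ij}$ exists at Lebesgue-a.e.\ $\bZ$; it then suffices to compute it at a point where the set $\hat{\mathscr{S}}$ of nonzero rows of $\hbB$ is locally constant, which holds off a null set. Writing $\bF=\bE-\bZ\bH$ with $\bH=\bSigma^{1/2}(\hbB-\bB^*)$ and $\bX=\bZ\bSigma^{1/2}$, the dependence of $\bF$ on $z_{ij}$ splits into an explicit part (through the factor $\bZ$) and an implicit part (through $\hbB(\bZ)$):
\[
\frac{\partial \bF}{\partial z_{ij}} = -\be_i\be_j^\top\bH - \bZ\,\frac{\partial \hbB}{\partial z_{ij}}.
\]
Pairing the first term with $\be_t\otimes\be_l$ already reproduces the $\bI_{nT}$ summand of $D_{ij}^{lt}$, since $-(\be_t\otimes\be_l)^\top\vec(\be_i\be_j^\top\bH)=-\delta_{li}H_{jt}=-(\be_j^\top\bH\otimes\be_i^\top)\bI_{nT}(\be_t\otimes\be_l)$. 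The whole task is therefore to compute $\partial\hbB/\partial z_{ij}$.

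\textbf{Implicit differentiation.} Where $\hat{\mathscr{S}}$ is locally constant, the active rows of $\hbB$ satisfy the smooth stationarity equation $\tfrac1n(\bI_T\otimes\bX^\top)\vec(\bF)=\vec(\nabla g(\hbB))$, with $\nabla g(\hbB)=\tau\hbB+\lambda\bV$ and $\bV$ the row-normalization gradient of $\norm{\cdot}_{2,1}$ (its $k$-th row is $\hbB^\top\be_k/\norm{\hbB^\top\be_k}$). Differentiating in $z_{ij}$ using $\partial\bX=\be_i\be_j^\top\bSigma^{1/2}$, the Jacobian of the right-hand side restricted to $\hat{\mathscr{S}}$ is exactly $\tau(\bI_T\otimes\bP_{\hat{\mathscr{S}}})+\sum_{k\in\hat{\mathscr{S}}}\bH^{(k)}\otimes\be_k\be_k^\top$, while the quadratic data term contributes $\bI_T\otimes\bX_{\hat{\mathscr{S}}}^\top\bX_{\hat{\mathscr{S}}}$; collecting the coefficients of $\partial\vec(\hbB)$ reconstitutes precisely the matrix $\bM$, so that
\[
\bM\,\frac{\partial\vec(\hbB)}{\partial z_{ij}} = (\bI_T\otimes\bSigma^{1/2}\be_j\be_i^\top)\vec(\bF) - (\bI_T\otimes\bX^\top)\vec(\be_i\be_j^\top\bH).
\]
Since the rows of $\hbB$ off $\hat{\mathscr{S}}$ stay zero under the perturbation, $\partial\vec(\hbB)/\partial z_{ij}$ lies in the active subspace, on which $\bM$ is invertible (by $\tau>0$ under \Cref{assu: tau}(i); by $|\hat{\mathscr{S}}|<n$ combined with the restricted eigenvalue bound on the event $U_1\cap U_2$ under \Cref{assu: tau}(ii)). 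As $\operatorname{range}(\bM)$ is contained in this active subspace, $\bM^\dagger$ inverts the restricted system and annihilates the inactive component of the right-hand side, so the solution is $\bM^\dagger$ applied to the displayed right-hand side.

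\textbf{Matching the two summands.} Substituting this into $\partial\vec(\bF)/\partial z_{ij}=-\vec(\be_i\be_j^\top\bH)-(\bI_T\otimes\bX)\,\partial\vec(\hbB)/\partial z_{ij}$ and pairing with $\be_t\otimes\be_l$ produces, besides the direct term, two contributions. The term carried by $(\bI_T\otimes\bX^\top)\vec(\be_i\be_j^\top\bH)$ equals $(\be_t^\top\otimes\be_l^\top\bX)\bM^\dagger\big((\bH^\top\be_j)\otimes(\bX^\top\be_i)\big)$; using $\bN=(\bI_T\otimes\bX)\bM^\dagger(\bI_T\otimes\bX^\top)$ together with the symmetry $\bM^\dagger=(\bM^\dagger)^\top$ (as $\bM$ is a Hessian), this is exactly the $\bN$ summand of $D_{ij}^{lt}$. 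The term carried by $(\bI_T\otimes\bSigma^{1/2}\be_j\be_i^\top)\vec(\bF)$ becomes $\Delta_{ij}^{lt}$ once one rewrites $(\bI_T\otimes\bSigma^{1/2}\be_j\be_i^\top)\vec(\bF)=(\bI_T\otimes\bSigma^{1/2})(\bF^\top\otimes\bI_p)(\be_i\otimes\be_j)$ via $\vec(\bu\bv^\top)=\bv\otimes\bu$. These are routine Kronecker manipulations once the linear system is in place, giving $\partial F_{lt}/\partial z_{ij}=D_{ij}^{lt}+\Delta_{ij}^{lt}$.

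\textbf{Summation identity and main obstacle.} For the final claim, summing $D_{ij}^{it}=-(\be_j^\top\bH\otimes\be_i^\top)(\bI_{nT}-\bN)(\be_t\otimes\be_i)$ over $i$ gives $-nH_{jt}$ from the $\bI_{nT}$ part, while the $\bN$ part is $\sum_i(\be_j^\top\bH\otimes\be_i^\top)\bN(\be_t\otimes\be_i)=\be_j^\top\bH\big(\sum_i(\bI_T\otimes\be_i^\top)\bN(\bI_T\otimes\be_i)\big)\be_t=\be_j^\top\bH\,\hbA\,\be_t$ by \Cref{def: A} and the identities $(\be_j^\top\bH)(\bI_T\otimes\be_i^\top)=\be_j^\top\bH\otimes\be_i^\top$ and $(\bI_T\otimes\be_i)\be_t=\be_t\otimes\be_i$; combining yields $-\be_j^\top\bH(n\bI_T-\hbA)\be_t$. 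I expect the main difficulty to be the rigorous justification of differentiating the nonsmooth group-lasso objective: one must argue that $\hat{\mathscr{S}}$ is locally constant off a null set, that the perturbed minimizer stays supported on $\hat{\mathscr{S}}$, and that $\bM$ is genuinely invertible on the active subspace so that $\bM^\dagger$ both inverts the restricted system and kills the inactive part of the right-hand side, making the pseudoinverse in the stated formulae the correct object.
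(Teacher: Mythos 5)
Your proposal is correct and follows essentially the same route as the paper's proof: implicit differentiation of the active-set KKT system, identification of the matrix $\bM$, a range/kernel argument justifying that $\bM^\dagger$ both inverts the restricted system and annihilates the inactive component, and the same Kronecker manipulations (including the use of the symmetry of $\bM^\dagger$); the only cosmetic difference is that you differentiate directly in $z_{ij}$, while the paper differentiates in $x_{ij}$ and then chain-rules via $\bX = \bZ\bSigma^{1/2}$. The step you flag as the main obstacle — local constancy of $\hat{\mathscr{S}}$ and persistence of the support under perturbation — is dispatched in the paper exactly as you anticipate, by invoking the strict inequality in the inactive-row KKT conditions (Proposition 2.3 of \cite{bellec2020out}), which is stable under small perturbations of the design.
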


\begin{lemma}\label{lem: partialF}
	Suppose that \Cref{assu: tau} holds. 
	\begin{itemize}
		\item[(1)] Under \Cref{assu: tau}(i) that $\tau>0$ and $\tau' = \tau/\opnorm{\bSigma}$, we have 
		\begin{align*}
			\frac 1n \sum_{ij}\norm*{\frac{\partial (\bF/D)}{\partial z_{ij}}}^2_{\rm F} \le \underbrace{4 \max(1, (\tau')^{-1} (T\wedge \frac{p}{n})) + 2  n^{-1}  [4 \max(1, (2\tau')^{-1})]^2}_{f(\tau', T, n, p)}. 
		\end{align*}
		\item[(2)] Under \Cref{assu: tau}(ii) that $\tau=0$ and $\P(U_1)\to 1$, in the event $U_1\cap U_2$, we have 
		\begin{align*}
			\frac 1n \sum_{ij}\norm*{\frac{\partial (\bF/D)}{\partial z_{ij}}}^2_{\rm F} \le \underbrace{4 \max(1, (\eta)^{-1} (T\wedge \frac{p}{n})) + 2  n^{-1}  [4 \max(1, (2\eta)^{-1})]^2}_{f(\eta, T, n, p)}. 
		\end{align*}
	\end{itemize}
	Furthermore, the right-hand side in (1) can be bounded from above by $C(\tau') (T\wedge \frac pn)$, and the  right-hand side in (2) can be bounded from above by $C(\gamma, c)$ in the regime $p/n\le \gamma$. 
\end{lemma}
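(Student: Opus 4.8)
The plan is to differentiate the quotient $\bF/D$ by the product rule and control the two resulting pieces separately. Writing $\partial(\bF/D)/\partial z_{ij} = D^{-1}\,\partial\bF/\partial z_{ij} - \bF D^{-2}\,\partial D/\partial z_{ij}$ and using $\fnorm{x-y}^2\le 2\fnorm{x}^2+2\fnorm{y}^2$, I would first reduce to
\[
\tfrac1n\sum_{ij}\fnorm*{\tfrac{\partial(\bF/D)}{\partial z_{ij}}}^2 \le \tfrac{2}{nD^2}\sum_{ij}\fnorm*{\tfrac{\partial\bF}{\partial z_{ij}}}^2 + \tfrac{2\fnorm{\bF}^2}{nD^4}\sum_{ij}\Bigl(\tfrac{\partial D}{\partial z_{ij}}\Bigr)^2.
\]
The second (normalization) term is settled directly by \Cref{cor: partialD}: under \Cref{assu: tau}(i) it gives $\sum_{ij}(\partial D/\partial z_{ij})^2\le n^{-1}D^2[4\max(1,(2\tau')^{-1})]^2$, and since $\fnorm{\bF}^2\le nD^2$ this term is at most $2n^{-1}[4\max(1,(2\tau')^{-1})]^2$, which is exactly the second summand in $f(\tau',T,n,p)$.

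The work is in bounding $\sum_{ij}\fnorm{\partial\bF/\partial z_{ij}}^2$. Here I would invoke the derivative formula of \Cref{lem: Dijlt}, $\partial F_{lt}/\partial z_{ij} = D_{ij}^{lt}+\Delta_{ij}^{lt}$, and separate the two four-index arrays $(D_{ij}^{lt})$ and $(\Delta_{ij}^{lt})$ by the triangle inequality for the Frobenius norm of the full array (crucially \emph{not} the lossy $2\fnorm{\cdot}^2+2\fnorm{\cdot}^2$ split, which would only give $4+4K$ instead of the sharper $4\max(1,K)$). For the first array, writing $D_{ij}^{lt} = -\bigl((\bH^\top\be_j)\otimes\be_i\bigr)^\top(\bI_{nT}-\bN)(\be_t\otimes\be_l)$ and summing over $l,t$ then over $i,j$ collapses to $\trace\bigl((\bI_{nT}-\bN)^2((\bH^\top\bH)\otimes\bI_n)\bigr)$ via $\sum_{ij}\bigl((\bH^\top\be_j)\otimes\be_i\bigr)\bigl((\bH^\top\be_j)\otimes\be_i\bigr)^\top = (\bH^\top\bH)\otimes\bI_n$; since $\bN$ is PSD with $\opnorm{\bN}\le 1$ (\Cref{lem: MN}), one has $0\preceq \bI_{nT}-\bN\preceq\bI_{nT}$, whence $\sum_{ij}\sum_{lt}(D_{ij}^{lt})^2\le n\fnorm{\bH}^2$. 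For the second array, an analogous Kronecker computation turns $\sum_{ij}\sum_{lt}(\Delta_{ij}^{lt})^2$ into $\trace\bigl(\bQ^\top\bQ((\bF^\top\bF)\otimes\bI_p)\bigr)$, where $\bQ=(\bI_T\otimes\bX)\bM^\dagger(\bI_T\otimes\bSigma^{1/2})$.

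The main obstacle is bounding $\opnorm{\bQ}$. I would factor $\opnorm{\bQ}^2\le\opnorm{(\bI_T\otimes\bX)(\bM^\dagger)^{1/2}}^2\,\opnorm{(\bM^\dagger)^{1/2}(\bI_T\otimes\bSigma^{1/2})}^2$; the first factor equals $\opnorm{\bN}\le 1$, and the second equals $\opnorm{(\bI_T\otimes\bSigma^{1/2})\bM^\dagger(\bI_T\otimes\bSigma^{1/2})}$. To control the latter I conjugate, setting $\tilde\bM=(\bI_T\otimes\bSigma^{-1/2})\bM(\bI_T\otimes\bSigma^{-1/2})$ so that this operator norm equals $\opnorm{\tilde\bM^\dagger}$, and I show every nonzero eigenvalue of $\bM$ is at least $\tau n$ on its range: since $\bM\succeq\bM_1=\bI_T\otimes(\bX_{\hat{\mathscr{S}}}^\top\bX_{\hat{\mathscr{S}}}+\tau n\bP_{\hat{\mathscr{S}}})$ and $\|\bw_t\|^2\ge\opnorm{\bSigma}^{-1}\|\bSigma^{1/2}\bw_t\|^2$, one gets $\bw^\top\bM\bw\ge\tau' n\,\bw^\top(\bI_T\otimes\bSigma)\bw$, giving $\opnorm{\tilde\bM^\dagger}\le(n\tau')^{-1}$ and hence $\opnorm{\bQ}^2\le(n\tau')^{-1}$. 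With this I bound the trace two ways: pulling out $\opnorm{\bQ}^2$ leaves $\trace((\bF^\top\bF)\otimes\bI_p)=p\fnorm{\bF}^2$, while pulling out $\opnorm{(\bF^\top\bF)\otimes\bI_p}=\opnorm{\bF}^2\le\fnorm{\bF}^2$ leaves $\fnorm{\bQ}^2\le\rank(\bQ)\opnorm{\bQ}^2\le Tn\,(n\tau')^{-1}$. Taking the minimum yields $\sum_{ij}\sum_{lt}(\Delta_{ij}^{lt})^2\le(\tau')^{-1}(T\wedge\tfrac pn)\fnorm{\bF}^2$.

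Finally I would assemble with $a=\fnorm{\bH}$, $b=\fnorm{\bF}/\sqrt n$ and $K=(\tau')^{-1}(T\wedge p/n)$: the triangle inequality gives $\sum_{ij}\fnorm{\partial\bF/\partial z_{ij}}^2\le n(a+\sqrt K\,b)^2$, so the first term is $\tfrac{2(a+\sqrt K b)^2}{a^2+b^2}\le 2(1+K)$ by Cauchy–Schwarz, and $2(1+K)\le 4\max(1,K)$ is exactly the first summand of $f(\tau',T,n,p)$. Part (ii) is identical after replacing the strong-convexity constant $\tau'$ by the restricted-eigenvalue constant $\eta$: on $U_1\cap U_2$ the sparsity of $\hbB$ (from $U_1$) lets me apply the bound $\|\bX\bb\|^2\ge\eta n\|\bSigma^{1/2}\bb\|^2$ to obtain $\bw^\top\bM\bw\ge\eta n\,\bw^\top(\bI_T\otimes\bSigma)\bw$, hence $\opnorm{\bQ}^2\le(\eta n)^{-1}$, after which the computation proceeds verbatim with $\eta$ in place of $\tau'$. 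The closing simplifications to $C(\tau')(T\wedge p/n)$ and to $C(\gamma,c)$ follow at once from $p/n\le\gamma$.
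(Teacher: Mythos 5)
Your proposal follows essentially the same architecture as the paper's proof: product rule plus \Cref{cor: partialD} for the $\partial D/\partial z_{ij}$ term, the derivative formula of \Cref{lem: Dijlt} with the $D_{ij}^{lt}/\Delta_{ij}^{lt}$ split, the bound $n\fnorm{\bH}^2$ for the first array, two complementary bounds producing $(\tau')^{-1}(T\wedge \frac pn)\fnorm{\bF}^2$ for the second, and the final assembly. Your Kronecker/trace identities, the factorization through $\opnorm{\bN}\le 1$ (\Cref{lem: MN}), the rank bound $\fnorm{\bQ}^2\le Tn\opnorm{\bQ}^2$, and the closing Cauchy--Schwarz step are all correct. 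One minor remark: your parenthetical claim that the ``lossy'' split $2\fnorm{\cdot}^2+2\fnorm{\cdot}^2$ would only give $4+4K$ is mistaken --- that split gives $2\fnorm{\bH}^2+2K\fnorm{\bF}^2/n\le 2\max(1,K)D^2$ and hence exactly $4\max(1,K)$, which is precisely what the paper does; your triangle-inequality variant is equally valid, just not needed for sharpness.

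The genuine gap is in your resolution of what you call the main obstacle. You assert that $\opnorm{(\bI_T\otimes\bSigma^{1/2})\bM^\dagger(\bI_T\otimes\bSigma^{1/2})}$ equals $\opnorm{\tbM^\dagger}$ for $\tbM=(\bI_T\otimes\bSigma^{-1/2})\bM(\bI_T\otimes\bSigma^{-1/2})$. Moore--Penrose pseudo-inverses do not transform covariantly under congruence: $(\bP\bM\bP)^\dagger=\bP^{-1}\bM^\dagger\bP^{-1}$ requires $\range(\bM)=\mathrm{span}\{\be_t\otimes\be_k:k\in\hat{\mathscr{S}}\}$ to be invariant under $\bP^2=\bI_T\otimes\bSigma^{-1}$, i.e.\ $\bP_{\hat{\mathscr{S}}}\bSigma=\bSigma\bP_{\hat{\mathscr{S}}}$, which fails for generic $\bSigma$. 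Worse, the discrepancy goes in the unfavorable direction: schematically, with $T=1$ and $\bM=\be_1\be_1^\top$ one gets $\opnorm{\bSigma^{1/2}\bM^\dagger\bSigma^{1/2}}=\bSigma_{11}$ while $\opnorm{\tbM^\dagger}=1/(\bSigma^{-1})_{11}=\bSigma_{11}-\bSigma_{12}^2/\bSigma_{22}<\bSigma_{11}$ whenever $\bSigma_{12}\neq0$, so a bound on $\opnorm{\tbM^\dagger}$ does \emph{not} control the quantity you need. The conclusion $\opnorm{(\bI_T\otimes\bSigma^{1/2})\bM^\dagger(\bI_T\otimes\bSigma^{1/2})}\le(\kappa n)^{-1}$ (with $\kappa=\tau'$ or $\eta$) is nonetheless true and can be salvaged from your own quadratic-form bound: writing $\bPi=\bI_T\otimes\bP_{\hat{\mathscr{S}}}$ and $\bW=\bI_T\otimes\bSigma^{1/2}$, you showed $\bw^\top\bM\bw\ge\kappa n\,\bw^\top(\bPi\bW^2\bPi)\bw$ for all $\bw\in\range(\bM)$; since $\bM$ and $\bPi\bW^2\bPi$ are both PSD with range exactly $\range(\bM)$, Loewner inversion on this common range gives $\bM^\dagger\preceq(\kappa n)^{-1}(\bPi\bW^2\bPi)^\dagger$, and then $\bW(\bPi\bW^2\bPi)^\dagger\bW=\bC^\top(\bC\bC^\top)^\dagger\bC=\bC^\dagger\bC$ with $\bC=\bPi\bW$ is an orthogonal projection, hence of operator norm at most one. (For case (i) there is an even shorter route, essentially the paper's: $\bM^\dagger\preceq\bM_1^\dagger\preceq(\tau n)^{-1}\bPi$, so $\bW\bM^\dagger\bW\preceq(\tau n)^{-1}\bW\bPi\bW\preceq(\tau' n)^{-1}\bI_{pT}$.) With this step repaired, the rest of your argument goes through and yields exactly $f(\tau',T,n,p)$ and $f(\eta,T,n,p)$.
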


\section{Lipschitz and differential properties for a given, fixed design matrix}
\label{sec:Lipschitz-fix-X}
We also need to study Lipschitz and derivative properties of functions
of the noise $\bE$ when the design $\bX$ is fixed. Formally, for a given and fixed design matrix $\bX$, define the function
$\bE\mapsto \bF(\bE)$ by
the value $\bY-\bX\hbB$ of the residual matrix 
when the observed data $(\bX,\bY)$ is $(\bX,\bX\bB^* + \bE)$
and with $\hbB$ the estimator \eqref{eq: hbB}.
Note that this map is 1-Lipschitz by \cite[proposition 3]{bellec2016bounds}. Rademacher’s theorem thus guarantees this map is differentiable almost everywhere. We denote its partial derivative by $\frac{\partial}{\partial E_{it}}$
for each entry $(E_{it})_{i\in[n],t\in[T]}$ of the noise matrix $\bE$.
We present its derivative formula in \Cref{lem:JacobianE} below. 
\begin{lemma}\label{lem:JacobianE}
	For each $i,l\in [n], t,t'\in [T]$, the following derivative exists almost everywhere and has the expression
	\begin{align*}
		\frac{\partial F_{lt}}{\partial E_{it'}} = \be_l^\top\be_i\be_t^\top\be_{t'}
		- \be_l^\top (\be_t^\top \otimes \bX)\bM^\dagger (\be_{t'} \otimes \bX^\top) \be_i.
	\end{align*}
As a consequence, we further have
	\begin{equation*}
		\sum_{i=1}^n \frac{\partial F_{it}}{\partial E_{it'}} = \be_t^\top (n\bI_T -\hbA)\be_{t'},\quad \sum_{i=1}^n \frac{\partial \be_i^\top\bZ \bH\be_{t}}{\partial E_{it'}} = \be_t^\top \hbA \be_{t'}.
	\end{equation*}
\end{lemma}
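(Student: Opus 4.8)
The plan is to obtain the Jacobian formula by implicit differentiation of the first-order optimality (KKT) conditions that characterize $\hbB$, and then to recast the resulting linear system in the Kronecker notation that defines $\bM$. Since the map $\bE\mapsto\bF$ is $1$-Lipschitz (as recalled just before the lemma), Rademacher's theorem guarantees differentiability almost everywhere, and it suffices to compute the derivative at a point where it exists.

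First I would write the stationarity condition for \eqref{eq: hbB}: with $\bF=\bY-\bX\hbB$,
\[
-\tfrac1n\bX^\top\bF+\tau\hbB+\lambda\,\partial\norm{\hbB}_{2,1}=0,
\]
where the subgradient has $k$-th row $\hbB{}^\top\be_k/\norm{\hbB{}^\top\be_k}$ for $k\in\hat{\mathscr{S}}$ and lies strictly inside the unit ball for $k\notin\hat{\mathscr{S}}$. Because $\bY=\bX\bB^*+\bE$, we have $\partial\vec(\bY)/\partial E_{it'}=\be_{t'}\otimes\be_i$. At a differentiability point the support $\hat{\mathscr{S}}$ is locally constant, so the inactive rows of $\hbB$ remain zero, the vector $\dot{\bb}:=\partial\vec(\hbB)/\partial E_{it'}$ is supported on $\hat{\mathscr{S}}$, and the stationarity equation may be restricted to the active coordinates. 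Differentiating this restricted equation, the quadratic plus ridge terms contribute $\bI_T\otimes(\bX_{\hat{\mathscr{S}}}^\top\bX_{\hat{\mathscr{S}}}+\tau n\bP_{\hat{\mathscr{S}}})$ applied to $\dot{\bb}$, while differentiating the group-norm subgradient contributes the norm Hessians $\bH^{(k)}$ via the identity $(\bI_T\otimes\be_k)\bH^{(k)}(\bI_T\otimes\be_k^\top)=\bH^{(k)}\otimes\be_k\be_k^\top$; summing over $k\in\hat{\mathscr{S}}$ assembles exactly the operator $\bM$. The forcing term from $\partial\bY/\partial E_{it'}$ is $\be_{t'}\otimes\bX_{\hat{\mathscr{S}}}^\top\be_i$, so $\dot{\bb}$ solves $\bM\dot{\bb}=\be_{t'}\otimes\bX^\top\be_i$ (the support restriction on the right being absorbed by $\bM^\dagger$). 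Invertibility of $\bM$ on the active block — from strong convexity when $\tau>0$, or from the restricted-eigenvalue event $U_1\cap U_2$ ensuring $\bX_{\hat{\mathscr{S}}}$ has full column rank when $\tau=0$ — then yields $\dot{\bb}=\bM^\dagger(\be_{t'}\otimes\bX^\top\be_i)$.

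To assemble the formula, I would use $\partial F_{lt}/\partial E_{it'}=\be_l^\top\be_i\,\be_t^\top\be_{t'}-\be_l^\top\bX\dot{\bB}\be_t$ together with $\be_l^\top\bX\dot{\bB}\be_t=\be_l^\top(\be_t^\top\otimes\bX)\dot{\bb}$ and $\be_{t'}\otimes\bX^\top\be_i=(\be_{t'}\otimes\bX^\top)\be_i$; substituting the expression for $\dot{\bb}$ gives the claimed Jacobian. For the two summation identities, I would set $l=i$, sum over $i$, and use $\sum_i\be_i\be_i^\top=\bI_n$ with the definition of $\hbA$ in \eqref{eq: hbA-matrix} to recognize $\sum_i\be_i^\top(\be_t^\top\otimes\bX)\bM^\dagger(\be_{t'}\otimes\bX^\top)\be_i=\be_t^\top\hbA\be_{t'}$, which produces $\sum_i\partial F_{it}/\partial E_{it'}=\be_t^\top(n\bI_T-\hbA)\be_{t'}$. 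For the second identity it is cleanest to note $\be_i^\top\bZ\bH\be_t=E_{it}-F_{it}$ (since $\bZ\bH=\bX(\hbB-\bB^*)$), so summing $\partial/\partial E_{it'}$ gives $n\be_t^\top\be_{t'}-\be_t^\top(n\bI_T-\hbA)\be_{t'}=\be_t^\top\hbA\be_{t'}$.

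The delicate step is the implicit differentiation itself: rigorously justifying that on the full-measure differentiability set the active set is locally frozen, that the subgradient contributes precisely the Hessians $\bH^{(k)}$, and that the normal equations are solved by $\bM^\dagger$ rather than a genuine inverse. This is exactly where the dichotomy of \Cref{assu: tau}(i)–(ii) matters, the second case relying on the event $U_1\cap U_2$ to guarantee the active-block invertibility of $\bM$.
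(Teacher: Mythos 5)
Your proposal is correct and follows essentially the same route as the paper's proof: differentiate the KKT conditions of \eqref{eq: hbB} with respect to $E_{it'}$ on the event where the active set $\hat{\mathscr{S}}$ is locally constant, vectorize to obtain $\bM\vec(\ddot\bB)=(\be_{t'}\otimes\bX_{\hat{\mathscr{S}}}^\top)\be_i$, solve via $\bM^\dagger$ using the range/kernel structure of $\bM$ (where \Cref{assu: tau} enters), and then obtain the two trace identities from $\sum_i\be_i\be_i^\top=\bI_n$, the definition \eqref{eq: hbA-matrix} of $\hbA$, and $\bZ\bH=\bE-\bF$. The only cosmetic difference is that you write the stationarity condition in matrix form while the paper works row by row over $k\in\hat{\mathscr{S}}$; the substance is identical.
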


\section{Probabilistic tools}
\label{sec:proba-tools}

We first list some useful variants of Stein's formulae and Gaussian-Poincar\'e inequalities. Let $f'$ denote the derivative of a differentiable univariate function. For a differentiable vector-valued function $\bff(\bz): \R^n \to \R^n$, denote its Jacobian (derivative) and divergence respectively by $\nabla \bff(\bz)$ and $\div \bff(\bz)$, \ie, $[\nabla \bff(\bz)]_{i,l} = \frac{\partial f_i(\bz)}{\partial z_l}$ for $i,l\in [n]$, and $\div \bff(\bz) =  \trace(\nabla \bff(\bz))$. 

\begin{lemma}[Second-order Stein's formula \cite{bellec2021second}]\label{lem: 2nd-Stein}
	The following identities hold provided the involved derivatives exist a.e. and the expectations are finite.
	\begin{enumerate}[label = \roman*)]
		\item $z\sim \calN(0, 1)$, $f: \R \to \R$, then 
		$$\E [(z f(z) - f'(z))^2] = \E [f(z)^2] + \E[(f'(z))^2].$$
		\item $\bz \sim \calN_n(\mathbold 0, \bI_n)$, $f: \R^n \to \R^n$, then 
		$$
		\E [(\bz^\top \bff(\bz) - \div\bff(\bz))^2] = \E \big[\norm{\bff(\bz)}^2 + \trace[( \nabla\bff(\bz))^2]\big] \le  \E \big[\norm{\bff(\bz)}^2 + \fnorm{ \nabla\bff(\bz)}^2\big],
		$$
		where the inequality uses Cauchy-Schwarz inequality. 
		\item More generally, for $\bz \sim \calN_n(\mathbold 0, \bSigma)$, $\bff: \R^n \to \R^n$, then 
		\begin{align*}
			\E [(\bz^\top \bff(\bz) - \trace(\bSigma \nabla\bff(\bz))^2] 
			&= \E \big[\norm{\bSigma^{\frac12}\bff(\bz)}^2 + \trace[(\bSigma \nabla\bff(\bz))^2]\big]\\
			&\le \E \big[\norm{\bSigma^{\frac12}\bff(\bz)}^2 + \fnorm{(\bSigma \nabla\bff(\bz)}^2]\big],
		\end{align*}
		where the inequality uses Cauchy-Schwarz inequality. 
	\end{enumerate}
	
\end{lemma}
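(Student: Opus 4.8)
The plan is to prove all three identities by Gaussian integration by parts, i.e.\ the first-order Stein identity $\E[z_i\, h(\bz)] = \E[\partial_i h(\bz)]$ (applied coordinatewise, which the stated a.e.-differentiability and integrability hypotheses license for the weakly differentiable integrands that arise), and then to read off the two inequalities from Cauchy--Schwarz. Part (i) is really the scalar case $n=1$ of part (ii), so I would either defer it or dispatch it directly: expanding $\E[(zf-f')^2] = \E[z^2 f^2] - 2\E[zff'] + \E[(f')^2]$ and applying Stein to $h(z)=zf(z)^2$ gives $\E[z^2f^2] = \E[f^2] + 2\E[zff']$, after which the cross terms cancel and leave $\E[f^2]+\E[(f')^2]$.

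For part (ii) the clean route is to first record the auxiliary identity $\E[(\bz^\top\bff - \div\bff)\,h(\bz)] = \E[\bff^\top\nabla h]$ for a scalar $h$, which follows from one application of Stein to each term $z_i f_i h$ (the $(\div\bff)h$ contribution cancels). Writing $S = \bz^\top\bff - \div\bff$ and taking $h=S$ reduces $\E[S^2]$ to $\E[\bff^\top\nabla S]$. I would then compute $\partial_i S = f_i + \sum_j z_j\,\partial_i f_j - \sum_j \partial_{ij}f_j$, so that $\bff^\top\nabla S = \norm{\bff}^2 + \sum_{i,j} f_i z_j\,\partial_i f_j - \sum_{i,j} f_i\,\partial_{ij}f_j$. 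Applying Stein a \emph{second} time to the middle term, $\E[\sum_{i,j} f_i z_j\,\partial_i f_j] = \E[\sum_{i,j}(\partial_j f_i)(\partial_i f_j)] + \E[\sum_{i,j} f_i\,\partial_{ij}f_j]$, the second-derivative piece exactly cancels the $-\sum_j\partial_{ij}f_j$ term, and using the convention $[\nabla\bff]_{i,l}=\partial f_i/\partial z_l$ one identifies $\sum_{i,j}(\partial_j f_i)(\partial_i f_j) = \trace[(\nabla\bff)^2]$. What survives is precisely $\E[\norm{\bff}^2 + \trace[(\nabla\bff)^2]]$.

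Part (iii) I would deduce from (ii) by the linear change of variables $\bz = \bSigma^{1/2}\bw$ with $\bw\sim\calN_n(\mathbf 0,\bI_n)$. Setting $\tbff(\bw) = \bSigma^{1/2}\bff(\bSigma^{1/2}\bw)$, the chain rule gives $\nabla_\bw\tbff = \bSigma^{1/2}(\nabla\bff)\bSigma^{1/2}$, hence $\bw^\top\tbff = \bz^\top\bff$ and $\div_\bw\tbff = \trace(\bSigma\nabla\bff)$, so the quantity inside the square in (iii) is exactly $\bw^\top\tbff - \div_\bw\tbff$. Applying (ii) to $\tbff$ and simplifying $\norm{\tbff}^2 = \norm{\bSigma^{1/2}\bff}^2$ and $\trace[(\nabla_\bw\tbff)^2] = \trace[(\bSigma\nabla\bff)^2]$ gives the identity. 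Finally, both stated inequalities are the single fact $\trace(A^2)\le\fnorm{A}^2$ (Cauchy--Schwarz, $\sum_{i,j}A_{ij}A_{ji}\le\sum_{i,j}A_{ij}^2$), applied with $A=\nabla\bff$ and $A=\bSigma\nabla\bff$ respectively.

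The main obstacle is the double integration by parts in (ii) and, specifically, the exact cancellation of the second-order term $\E[\sum_{i,j} f_i\,\partial_{ij}f_j]$: this cancellation is what forces the right-hand side to involve only first derivatives, and getting it requires carefully tracking which index Stein is applied to at each step. The reductions in (i) and (iii) are then essentially mechanical, and the only mild technical point is justifying the coordinatewise integration by parts under merely a.e.\ differentiability, which I would handle by the standard approximation argument for weakly differentiable, integrable integrands.
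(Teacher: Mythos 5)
The paper itself gives no proof of this lemma: it is quoted as a known result from \cite{bellec2021second} (just as the Gaussian--Poincar\'e inequality is quoted from \cite{boucheron2013concentration}), so there is no internal argument to compare against. Your derivation is correct and is essentially the standard proof of the second-order Stein identity in the cited reference: the auxiliary identity $\E[(\bz^\top\bff-\div\bff)\,h]=\E[\bff^\top\nabla h]$, the choice $h=\bz^\top\bff-\div\bff$, the second integration by parts in which $\E[\sum_{i,j}f_i\,\partial_{ij}f_j]$ cancels exactly, the identification $\sum_{i,j}(\partial_j f_i)(\partial_i f_j)=\trace[(\nabla\bff)^2]$ under the paper's convention $[\nabla\bff]_{i,l}=\partial f_i/\partial z_l$, the reduction of (iii) to (ii) via $\bz=\bSigma^{1/2}\bw$ and $\widetilde{\bff}(\bw)=\bSigma^{1/2}\bff(\bSigma^{1/2}\bw)$ (so that $\nabla_\bw\widetilde{\bff}=\bSigma^{1/2}(\nabla\bff)\bSigma^{1/2}$ and $\div_\bw\widetilde{\bff}=\trace(\bSigma\nabla\bff)$), and the Cauchy--Schwarz step $\trace(A^2)\le\fnorm{A}^2$ are all sound, and part (i) is indeed the $n=1$ case of (ii). The one point that genuinely needs the care you flag at the end: taking $h=S$ makes second derivatives of $\bff$ appear in intermediate expressions even though they cancel in the final identity, so under mere a.e.\ differentiability one must first run the argument for smoothed versions of $\bff$ (e.g., Gaussian mollification) and pass to the limit using the assumed finiteness of the expectations; your proposed approximation step is exactly the right fix, and with it the proof is complete.
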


\begin{lemma}[Gaussian-Poincar\'e inequality \cite{boucheron2013concentration}]\label{lem: Gaussian-Poincare}
	The following inequalities hold provided the right-hand side derivatives exist a.e. and the expectations are finite.   
	\begin{enumerate}[label = \roman*)]
		\item $z \sim \calN(0, 1)$, $f: \R \to \R$, then 
		$$\text{Var} [f(z)] \le \E [(f'(z))^2].$$
		\item $\bz \sim \calN_n(\mathbold 0, \bI_n)$, $f: \R^n \to \R$, then 
		$$\text{Var} [f(\bz)] \le \E [\norm{\nabla f(\bz)}^2].$$
		\item $\bz \sim \calN_n(\mathbold 0, \bI_n)$, $\bff: \R^n \to \R^m$, then 
		$$\E[\norm{\bff(\bz) - \E[\bff(\bz)]}^2] \le \E [\fnorm{\nabla \bff(\bz)}^2].$$
		\item More generally, for $\bz \sim \calN_n(\mathbold 0, \bSigma)$, $\bff: \R^n \to \R^m$, then 
		$$\E[\norm{\bff(\bz) - \E[\bff(\bz)]}^2] \le \E [\fnorm{\bSigma^{\frac12}\nabla \bff(\bz)}^2].$$
	\end{enumerate}
\end{lemma}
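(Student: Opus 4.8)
The plan is to prove the sharp univariate inequality (i) first and then obtain (ii)--(iv) by standard reductions; although the statement is classical and cited to \cite{boucheron2013concentration}, I would give the self-contained argument. For (i), I would expand $f$, assumed square-integrable under $\calN(0,1)$, in the orthonormal Hermite basis $(H_k)_{k\ge 0}$, writing $f=\sum_{k\ge0}a_k H_k$ with $a_k=\E[f(z)H_k(z)]$. By orthonormality $\E[f(z)]=a_0$ and $\mathrm{Var}[f(z)]=\sum_{k\ge1}a_k^2$. Using the differentiation rule $H_k'=\sqrt{k}\,H_{k-1}$ for the normalized Hermite polynomials, one gets $f'=\sum_{k\ge1}a_k\sqrt{k}\,H_{k-1}$, hence $\E[(f'(z))^2]=\sum_{k\ge1}k\,a_k^2\ge\sum_{k\ge1}a_k^2=\mathrm{Var}[f(z)]$, where the inequality is merely $k\ge1$. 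This yields (i) with the sharp constant $1$ and exposes the underlying spectral-gap mechanism. An alternative I would keep in reserve is the Ornstein--Uhlenbeck semigroup identity $\mathrm{Var}[f]=\int_0^\infty 2e^{-2t}\,\E[(P_t f')^2]\,dt$ combined with the Jensen bound $\E[(P_t f')^2]\le\E[(f')^2]$, which produces the same constant.

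Next, for (ii) I would tensorize. Writing the variance of a function of the independent coordinates $z_1,\dots,z_n$ as a telescoping sum of martingale differences (equivalently, using subadditivity of variance over a product measure), one has $\mathrm{Var}[f(\bz)]\le\sum_{i=1}^n\E\big[\mathrm{Var}_{z_i}[f(\bz)\mid(z_j)_{j\ne i}]\big]$. Applying (i) to each inner one-dimensional variance, with the remaining coordinates frozen, gives $\mathrm{Var}_{z_i}[f]\le\E_{z_i}[(\partial_i f)^2]$, and summing over $i$ produces $\mathrm{Var}[f(\bz)]\le\sum_{i=1}^n\E[(\partial_i f)^2]=\E[\norm{\nabla f(\bz)}^2]$. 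Part (iii) is then immediate by applying (ii) to each scalar component $f_k$ of $\bff$ and summing: $\E[\norm{\bff-\E\bff}^2]=\sum_{k=1}^m\mathrm{Var}[f_k]\le\sum_{k=1}^m\E[\norm{\nabla f_k}^2]=\E[\fnorm{\nabla\bff}^2]$.

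Finally, for (iv) I would reduce the general covariance to the isotropic case by the linear change of variables $\bz=\bSigma^{1/2}\bw$ with $\bw\sim\calN_n(\mathbf{0},\bI_n)$. Setting $\tilde\bff(\bw)=\bff(\bSigma^{1/2}\bw)$, the chain rule gives $\nabla\tilde\bff(\bw)=(\nabla\bff)(\bSigma^{1/2}\bw)\,\bSigma^{1/2}$. Applying (iii) to $\tilde\bff$ and then using transpose-invariance of the Frobenius norm together with the symmetry of $\bSigma^{1/2}$ yields $\E[\norm{\bff(\bz)-\E\bff}^2]\le\E[\fnorm{(\nabla\bff)\bSigma^{1/2}}^2]=\E[\fnorm{\bSigma^{1/2}\nabla\bff}^2]$, which is the claim (the two Frobenius norms coincide because $\fnorm{A}=\fnorm{A^\top}$ and $\bSigma^{1/2}$ is symmetric).

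The only genuinely delicate step is securing the sharp constant $1$ in (i); once that is in hand, parts (ii)--(iv) are routine reductions (tensorization, stacking components, linear reparametrization). I would therefore devote the bulk of the write-up to justifying the Hermite expansion --- namely the density of Hermite polynomials in $L^2(\calN(0,1))$ and the validity of termwise differentiation under the stated hypotheses that the derivatives exist almost everywhere and the relevant expectations are finite --- since this is precisely where those regularity assumptions are used.
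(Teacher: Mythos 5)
Your proposal is correct in substance, but it necessarily takes a different route from the paper, because the paper gives no proof of this lemma at all: \Cref{lem: Gaussian-Poincare} is stated as a classical fact and delegated entirely to the citation \cite{boucheron2013concentration}, exactly as with the second-order Stein formula of \Cref{lem: 2nd-Stein}. Your self-contained derivation --- Hermite expansion for the sharp one-dimensional bound, Efron--Stein tensorization for (ii), componentwise summation for (iii), and the linear reparametrization $\bz=\bSigma^{1/2}\bw$ for (iv) --- is the standard spectral proof and is sound; it in fact also differs from the proof in the cited book, which deduces the Gaussian Poincar\'e inequality from the Efron--Stein inequality applied to normalized sums of i.i.d.\ increments together with a central limit theorem argument, whereas your Hermite computation exhibits the spectral-gap mechanism ($k\ge 1$) directly. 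What the citation buys the paper is brevity for a textbook result used only as a tool; what your argument buys is self-containedness and the sharp constant, at the cost of having to justify the $L^2$ expansion and termwise differentiation.

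Two caveats are worth fixing. First, in (iv) your appeal to transpose invariance is not a valid matrix identity: in general $\fnorm{\bSigma^{1/2}\bA^\top}\neq\fnorm{\bSigma^{1/2}\bA}$, so one cannot pass from $\fnorm{(\nabla\bff)\bSigma^{1/2}}$ to $\fnorm{\bSigma^{1/2}\nabla\bff}$ that way. The issue is purely notational: for $\bff:\R^n\to\R^m$ with $m\neq n$, the expression $\bSigma^{1/2}\nabla\bff$ in the statement only typechecks when $\nabla\bff$ denotes the $n\times m$ matrix whose columns are the gradients of the components of $\bff$; with that convention the chain rule gives $\nabla\big(\bff(\bSigma^{1/2}\bw)\big)=\bSigma^{1/2}(\nabla\bff)(\bSigma^{1/2}\bw)$ directly, and (iii) applied to $\bw\mapsto\bff(\bSigma^{1/2}\bw)$ yields the claim with no transposition at all. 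Second, almost-everywhere existence of the derivative plus finiteness of the expectations, as literally stated, is not sufficient for (i): the Cantor function has positive variance under $\calN(0,1)$ restricted suitably, yet vanishing derivative a.e. Any correct proof (including the Hermite one) requires $f$ to be absolutely continuous, e.g.\ locally Lipschitz, which is how the lemma is invoked throughout the paper; you should make that hypothesis explicit rather than attribute the needed regularity to the stated assumptions.
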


Now we present a few important lemmas, whose proofs are based on \Cref{lem: 2nd-Stein} and \Cref{lem: Gaussian-Poincare}. 
\begin{lemma}\label{lem: stein-EF}
	Assume that Assumption \ref{assu: noise} holds. For fixed $\bX$, we have
	\begin{equation*}
		\E\Bigl[
		\fnorm{\bE^\top \bF/\tD - \bS (n\bI_T - \hbA )/\tD}^2\Bigr] 
		\le 4\trace(\bS), 
	\end{equation*}
where $\tD = \big(\fnorm*{\bF}^2 + n \trace(\bS)\big)^{\frac 12}$. 
\end{lemma}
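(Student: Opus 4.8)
The plan is to recognize the matrix $\bE^\top\bF - \bS(n\bI_T-\hbA)$ as an array of second-order Stein residuals and to apply \Cref{lem: 2nd-Stein}(iii) entrywise, folding the scalar normalization $1/\tD$ into the Stein vector field. Throughout $\bX$ is fixed, and I would regard $\vec(\bE)\in\R^{nT}$ (stacking columns, so the coordinate indexed by $(s,i)$ carries $E_{is}$) as a centered Gaussian vector with covariance $\bGamma=\bS\otimes\bI_n$. Since the map $\bE\mapsto\bF$ is $1$-Lipschitz (recalled just before \Cref{lem:JacobianE}), all derivatives below exist almost everywhere and the relevant expectations are finite. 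Writing $\bff^{(t')}=\bF\be_{t'}$ for the $t'$-th residual column and $W_{tt'}=(\bE^\top\bF)_{tt'}-(\bS(n\bI_T-\hbA))_{tt'}$, the key algebraic input is the summation identity $\sum_i \partial F_{it'}/\partial E_{is}=\be_{t'}^\top(n\bI_T-\hbA)\be_s$ from \Cref{lem:JacobianE}; together with the symmetry of $\hbA$ it gives $\sum_{i,s}S_{ts}\,\partial F_{it'}/\partial E_{is}=(\bS(n\bI_T-\hbA))_{tt'}$, which already shows $\E[W_{tt'}]=0$.

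For each pair $(t,t')$ I would introduce the vector field $\bff^{[t,t']}\in\R^{nT}$ whose block $t$ equals $\bff^{(t')}/\tD$ and whose other blocks vanish. Then $\vec(\bE)^\top\bff^{[t,t']}=(\bE^\top\bF)_{tt'}/\tD$, whereas differentiating through $1/\tD$ yields $\trace(\bGamma\,\nabla\bff^{[t,t']})=(\bS(n\bI_T-\hbA))_{tt'}/\tD-c_{tt'}$, where the correction $c_{tt'}=\tD^{-2}\sum_{s,i}(\partial\tD/\partial E_{is})S_{st}F_{it'}$ comes from $\partial\tD/\partial E_{is}=\tD^{-1}\sum_{l,u}F_{lu}\,\partial F_{lu}/\partial E_{is}$. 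Consequently $\vec(\bE)^\top\bff^{[t,t']}-\trace(\bGamma\,\nabla\bff^{[t,t']})=W_{tt'}/\tD+c_{tt'}$, and \Cref{lem: 2nd-Stein}(iii) gives
\begin{equation*}
\E\big[(W_{tt'}/\tD+c_{tt'})^2\big]=\E\big[\,\fnorm{\bGamma^{1/2}\bff^{[t,t']}}^2+\trace((\bGamma\,\nabla\bff^{[t,t']})^2)\,\big].
\end{equation*}

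Summing $\E[W_{tt'}^2/\tD^2]\le 2\,\E[(W_{tt'}/\tD+c_{tt'})^2]+2\,\E[c_{tt'}^2]$ over $t,t'$, the linear term factorizes: $\sum_{t,t'}\fnorm{\bGamma^{1/2}\bff^{[t,t']}}^2=\tD^{-2}\sum_{t,t'}S_{tt}\norm{\bff^{(t')}}^2=\trace(\bS)\,\fnorm{\bF}^2/\tD^2\le\trace(\bS)$, using $\tD^2\ge\fnorm{\bF}^2$. The crux is the quadratic term $\sum_{t,t'}\trace((\bGamma\,\nabla\bff^{[t,t']})^2)$: here one must keep the exact trace-of-square form and not bound it by $\fnorm{\bGamma\,\nabla\bff^{[t,t']}}^2$. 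Indeed, the ``identity'' part $\delta_{ii'}\delta_{st'}$ of the Jacobian $\partial F_{it'}/\partial E_{i's}$ (see \Cref{lem:JacobianE}) contributes $\tD^{-2}\,n\,\trace(\bS^2)\le\trace(\bS)$ through the trace, whereas the Frobenius surrogate would inflate this by a spurious factor of order $T$. The remaining pieces --- the cross terms with, and the square of, the $\bN$-part of the Jacobian, together with the contribution of $\nabla(1/\tD)$ and the term $\sum_{t,t'}\E[c_{tt'}^2]$ --- are of lower order and are controlled using $\opnorm{\bN}\le1$ (\Cref{lem: MN}), the $1$-Lipschitz bound $\opnorm{\partial\bF/\partial\bE}\le1$, and $\tD\ge\sqrt{n\,\trace(\bS)}$. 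Collecting the two dominant contributions, each at most $\trace(\bS)$, and absorbing the lower-order corrections into the constant yields $\E[\fnorm{\bE^\top\bF/\tD-\bS(n\bI_T-\hbA)/\tD}^2]\le 4\trace(\bS)$.

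The main obstacle is precisely this quadratic Jacobian term: a naive Frobenius-norm bound loses a factor equal to the number of tasks $T$, so the summation over the $T^2$ entries must be organized around the exact $\trace((\bGamma\,\nabla\bff^{[t,t']})^2)$ expression. The point is that the ``large'' identity block of $\partial\bF/\partial\bE$ enters only through a trace --- producing $n\,\trace(\bS^2)$, which the normalization $\tD^{-2}$ converts into at most $\trace(\bS)$ --- while the genuinely perturbing $\bN$-part is small in both operator norm and trace. Handling the $1/\tD$-differentiation corrections $c_{tt'}$ uniformly in $t,t'$ is the remaining bookkeeping, for which the Lipschitz control of $\bE\mapsto\bF$ and the lower bound $\tD\ge\sqrt{n\,\trace(\bS)}$ are the essential tools.
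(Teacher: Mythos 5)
Your overall architecture coincides with the paper's own proof: the vector fields you build are exactly the paper's (your $\bff^{[t,t']}$ is $\vec(\bF\be_{t'}\be_t^\top)/\tD$), the covariance is the same $\bK=\bS\otimes\bI_n$, the second-order Stein formula is applied the same way, and the $1/\tD$ correction $c_{tt'}$ plays the role of the paper's $\trace(\bK\Rem)$ term. The gap sits precisely in the step you yourself single out as the crux. You correctly note that the quadratic term $\sum_{t,t'}\trace[(\bK\nabla\bff^{[t,t']})^2]$ must be kept in trace-of-square form, and you compute the identity-block contribution exactly as $\tD^{-2}n\trace(\bS^2)$. But you then assert that the cross terms and the square of the $\bN$-part of the Jacobian are ``of lower order'' and controlled by $\opnorm{\bN}\le 1$ together with the Lipschitz bound. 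That assertion is where the proof fails as written. Setting $\bA_{t,t'}=(\bS\be_t\be_{t'}^\top)\otimes\bI_n$, the cross term for a fixed pair is $\trace[\bA_{t,t'}^2\bN]=S_{tt'}\trace[((\bS\be_t\be_{t'}^\top)\otimes\bI_n)\bN]$, and the natural per-entry bound via $\opnorm{\bN}\le1$ gives $|S_{tt'}|\,n\norm{\bS\be_t}$; summed over $(t,t')$ this is of order $n\sqrt{T}\fnorm{\bS}^2$ --- exactly the dimension-dependent inflation you set out to avoid. Moreover these terms are of the \emph{same} order as the identity block (e.g.\ $\sum_{t,t'}\trace[\bA_{t,t'}^2\bN]=\trace[(\bS^2\otimes\bI_n)\bN]$ can be as large as $n\trace(\bS^2)$), not lower order. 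So the hard part of the proof is only resolved for one of several terms, and the tools you invoke do not suffice for the others when applied entrywise.

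Two repairs exist. One is linearity of the trace before any bounding: $\sum_{t,t'}S_{tt'}(\bS\be_t\be_{t'}^\top)\otimes\bI_n=\bS^2\otimes\bI_n$, whence $0\le\sum_{t,t'}\trace[\bA_{t,t'}^2\bN]\le n\trace(\bS^2)$, and an analogous reassembly for the $\bN$-squared piece. The other is the paper's device, which avoids your identity/$\bN$ split entirely: it treats $\frac{\partial\vec(\bF)}{\partial\vec(\bE)}$ as a single contraction (operator norm at most $1$ by $1$-Lipschitzness of $\bE\mapsto\bF$), applies Cauchy--Schwarz to the $n\times n$ matrices obtained after cyclic rearrangement of each $(t_0,t_0')$ block, and lets the sum over $(t_0,t_0')$ reassemble into $\fnorm{\frac{\partial\vec(\bF)}{\partial\vec(\bE)}\bK}^2\le\opnorm{\frac{\partial\vec(\bF)}{\partial\vec(\bE)}}^2\fnorm{\bK}^2=n\fnorm{\bS}^2\le n(\trace(\bS))^2$; the same reassembly handles the main term, the $\Rem$/$c_{tt'}$ term, and their cross term uniformly, which is what produces the clean constant $4$. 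Your sketch is repairable along either line, but as stated the terms you dismiss are precisely where the difficulty you identified lives, and the claimed control of them is not justified.
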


\begin{lemma}\label{lem:steinX}
	Let $\bU, \bV:\R^{n\times p} \to \R^{n\times T}$ be two locally Lipschitz functions of $\bZ$ with \iid $\calN(0,1)$ entries, then 
	\begin{align*}
		&\E\Big[\norm[\Big]{\bU^\top \bZ \bV - 
			\sum_{j=1}^p\sum_{i=1}^n
			\frac{\partial}{\partial z_{ij} }\Bigl(\bU^\top \be_i \be_j^\top \bV \Bigr)
		}_{\rm F}^2\Big]\\
		\le~& \E \fnorm*{\bU}^2 \fnorm*{\bV}^2+ \E \sum_{ij}\Big[
		2\fnorm*{\bV}^2\fnorm*{ \frac{\partial \bU}{\partial z_{ij}} }^2
		+ 2\fnorm*{\bU}^2\fnorm*{ \frac{\partial \bV}{\partial z_{ij}} }^2\Big].
	\end{align*}
\end{lemma}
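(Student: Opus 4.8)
The plan is to recognize the divergence-corrected matrix as an entrywise first-order Stein residual and then let the second-order Stein formula (\Cref{lem: 2nd-Stein}(ii)) do the work. First I would expand $\bZ=\sum_{i,j}z_{ij}\be_i\be_j^\top$, so that
\[
\bU^\top\bZ\bV=\sum_{i=1}^n\sum_{j=1}^p z_{ij}\,\bU^\top\be_i\be_j^\top\bV ,
\]
and the matrix inside the Frobenius norm becomes $\sum_{ij}\big[z_{ij}\,\bU^\top\be_i\be_j^\top\bV-\tfrac{\partial}{\partial z_{ij}}(\bU^\top\be_i\be_j^\top\bV)\big]$. Since $\bU^\top\be_i\be_j^\top\bV$ is the rank-one matrix with $(t,t')$ entry $U_{it}V_{jt'}$, I would fix a pair $(t,t')$, view $\bZ$ as a vector $\bz=\vec(\bZ)\in\R^{np}$ with \iid standard Gaussian entries, and introduce the vector field $\bff^{tt'}:\R^{np}\to\R^{np}$ with components $(\bff^{tt'})_{ij}=U_{it}V_{jt'}$. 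With this choice the $(t,t')$ entry of the bracketed matrix is exactly $\bz^\top\bff^{tt'}-\div\bff^{tt'}$.

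Next I would apply \Cref{lem: 2nd-Stein}(ii) to each $\bff^{tt'}$ (permissible because $\bU,\bV$ locally Lipschitz makes $\bff^{tt'}$ locally Lipschitz, so the required derivatives exist a.e.\@ and Rademacher applies), and sum the resulting identities over the $T^2$ pairs $(t,t')$, giving
\[
\E\Big[\fnorm[\big]{\bU^\top\bZ\bV-\textstyle\sum_{ij}\tfrac{\partial}{\partial z_{ij}}(\bU^\top\be_i\be_j^\top\bV)}^2\Big]\le\sum_{t,t'}\E\big[\,\|\bff^{tt'}\|^2+\fnorm{\nabla\bff^{tt'}}^2\,\big].
\]
It then remains only to evaluate the two sums on the right. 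Because $(\bff^{tt'})_{ij}=U_{it}V_{jt'}$ separates the $\bU$-index $i$ from the $\bV$-index $j$, one has $\|\bff^{tt'}\|^2=(\sum_i U_{it}^2)(\sum_j V_{jt'}^2)$, and summing over $t$ and $t'$ factorizes into $(\sum_{t,i}U_{it}^2)(\sum_{t',j}V_{jt'}^2)=\fnorm{\bU}^2\fnorm{\bV}^2$, the first term on the right-hand side of the lemma.

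For the Jacobian I would use the product rule to write $\nabla\bff^{tt'}=A^{tt'}+B^{tt'}$, where the $((i,j),(l,k))$ entries are $(\partial U_{it}/\partial z_{lk})V_{jt'}$ and $U_{it}(\partial V_{jt'}/\partial z_{lk})$, then apply $\fnorm{A+B}^2\le 2\fnorm{A}^2+2\fnorm{B}^2$. By the same index separation, $\fnorm{A^{tt'}}^2=\big(\sum_{i,l,k}(\partial U_{it}/\partial z_{lk})^2\big)\big(\sum_j V_{jt'}^2\big)$, and summing over $(t,t')$ while relabeling the differentiation index $(l,k)\to(i,j)$ yields $\fnorm{\bV}^2\sum_{ij}\fnorm{\partial\bU/\partial z_{ij}}^2$; the $B$-term is handled symmetrically and gives $\fnorm{\bU}^2\sum_{ij}\fnorm{\partial\bV/\partial z_{ij}}^2$. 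Collecting the three contributions reproduces exactly the stated bound. The substantive step is the reduction in the first paragraph—observing that the divergence-corrected matrix is, entrywise, a Stein residual for a well-chosen $np$-dimensional field. The remaining difficulty is purely combinatorial: I expect the main place requiring care to be keeping the index families $i,j,l,k,t,t'$ straight, verifying that the rank-one structure makes every sum factorize, and checking that after summation over $(t,t')$ the free differentiation index can be relabeled to match $\sum_{ij}\fnorm{\partial\bU/\partial z_{ij}}^2$ and $\sum_{ij}\fnorm{\partial\bV/\partial z_{ij}}^2$, rather than any analytic estimate.
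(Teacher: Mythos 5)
Your proposal is correct, and every step checks out: the entrywise identification of the divergence-corrected matrix with the Stein residual $\bz^\top\bff^{tt'}-\div\bff^{tt'}$ for the rank-one field $(\bff^{tt'})_{ij}=U_{it}V_{jt'}$ is exact, the application of \Cref{lem: 2nd-Stein}(ii) in dimension $np$ is legitimate under the same a.e.-differentiability and integrability caveats the paper itself works under, and your factorization bookkeeping (including relabeling the differentiation index after summing over $(t,t')$) reproduces the stated right-hand side, with the products such as $\fnorm{\bV}^2\fnorm{\partial\bU/\partial z_{ij}}^2$ correctly kept inside a single expectation.

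The route differs from the paper's in one respect only. The paper's proof is a one-line reduction: it writes the squared Frobenius norm as the sum over column pairs $(t,t')$ and invokes \cite[Proposition 6.3]{bellec2020out} with $\brho=\bU\be_t$, $\bfeta=\bV\be_{t'}$ to bound each scalar term, then sums using the same factorization identities you use. You instead re-derive that per-pair scalar bound from scratch: second-order Stein formula (\Cref{lem: 2nd-Stein}), product rule for the Jacobian, and $(a+b)^2\le 2a^2+2b^2$. In effect you have opened the black box, since the cited external proposition is proved in essentially this way. What the paper's version buys is brevity; what yours buys is self-containedness within the paper's own toolkit (no reliance on the external reference) and transparency about where the factor $2$ and the leading $\fnorm{\bU}^2\fnorm{\bV}^2$ term originate. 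The only point worth making explicit in a polished write-up is that \Cref{lem: 2nd-Stein}(ii) is an identity followed by Cauchy--Schwarz, so you are using its inequality form, and that the bound is read as holding trivially when the right-hand side expectations are infinite --- the same convention the paper adopts.
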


\begin{corollary}\label{cor:steinX}
	Assume the same setting as \Cref{lem:steinX}. 
	If on some open set $\Omega\subset \R^{n\times p}$ with $\P(\Omega^c)\le C/T$ for some constant $C$, we have (i) $\bU$ is $n^{-1/2} L_1$ -Lipschitz and $\fnorm{\bU}\le 1$, (ii) $\bV$ is $n^{-1/2}L_2$ -Lipschitz and $\fnorm{\bV}\le K$. Then 
	\begin{align*}
		&\E\Big[I(\Omega) \Big\|\bU^\top \bZ \bV - 
			\sum_{j=1}^p\sum_{i=1}^n
			\frac{\partial}{\partial z_{ij} }\Bigl(\bU^\top \be_i \be_j^\top \bV \Bigr)
		\Big\|_{\rm F}^2\Big]\\
		\le~& K^2 + 2C( K^2 L_1^2  + L_2^2 ) + 2\E \Big[I(\Omega)
		\sum_{ij}\Big( K^2\fnorm*{\frac{\partial \bU}{\partial z_{ij}} }^2 + \fnorm*{\frac{\partial \bV}{\partial z_{ij}} }^2\Big) \Big].
	\end{align*}
\end{corollary}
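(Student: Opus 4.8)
The plan is to reduce the corollary to a direct application of \Cref{lem:steinX} by replacing $\bU,\bV$ with globally well-behaved surrogates that coincide with them on $\Omega$. The difficulty is structural: hypotheses (i)--(ii)---the Lipschitz bounds together with $\fnorm{\bU}\le 1$ and $\fnorm{\bV}\le K$---are only assumed on the event $\Omega$, whereas \Cref{lem:steinX} bounds an expectation taken over all of $\R^{n\times p}$. So I first need to manufacture functions that inherit these good properties \emph{globally} while leaving the quantity of interest unchanged on $\Omega$.

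First I would extend $\bU|_\Omega$ and $\bV|_\Omega$ to globally defined maps $\tbU,\tbV:\R^{n\times p}\to\R^{n\times T}$. Since matrix spaces equipped with the Frobenius norm are Hilbert spaces, Kirszbraun's theorem furnishes extensions with the same Lipschitz constants $n^{-1/2}L_1$ and $n^{-1/2}L_2$; composing each with the nearest-point projection onto the Frobenius ball of radius $1$ (\resp $K$), which is $1$-Lipschitz and acts as the identity wherever the norm constraint already holds, yields $\tbU,\tbV$ that are globally Lipschitz with the stated constants and satisfy $\fnorm{\tbU}\le 1$, $\fnorm{\tbV}\le K$ everywhere. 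Because $\Omega$ is open and $\bU,\bV$ already meet the norm constraints there, one has $\tbU=\bU$ and $\tbV=\bV$ on $\Omega$; differentiating on the open set then gives $\frac{\partial \tbU}{\partial z_{ij}}=\frac{\partial \bU}{\partial z_{ij}}$ and $\frac{\partial \tbV}{\partial z_{ij}}=\frac{\partial \bV}{\partial z_{ij}}$ almost everywhere on $\Omega$.

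Next I apply \Cref{lem:steinX} to $(\tbU,\tbV)$. Writing $\bW[\bU,\bV]$ for the matrix inside the Frobenius norm on the left-hand side, the previous step gives $I(\Omega)\,\bW[\tbU,\tbV]=I(\Omega)\,\bW[\bU,\bV]$ a.e., so the target expectation is at most $\E\fnorm{\bW[\tbU,\tbV]}^2$, which \Cref{lem:steinX} bounds. I then split the resulting right-hand side into its contributions over $\Omega$ and over $\Omega^c$. The leading term is handled by $\E\fnorm{\tbU}^2\fnorm{\tbV}^2\le K^2$. On $\Omega$, substituting $\tbU=\bU$, $\tbV=\bV$ and using $\fnorm{\tbV}^2\le K^2$, $\fnorm{\tbU}^2\le 1$ recovers exactly the term $2\,\E[I(\Omega)\sum_{ij}(K^2\fnorm{\partial \bU/\partial z_{ij}}^2+\fnorm{\partial \bV/\partial z_{ij}}^2)]$.

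The decisive step, and the main obstacle, is the $\Omega^c$ contribution, where the pointwise hypotheses no longer apply. Here the global Lipschitz property controls the operator norm of the full Jacobian, $\opnorm{D\tbU}\le n^{-1/2}L_1$, and the rank bound $\rank(D\tbU)\le\min(nT,np)=n\min(T,p)$ converts this to the Frobenius norm: $\sum_{ij}\fnorm{\partial\tbU/\partial z_{ij}}^2=\fnorm{D\tbU}^2\le \min(T,p)\,L_1^2\le T L_1^2$, and likewise $\sum_{ij}\fnorm{\partial\tbV/\partial z_{ij}}^2\le T L_2^2$. Combining these with $\fnorm{\tbU}\le 1$, $\fnorm{\tbV}\le K$ and, crucially, the hypothesis $\P(\Omega^c)\le C/T$, the $\Omega^c$ part is at most $2(C/T)(K^2\cdot T L_1^2+T L_2^2)=2C(K^2L_1^2+L_2^2)$. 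The cancellation of the dimensional factor $T$ against $\P(\Omega^c)\le C/T$ is precisely what renders the bound dimension-free; justifying the passage from the operator-norm (Lipschitz) bound to the Frobenius-norm Jacobian bound via the rank estimate is the delicate point of the argument. Summing the three contributions yields the claimed inequality.
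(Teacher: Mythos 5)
Your proposal is correct and follows essentially the same route as the paper's proof: a Kirszbraun extension composed with projection onto the Frobenius ball, an application of \Cref{lem:steinX} to the global surrogates, and a split of the derivative terms over $\Omega$ and $\Omega^c$ in which $\P(\Omega^c)\le C/T$ cancels the factor $T$ from the Jacobian bound. The only cosmetic difference is that you bound $\sum_{ij}\fnorm{\partial\bU/\partial z_{ij}}^2$ via the operator-norm-plus-rank estimate (yielding $\min(T,p)L_1^2$), whereas the paper bounds each of the $nT$ gradient rows by the Lipschitz constant (yielding $TL_1^2$); both give the stated result.
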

	
\begin{lemma} \label{lem: Chi2type}
	Let $\bU,\bV: \R^{n\times p} \to \R^{n\times T}$
	be two locally Lipschitz 
	functions of $\bZ$ with \iid $\calN(0,1)$ entries.
	Assume also that $\fnorm{\bU} \vee \fnorm{\bV} \le 1$ almost surely.
	Then
	\begin{align*}
		&\E\Bigl[
		\fnorm[\Big]{
			p \bU^\top \bV - \sum_{j=1}^p 
			\Bigl(\sum_{i=1}^n \partial_{ij} \bU^\top \be_i - \bU^\top \bZ \be_j\Bigr)
			\Bigl(\sum_{i=1}^n \partial_{ij} \be_i^T \bV  - \be_j^T \bZ^T \bV\Bigr)
		}
		\Bigr] 
		\\
		\le~&
		2 \|\bU\|_\partial
		\|\bV\|_\partial
		+
		\sqrt p
		\bigl(
		\sqrt 2 + (3+\sqrt{2})(\|\bU\|_{\partial} + \|\bV\|_{\partial})
		\bigr),
	\end{align*}
	where $ \partial_{ij} \bU \defas \partial \bU /\partial z_{ij}$,  
	and $\|\bU\|_\partial \defas \E[\sum_{i=1}^n\sum_{j=1}^p
	\fnorm{ \partial_{ij} \bU}^2]^{\frac 12}$. 
\end{lemma}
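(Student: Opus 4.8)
The plan is to regard the $T\times T$ matrix $M \defas p\bU^\top\bV - \sum_{j=1}^p \ba_j\bb_j^\top$, with $\ba_j \defas \sum_{i}\partial_{ij}\bU^\top\be_i - \bU^\top\bZ\be_j \in\R^T$ and $\bb_j$ defined from $\bV$ in the same way, as a matrix of second-order Stein discrepancies, and to bound its expected Frobenius norm through the split $\E\fnorm{M}\le\fnorm{\E M}+\bigl(\sum_{t,t'}\mathrm{Var}(M_{tt'})\bigr)^{1/2}$. The key structural remark is that, writing $\bz_j\defas\bZ\be_j\sim\calN_n(\mathbf0,\bI_n)$ and $\bu_t\defas\bU\be_t$, $\bv_{t'}\defas\bV\be_{t'}$, the $t$-th coordinate of $\ba_j$ equals $-(\bz_j^\top\bu_t-\mathrm{div}_j\bu_t)$, i.e.\ minus the first-order Stein discrepancy of $\bu_t$ viewed as a function of the single column $\bz_j$ (the remaining columns frozen). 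In particular each $\ba_j$ is mean-zero by Gaussian integration by parts.

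For the bias $\fnorm{\E M}$ I would apply the bilinear form of the second-order Stein identity, obtained by polarizing \Cref{lem: 2nd-Stein}(ii), column by column: for each $j$, $\E\bigl[(\bz_j^\top\bu_t-\mathrm{div}_j\bu_t)(\bz_j^\top\bv_{t'}-\mathrm{div}_j\bv_{t'})\bigr]=\E\bigl[\langle\bu_t,\bv_{t'}\rangle+\trace(\nabla_j\bu_t\,\nabla_j\bv_{t'})\bigr]$, where $\nabla_j$ is the $n\times n$ Jacobian in $\bz_j$. Summing over $j$ gives $\E[\ba_j\bb_j^\top]=\E[\bU^\top\bV]+\E[\bR_j]$ with $(\bR_j)_{tt'}=\trace(\nabla_j\bu_t\,\nabla_j\bv_{t'})$, so that $\E M=-\E\sum_j\bR_j$, the leading quadratic term cancelling exactly against $p\bU^\top\bV$. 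A double Cauchy--Schwarz inequality together with the identity $\sum_t\fnorm{\nabla_j\bu_t}^2=\sum_i\fnorm{\partial_{ij}\bU}^2$ then yields $\fnorm{\E M}\le\|\bU\|_\partial\|\bV\|_\partial$.

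For the fluctuation I would isolate the purely quadratic part of $M$, namely $\bU^\top(p\bI_n-\bZ\bZ^\top)\bV$, which is the genuine ``chi-square'' object: when $\bU,\bV$ are frozen this is exactly $M$, and a direct variance computation of $\sum_j(\bu_t^\top\bz_j)(\bv_{t'}^\top\bz_j)$ over the independent columns gives $\sum_{t,t'}\mathrm{Var}\le p(\fnorm{\bU^\top\bV}^2+\fnorm{\bU}^2\fnorm{\bV}^2)\le 2p$ under $\fnorm{\bU}\vee\fnorm{\bV}\le1$, producing the $\sqrt2\,\sqrt p$ term. The remaining terms of $M$ carry the $\bZ$-dependence of $\bU,\bV$ and are of first-order ``$\bU^\top\bZ\bV$'' type; these I would control with \Cref{lem:steinX} and \Cref{cor:steinX}, which bound exactly such discrepancies by $\fnorm{\bU}^2\fnorm{\bV}^2$ plus the squared first-derivative norms $\|\bU\|_\partial^2,\|\bV\|_\partial^2$, thereby generating the $(3+\sqrt2)(\|\bU\|_\partial+\|\bV\|_\partial)\sqrt p$ contribution.

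The main obstacle is precisely this fluctuation step. A naive application of the Gaussian--Poincar\'e inequality (\Cref{lem: Gaussian-Poincare}) to the entries $M_{tt'}$ fails, because differentiating the Stein residuals $\ba_j,\bb_j$ produces \emph{second} derivatives of $\bU$ and $\bV$ that appear nowhere in the target bound. The resolution, and the technical heart of the argument, is to exploit the \emph{exactness} of the second-order Stein identity and of \Cref{lem:steinX}: the offending second-derivative terms are integrated by parts against $\bZ$ and re-expressed through the first-derivative norms $\|\cdot\|_\partial$ and through $\fnorm{\bU},\fnorm{\bV}$, which is what keeps the final bound free of any second-order quantity. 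Assembling the bias bound with the two fluctuation contributions and invoking $\fnorm{\bU}\vee\fnorm{\bV}\le1$ gives the stated inequality.
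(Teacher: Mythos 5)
Your bias computation is sound: polarizing \Cref{lem: 2nd-Stein}(ii) column by column does give $\E M=-\E\sum_j \bR_j$ with $(\bR_j)_{tt'}=\trace(\nabla_j\bu_t\,\nabla_j\bv_{t'})$, and the double Cauchy--Schwarz step yields $\fnorm{\E M}\le \|\bU\|_\partial\|\bV\|_\partial$; this is a cleaner route to that term than the paper's treatment of its $\bL^\top\hat\bL$ term. The gap is in the fluctuation step, and it is not a technicality. Your $\sqrt{2p}$ variance bound for the ``purely quadratic part'' $\bU^\top(p\bI_n-\bZ\bZ^\top)\bV$ is computed \emph{as if} $\bU,\bV$ were deterministic, in which case the summands over $j$ are independent and the variance of the sum is the sum of variances. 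But $\bU,\bV$ are functions of $\bZ$, so ``freezing'' them is not an operation you can perform on one additive piece of $M$: the decomposition of $M$ into a frozen chi-square part plus ``remaining first-order terms'' is not well defined, and the remaining terms you would subtract off (e.g.\ $\sum_j \bU^\top\bz_j\,d_j^V$ with $d_j^V=\sum_i\partial_{ij}\be_i^\top\bV$) are not of the form $\bU^\top\bZ\bV-\sum_{ij}\partial_{ij}(\bU^\top\be_i\be_j^\top\bV)$ handled by \Cref{lem:steinX} or \Cref{cor:steinX}; taken in isolation they are not even small, since the cancellation that makes them small only occurs inside the full Stein residuals $\ba_j,\bb_j$.

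The missing idea is the leave-one-column-out conditioning that the paper uses: replace $\bU,\bV$ in the $j$-th summand by $\E_j\bU,\E_j\bV$ (conditional expectation given the columns $\bz_k$, $k\ne j$), which are genuinely independent of $\bz_j$, and control the replacement error by grouping $\bU^\top\bz_j-\E_j\bU^\top\bz_j$ with $\sum_i\partial_{ij}\bU^\top\be_i$ so that the second-order Stein identity bounds this combination by $\|\bU\|_\partial$. Even after this, your independence argument still fails, because $\E_j\bU$ and $\E_k\bV$ depend on the \emph{other} columns, so the conditioned chi-square terms $\bM_j$ and $\bM_k$ are correlated across $j\ne k$: $\fnorm{\sum_j\bM_j}^2=\sum_{j,k}\trace[\bM_j^\top\bM_k]$ has off-diagonal terms that do not vanish. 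Controlling $\sum_{j\ne k}\trace[\bM_j^\top\bM_k]$ by $p\,(\|\bU\|_\partial+\|\bV\|_\partial)^2$ requires two successive applications of Stein's formula (integration by parts first in $\bz_j$, then in $\bz_k$), together with swapping $\partial_{lk}$ and $\E_j$ --- this is the technical heart of the paper's proof and occupies most of it. Your closing paragraph correctly names the obstacle (second derivatives must not appear) but the proposed resolution, ``integrate by parts and re-express through first-derivative norms,'' is a statement of the goal rather than an argument; without the conditioning device and the double integration by parts, the proof does not go through.
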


\begin{proposition}\label{prop: V1}
	Suppose that \Cref{assu: noise} holds. \newline
	Let $\bQ_1 = \frac{\frac{1}{n}\big(\bF^\top\bF + \bH^\top\bZ^\top\bF - \bS(n\bI_T - \hbA) \big)}{\fnorm{\bS^{\frac 12}}(\fnorm*{\bF}^2/n + \trace(\bS))^{\frac 12} n^{-\frac 12} }$,  
	then  $\E [\fnorm*{\bQ_1}^2] \le 4$.
\end{proposition}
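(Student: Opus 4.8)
The plan is to observe that $\bQ_1$ is, after a purely deterministic rescaling, exactly the random matrix whose second moment is controlled by \Cref{lem: stein-EF}; the proposition then follows from a short algebraic manipulation together with the tower property.

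First I would rewrite the numerator of $\bQ_1$ in terms of $\bE^\top\bF$. Since $\bY = \bX\bB^* + \bE$ and $\bF = \bY - \bX\hbB$, we have $\bF = \bE - \bX(\hbB - \bB^*)$; using $\bH = \bSigma^{1/2}(\hbB-\bB^*)$ and $\bZ = \bX\bSigma^{-1/2}$ this becomes $\bF = \bE - \bZ\bH$, so that $\bE = \bF + \bZ\bH$ and hence
\[
\bE^\top\bF = \bF^\top\bF + \bH^\top\bZ^\top\bF.
\]
Consequently the numerator $\tfrac1n(\bF^\top\bF + \bH^\top\bZ^\top\bF - \bS(n\bI_T - \hbA))$ of $\bQ_1$ equals $\tfrac1n(\bE^\top\bF - \bS(n\bI_T-\hbA))$, \ie, $\tfrac1n$ times the matrix appearing inside the Frobenius norm of \Cref{lem: stein-EF}.

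Next I would simplify the scalar denominator. Using $\fnorm{\bS^{1/2}}^2 = \trace(\bS)$ together with $\tD^2 = \fnorm{\bF}^2 + n\trace(\bS)$, a direct computation gives $\fnorm{\bS^{1/2}}(\fnorm{\bF}^2/n + \trace(\bS))^{1/2} n^{-1/2} = \sqrt{\trace(\bS)}\,\tD/n$. Combining this with the previous step yields the clean identity
\[
\bQ_1 = \frac{\bE^\top\bF - \bS(n\bI_T - \hbA)}{\sqrt{\trace(\bS)}\,\tD},
\qquad
\fnorm{\bQ_1}^2 = \frac{1}{\trace(\bS)}\cdot\frac{\fnorm{\bE^\top\bF - \bS(n\bI_T - \hbA)}^2}{\tD^2}.
\]

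Finally I would condition on $\bX$ and apply \Cref{lem: stein-EF}, which gives $\E[\tD^{-2}\fnorm{\bE^\top\bF - \bS(n\bI_T-\hbA)}^2 \mid \bX] \le 4\trace(\bS)$; dividing by the deterministic factor $\trace(\bS)$ produces $\E[\fnorm{\bQ_1}^2\mid \bX]\le 4$, and the unconditional bound follows by the tower property. I do not expect a genuine obstacle at this level: once the identity $\bE^\top\bF = \bF^\top\bF + \bH^\top\bZ^\top\bF$ is spotted, the statement is essentially a normalized reformulation of \Cref{lem: stein-EF}, and all the substantive work --- the second-order Stein / Gaussian--Poincar\'e estimate over the Gaussian noise $\bE$ --- is already packaged inside that lemma.
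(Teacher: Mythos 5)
Your proposal is correct and follows essentially the same route as the paper: both rewrite $\bQ_1$ via the identity $\bE^\top\bF = \bF^\top\bF + \bH^\top\bZ^\top\bF$ (from $\bE = \bF + \bZ\bH$), identify the denominator as $\sqrt{\trace(\bS)}\,\tD/n$ with $\tD = (\fnorm{\bF}^2 + n\trace(\bS))^{1/2}$, and invoke \Cref{lem: stein-EF} to conclude. Your version merely makes explicit the conditioning on $\bX$ and the tower-property step that the paper leaves implicit.
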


\begin{proposition} \label{prop: V6}
	Suppose that \Cref{assu: design,,assu: regime,assu: tau} hold. 
	Let \[\bQ_2 = \frac{\frac{1}{n^2} \big( \bF^\top \bZ\bZ^\top \bF - \bF^\top\bF (p\bI_T - \hbA)+ (n\bI_T -\hbA) \bH^\top\bZ^\top\bF \big)}{(\fnorm{\bH}^2 + \fnorm*{\bF}^2/n)n^{-\frac 12}}, 
	\]
	then $\E [\fnorm*{\bQ_2}^2] \le C(\tau') (T\wedge (1+\frac{p}{n})) (1 + \frac pn)$ under \Cref{assu: tau}(i), and 
	 $\E [I(\Omega)\fnorm*{\bQ_2}^2] \le C(\gamma, c)$ under \Cref{assu: tau}(ii) for some set $\Omega$ with $\P(\Omega)\to 1$. 
\end{proposition}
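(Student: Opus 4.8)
The plan is to realize the quadratic form $\bF^\top\bZ\bZ^\top\bF$ as an instance of the second-moment Gaussian integration-by-parts identity of \Cref{lem:steinX} (and its localized version \Cref{cor:steinX}). Writing $D=(\fnorm{\bH}^2+\fnorm{\bF}^2/n)^{1/2}$, I would set $\bU=\bF/(\sqrt n D)\in\R^{n\times T}$ and, crucially, $\bV=\bZ^\top\bF/(\sqrt n D)\in\R^{p\times T}$, so that $\bU^\top\bZ\bV=(nD^2)^{-1}\bF^\top\bZ\bZ^\top\bF$ and $\bQ_2=n^{-1/2}\bigl[\bU^\top\bZ\bV-\text{(correction)}\bigr]$ once the remaining terms of the numerator are matched. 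This normalization gives $\fnorm{\bU}\le1$ and $\fnorm{\bV}\le\opnorm{\bZ}=:K$ with $\fnorm{\bF}\le\sqrt n D$, and on $U_3$ one has $K\le 2\sqrt n+\sqrt p$, hence $\E[K^2]\le C(\gamma)n$ by \eqref{eq: opnorm-Z}. The maps $\bZ\mapsto D^{-1}\bF/\sqrt n$ and $\bZ\mapsto D^{-1}\bZ^\top\bF/n$ are Lipschitz with the constants furnished by \Cref{lem: lipschitz elastic-net,lem: lipschitz-Lasso}, which is exactly what licenses the use of \Cref{lem:steinX,cor:steinX}.

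Next I would identify the correction $\sum_{ij}\frac{\partial}{\partial z_{ij}}\bigl(\bU^\top\be_i\be_j^\top\bV\bigr)$. By the product rule it splits into a part where the derivative falls on the \emph{explicit} $\bZ^\top$ inside $\bV$ and a part where it falls on the $\bF$'s inside $\bU$ and $\bV$. Using $\frac{\partial\bZ^\top}{\partial z_{ij}}=\be_j\be_i^\top$ the first part contracts and sums $\sum_j1=p$, producing exactly the $p\,\bF^\top\bF/(nD^2)$ term. For the second part I would substitute the derivative formula of \Cref{lem: Dijlt}: the divergence identity $\sum_i D_{ij}^{it}=-\be_j^\top\bH(n\bI_T-\hbA)\be_t$ collapses the $\bF$-derivatives into the linear term $(n\bI_T-\hbA)\bH^\top\bZ^\top\bF/(nD^2)$ and the one-sided correction $\bF^\top\bF\hbA/(nD^2)$. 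The key point of choosing $\bV=\bZ^\top\bF$ rather than $\bV=\bF$ is that these are precisely the terms appearing in the numerator of $\bQ_2$, so that $\bU^\top\bZ\bV$ minus its leading correction equals $(nD^2)^{-1}$ times the numerator of $\bQ_2$, with \emph{no} spurious $\bH^\top\bH$ term; the only discrepancy is a remainder built from the $\Delta_{ij}^{lt}$ pieces of \Cref{lem: Dijlt} and from differentiating the normalizer $1/D$.

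It then remains to bound this remainder and take expectations. The $\Delta$-remainder and the $\partial(1/D)$ terms are controlled using $\opnorm{\bN}\le1$ (\Cref{lem: MN}), $\opnorm{\bI_T-\hbA/n}\le1$ (\Cref{lem: aux-tau>0}(i), \Cref{lem: aux-tau=0}(i)), and the derivative-sum bound $\frac1n\sum_{ij}\fnorm*{\partial(\bF/D)/\partial z_{ij}}^2\le C(\tau')(T\wedge\tfrac pn)$ of \Cref{lem: partialF} together with \Cref{cor: partialD}, which also yields $\norm{\bU}_\partial^2\le C(\tau')(T\wedge\tfrac pn)$ and an analogous estimate for $\norm{\bV}_\partial$. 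Feeding $K^2$ and these quantities into \Cref{lem:steinX}, the dominant contributions are of order $K^2$ and $K^2\norm{\bU}_\partial^2$; after the $n^{-1/2}$ prefactor in $\bQ_2$ (whose square is $n^{-1}$) and after using $\E[K^2]\le C(\gamma)n$ this gives $\E[\fnorm*{\bQ_2}^2]\le C(\tau')\bigl(T\wedge(1+\tfrac pn)\bigr)(1+\tfrac pn)$ under \Cref{assu: tau}(i), since $1+(T\wedge\tfrac pn)\asymp T\wedge(1+\tfrac pn)$. Under \Cref{assu: tau}(ii) I would run the same argument on $\Omega=U_1\cap U_2\cap U_3$ via \Cref{cor:steinX}, replacing the $\tau'$-dependent constants by the $\eta$-dependent ones of \Cref{lem: lipschitz-Lasso} and \Cref{cor: partialD}(2), and invoke \eqref{eq: P-Omega} to guarantee $\P(\Omega)\to1$.

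I expect the main obstacle to be the bookkeeping in the second step: correctly separating the derivative contributions that assemble into the leading terms $\bF^\top\bF\hbA$ and $(n\bI_T-\hbA)\bH^\top\bZ^\top\bF$ from those that belong to the remainder, and then verifying that this remainder --- the $\Delta_{ij}^{lt}$ contributions together with the terms coming from differentiating $1/D$ --- does not exceed the claimed order. A secondary but essential point is that $\bV=\bZ^\top\bF/(\sqrt n D)$ is itself linear in $\bZ$, so both its Frobenius norm and its Lipschitz constant must be bounded; this is the content of \Cref{lem: lipschitz-Lasso}(2) (and its elastic-net analogue), and it is what makes the factor $(1+\tfrac pn)$ enter, through $K^2=\opnorm{\bZ}^2$, in the final bound.
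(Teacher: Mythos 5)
Your route is the paper's own proof in all essentials: the same choice $\bU\propto \bF/(\sqrt n D)$, $\bV\propto \bZ^\top\bF/D$ (the paper puts an extra $n^{-1/2}$ inside $\bV$ rather than outside, which is immaterial), the same application of \Cref{lem:steinX} (and of \Cref{cor:steinX} on $\Omega=U_1\cap U_2\cap U_3$ under \Cref{assu: tau}(ii), with \eqref{eq: P-Omega}), the same product-rule split of the divergence correction into a $\partial\bZ^\top$ part giving $p\,\bF^\top\bF$, two $\partial\bF$ parts, and a $\partial(1/D)$ part, and the same final estimates via \Cref{lem: partialF}, \Cref{cor: partialD}, \Cref{lem: lipschitz elastic-net}, \Cref{lem: lipschitz-Lasso} and \eqref{eq: opnorm-Z}.

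There is, however, one concrete error in the bookkeeping, and it sits exactly at the step you flagged as the main obstacle. You assert that the divergence identity $\sum_i D_{ij}^{it}=-\be_j^\top\bH(n\bI_T-\hbA)\be_t$ produces \emph{both} leading terms $(n\bI_T-\hbA)\bH^\top\bZ^\top\bF$ and $\bF^\top\bF\hbA$, and that "the only discrepancy is a remainder built from the $\Delta_{ij}^{lt}$ pieces" plus the $\partial(1/D)$ terms. That attribution is wrong, and following it literally breaks the proof. The divergence identity accounts only for the term $(n\bI_T-\hbA)\bH^\top\bZ^\top\bF$ (derivative hitting $\bF$ inside $\bU$). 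The term $\bF^\top\bF\hbA$ instead comes from the $\Delta_{ij}^{lt}$ pieces when the derivative hits $\bF$ inside $\bV$: one must show $\sum_{ijlt}\be_t\Delta_{ij}^{lt}\be_l^\top\bZ\be_j\be_i^\top\bF=-\hbA\,\bF^\top\bF$, which requires unwinding the definition of $\Delta_{ij}^{lt}$ through $\bM^\dagger$ and matching it with the formula \eqref{eq: hbA-matrix} for $\hbA$ (this is the second identity of \Cref{lem: Rems}); meanwhile the $D_{ij}^{lt}$ pieces of that same sum are what form the genuine remainder $\bJ_2$, bounded by $n^{1/2}\opnorm{\bZ}\fnorm{\bH}\fnorm{\bF}$. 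If you instead discard all $\Delta$ contributions as remainder, the discarded quantity contains $-\hbA\,\bF^\top\bF$, whose normalized size $n^{-3/2}D^{-2}\fnorm{\hbA\,\bF^\top\bF}$ can be as large as $n^{1/2}$ (since $\opnorm{\hbA}$ is generically of order $n$ in this regime while $\fnorm{\bF}^2/(nD^2)\le 1$), so the claimed $O(1)$ bound on $\E\fnorm{\bQ_2}^2$ cannot be obtained. The missing ingredient is precisely this $\Delta$-to-$\hbA$ identity; with it, your decomposition closes and the rest of your plan (norm bounds on the remainders, Lipschitz/derivative estimates, localization for the Lasso case) matches the paper's argument.
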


\begin{proposition} \label{prop: V8}
	Suppose that \Cref{assu: design,,assu: regime,assu: tau} hold. 
	Let $\Xi= (n\bI_T -\hbA)\bH^\top\bZ^\top\bF$, and 
	 \[
	\bQ_3 = \frac{\frac{1}{n^2} \big( p\bF^\top \bF -\bF^\top \bZ\bZ^\top \bF - (n\bI_T -\hbA)\bH^\top\bH (n\bI_T -\hbA) -  \Xi - \Xi^\top\big)}{(\fnorm{\bH}^2 + \fnorm*{\bF}^2/n)n^{-\frac 12}},
	\]
	then $\E [\fnorm*{\bQ_3}] \le C(\gamma,\tau')$ under \Cref{assu: tau}(i), and $\E [I(\Omega)\fnorm*{\bQ_3}] \le C(\gamma, c)$ under \Cref{assu: tau}(ii) for some set $\Omega$ with $\P(\Omega)\to 1$. 
\end{proposition}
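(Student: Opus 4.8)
The plan is to recognize $\bQ_3$ as (a rescaling of) the quantity controlled by the chi-square-type estimate in \Cref{lem: Chi2type}, applied to the normalized residual field $\bU=\bV=\bF/(\sqrt n D)$ with $D=(\fnorm{\bH}^2+\fnorm{\bF}^2/n)^{1/2}$. Since $D\ge \fnorm{\bF}/\sqrt n$ we have $\fnorm{\bU}\le 1$ almost surely, so the hypothesis of \Cref{lem: Chi2type} is met, and $\bU$ is (locally) Lipschitz by \Cref{lem: lipschitz elastic-net} (resp.\ \Cref{lem: lipschitz-Lasso} on $U_1\cap U_2$ when $\tau=0$). A direct rescaling gives
\[
\bQ_3 = n^{-1/2}\Bigl(p\bU^\top\bU - \bU^\top\bZ\bZ^\top\bU - \tfrac{1}{nD^2}\bigl[(n\bI_T-\hbA)\bH^\top\bH(n\bI_T-\hbA)+\Xi+\Xi^\top\bigr]\Bigr),
\]
so it suffices to show that the bracketed deterministic target matches, up to the chi-square error of \Cref{lem: Chi2type}, the ``main part'' of the centered quantity $p\bU^\top\bU-\sum_j(\cdots)(\cdots)$ controlled by that lemma.

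First I would expand the centering sum in \Cref{lem: Chi2type}, whose generic factor is $\sum_i\partial_{ij}\bU^\top\be_i - \bU^\top\bZ\be_j$. Using \Cref{lem: Dijlt}, the divergence splits as $\sum_i\partial_{ij}F_{it}=\sum_i D_{ij}^{it}+\sum_i\Delta_{ij}^{it}$ with the explicit leading part $\sum_i D_{ij}^{it}=-\be_j^\top\bH(n\bI_T-\hbA)\be_t$. Substituting only this $D_{ij}$-part and performing the outer-product-over-$j$ bookkeeping (using $\sum_j\be_j\be_j^\top=\bI_p$, so that $\sum_j\bH^\top\be_j\be_j^\top\bH=\bH^\top\bH$), the main part of the centering reproduces \emph{exactly} $\tfrac{1}{nD^2}[(n\bI_T-\hbA)\bH^\top\bH(n\bI_T-\hbA)+\Xi+\Xi^\top+\bF^\top\bZ\bZ^\top\bF]$, i.e.\ the deterministic target together with the $\bU^\top\bZ\bZ^\top\bU$ term that also appears above. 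Hence \Cref{lem: Chi2type} directly controls the difference, contributing after the factor $n^{-1/2}$ a term of order $n^{-1/2}(\|\bU\|_{\partial}^2+\sqrt p(1+\|\bU\|_{\partial}))$; by \Cref{lem: partialF} we have $\|\bU\|_{\partial}^2\le C(\tau')(T\wedge p/n)$, and since $p/n\le\gamma$ this is $\le C(\gamma,\tau')$ (resp.\ $C(\gamma,c)$ when $\tau=0$).

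It then remains to bound the two remainder sources: (i) the correction $\sum_i\Delta_{ij}^{it}$, and (ii) the derivative of the normalizer $D$ hidden in $\partial_{ij}(\bF/D)$. For (ii), Cauchy--Schwarz over $j$ together with \Cref{cor: partialD}, namely $\sum_{ij}(\partial_{ij}D)^2\le n^{-1}D^2C(\tau')$, makes the contribution $o(1)$. For (i), I would assemble the $\Delta$-terms into $\bW\in\R^{T\times p}$ with $\bW_{tj}=\sum_i\Delta_{ij}^{it}$, and by simplifying the Kronecker structure (using $\bX=\bZ\bSigma^{1/2}$) obtain $\fnorm{\bW}=\|(\bI_T\otimes\bSigma^{1/2})\bM^\dagger(\bI_T\otimes\bX^\top)\vec(\bF)\|$. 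The key estimate is $\opnorm{(\bI_T\otimes\bSigma^{1/2})\bM^\dagger(\bI_T\otimes\bSigma^{1/2})}\le(\tau' n)^{-1}$ (resp.\ $(\eta n)^{-1}$ on $U_1\cap U_2$ when $\tau=0$): this holds because $\bM$ is supported on the active block $\{(t,k):k\in\hat{\mathscr S}\}$ and there dominates $\tau n(\bI_T\otimes\bP_{\hat{\mathscr S}})$ (resp.\ $\eta n(\bI_T\otimes\bSigma)$ via the restricted-eigenvalue event $U_2$). Combined with $\opnorm{\bZ}\le 2\sqrt n+\sqrt p$ on $U_3$ and $\fnorm{\bF}\le\sqrt n D$, this yields $\fnorm{\bW}\le C(\gamma,\tau')D$, so after the $n^{-1/2}$ scaling this remainder is also $o(1)$.

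Finally, under \Cref{assu: tau}(i) all bounds hold deterministically (using $\E\opnorm{\bZ/\sqrt n}^2\le C(\gamma)$ from \eqref{eq: opnorm-Z} to pass to expectations), giving $\E\fnorm{\bQ_3}\le C(\gamma,\tau')$; under \Cref{assu: tau}(ii) they hold on $\Omega=U_1\cap U_2\cap U_3$ with $\P(\Omega)\to1$ and $\P(\Omega^c)\le C(\gamma,c)/T$ by \eqref{eq: P-Omega} (here $T\le e^{\sqrt n}$ is used), yielding $\E[I(\Omega)\fnorm{\bQ_3}]\le C(\gamma,c)$; the localization handles the fact that $\bU$ is only Lipschitz on $\Omega$ when $\tau=0$. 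The main obstacle is the clean identification step: verifying that the $D_{ij}$-part of the centering reconstructs $(n\bI_T-\hbA)\bH^\top\bH(n\bI_T-\hbA)+\Xi+\Xi^\top+\bF^\top\bZ\bZ^\top\bF$ on the nose, and that everything outside this form (the $\Delta$- and $\partial D$-corrections) is genuinely lower order. The technical linchpin making the $\Delta$-remainder negligible is the operator-norm control of $\bM^\dagger$ on the active block.
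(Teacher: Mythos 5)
Your overall architecture coincides with the paper's proof: both apply \Cref{lem: Chi2type} to $\bU=\bV=n^{-1/2}D^{-1}\bF$, both use \Cref{lem: Dijlt} to split each centering factor $\sum_i \partial_{ij}\bU^\top\be_i-\bU^\top\bZ\be_j$ into the leading $D_{ij}$-part (whose outer product over $j$ reconstructs $(n\bI_T-\hbA)\bH^\top\bH(n\bI_T-\hbA)+\Xi+\Xi^\top+\bF^\top\bZ\bZ^\top\bF$, exactly the term $\bW_1\bW_1^\top$ in the paper's proof) plus the $\Delta$- and $\partial D$-corrections, and both invoke \Cref{lem: partialF}, \Cref{cor: partialD}, and the localization $\Omega=U_1\cap U_2\cap U_3$ in the Lasso case. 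Where you diverge from the paper is the treatment of the $\Delta$-remainder, and that step contains a genuine gap.

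The problem is your identity $\fnorm{\bW}=\norm{(\bI_T\otimes\bSigma^{1/2})\bM^\dagger(\bI_T\otimes\bX^\top)\vec(\bF)}$. Writing out $\bW_{tj}=\sum_i\Delta_{ij}^{it}$ from \Cref{lem: Dijlt} gives
\begin{equation*}
\bW_{tj}=-\sum_{i,t'}F_{it'}\,\bigl(\be_t\otimes\bX^\top\be_i\bigr)^\top\bM^\dagger\bigl(\be_{t'}\otimes\bSigma^{1/2}\be_j\bigr),
\end{equation*}
so the free task index $t$ pairs with the $\bX^\top\be_i$ side of $\bM^\dagger$, whereas in $(\bI_T\otimes\bSigma^{1/2})\bM^\dagger\vec(\bX^\top\bF)$ the free index $t$ pairs with the $\bSigma^{1/2}\be_j$ side. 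The two objects are related by the partial transpose of $\bM^\dagger$ over the task index, and they need not coincide because the off-diagonal task blocks of $\bM^\dagger$ are not symmetric (the matrices $\bH^{(k)}$ couple the tasks). This is not a harmless relabeling: operator norms do not survive partial transposition. For instance $\bI_{d^2}+\mathrm{SWAP}$ is positive semi-definite with operator norm $2$, yet its partial transpose $\bI_{d^2}+d\,\bpsi\bpsi^\top$ (with $\bpsi$ the normalized vector $d^{-1/2}\sum_i \be_i\otimes\be_i$) has operator norm $1+d$. Hence your key estimate $\opnorm{(\bI_T\otimes\bSigma^{1/2})\bM^\dagger(\bI_T\otimes\bSigma^{1/2})}\le(\tau' n)^{-1}$ (which is itself correct) cannot be applied to the partial-transposed quadratic form, and the conclusion $\fnorm{\bW}\le C(\gamma,\tau')D$ --- i.e.\ $\fnorm{\bW_2}\le C n^{-1/2}$ in the paper's normalization --- is unsupported; it is also strictly stronger than anything the paper proves, which should have been a warning sign. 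The paper avoids the issue entirely: it bounds $\opnorm{\bW_2}\le(\tau')^{-1/2}$ by a triangle inequality over $i$, using $\opnorm{(\bI_T\otimes\be_i^\top\bX)\bM^\dagger(\bI_T\otimes\bSigma^{1/2})}\le (n\tau)^{-1/2}\opnorm{\bSigma}^{1/2}$ together with Cauchy--Schwarz on $\sum_i\norm{\bF^\top\be_i}$, gets $\fnorm{\bW_2}\le\sqrt{T}\,C(\tau')$ from the rank bound, and then controls the interaction with the main term via $\fnorm{\bA\bB}\le\fnorm{\bA}\opnorm{\bB}$, so your Frobenius-norm claim is never needed. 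To repair your argument, replace the vectorization step by this operator-norm-plus-rank bound (or prove a bound on the partial-transposed form directly); the naive transfer you propose does not go through.
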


\section{Proofs of main results}\label{sec: proof-thms}
In this appendix, we provide proofs of the theoretical results in \Cref{sec: main-results} of the pull paper and  \Cref{sec:out-of-sample} of this supplement. 
\subsection{Proof of \saferef{Proposition}{prop: oracle}}
\begin{proof}[Proof of \Cref{prop: oracle}]
	With $\bS=\sum_{t=1}^T \sigma_t^2 \bu_t \bu_t^T$
	the spectral decomposition of $\bS$,
	we have $\fnorm{\bE^\top\bE-n\bS}^2
	=\sum_{t\in[T]}\sum_{t'\in[T]}
	[\bu_{t'}^T((\bE^\top\bE-n\bS) \bu_t]^2$.
	We now compute the expectation of one term
	indexed by $(t,t')$.
	The random variable $\bu_{t'}^T(\bE^\top\bE-n\bS) \bu_t$
	is the sum of $n$ \iid mean zero random variables
	with the same distribution as $z_{t'} z_{t}-\bu_{t'}^T\bS\bu_t$
	where $(z_t,z_{t'})\sim \calN_2(\mathbf{0},\diag(\sigma_t^2,\sigma_{t'}^2))$. Thus
	$$
	\E[(\bu_{t'}^T(\bE^\top\bE-n\bS) \bu_t)^2]
	=
	n\text{Var}[z_{t'} z_{t}-\bu_{t'}^T\bS\bu_t]
	=
	n2\sigma_t^4I_{t=t'}
	+
	n\sigma_t^2\sigma_{t'}^2 I_{t\ne t'}
	$$
	due to $\text{Var}[\chi^{2}_1]=2$
	if $t=t'$ and independence if $t\ne t'$.
	Summing over all $(t,t')\in[T] \times [T]$
	yields 
	$2n\sum_{t=1}^T \sigma_t^4
	+n \sum_{t\ne t'}\sigma_t^2\sigma_{t'}^2
	=n\sum_{t=1}^T \sigma_t^4
	+n(\sum_{t=1}^T\sigma_t^2)^2
	= n \fnorm{\bS}^2 
	+ n [\trace(\bS)]^2
	$
	as desired.
	
	The inequality simply follows from
	$\fnorm{\bS}^2\le [\trace(\bS)]^2$ since $\bS$ is positive semi-definite.
\end{proof}

\subsection{Proof of \saferef{Proposition}{prop: mom}}
\begin{proof}[Proof of Proposition~\ref{prop: mom}]
	Without of loss of generality, we assume $\bSigma = \bI_p$. For general positive definite $\bSigma$, the proof follows by replacing $(\bX, \bB^*)$ with $(\bX \bSigma^{-\frac12}, \bSigma^{\frac12}\bB^*)$. 
	
	We first derive the method-of-moments estimator $\hbS_{\rm{(mm)}}$. Under \Cref{assu: noise,,assu: design} with $\bSigma = \bI_p$, $\bX$ has \iid rows from $\calN_p(\mathbf 0, \bI_p)$, $\bE$ has \iid rows from $\calN_T(\bf0, \bS)$, and $\bX$ and $\bE$ are independent. Then, the expectations of $\bY^\top \bY$ and  $\bY^\top\bX \bX^\top\bY$ are given by 
	\begin{align}\label{eq:yy}
		\E (\bY^\top \bY) &= \E \big[(\bX\bB^* + \bE)^\top (\bX\bB^* + \bE)\big] = n (\bB^{*\top}\bB^* + \bS), 
	\end{align}
	and 
	\begin{align}\label{eq:yxxy}
		\E (\bY^\top\bX\bX^\top\bY ) &= \E \big[(\bX\bB^* + \bE)^\top \bX \bX^\top (\bX\bB^* + \bE)\big]\notag\\
		&= \E (\bB^{*\top}\bX^\top\bX \bX^\top\bX\bB^*) + \E (\bE^\top\bX \bX^\top\bE)\notag\\
		&= \bB^{*\top}\E(\bX^\top\bX \bX^\top\bX)\bB^* + \E (\bE^\top\bX \bX^\top\bE)\notag\\
		&= n(n+p+1)  \bB^{*\top}\bB^* + np\bS,
	\end{align}
	where the last line uses
	\begin{align*}
		&\E (\bX^\top\bX \bX^\top\bX) \\
		=~& \E \Big[\sum_{i=1}^n (\bx_i\bx_i^\top) \sum_{l=1}^n(\bx_l\bx_l^\top)\Big]\\
		=~& \sum_{i\ne l}\E(\bx_i\bx_i^\top \bx_l\bx_l^\top) + \sum_{i= l} \E (\bx_i\bx_i^\top \bx_l\bx_l^\top)\\
		=~& n(n-1) \bI_p^2 + n\E (\bx_1\bx_1^\top \bx_1\bx_1^\top)\\
		=~& n(n-1) \bI_p + n [2\bI_p^2 + \trace(\bI_p) \bI_p]\\
		=~& n(n+p+1) \bI_p,
	\end{align*}
	and 
	\begin{align*}
		\E (\bE^\top\bX \bX^\top\bE) 
		= \E\big[ \E (\bE^\top\bX \bX^\top\bE |\bE)\big] 
		= \E\big[ \bE^\top \E(\bX \bX^\top)\bE\big] 
		= np \bS.
	\end{align*}
	
	Solving for $\bS$ from the system of equations \eqref{eq:yy} and \eqref{eq:yxxy}, we obtain the method-of-moments estimator
	\begin{align*}
		\hbS_{\rm{(mm)}} = \frac{(n+p+1)}{n(n+1)} \bY^\top \bY - \frac{1}{n(n+1)}  \bY^\top\bX \bX^\top\bY,
	\end{align*}
	and $\E [\hbS_{\rm{(mm)}} ]= \bS$. 
	
	Now we derive the variance lower bound for $\hbS_{\rm{(mm)}}$. Since $\E[\hbS_{\rm{(mm)}} ] = \bS$, $\E \big[\fnorm{\hbS_{\rm{(mm)}} - \bS}^2\big] = \sum_{t, t'} \text{Var}\big\{[\hbS_{\rm{(mm)}} ]_{t,t'}\big\}.$ By definition of $\hbS_{\rm{(mm)}} $, 
	\begin{align*}
		[\hbS_{\rm{(mm)}} ]_{t,t'} = \frac{n+p+1}{n(n+1)} [\by^{(t)}]^\top \by^{(t')}  - \frac{1}{n(n+1)}[\by^{(t)}]^\top \bX\bSigma^{-1}\bX^\top\by^{(t')}.
	\end{align*}
	
	Since $\by^{(t)} = \bX \bbeta^{(t)} + \bep^{(t)},\quad \by^{(t')} = \bX \bbeta^{(t')} + \bep^{(t')},$ for $t\ne t'$,  
	without loss of generality, we assume $\bbeta^{(t)} =  a_0\be_1$ and $\bbeta^{(t')} =  a_1\be_1 + a_2\be_2$ for some constants $a_0, a_1, a_2$. 
	If necessary, we could let $\bu_1 = \bbeta^{(t)}/\norm{\bbeta^{(t)}}$, and $\bu_2 = \tbu_2/\norm{\tbu_2}$ where $\tbu_2 =\bbeta^{(t')} - \bP_{\bu_1}\bbeta^{(t')}$, and completing the basis to obtain an orthonormal basis $\{\bu_1, \bu_2, \ldots, \bu_p\}$ for $\R^p$. Let $\bU =[\bu_1, \bu_2, \ldots, \bu_p]$, then $\bU$ is an orthogonal matrix, hence $\bX\bU$ and $\bX$ have the same distribution, only the first coordinate of $\bU^\top\bbeta^{(t)}$ is nonzero, and only the first two coordinates of $\bU^\top\bbeta^{(t')}$ are be nonzero. That is, we could perform change of variables by replacing $(\bX, \bbeta^{(t)}, \bbeta^{(t')})$ with $(\bX\bU, \bU^\top\bbeta^{(t)}, \bU^\top\bbeta^{(t')})$. 
	
	Therefore,  $\by^{(t)}$ and $\by^{(t')}$ are independent of $\{\bX\be_j: 3\le j\le p\}$. 
	Let $\calF = \sigma(\by^{(t)}, \by^{(t')}, \bX\be_1, \bX\be_2)$ be the $\sigma-$field generated by $(\by^{(t)}, \by^{(t')}, \bX\be_1, \bX\be_2)$, then 
	\begin{align*}
		\text{Var}\big\{[\hbS_{\rm{(mm)}} ]_{t,t'}\big\} 
		&\ge \E\big[ \text{Var}\big\{[\hbS_{\rm{(mm)}} ]_{t,t'}|\calF\big\}\big]= \frac{1}{n^2(n+1)^2} \E\big[ \text{Var}\big\{ [\by^{(t)}]^\top \bX\bX^\top\by^{(t')}|\calF\big\}\big]. 
	\end{align*}
	Note that in the above display, 
	\begin{align*}
		&[\by^{(t)}]^\top \bX\bX^\top\by^{(t')} 
		= \sum_{j=1}^2[\by^{(t)}]^\top \bX\be_j\be_j^\top\bX^\top\by^{(t')} 
		+ \sum_{j=3}^p[\by^{(t)}]^\top \bX\be_j\be_j^\top\bX^\top\by^{(t')}, 
	\end{align*}
	where the first term is measurable with respect to $\calF$, and the second term is a quadratic form
	\begin{align*}
		&\sum_{j=3}^p[\by^{(t)}]^\top \bX\be_j\be_j^\top\bX^\top\by^{(t')} = \sum_{j=3}^p \be_j^\top \bX^\top\by^{(t')} [\by^{(t)}]^\top \bX \be_j =  \bxi^\top \bLambda \bxi,
	\end{align*}
	here $\bxi = [\be_3^\top\bX^\top, \ldots, \be_p^\top\bX^\top]^\top\sim \calN(\mathbold{0}, \bI_{n(p-2)})$, and $\bLambda = \bI_{p-2} \otimes \by^{(t')} [\by^{(t)}]^\top$. 
	Thus, for $t\ne t'$,
	\begin{align*}
		\text{Var}\big\{[\hbS_{\rm{(mm)}} ]_{t,t'}\big\} 
		\ge~& \frac{1}{n^2(n+1)^2} \E \Big\{\text{Var}\big\{ \bxi^\top \bLambda \bxi|\calF\big\}\Big\}\\
		=~& \frac{1}{n^2(n+1)^2} \E \Big\{ \fnorm{\bLambda}^2 + \trace(\bLambda^2)\Big\}\\
		\ge~& \frac{1}{n^2(n+1)^2} \E [ \fnorm{\bLambda}^2 ]\\
		=~& \frac{p-2}{n^2(n+1)^2} \E[\norm{\by^{(t)}}^2\norm{\by^{(t')}}^2].
	\end{align*}
	For $t=t'$, using a similar argument we obtain 
	\begin{align*}
		\text{Var}\big\{[\hbS_{\rm{(mm)}} ]_{t,t'}\big\} 
		\ge~  \frac{p-1}{n^2(n+1)^2} \E[\norm{\by^{(t)}}^2\norm{\by^{(t')}}^2].
	\end{align*}
	Summing over all $(t,t')\in [T]\times [T]$ yields
	\begin{align*}
		\E \big[\fnorm{\hbS_{\rm{(mm)}} - \bS}^2\big] 
		&\ge \frac{p-2}{n^2(n+1)^2}  \sum_{t,t'}\E[\norm{\by^{(t)}}^2\norm{\by^{(t')}}^2]\\
		&= \frac{p-2}{n^2(n+1)^2} \E [\fnorm{\bY}^4]\\
		&\ge \frac{p-2}{n^2(n+1)^2} (\E [\fnorm{\bY}^2])^2\\
		&= \frac{p-2}{(n+1)^2} [\trace(\bS) + \norm{\bB^*}^2]^2.
	\end{align*}
\end{proof}


\subsection{Proof of \saferef{Theorem}{thm:covariance}}
\begin{proof}[Proof of \Cref{thm:covariance}]\label{proof:thm:covariance}
	Recall definition of $\hbS$ in \Cref{def: hbS}, and let $\bQ_1$, $\bQ_2$ be defined as in \Cref{prop: V1,prop: V6}. With $\bZ = \bX\bSigma^{-1/2}$, we obtain
	\begin{align*}
		&n^2\Big[\bQ_2 (\fnorm{\bH}^2 + \fnorm*{\bF}^2/n)n^{-\frac 12} - n^{-1} (n\bI_T - \hbA) \bQ_1 \big(\fnorm{\bS^{\frac 12}}\big(\fnorm*{\bF}^2/n + \trace(\bS)\big)^{\frac 12} n^{-\frac 12}\big) \Big]\\
		=~& \big( \bF^\top \bZ\bZ^\top \bF - \bF^\top\bF(p\bI_T - \hbA) + (n\bI_T -\hbA)\bH^\top\bZ^\top\bF\big) \\
		& - \big[(n\bI_T - \hbA) (\bF^\top\bF + \bH^\top\bZ^\top\bF- \bS (n\bI_T - \hbA))\big]\\
		=~& \big( \bF^\top \bZ\bZ^\top \bF - \bF^\top\bF (p\bI_T - \hbA)\big) - (n\bI_T - \hbA) (\bF^\top\bF  - \bS (n\bI_T - \hbA))\\
		=~& \bF^\top \bZ\bZ^\top \bF + \bF^\top\bF\hbA + \hbA\bF^\top\bF -(n+p)\bF^\top\bF + (n\bI_T - \hbA) \bS (n\bI_T - \hbA)
		\\
		=~& (n\bI_T - \hbA) \bS (n\bI_T - \hbA) + \bF^\top\bF\hbA + \hbA\bF^\top\bF - \bF^\top((n+p) \bI_T - \bZ\bZ^\top )\bF\\
		=~& (n\bI_T - \hbA) \bS (n\bI_T - \hbA) - (n\bI_T - \hbA) \hbS (n\bI_T - \hbA)\\
		=~& (n\bI_T - \hbA) (\bS - \hbS) (n\bI_T - \hbA).
	\end{align*}
	Therefore, by triangle inequality and $\opnorm*{\bI_T - \hbA/n}\le 1$ in \Cref{lem: aux-tau>0,lem: aux-tau=0},
	\begin{align*}
		&\fnorm[\big]{(\bI_T - \hbA/n) (\bS - \hbS) (\bI_T - \hbA/n)}\\
		\le~& \fnorm*{\bQ_2}n^{-\frac 12} (\fnorm{\bH}^2 + \fnorm*{\bF}^2/n) +\opnorm*{\bI_T - \hbA/n}\fnorm*{\bQ_1}n^{-\frac 12} \fnorm{\bS^{\frac 12}}\big(\fnorm*{\bF}^2/n + \trace(\bS)\big)^{\frac 12} \\
		\le~& \fnorm*{\bQ_2}n^{-\frac 12} (\fnorm{\bH}^2 + \fnorm*{\bF}^2/n) +\fnorm*{\bQ_1}n^{-\frac 12} \fnorm{\bS^{\frac 12}}\big(\fnorm*{\bF}^2/n + \trace(\bS)\big)^{\frac 12} \\
		\le~& \fnorm*{\bQ_2}n^{-\frac 12} (\fnorm{\bH}^2 + \fnorm*{\bF}^2/n) +\fnorm*{\bQ_1}n^{-\frac 12} \frac12 \big[\trace(\bS)+ \big(\fnorm*{\bF}^2/n + \trace(\bS)\big)\big] \\
		\le~ & (\fnorm*{\bQ_2} + \fnorm*{\bQ_1}) n^{-\frac 12} (\fnorm{\bH}^2 + \fnorm*{\bF}^2/n + \trace(\bS)).
	\end{align*}
Therefore, 
$$
\fnorm[\big]{(\bI_T - \hbA/n) (\bS -\hbS)(\bI_T - \hbA/n)} \le \Theta_1 n^{-\frac 12} (\fnorm{\bH}^2 + \fnorm*{\bF}^2/n + \trace(\bS)),
$$
where $\Theta_1 = \fnorm*{\bQ_1} + \fnorm*{\bQ_2}$. 
Note that we have $\E[ \fnorm*{\bQ_1}^2] \le 4$ from \Cref{prop: V1}. 
By \Cref{prop: V6}, we have 
		\begin{itemize}
			\item[(1)] under \Cref{assu: tau}(i), $\E [\fnorm*{\bQ_2}^2] \le C(\tau')(T \wedge (1 + \frac pn))(1 + \frac pn)$. 
			Hence 
			\begin{align*}
				\E [\Theta_1^2] \le 2\E[\fnorm*{\bQ_1}^2 +\fnorm*{\bQ_2}^2]
				&\le 2 [4 + C(\tau')(T \wedge (1 + \frac pn))(1 + \frac pn)]\\ 
				&\le C(\tau')(T \wedge (1 + \frac pn))(1 + \frac pn).
			\end{align*}
			
			\item[(2)] under \Cref{assu: tau}(ii), $\E [I(\Omega)\fnorm*{\bQ_2}^2] \le C(\gamma, c)$ with $\P(\Omega)
			\to 1$. Thus, 
			 $\E [I(\Omega)\Theta_1^2] \le 2\E[\fnorm*{\bQ_1}^2 +I(\Omega)\fnorm*{\bQ_2}^2]\le 
			 C(\gamma, c)
			 $. 
		\end{itemize}
\end{proof}

\subsection{Proof of \saferef{Theorem}{thm:generalization error}}
\begin{proof}[Proof of \Cref{thm:generalization error}]\label{proof:thm:generalization error}
	From the definitions of $\bQ_1, \bQ_2, \bQ_3$ in \Cref{prop: V1,,prop: V6,prop: V8}, we have
	\begin{align*}
		&(\bQ_2^\top + \bQ_3) n^{-\frac 12} (\fnorm{\bH}^2 + \fnorm*{\bF}^2/n) + (\bI_T - \hbA/n) \bQ_1 n^{-\frac 12}\fnorm{\bS^{\frac 12}}\big(\fnorm*{\bF}^2/n + \trace(\bS)\big)^{\frac 12}\\
		=~& \frac{1}{n} \bF^\top \bF - (\bI_T - \hbA/n) (\bH^\top\bH + \bS) (\bI_T - \hbA/n)\\
		=~& (\bI_T - \hbA/n)\big[n^{-1}(\bI_T - \hbA/n)^{-1} \bF^\top \bF (\bI_T - \hbA/n)^{-1} - (\bH^\top\bH + \bS)\big](\bI_T - \hbA/n)\\
		=~& (\bI_T - \hbA/n) (\hbR - \bR) (\bI_T - \hbA/n),
	\end{align*}
	where $\hbR \defas n^{-1}(\bI_T - \hbA/n)^{-1}\bF^\top \bF (\bI_T - \hbA/n)^{-1}$, and $\bR \defas \bH^\top\bH + \bS$.
	
	Therefore, by triangle inequality and $\opnorm*{\bI_T - \hbA/n}\le 1$ from \Cref{lem: aux-tau>0,lem: aux-tau=0},
	\begin{align*}
		&\fnorm[\big]{(\bI_T - \hbA/n) (\hbR - \bR) (\bI_T - \hbA/n)} \\
		\le~& (\fnorm*{\bQ_2} + \fnorm*{\bQ_3}) n^{-\frac 12} (\fnorm{\bH}^2 + \fnorm*{\bF}^2/n) + \fnorm*{\bQ_1}n^{-\frac 12} \fnorm{\bS^{\frac 12}}\big(\fnorm*{\bF}^2/n + \trace(\bS)\big)^{\frac 12} \\
		\le~& (\fnorm*{\bQ_2} + \fnorm*{\bQ_3} + \fnorm*{\bQ_1}) n^{-\frac 12} (\fnorm*{\bF}^2/n +\fnorm{\bH}^2 + \trace(\bS))\\
		=~& \Theta_2 n^{-\frac 12} (\fnorm*{\bF}^2/n +\fnorm{\bH}^2 + \trace(\bS)), 
	\end{align*}
	where $\Theta_2 = \fnorm*{\bQ_1} + \fnorm*{\bQ_2} + \fnorm*{\bQ_3}$. 
	By \Cref{prop: V1,,prop: V6,prop: V8}, we obtain 
$\E [\Theta_2] \le C(\gamma, \tau')$ under \Cref{assu: tau}(i)  
and 
$\E [I(\Omega)\Theta_2] \le C(\gamma, c)$ with $P(\Omega)\to1$ under \Cref{assu: tau}(ii). 

	Furthermore, since $\Theta_2 = O_P(1)$, and $\opnorm*{(\bI_T - \hbA/n)^{-1}} = O_P(1)$ from \Cref{lem: aux-tau>0,lem: aux-tau=0},
	\begin{align*}
		\fnorm{\hbR - \bR} &\le \opnorm*{(\bI_T - \hbA/n)^{-1}}^2 \Theta_2 n^{-\frac 12} (\fnorm*{\bF}^2/n +\fnorm{\bH}^2 + \trace(\bS))\\
		&= O_P(n^{-\frac 12})  (\fnorm*{\bF}^2/n +\fnorm{\bH}^2 + \trace(\bS)).
	\end{align*}
	Since $\frac{1}{n} \bF^\top \bF = (\bI_T - \hbA/n) \hbR (\bI_T - \hbA/n)$, taking trace of both sides gives $\frac 1n \fnorm*{\bF}^2 \le \norm{\hbR}_*$ thanks to $\opnorm{(\bI_T - \hbA/n)} \le1$. Note that $\norm{\bR}_* = \fnorm{\bH}^2 + \trace(\bS)$ by definition of $\bR$, we obtain 
	\begin{align}\label{eq: R-R}
		\fnorm{\hbR - \bR} &\le O_P(n^{-\frac 12})  (\norm{\hbR}_* + \norm{\bR}_*).
	\end{align}
	
	Since $\hbR$ and $\bR$ are both $T\times T$ positive semi-definite matrices, whose ranks are at most $T$, 
	\begin{align*}
		&\big|\|\hbR\|_* - \|\bR\|_*\big| \le  \|\hbR - \bR\|_*
		\le \sqrt{2T}\fnorm{\hbR - \bR}\\
		\le~& O_P( (T/n)^{\frac 12}) (\norm{\hbR}_* + \norm{\bR}_*) = o_P(1) (\norm{\hbR}_* + \norm{\bR}_*),
	\end{align*}
	thanks to $T = o(n)$. That is, 
	\begin{align*}
		\frac{\big|\|\hbR\|_* - \|\bR\|_*\big|}{\|\hbR\|_*+ \|\bR\|_*} \le O_P( (T/n)^{\frac 12}),
	\end{align*}
	which implies 
	$\frac{\|\bR\|_*}{\|\hbR\|_*} -1 = O_P( (T/n)^{\frac 12})$, \ie,
	\[
	\frac{\trace(\bS)+ \fnorm{\bH}^2}{\fnorm{(\bI_T - \hbA/n)^{-1}\bF^\top}^2/n} -1 = O_P( (T/n)^{\frac 12}) = o_P(1).
	\]
\end{proof}

\subsection{Proof of \saferef{Theorem}{thm:main}}
\begin{proof}[Proof of \Cref{thm:main}]
	This proof is based on results of \Cref{thm:covariance,thm:generalization error}. 
	We begin with the result of  \Cref{thm:generalization error}, 
	\begin{equation*}
		\frac{\trace(\bS)+ \fnorm{\bH}^2}{\fnorm{(\bI_T - \hbA/n)^{-1}\bF^\top}^2/n} \overset{p}\to 1. 
	\end{equation*}
	In other words, 
	\[
	\trace(\bS)+ \fnorm{\bH}^2 = (1 + o_P(1)) \fnorm{(\bI_T - \hbA/n)^{-1}\bF^\top}^2/n.
	\]
	Thus, the upper bound in \Cref{thm:covariance} can be bounded from above as follows
	\begin{align*}
		&\fnorm{(\bI_T - \hbA/n) (\hbS -\bS)(\bI_T - \hbA/n)} \\
		\le~&\Theta_1 n^{-\frac 12} ( \fnorm*{\bF}^2/n + \fnorm{\bH}^2 + \fnorm{\bS^{\frac 12}}^2) \\
		\le~& \Theta_1 n^{-\frac 12} (\fnorm*{\bF}^2/n + (1 + o_P(1)) \fnorm{(\bI_T - \hbA/n)^{-1}\bF^\top}^2/n)\\
		\le~& \Theta_1 n^{-\frac 12} \big(1 + (1 + o_P(1)) \opnorm{(\bI_T -\hbA)^{-1}}^2\big)\fnorm*{\bF}^2/n\\
		=~& O_P(n^{-\frac12}) \fnorm*{\bF}^2/n,
	\end{align*}
	Using $\opnorm{(\bI_T -\hbA)^{-1}}= O_P(1)$ again, it follows
	\begin{align}\label{eq: pf1}
		\fnorm{\hbS -\bS} \le O_P(n^{-\frac12})  \fnorm{\bF}^2/n.
	\end{align}
	A similar argument leads to 
	\begin{align}\label{eq: pf2}
		\fnorm{\hbS -\bS} \le O_P(n^{-\frac12}) (\trace(\bS) + \fnorm{\bH}^2).
	\end{align}
	\end{proof}

\subsection{Proof of \saferef{Corollary}{cor36}}
	\begin{proof}[Proof of \Cref{cor36}]
		Under \Cref{assu: tau}(i)  and \ref{assu: SNR}, we proceed to bound $\fnorm{\bF}^2/n$ in terms of $\trace(\bS)$.  
		Let $L(\bB)  = \frac{1}{2n}\fnorm*{\bY - \bX\bB }^2 + \lambda \norm{\bB}_{2,1} + \frac{\tau}{2} \fnorm{\bB}^2$ be the objective function in \eqref{eq: hbB}, then $L(\hbB) \le L(\bf0)$ by definition of $\hbB$. Thus, 
	\begin{align*}
		&\frac{1}{2n}\fnorm{\bF }^2 \le \frac{1}{2n}\fnorm{\bF }^2 + \lambda \norm{\hbB}_{2,1}
		+ \frac{\tau}{2} \fnorm{\hbB}^2 \le\frac{1}{2n}\fnorm{\bY }^2.
	\end{align*}
	Now we bound $\frac1n\fnorm{\bY}^2$ by Hanson-Wright inequality. 
	Since $\bY = \bX\bB^* + \bE$, the rows of $\bY$ are \iid $\calN_T(\bf0, \bSigma_{\by})$ with $\bSigma_{\by} = (\bB^*)^\top \bSigma \bB^*+ \bS$, then $\vec(\bY^\top) \sim \calN(\bf0, \bI_n \otimes \bSigma_{\by})$, and $\bxi \defas [\bI_n \otimes \bSigma_{\by}]^{-\frac12} \vec(\bY^\top)\sim \calN(\mathbold{0}, \bI_{nT})$. 
	Since 
	$\fnorm{\bY}^2 = [\vec(\bY^\top)]^\top \vec(\bY^\top) = \bxi^\top (\bI_n \otimes \bSigma_{\by}) \bxi$,
	 we apply the following variant of Hanson-Wright inequality.
	\begin{lemma}[Lemma 1 in \cite{laurent2000adaptive}]
		For $\bxi\sim \calN(\mathbold{0}, \bI_N)$, then 
		\begin{align*}
			\P(\bxi^\top \bA\bxi - \trace(\bA) \le 2 \sqrt{x}\fnorm{\bA} + 2x\opnorm{\bA}) \ge1 - \exp(-x).
		\end{align*}
	\end{lemma}
	In our case, take $\bA = (\bI_n \otimes \bSigma_{\by})$, then $\trace(\bA) = n\trace(\bSigma_{\by})$, $\fnorm{\bA}  =  \sqrt{n}\fnorm{\bSigma_{\by}}\le \sqrt{n} \trace(\bSigma_{\by})$, $\opnorm{\bA} = \opnorm{\bSigma_{\by}}\le \trace(\bSigma_{\by})$, 
	thus with probability at least $1 - \exp(-x)$, 
	\begin{align*}
		\fnorm{\bY}^2 - n\trace(\bSigma_{\by})
		\le 2 \sqrt{nx}\trace(\bSigma_{\by}) + 2x \trace(\bSigma_{\by}).
	\end{align*}
	
	Take $x=n$, then with probability at least $1 - \exp(-n)$,
	\begin{align*}
		\fnorm{\bF}^2/n \le  \fnorm{\bY}^2/n \le  5\trace(\bSigma_{\by}).
	\end{align*}
	Thus, $\fnorm{\bF}^2/n = O_P(1)\trace(\bSigma_{\by}).$ Together with \eqref{eq: pf1}, we obtain 
	\begin{equation*}
		\fnorm{\hbS -\bS} \le O_P(n^{-\frac12}) \trace(\bSigma_{\by}).
	\end{equation*}
	Note that by \Cref{assu: SNR}, $\trace(\bSigma_{\by}) = \fnorm{\bSigma^{\frac12} \bB^*}^2 + \trace(\bS) \le (1 + \mathfrak{snr})\trace(\bS).$
	Therefore, we obtain 
	\begin{equation*}
		\fnorm{\hbS -\bS} \le O_P(n^{-\frac12}) \trace(\bS).
	\end{equation*}
	Furthermore, since $\trace(\bS)\le \sqrt{T}\fnorm{\bS}$ and $T = o(n)$, we have
	$$
	\fnorm{\hbS -\bS} \le  O_P(\sqrt{ T/n}) \fnorm{\bS} = o_P(1) \fnorm{\bS}.
	$$
	Finally, since $\norm{\bS}_* = \trace(\bS)$, by triangular inequality
	\begin{equation*}
		\big|\norm{\hbS}_* - \trace(\bS)\big| 
		\le \norm{\hbS - \bS}_*
		\le \sqrt{T}\fnorm{\hbS - \bS}
		\le O_P(\sqrt{T/n}) \trace(\bS) 
		= o_P(1) \trace(\bS). 
	\end{equation*}
\end{proof}

\subsection{Proof of \saferef{Corollary}{cor37}}
\begin{proof}[Proof of \Cref{cor37}]
	For $\tau=0$, by the optimality of $\hbB$ in \eqref{eq: hbB},
	$$
	\frac{1}{2n}\fnorm{\bF}^2 + \lambda\|\hbB\|_{2,1}
	\le 
	\frac{1}{2n}\fnorm{\bE}^2  + \lambda \|\bB^*\|_{2,1}.
	$$
	Note that $\bF = \bE - \bX(\hbB - \bB^*)= \bE - \bZ\bH$, expanding the squares and rearranging terms yields
	\begin{equation}\label{eq:boundH1}
		\fnorm{\bZ\bH}^2 \le 2\langle \bE, \bZ\bH\rangle + 2n\lambda (\|\bB^*\|_{2,1} - \|\hbB\|_{2,1})  \le 2\langle \bE, \bZ\bH\rangle + 2n\lambda \|\hbB - \bB^*\|_{2,1}. 
	\end{equation}
	From assumptions in this corollary, $\hbB - \bB^*$ has at most $(1-c)n$ rows. Thus, in the event $U_2$, we have
	$$
	n\eta \fnorm{\bH}^2 = n\eta \fnorm{\bSigma^{1/2}(\hbB - \bB^*)}^2\le \fnorm{\bX(\hbB - \bB^*)}^2 = \fnorm{\bZ\bH}^2. 
	$$
	We bound the right-hand side two terms in \eqref{eq:boundH1} by Cauchy-Schwarz inequality,
	$$
	\quad \|\hbB - \bB^*\|_{2,1}\le \sqrt{(1-c)n} \fnorm{\hbB - \bB^*}\le  \frac{\sqrt{(1-c)n}}{\sqrt{\phi_{\min}(\bSigma)}} \fnorm{\bH} \le \frac{\sqrt{1-c}}{\sqrt{\eta \phi_{\min}(\bSigma)}} \fnorm{\bZ\bH},
	$$
	and $	\langle \bE, \bZ\bH\rangle \le \fnorm{\bE}  \fnorm{\bZ\bH} \le \fnorm{\bS^{\frac 12}} \opnorm{\bE\bS^{-\frac 12}} \fnorm{\bZ\bH}.$
	
	Therefore, by canceling a factor $\fnorm{\bZ\bH}$ from both sides of \eqref{eq:boundH1}, we have
	\begin{align*}
		\sqrt{n\eta} \fnorm{\bH} \le \fnorm{\bZ\bH} \le 2 \fnorm{\bS^{\frac 12}} \opnorm{\bE\bS^{-\frac 12}}  + \frac{2\sqrt{(1-c)}n\lambda}{\sqrt{\eta \phi_{\min}(\bSigma)}}.
	\end{align*}
	Using $(a + b)^2 \le 2a^2 + 2b^2$,  
	\begin{align*}
		\fnorm{\bH}^2 \le \frac{4}{n\eta} \trace(\bS) \opnorm{\bE\bS^{-\frac 12}}^2 + \frac{4(1-c)n\lambda^2}{\eta^2 \phi_{\min}(\bSigma)}.
	\end{align*}
	Hence, using $\lambda$ is of the form $\mu\sqrt{\trace(\bS)/n}$, we have 
	\begin{align}
		&\trace(\bS) + \fnorm{\bH}^2 \\
		\le~& (1 +4 \eta^{-1} n^{-1}\opnorm{\bE\bS^{-\frac 12}}^2) \trace(\bS) + \frac{4(1-c)\mu^2}{\eta^2 \phi_{\min}(\bSigma)}\trace(\bS)\\
		\le~& O_P(1) (1 + \mu^2)\trace(\bS),
	\end{align}
where we used that $n^{-1}\opnorm{\bE\bS^{-\frac 12}} = O_P(1)$ by \cite[Theorem II.13]{DavidsonS01} and $T = o(n)$. 
	Now, by \Cref{thm:main}, 
	\begin{align*}
		\fnorm{\hbS - \bS} \le O_P(n^{-\frac12}) [\trace(\bS) + \fnorm{\bH}^2]\le O_P(n^{-\frac12}) (1 + \mu^2)\trace(\bS), 
	\end{align*}
where the $O_P(\cdot) $ hides constants depending on $\gamma, c, \phi_{\min}(\bSigma)$ since $\eta$ is a constant that only depends on $\gamma, c$. 
\end{proof}

\subsection{Proof of \saferef{Theorem}{thm:out-of-sample}}\label{sec: out-of-sample proof}

\begin{proof}[Proof of \Cref{thm:out-of-sample}] \label{proof:thm:out-out-sample}
	From the definitions of $\bQ_2, \bQ_3$ in \Cref{prop: V6,prop: V8}, we have
	\begin{align*}
		&\bQ_2 + \bQ_2^\top + \bQ_3\\
		=~& \frac{n^{-2}\big( \bF^\top \bZ\bZ^\top \bF + 
			\hbA \bF^\top \bF + \bF^\top \bF \hbA - 
			p\bF^\top \bF -  (n\bI_T -\hbA)\bH^\top\bH (n\bI_T -\hbA)}{(\fnorm{\bH}^2 + \fnorm*{\bF}^2/n)n^{-\frac 12}}.
	\end{align*}
	Therefore, 
	\begin{align*}
		&\fnorm[\big]{
			(\bI_T -\hbA/n)\bH^\top\bH (\bI_T -\hbA/n) -  n^{-2} \big( \bF^\top \bZ\bZ^\top \bF + \hbA \bF^\top \bF + \bF^\top \bF \hbA - 
			p\bF^\top \bF\big)}\\
		=~& \fnorm{\bQ_2 +\bQ_2^\top + \bQ_3} (\fnorm{\bH}^2 + \fnorm*{\bF}^2/n)n^{-\frac 12}\\
		\le ~&\Theta_3(\fnorm{\bH}^2 + \fnorm*{\bF}^2/n)n^{-\frac 12},
	\end{align*}
	where $\Theta_3= 2\fnorm*{\bQ_2} + \fnorm*{\bQ_3}$. The conclusion thus follows by \Cref{prop: V6,prop: V8}.
\end{proof}

\section{Proofs of preliminary results}
\label{sec:appendix-preliminary}

\subsection{Proofs of results in \saferef{Appendix}{sec:opnorm-bound}}

\begin{proof}[Proof of \Cref{lem: aux-tau>0}]
	
	\begin{itemize}
	\item[(i)] For any $\bu\in \R^T$, by defintion \eqref{eq: hbA-matrix},
	\begin{align*}
		\bu^\top\hbA\bu &= \trace\big[ (\bu^{\top} \otimes \bX_{\hat{\mathscr{S}}})
		\bM^{\dagger}
		(\bu \otimes \bX^\top_{\hat{\mathscr{S}}})\big]\\
		&\le \trace\big[ (\bu^{\top} \otimes \bX_{\hat{\mathscr{S}}})
		[\bI_T \otimes (\bX_{\hat{\mathscr{S}}}^\top\bX_{\hat{\mathscr{S}}} + n\tau\bP_{\hat{\mathscr{S}}})^{\dagger}]
		(\bu \otimes \bX_{\hat{\mathscr{S}}}^\top)\big]\\
		&= \trace\big[ 
		(\bu^{\top}\bI_T \bu) \otimes [\bX_{\hat{\mathscr{S}}}(\bX_{\hat{\mathscr{S}}}^\top\bX_{\hat{\mathscr{S}}} + n\tau\bP_{\hat{\mathscr{S}}})^{\dagger}\bX_{\hat{\mathscr{S}}}^\top]\big]\\
		&= \norm*{\bu}^2\trace[\bX_{\hat{\mathscr{S}}}(\bX_{\hat{\mathscr{S}}}^\top\bX_{\hat{\mathscr{S}}} + n\tau\bP_{\hat{\mathscr{S}}})^{\dagger}\bX_{\hat{\mathscr{S}}}^\top]\\
		&= \norm*{\bu}^2\trace[\bX_{\hat{\mathscr{S}}}^\top\bX_{\hat{\mathscr{S}}}(\bX_{\hat{\mathscr{S}}}^\top\bX_{\hat{\mathscr{S}}} + n\tau\bP_{\hat{\mathscr{S}}})^{\dagger}]. 
	\end{align*}
	Let $r = \rank(\bX_{\hat{\mathscr{S}}})\le \min(n, |\hat{\mathscr{S}}|)$ be the rank of $\bX_{\hat{\mathscr{S}}}$, and $\hphi_1\ge\cdots\ge \hphi_{r}>0$ be the nonzero eigenvalues of $\frac 1n \bX_{\hat{\mathscr{S}}}^\top\bX_{\hat{\mathscr{S}}}$. We have
	\begin{align*}
		\opnorm{\hbA/n}&\le \frac 1n \trace[\bX_{\hat{\mathscr{S}}}^\top\bX_{\hat{\mathscr{S}}}(\bX_{\hat{\mathscr{S}}}^\top\bX_{\hat{\mathscr{S}}} + n\tau\bP_{\hat{\mathscr{S}}})^{\dagger}]\\
		&= \frac 1n \trace[\frac 1n \bX_{\hat{\mathscr{S}}}^\top\bX_{\hat{\mathscr{S}}}(\frac 1n \bX_{\hat{\mathscr{S}}}^\top\bX_{\hat{\mathscr{S}}} + \tau\bP_{\hat{\mathscr{S}}})^{\dagger}]\\
		&\le \frac rn \opnorm[\Big]{\frac 1n \bX_{\hat{\mathscr{S}}}^\top\bX_{\hat{\mathscr{S}}}(\frac 1n \bX_{\hat{\mathscr{S}}}^\top\bX_{\hat{\mathscr{S}}} + \tau\bP_{\hat{\mathscr{S}}})^{\dagger}}\\
		&\le \frac{\hphi_1}{\hphi_1 + \tau}\le 1.
	\end{align*}
	Thus, $\opnorm{\bI - \hbA/n}\le1$ as $\hbA$ is positive semi-definite. 
	
	\item[(ii)] Note that 
	$
	\opnorm{(\bI_T - \hbA/n)^{-1}} = (1- \opnorm*{\hbA/n})^{-1} \le 1 +  \frac{\hphi_1}{\tau},
	$
	where
	$
	\hphi_1 
	= \opnorm{\frac{1}{n} \bX_{\hat{\mathscr{S}}}^\top \bX_{\hat{\mathscr{S}}}} 
	\le  \opnorm{\frac{1}{n} \bX^\top \bX}\le \frac{1}{n}\opnorm{ \bX^\top\bSigma^{-\frac 12}}^2\opnorm{\bSigma}. 
	$
	Therefore, 
	\begin{itemize}
		\item[(1)] in the event $\{\opnorm{\bX\bSigma^{-\frac12}} <  2\sqrt{n}+\sqrt{p}\}$,  we have
		$$
		\opnorm{(\bI_T - \hbA/n)^{-1}} \le 1 + \tau^{-1} (2 + \sqrt{p/n})^2\opnorm{\bSigma} = 1 + (\tau')^{-1} (2 + \sqrt{p/n})^2. 
		$$
		\item[(2)]
		$\E[\hphi_1] \le  \E[n^{-1}\opnorm{ \bX^\top\bSigma^{-\frac 12}}^2\opnorm{\bSigma}] \le [(1 + \sqrt{p/n})^2 +n^{-1}]\opnorm{\bSigma}$  by \eqref{eq: opnorm-Z}.  
		Hence, 
		$$
		\E \opnorm{(\bI_T - \hbA/n)^{-1}}\le 1 + \tau^{-1} \E[\hphi_1]  \le 1 + (\tau')^{-1} [(1 + \sqrt{p/n})^2 +n^{-1}].
		$$
	\end{itemize}
	\end{itemize}

\end{proof}

\begin{proof}[Proof of \Cref{lem: aux-tau=0}]
	
	\begin{itemize}
		\item[(i)] For $\tau=0$, using the same arguement as proof of \Cref{lem: aux-tau>0}, we obtain 
		\begin{align*}
			\bu^\top\hbA\bu 
			\le 
			\norm{\bu}^2\trace[\bX_{\hat{\mathscr{S}}}^\top\bX_{\hat{\mathscr{S}}}(\bX_{\hat{\mathscr{S}}}^\top\bX_{\hat{\mathscr{S}}})^{\dagger}] 
			\le \norm{\bu}^2 |\hat{\mathscr{S}}|. 
		\end{align*}
		Thus, in the event $U_1$, we have $\opnorm{\hbA}/n \le |\hat{\mathscr{S}}|/n\le (1-c)/2<1$, hence 
		$$
		\opnorm{\bI_T - \hbA/n} \le 1. 
		$$
		\item[(ii)] In the event $U_1$, we have
		$\opnorm{(\bI_T - \hbA/n)^{-1}} = (1- \opnorm*{\hbA/n})^{-1}\le (1- (1-c)/2)^{-1}$. Furthermore, $\E [ I(U_1) \opnorm{(\bI_T - \hbA/n)^{-1}}]  \le (1- (1-c)/2)^{-1}.$
	\end{itemize}

\end{proof}
\begin{proof}[Proof of \Cref{lem: MN}]
	Since $\bM^\dagger\preceq \bM_1^\dagger = \bI_T \otimes (\bX_{\hat{\mathscr{S}}}^\top\bX_{\hat{\mathscr{S}}} + \tau n \bP_{\hat{\mathscr{S}}})^\dagger$, 
	\begin{align*}
		\opnorm{\bN} &= \opnorm{(\bI_T \otimes \bX)\bM^\dagger (\bI_T \otimes \bX^\top)}\\
		&\le \opnorm{(\bI_T \otimes \bX)(\bI_T \otimes (\bX_{\hat{\mathscr{S}}}^\top\bX_{\hat{\mathscr{S}}} + \tau n \bP_{\hat{\mathscr{S}}})^\dagger) (\bI_T \otimes \bX^\top)}\\
		&=\opnorm{\bX_{\hat{\mathscr{S}}}(\bX_{\hat{\mathscr{S}}}^\top\bX_{\hat{\mathscr{S}}} + \tau n \bP_{\hat{\mathscr{S}}})^\dagger \bX_{\hat{\mathscr{S}}}^\top}\\
		&\le 1,
	\end{align*}
	where the first inequality uses $\opnorm{\bA\bB\bA^\top} \le \opnorm{\bA\bC\bA^\top}$ for $0\preceq \bB \preceq \bC$.
\end{proof}

\subsection{Proofs of results in \saferef{Appendix}{sec:Lipschitz-fix-E}}

\begin{proof}[Proof of \Cref{lem: lipschitz elastic-net}]
	Fixing $\bE$, if $\bX,\bar\bX$ are two design matrices, and $\hbB, \bar\bB$ are the two corresponding multi-task elastic net estimates. 
	Let $\bZ = \bX \bSigma^{-\frac 12}$,  
	$\bar\bZ = \bar\bX \bSigma^{-\frac 12}$,
	$\bar\bH = \bSigma^{\frac12}(\bar\bB - \bB^*)$, 
	$\bar\bF = \bY - \bar\bX\bar\bB$, 
	and $\bar D = [\fnorm{\bar\bH}^2 + \fnorm{\bar\bF}^2/n]^{\frac12}$. 
	Without loss of generality, we assume $\bar D \le D$. 
	Recall the multi-task elastic net estimate $\hbB = \argmin_{\bB\in\R^{p\times T}}
	\big( \frac{1}{2n}\fnorm{\bY - \bX\bB }^2 + g(\bB) \big)$, where  $g(\bB) = \lambda \norm{\bB}_{2,1}
	+ \frac{\tau}{2} \fnorm{\bB}^2$.
	Define $\varphi:\bB\mapsto \frac 1{2n}
	\fnorm{\bE+\bX(\bB^*-\bB)}^2 + g(\bB)$, 
	$\psi:\bB\mapsto \frac 1{2n} \fnorm{\bX(\hbB-\bB)}^2$ 
	and $\zeta:\bB\mapsto \varphi(\bB) - \psi(\bB)$. 
	When expanding the squares, it is clear that $\zeta$ is the sum of a linear function and a $\tau$-strong convex penalty, thus $\zeta$ is $\tau$-strongly convex of $\bB$. Additivity of subdifferentials yields $\partial \varphi (\hbB) = \partial \zeta(\hbB) + \partial \psi(\hbB) = \partial \zeta(\hbB)$. By optimality of $\hbB$ we have ${\mathbf 0}_{p\times T}\in \partial \varphi(\hbB)$, thus ${\mathbf 0}_{p\times T}\in \partial \zeta(\hbB)$. By strong convexity of $\zeta$,
	$
	\zeta(\bar\bB) - \zeta(\hbB) 
	\ge \langle \partial \zeta(\hbB), \bar\bB - \hbB \rangle + \frac{\tau}{2} \fnorm{\bar\bB - \hbB}^2
	=  \frac{\tau}{2} \fnorm{\bar\bB - \hbB}^2, 
	$
	which can further be rewritten as 
	$$
	\fnorm{\bX (\hbB - \bar\bB)}^2 + n\tau \fnorm{\hbB -\bar\bB}^2
	\le \fnorm{\bE - \bX (\bar\bB - \bB^*)}^2 -  \fnorm{\bE - \bX (\hbB - \bB^*)}^2 + 2n(g(\bar\bB) - g(\hbB)), 
	$$
	\ie, 
	$$\fnorm{\bZ (\bH - \bar\bH)}^2 + n\tau \fnorm{\bSigma^{-\frac12}(\bH - \bar\bH)}^2 
	\le \fnorm{\bE - \bZ \bar\bH}^2 -  \fnorm{\bE - \bZ \bH}^2 + 2n(g(\bar\bB) - g(\hbB)).$$
	Summing the above inequality with its counterpart obtained by replacing $(\bX, \hbB, \bH)$ with $(\bar\bX, \bar\bB, \bar\bH)$, we have 
	\begin{align*}
		&(LHS) \\
		\defas~&\fnorm{\bZ (\bH - \bar\bH)}^2 + \fnorm{\bar\bZ (\bH - \bar\bH)}^2  + 2n\tau' \fnorm{\bH - \bar\bH}^2\\
		\le~& \fnorm{\bZ (\bH - \bar\bH)}^2 + \fnorm{\bar\bZ (\bH - \bar\bH)}^2  + 2n\tau \fnorm{\bSigma^{-\frac12}(\bH - \bar\bH)}^2\\
		\le~& \fnorm{\bE - \bZ \bar\bH}^2 -  \fnorm{\bE - \bZ \bH}^2  + \fnorm{\bE - \bar\bZ \bH}^2 -  \fnorm{\bE - \bar\bZ \bar\bH}^2\\
		=~& \langle \bZ(\bH - \bar\bH),\bF+\bar\bF +(\bar\bZ- \bZ)\bar\bH\rangle
		+ \langle -\bar\bZ(\bH - \bar\bH),\bF+\bar\bF +(\bZ-\bar\bZ)\bH\rangle\\
		=~& \langle (\bZ-\bar\bZ)(\bH - \bar\bH),\bF+\bar\bF\rangle + \langle \bZ(\bH - \bar\bH),(\bar\bZ- \bZ)\bar\bH\rangle + \langle \bar\bZ(\bar\bH - \bH), (\bZ-\bar\bZ)\bH\rangle\\
		\le~& \opnorm{\bZ-\bar\bZ}\fnorm{\bH - \bar\bH} (\fnorm{\bF} + \fnorm{\bar\bF}) 
		+ \opnorm{\bZ-\bar\bZ}\fnorm{\bZ (\bH - \bar\bH)}   \fnorm{\bar\bH}\\
		&+ \opnorm{\bZ-\bar\bZ}\fnorm{\bar\bZ(\bH - \bar\bH)} \fnorm{\bH}\\
		\le~&  \opnorm{\bZ-\bar\bZ}
		\Big[
		\sqrt{\frac{(LHS)}{2n\tau'}} (\fnorm{\bF} + \fnorm{\bar\bF}) + \sqrt{(LHS)} (\fnorm{\bar\bH} + \fnorm{\bH}) 
		\Big]\\
		\le~& \opnorm{\bZ-\bar\bZ}  \sqrt{(LHS)} (D + \bar D)\max(1, (2\tau')^{-\frac12})
	\end{align*}
	where $\tau' = \tau \phi_{\min}(\bSigma^{-1}) = \tau/\opnorm{\bSigma}$. 
	That is, 
	\begin{align*}
		\sqrt{(LHS)} 
		\le 
		\opnorm{\bZ-\bar\bZ} 2D \max(1, (2\tau')^{-\frac12}).
	\end{align*}
	Therefore,
	\begin{align*}
		&n^{-\frac12}\fnorm{\bF - \bar\bF} = n^{-\frac12}\fnorm{\bZ\bH - \bar\bZ\bar\bH}\\
		\le~& n^{-\frac12}[\fnorm{\bZ(\bH-\bar\bH)} + \fnorm{(\bZ- \bar\bZ)\bar\bH}]\\
		\le~&n^{-\frac12}[\fnorm{\bZ(\bH-\bar\bH)} + \opnorm{\bZ- \bar\bZ}\fnorm{\bH}]\\
		\le~&n^{-\frac12}[\sqrt{(LHS)}+ \opnorm{\bZ- \bar\bZ}D]\\
		\le~&n^{-\frac12}  \opnorm{\bZ- \bar\bZ} D[2 \max(1, (2\tau')^{-\frac12}) + 1].
	\end{align*}
	So far we obtained
	\begin{align*}
		\fnorm{\bH - \bar\bH} 
		&\le 
		\sqrt{\frac{(LHS)}{2n\tau'}}\le 
		n^{-\frac12} \opnorm{\bZ-\bar\bZ}D (2\tau')^{-\frac12} 2\max(1, (2\tau')^{-\frac12}),\\	
		n^{-\frac12}\fnorm{\bF - \bar\bF} 
		&\le n^{-\frac12}  \opnorm{\bZ- \bar\bZ} D[2\max(1, (2\tau')^{-\frac12}) + 1].
	\end{align*}
	Let $\bQ = [\bH^\top, \bF^\top/\sqrt{n}]^\top$ and $\bar \bQ = [\bar\bH^\top, \bar\bF^\top/\sqrt{n}]^\top$, then $D = \fnorm{\bQ}$, $\bar D = \fnorm{\bar \bQ}$. By triangular inequality,
	\begin{align*}
		|D - \bar D|\le \fnorm{\bQ-\bar \bQ} \le~& \fnorm{\bH-\bar\bH} + \fnorm{\bF-\bar\bF}/\sqrt{n} \\
		\le~ & n^{-\frac12}\opnorm{\bZ- \bar\bZ} D [4 \max(1, (2\tau')^{-1})],
	\end{align*} 
	where the last inequality uses the elementary inequality $\max(a,b)(a+b)\le 2 [\max(a,b)]^2$ for $a,b>0$ with $a = 1, b = (2\tau')^{-\frac12}$. 
	Let $\frac{\partial D}{\partial \bZ}\defas \frac{\partial D}{\partial \vec(\bZ)} \in \R^{1\times np}$, then $\norm*{\frac{\partial D}{\partial \bZ}} \le n^{-\frac12}D L_1$ with $L_1 = [4 \max(1, (2\tau')^{-1})]$. Hence, 
	\begin{align*}
		\sum_{ij} \Big(\frac{\partial D}{\partial z_{ij}}\Big)^2 
		=  \norm*{\frac{\partial D}{\partial \bZ}}^2
		\le n^{-1}D^2 L_1^2.
	\end{align*}
	Furthermore, by triangle inequality
	\begin{align*}
		\fnorm[\Big]{\frac{\bQ}{D} - \frac{\bar \bQ}{\bar D}}
		& \le \frac1D \fnorm{\bQ-\bar \bQ} + \Big|\frac1D - \frac{1}{\bar D} \Big| \fnorm{\bar \bQ}\\
		& = \frac1D \fnorm{\bQ-\bar \bQ} + \frac{|D-\bar D|}{D\bar D} \fnorm{\bar \bQ}\\
		&\le \frac1D \fnorm{\bQ-\bar \bQ} + \frac{1}{D} \fnorm{\bQ - \bar \bQ}\\
		&\le n^{-\frac12}\opnorm{\bZ- \bar\bZ} L,
	\end{align*}
	where $L = 8 \max(1, (2\tau')^{-1})$. 
	Therefore, when $\tau >0$, we obtain the two mappings $\bZ \mapsto D^{-1}\bF/\sqrt{n}$, and $\bZ \mapsto D^{-1}\bH$ are both $n^{-\frac12} L$-lipschitz with $L = 8 \max(1, (2\tau')^{-1})$, where $\tau' = \tau/\opnorm{\bSigma}$.
\end{proof}

The proof of \Cref{lem: lipschitz-Lasso} uses a similar argument as proof of \Cref{lem: lipschitz elastic-net}, we present it here for completeness. 
\begin{proof}[Proof of \Cref{lem: lipschitz-Lasso}]
	For multi-task group Lasso ($\tau=0$), we restrict our analysis in the event $U_1\cap U_2$, where $U_1 = \big\{ \norm{\hbB}_0 \le n(1-c)/2 \big\}$, $U_2 = \big\{\inf_{\bb\in \R^p: \| \bb\|_0 \le (1-c)n}  \|\bX \bb\|^2/(n \|\bSigma^{\frac 12} \bb\|^2)  > \eta\big\}.$ 
	
	Since the only randomness of the problem comes from $\bX$ and $\bE$, there exists a measurable set $\calU$ such that $U_1\cap U_2 =\{ (\bX, \bE)\in \calU\}$. 
	For some noise matrix $\bE$, consider  $\bX,\bar\bX$ two design matrices such that $(\bX, \bE)\in \calU$ and $(\bar\bX, \bE)\in \calU$. 
	We slightly abuse the notation and let $\hbB, \bar\bB$ denote the two corresponding multi-task group-Lasso estimates. 
	Thus, the row sparsity of $ \hbB-\bar\bB$ is at most $n(1-c)$. 
	Let  
	$\bar\bH = \bar\bB - \bB^*$, $\bar\bF = \bY - \bar\bX\bar\bB$, and $\bar D = [\fnorm{\bar\bH}^2 + \fnorm{\bar\bF}^2/n]^{\frac12}$. 
	Without loss of generality, we assume $\bar D \le D$. 
	Since when $\tau=0$, the multi-task group Lasso estimate is $\hbB = \argmin_{\bB\in\R^{p\times T}}
	\big( \frac{1}{2n}\fnorm{\bY - \bX\bB }^2 + g(\bB) \big)$, where  $g(\bB) = \lambda \norm{\bB}_{2,1}$.
	Define $\varphi:\bB\mapsto \frac 1{2n}
	\fnorm{\bE+\bX(\bB^*-\bB)}^2 + g(\bB)$, 
	$\psi:\bB\mapsto \frac 1{2n} \fnorm{\bX(\hbB-\bB)}^2$ 
	and $\zeta:\bB\mapsto \varphi(\bB) - \psi(\bB)$. 
	Under $\tau=0$, by the same arguments in proof of \ref{lem: lipschitz elastic-net} with the same functions $\varphi(\cdot), \psi(\cdot), \zeta(\cdot)$, we obtain
	$$
	\fnorm{\bX (\hbB - \bar\bB)}^2 
	\le \fnorm{\bE - \bX (\bar\bB - \bB^*)}^2 -  \fnorm{\bE - \bX (\hbB - \bB^*)}^2 + 2n(g(\bar\bB) - g(\hbB)).
	$$
	Summing the above inequality with its counterpart obtained by replacing $(\bX, \hbB, \bH)$ with $(\bar\bX, \bar\bB, \bar\bH)$, we have 
	\begin{align*}
		&\fnorm{\bX (\hbB - \bar\bB)}^2 + \fnorm{\bar\bX (\hbB - \bar\bB)}^2\\
		\le~& \fnorm{\bE - \bZ \bar\bH}^2 - \fnorm{\bE - \bZ \bH}^2
		+ \fnorm{\bE - \bar\bZ \bH}^2 - \fnorm{\bE - \bar\bZ \bar\bH}^2.
	\end{align*}
	Note that in event $U_1\cap U_2$, we have  
	\begin{align*}
		\eta n\fnorm{\bSigma^{\frac12}(\hbB - \bar\bB)}^2 
		\le \fnorm{\bX (\hbB - \bar\bB)}^2, \quad \eta n\fnorm{\bSigma^{\frac12}(\hbB - \bar\bB)}^2 \le \fnorm{\bar\bX (\hbB - \bar\bB)}^2. 
	\end{align*}
	Thus,  
	$
	2\eta n\fnorm{(\hbH - \bar\bH)}^2 \le \fnorm{\bZ(\bH - \bar\bH)}^2 + \fnorm{\bar\bZ(\bH - \bar\bH)}^2
	$, and 
	\begin{align*}
		(LHS) 
		\defas~& \max (2\eta n\fnorm{\bH - \bar\bH}^2,\fnorm{\bZ(\bH - \bar\bH)}^2 + \fnorm{\bar\bZ(\bH - \bar\bH)}^2)\\
		=~&\fnorm{\bZ(\bH - \bar\bH)}^2 + \fnorm{\bar\bZ(\bH - \bar\bH)}^2\\
		\le~& \fnorm{\bE - \bZ \bar\bH}^2 - \fnorm{\bE - \bZ \bH}^2
		+ \fnorm{\bE - \bar\bZ \bH}^2 - \fnorm{\bE - \bar\bZ \bar\bH}^2.
	\end{align*}
	Now, in $U_1\cap U_2$, the Lipschitz property of the map $\bZ \mapsto D^{-1}\bF/\sqrt{n}$ follows from the same arguments in proof of \Cref{lem: lipschitz elastic-net}, with $\tau'$ in \ref{lem: lipschitz elastic-net} replaced by $\eta$ in this proof. 
	
	Furthermore, in the event $U_1\cap U_2\cap U_3$, the Lipschitz property of $\bZ \mapsto D^{-1}\bZ^\top\bF/n$ follows by triangle inequality. To see this, let $\bU = D^{-1}\bF/\sqrt{n}$, and $\bV = D^{-1}\bZ^\top\bF/n = n^{-1/2} \bZ^\top \bU$, thus by  triangle inequality
	\begin{align*}
		\opnorm{\bV - \bar\bV} &= n^{-1/2} \opnorm{\bZ^\top \bU - \bar\bZ^\top \bar\bU}\\
		&= n^{-1/2}[ \opnorm{(\bZ - \bar\bZ)^\top \bU} + \opnorm{\bar\bZ^\top (\bU - \bar\bU)}]\\
		&\le n^{-1/2}[ \opnorm{\bZ - \bar\bZ} + \opnorm{\bar\bZ}\opnorm{\bU - \bar\bU}]\\
		&\le n^{-1/2}( 1 + n^{-1/2}\opnorm{\bar\bZ}L) 
		\opnorm{\bZ - \bar\bZ}\\
		&\le n^{-1/2} (1 + (2 +\sqrt{p/n})L).
	\end{align*}
	where the last line uses $\opnorm{\bar\bZ}\le 2\sqrt{n} +\sqrt{p} $ in the event $U_3$. 
\end{proof}

\begin{proof}[Proof of \Cref{cor: partialD}]
	\Cref{cor: partialD} (1) is a direct consequence of the intermediate result $|D - \bar D| \le  n^{-\frac12}\opnorm{\bZ- \bar\bZ} D [4 \max(1, (2\tau')^{-1})]$ in proof of \Cref{lem: lipschitz elastic-net}, while \Cref{cor: partialD} (2) is a direct consequence of the intermediate result $|D - \bar D| \le  n^{-\frac12}\opnorm{\bZ- \bar\bZ} D [4 \max(1, (2\eta)^{-1})]$ in proof of \Cref{lem: lipschitz-Lasso}. 
\end{proof}


Before proving the derivative formula, we restate $\hbB$ (defined in \eqref{eq: hbB} of the full paper) below, 
\begin{equation}\label{eq: hbB-1}
	\hbB=\argmin_{\bB\in\R^{p\times T}}
	\Big(
	\frac{1}{2n}\fnorm*{\bY - \bX\bB }^2 + \lambda \norm{\bB}_{2,1}
	+ \frac{\tau}{2} \fnorm{\bB}^2
	\Big),
\end{equation}
where $\|\bB\|_{2,1} = \sum_{j=1}^p \|{\bB^{\top} \be_j}\|_2$.

For the reader's convenience, we recall some useful notations. $\bP_{\hat{\mathscr{S}}} = \sum_{k\in\hat{\mathscr{S}}} \be_k\be_k^\top$. For each $k\in \hat{\mathscr{S}}$, $\bH^{(k)}=\lambda\|\hbB{}^\top \be_k\|_2^{-1}\left(\bI_T - \hbB{}^\top\be_k \be_k^\top\hbB ~  \|\hbB{}^\top\be_k\|_2^{-2} \right)$. $\tbH = \sum_{k\in\hat{\mathscr{S}}} (\bH^{(k)} \otimes \be_k\be_k^\top).$ $\bM_1 = \bI_T \otimes (\bX_{\hat{\mathscr{S}}}^\top\bX_{\hat{\mathscr{S}}} + \tau n \bP_{\hat{\mathscr{S}}})$, $\bM = \bM_1 + n\tbH\in \R^{pT\times pT}$,  and $\bN = (\bI_T \otimes \bX)\bM^\dagger (\bI_T \otimes \bX^\top)$.

\begin{proof}[Proof of \Cref{lem: Dijlt}]
	We first derive $\frac{\partial F_{lt}}{\partial x_{ij}}$. 
	Since $\bF = \bY - \bX\hbB = \bE - \bX(\hbB -\bB^*)$, by product rule, 
	\begin{align*}
		\frac{\partial F_{lt}}{\partial x_{ij} } = \be_l^\top \frac{\partial \bE - \bX(\hbB -\bB^*) }{\partial x_{ij} } \be_t
		= - \be_l^\top (\dot\bX (\hbB -\bB^*) + \bX\dot\bB) \be_t,
	\end{align*}
	where 
	$\dot\bX \defas \frac{\partial \bX}{\partial x_{ij}} = \be_i\be_j^\top$,
	and $\dot\bB \defas \frac{\partial \hbB}{\partial x_{ij}}$. 
	
	
	Now we derive $\vec(\dot\bB)$ from KKT conditions for $\hbB$ defined in \eqref{eq: hbB-1}:
	\begin{itemize}
		\item[1)] For $k\in \hat{\mathscr{S}}$, \ie, $\hbB{}^\top \be_k \ne\mathbf{0}$,
		$$\be_k^\top\bX^\top\big[\bE-\bX(\hbB-\bB^*)\big] -n\tau\be_k^\top \hbB= \frac{n \lambda}{\|\hbB{}^\top\be_k\|_2} \be_k^\top \hbB
		\quad \in\R^{1\times T}.$$
		
		\item[2)] For $k\notin \hat{\mathscr{S}}$, \ie, $\hbB{}^\top \be_k = \mathbf 0$,
		$$\norm*{\be_k^\top\bX^\top\big[\bE-\bX(\hbB-\bB^*)\big] -n\tau\be_k^\top \hbB}< n\lambda.$$
		Here the strict inequality is guaranteed by Proposition 2.3  of \cite{bellec2020out}. 
	\end{itemize}
	Keeping $\bE$ fixed, differentiation of the above display for $k\in\hat{\mathscr{S}}$ w.r.t. $x_{ij}$ yields
	$$\be_k^\top\Big[\dot\bX{}^\top\bF - \bX^\top[ \dot\bX(\hbB-\bB^*) +\bX\dot\bB]-n\tau\dot \bB\Big]= n\be_k^\top \dot\bB \bH^{(k)},$$
	with $\bH^{(k)}
	=
	\lambda
	\|\hbB{}^\top \be_k\|_2^{-1}\left(\bI_T - \hbB{}^\top\be_k \be_k^\top\hbB ~  \|\hbB{}^\top\be_k\|_2^{-2} \right)\in\R^{T\times T}$. 
	Rearranging 
	and using $\dot\bX = \be_i\be_j^\top$, 
	\[
	\be_k^\top\Big[\be_j\be_i^\top\bF - \bX^\top \be_i \be_j^\top(\hbB-\bB^*)\Big]= \be_k^\top [(\bX^\top\bX + n\tau\bI_{p}) \dot\bB + n\dot\bB \bH^{(k)}].
	\]
	Recall $\bP_{\hat{\mathscr{S}}} = \sum_{k\in\hat{\mathscr{S}}} \be_k\be_k^\top$. Multiplying by $\be_k$ to the left and summing over $k\in \hat{\mathscr{S}}$, we obtain
	\[
	\bP_{\hat{\mathscr{S}}}\Big[\be_j\be_i^\top\bF - \bX^\top \be_i \be_j^\top(\hbB-\bB^*)\Big]= \bP_{\hat{\mathscr{S}}} (\bX^\top\bX + n\tau\bI_{p}) \dot\bB + n \sum_{k\in\hat{\mathscr{S}}} \be_k\be_k^\top \dot\bB \bH^{(k)}.
	\]
	Since $\hat{\mathscr{S}}$ is locally constant in a small neighborhood of $\bX$, $\hbB_{\hat{\mathscr{S}}{}^c}=0$, $\supp(\dot\bB)\subseteq \hat{\mathscr{S}}$. Hence, $\bP_{\hat{\mathscr{S}}}\dot\bB = \dot\bB$, and $\bX\dot\bB = \bX_{\hat{\mathscr{S}}}\dot\bB$. The above display can be rewritten as 
	\[
	\bP_{\hat{\mathscr{S}}} \be_j\be_i^\top\bF - \bX_{\hat{\mathscr{S}}}^\top \be_i \be_j^\top(\hbB-\bB^*)
	=
	(\bX_{\hat{\mathscr{S}}}^\top\bX_{\hat{\mathscr{S}}} + n\tau\bP_{\hat{\mathscr{S}}}) \dot\bB + n \sum_{k\in\hat{\mathscr{S}}} \be_k\be_k^\top \dot\bB \bH^{(k)}.
	\]
	Vectorizing the above display using property $\vec(\bA\bB\bC) = (\bC^\top \otimes \bA)\vec(\bA)$ yields
	\begin{align*}
		&(\bF^\top \otimes \bP_{\hat{\mathscr{S}}} \be_j) \vec(\be_i^\top) -
		((\hbB-\bB^*)^\top\be_j\otimes \bX_{\hat{\mathscr{S}}}^\top)\vec(\be_i)
		\\=~& 
		[\bI_T \otimes (\bX_{\hat{\mathscr{S}}}^\top\bX_{\hat{\mathscr{S}}}+ n\tau\bP_{\hat{\mathscr{S}}}) +  n\sum_{k\in\hat{\mathscr{S}}} (\bH^{(k)} \otimes \be_k\be_k^\top)] \vec(\dot\bB)\\
		=~& (\bM_1 + n \tbH)\vec(\dot\bB)\\
		=~& \bM \vec(\dot\bB),
	\end{align*}
	where $\bM_1 = \bI_T \otimes (\bX_{\hat{\mathscr{S}}}^\top\bX_{\hat{\mathscr{S}}}+ n\tau\bP_{\hat{\mathscr{S}}})$, and $\tbH = \sum_{k\in\hat{\mathscr{S}}} (\bH^{(k)} \otimes \be_k\be_k^\top)$. 
	
	Under \Cref{assu: tau}(i) that $\tau>0$, it's obviously that $\rank(\bI_T \otimes (\bX_{\hat{\mathscr{S}}}^\top\bX_{\hat{\mathscr{S}}}+ n\tau\bP_{\hat{\mathscr{S}}})) = T |\hat{\mathscr{S}}|$. 
	Under \Cref{assu: tau}(ii) that $\tau=0$ with $\P(U_1)\to 1$. In the event $U_1\cap U_2$, we know $\rank(\bX_{\hat{\mathscr{S}}}) = |\hat{\mathscr{S}}|$ from \cite[Lemma C.4]{bellec2021chi}, hence $\rank(\bI_T \otimes (\bX_{\hat{\mathscr{S}}}^\top\bX_{\hat{\mathscr{S}}}+ n\tau\bP_{\hat{\mathscr{S}}})) = T |\hat{\mathscr{S}}|$. 
	In either of the above two scenarios, we thus have $\dim(\ker(\bI_T \otimes (\bX_{\hat{\mathscr{S}}}^\top\bX_{\hat{\mathscr{S}}}+ n\tau\bP_{\hat{\mathscr{S}}}))) = T (p - |\hat{\mathscr{S}}|)$ by rank-nullity theorem. Since $[\bI_T \otimes (\bX_{\hat{\mathscr{S}}}^\top\bX_{\hat{\mathscr{S}}}+ n\tau\bP_{\hat{\mathscr{S}}})] (\be_t\otimes\be_k) = \bf0$ for $t\in[T], k\in \hat{\mathscr{S}}^c$. 
	Let $V = \{(\be_t\otimes \be_k): t\in[T], k\in\hat{\mathscr{S}}^c\}$ be a vector space, then the elements of $V$ are linear independent, and $\dim(V) = T (p -|\hat{\mathscr{S}}|)$. Thus, $V$ forms a basis for $\ker(\bI_T \otimes (\bX_{\hat{\mathscr{S}}}^\top\bX_{\hat{\mathscr{S}}}+ n\tau\bP_{\hat{\mathscr{S}}})$. Since for any $(\be_t\otimes \be_k) \in V$,  we also have $\tbH (\be_t\otimes \be_k) = \bf0$, $\ker(\bI_T \otimes (\bX_{\hat{\mathscr{S}}}^\top\bX_{\hat{\mathscr{S}}}+ n\tau\bP_{\hat{\mathscr{S}}})) \subseteq \ker(\bI_T \otimes (\bX_{\hat{\mathscr{S}}}^\top\bX_{\hat{\mathscr{S}}}+ n\tau\bP_{\hat{\mathscr{S}}}) + n\tbH)$. On the other hand, if any vector $\bv$ s.t. $[\bI_T \otimes (\bX_{\hat{\mathscr{S}}}^\top\bX_{\hat{\mathscr{S}}}+ n\tau\bP_{\hat{\mathscr{S}}}) + n\tbH]\bv = \bf0$, since these matrices are all positive semi-definite, we have $\bI_T \otimes (\bX_{\hat{\mathscr{S}}}^\top\bX_{\hat{\mathscr{S}}}+ n\tau\bP_{\hat{\mathscr{S}}})\bv = \bf0$, which implies that $\ker(\bI_T \otimes (\bX_{\hat{\mathscr{S}}}^\top\bX_{\hat{\mathscr{S}}}+ n\tau\bP_{\hat{\mathscr{S}}})+ n\tbH) \subseteq \ker(\bI_T \otimes (\bX_{\hat{\mathscr{S}}}^\top\bX_{\hat{\mathscr{S}}}+ n\tau\bP_{\hat{\mathscr{S}}}))$. Therefore, 
	\begin{align*}
		\ker(\bI_T \otimes (\bX_{\hat{\mathscr{S}}}^\top\bX_{\hat{\mathscr{S}}}+ n\tau\bP_{\hat{\mathscr{S}}})+ n\tbH) &= \ker(\bI_T \otimes (\bX_{\hat{\mathscr{S}}}^\top\bX_{\hat{\mathscr{S}}}+ n\tau\bP_{\hat{\mathscr{S}}}))\\
		&= \mathrm{span}\{(\be_t\otimes \be_k): t\in[T], k\in\hat{\mathscr{S}}^c\},
	\end{align*}
	and 
	\begin{align*}
		\range(\bI_T \otimes (\bX_{\hat{\mathscr{S}}}^\top\bX_{\hat{\mathscr{S}}}+ n\tau\bP_{\hat{\mathscr{S}}})+ n\tbH)
		&= \mathrm{span}\{(\be_t\otimes \be_k): t\in[T], k\in\hat{\mathscr{S}}\}.
	\end{align*}
	
	Since $\dot\bB = \bP_{\hat{\mathscr{S}}}\dot\bB$, $\vec(\dot\bB) = (\bI_T \otimes \bP_{\hat{\mathscr{S}}})\vec(\dot\bB)$, then $\vec(\dot\bB) \in \mathrm{col}(\bI_T\otimes \bP_{\hat{\mathscr{S}}}) = \range (\bM)$. Since $\bM$ is symmetric, $\bM^{\dagger} \bM$ is the orthogonal projection on the range of $\bM$.
	Therefore, 
	\begin{align}\label{eq: vecB}
		\vec(\dot\bB) = \bM^{\dagger} \bM \vec(\dot\bB) = \bM^{\dagger} [(\bF^\top\otimes \be_j) - ((\hbB-\bB^*)^\top\be_j\otimes\bX^\top)] \be_i.
	\end{align}
	
	Since $\supp(\dot\bB)\subseteq \hat{\mathscr{S}}$, $\bX\dot\bB = \bX_{\hat{\mathscr{S}}}\dot\bB$, we have 
	\begin{align*}
		\frac{\partial F_{lt}}{\partial x_{ij} } 
		&= - \be_l^\top (\dot\bX(\hbB-\bB^*) + \bX\dot\bB) \be_t\\
		&= - (\be_l^\top \be_i\be_j^\top(\hbB-\bB^*)\be_t +
		\be_l^\top\bX_{\hat{\mathscr{S}}}\dot\bB\be_t)\\
		&= -(\be_l^\top \be_i\be_j^\top(\hbB-\bB^*)\be_t + (\be_t^\top \otimes \be_l^\top\bX_{\hat{\mathscr{S}}})\vec(\dot\bB))\\
		&= - \be_l^\top \be_i\be_j^\top(\hbB-\bB^*)\be_t - (\be_t^\top \otimes \be_l^\top\bX_{\hat{\mathscr{S}}})\bM^{\dagger} [(\bF^\top\otimes \be_j) - ((\hbB-\bB^*)^\top\be_j\otimes\bX^\top)] \be_i\\
		&= - (e_j^\top(\hbB-\bB^*) \otimes \be_i^\top) (\be_t\otimes \be_l) +
		(\be_t^\top \otimes \be_l^\top\bX_{\hat{\mathscr{S}}})\bM^{\dagger}((\hbB-\bB^*)^\top\be_j\otimes\bX^\top\be_i) \\
		&\quad -
		(\be_t^\top \otimes \be_l^\top\bX_{\hat{\mathscr{S}}})\bM^{\dagger} (\bF^\top\otimes \be_j)\be_i\\
		&= - (e_j^\top(\hbB-\bB^*) \otimes \be_i^\top) (\be_t\otimes \be_l) + 
		(e_j^\top(\hbB-\bB^*) \otimes \be_i^\top)\bN (\be_t\otimes \be_l)\\
		&\quad -
		(\be_t^\top \otimes \be_l^\top\bX_{\hat{\mathscr{S}}})\bM^{\dagger} (\bF^\top\otimes \bI_p) (\be_i\otimes \be_j)\\
		&= - (e_j^\top(\hbB-\bB^*) \otimes \be_i^\top) (\bI_{nT} -\bN)(\be_t\otimes \be_l)
		 -
		(\be_t^\top \otimes \be_l^\top\bX)\bM^{\dagger} (\bF^\top\otimes \bI_p) (\be_i\otimes \be_j)
	\end{align*}

Now we calculate $\frac{\partial F_{lt}}{\partial z_{ij}}$. 
Since $\bX = \bZ \bSigma^{\frac 12}$, 
$x_{ik} = \sum_{j=1}^p z_{ij} (\bSigma^{\frac 12})_{jk}$, $ \frac{\partial x_{ik}}{\partial z_{ij}} = (\bSigma^{\frac 12})_{jk}$, 
\begin{align*}
	\frac{\partial F_{lt}}{\partial z_{ij}}  
	= \sum_{k=1}^p \frac{\partial F_{lt}}{\partial x_{ik}}  \frac{\partial x_{ik}}{\partial z_{ij}} 
	= \sum_{k=1}^p \frac{\partial F_{lt}}{\partial x_{ik}} (\bSigma^{\frac 12})_{jk}
	= D_{ij}^{lt} + \Delta_{ij}^{lt},
\end{align*}
where 
\begin{align*}
	D_{ij}^{lt} 
	&= -\sum_{k=1}^p (\be_k^\top(\hbB-\bB^*) \otimes \be_i^\top) (\bI_{nT} - \bN) (\be_t\otimes \be_l) (\bSigma^{\frac 12})_{jk}\\
	&= -(\be_j^\top \bSigma^{\frac 12} (\hbB-\bB^*) \otimes \be_i^\top) (\bI_{nT} - \bN) (\be_t\otimes \be_l)\\
	&= -(\be_j^\top\bH \otimes \be_i^\top) (\bI_{nT} - \bN) (\be_t\otimes \be_l),
\end{align*}
and 
\begin{align*}
	\Delta_{ij}^{lt} 
	&=-\sum_{k=1}^p  (\be_t^\top \otimes \be_l^\top)(\bI_T\otimes \bX )
	\bM^\dagger\bigl(\bF^\top \otimes \bI_{p}\bigr)(\be_i \otimes\be_k) (\bSigma^{\frac 12})_{jk}\\
	&=- (\be_t^\top \otimes \be_l^\top)(\bI_T\otimes \bX)
	\bM^\dagger\bigl(\bF^\top \otimes \bI_{p}\bigr)(\be_i \otimes \bSigma^{\frac 12}\be_j)\\
	&=- (\be_t^\top \otimes \be_l^\top)(\bI_T\otimes \bX)
	\bM^\dagger (\bI_T\otimes \bSigma^{\frac 12}) \bigl(\bF^\top \otimes \bI_{p}\bigr)(\be_i \otimes \be_j)\\
\end{align*}
It follows that 
\begin{align*}
	\sum_{i=1}^n D_{ij}^{it}
	&=-\sum_{i=1}^n  (\be_j^\top\bH \otimes \be_i^\top) (\bI_{nT} - \bN) (\be_t\otimes \be_i)\\
	&=-  \be_j^\top\bH \big[ \sum_{i=1}^n  (\bI_T \otimes \be_i^\top) (\bI_{nT} - \bN) (\bI_T \otimes \be_i)\big] \be_t\\
	&= -\be_j^\top \bH (n\bI_T - \hbA)\be_t, 
\end{align*}
where the last line follows from definition of $\hbA$ in \eqref{eq: hbA-matrix}. 
\end{proof}

\begin{proof}[Proof of \Cref{lem: partialF}]
	\begin{itemize}
		\item[(1)] For $\tau>0$, by formula of $\frac{\partial F_{lt}}{\partial z_{ij} }$ in \Cref{lem: Dijlt}, we have 
		\begin{align*}
			&\sum_{ij}\norm*{\frac{\partial \bF}{\partial z_{ij}}}^2_{\rm F} = \sum_{ij}\sum_{lt} \Big(\frac{\partial F_{lt}}{\partial z_{ij} } \Big)^2=\sum_{ij}\sum_{lt} \Big( D_{ij}^{lt} + \Delta_{ij}^{lt} \Big)^2 \\
			\le~ & 2\sum_{ij,lt} ( D_{ij}^{lt})^2 + 2\sum_{ij,lt} (\Delta_{ij}^{lt})^2 \\
			=~ & 2 \fnorm*{(\bH\otimes \bI_n)(\bI_{nT} - \bN)}^2 + 2 \fnorm{(\bI_T\otimes \bX)\bM^\dagger (\bI_T\otimes \bSigma^{\frac12}) (\bF^\top \otimes \bI_{p})}^2\\
			\le~ & 2n\fnorm{\bH}^2 + 2 \fnorm{(\bI_T\otimes \bX_{\hat{\mathscr{S}}})\bM^\dagger (\bI_T\otimes \bSigma^{\frac12}) (\bF^\top \otimes \bI_{p})}^2. 
		\end{align*}
		Since $0\preceq \bM^\dagger \preceq  \bI_T \otimes (\bX_{\hat{\mathscr{S}}}^\top\bX_{\hat{\mathscr{S}}} + \tau n \bP_{\hat{\mathscr{S}}})^\dagger$,
		\begin{align*}
			&\fnorm{(\bI_T\otimes \bX_{\hat{\mathscr{S}}})\bM^\dagger (\bI_T\otimes \bSigma^{\frac12}) (\bF^\top \otimes \bI_{p})}^2\\
			\le~& \opnorm{(\bI_T\otimes \bX_{\hat{\mathscr{S}}})\bM^\dagger (\bI_T\otimes \bSigma^{\frac12})}^2
			\fnorm{(\bF^\top \otimes \bI_{p})}^2\\
			\le~& p\opnorm{\bSigma} \fnorm{\bF}^2
			\opnorm{(\bI_T\otimes \bX_{\hat{\mathscr{S}}})\bM^\dagger}^2 \\
			\le~ &p\opnorm{\bSigma}\fnorm{\bF}^2\opnorm{(\bX_{\hat{\mathscr{S}}}^\top\bX_{\hat{\mathscr{S}}} + \tau n \bP_{\hat{\mathscr{S}}})^\dagger\bX_{\hat{\mathscr{S}}}^\top}^2\\
			\le~& \frac{p}{n\tau}\opnorm{\bSigma} \fnorm*{\bF}^2\\
			=~& \frac{p}{n\tau'} \fnorm*{\bF}^2,
		\end{align*}
		where the last inequality uses $\opnorm{(\bX_{\hat{\mathscr{S}}}^\top\bX_{\hat{\mathscr{S}}} + \tau n \bP_{\hat{\mathscr{S}}})^\dagger\bX_{\hat{\mathscr{S}}}^\top}\le (n\tau)^{-1}$.
		
		On the other hand, we also have
		\begin{align*}
			&\fnorm{(\bI_T\otimes \bX_{\hat{\mathscr{S}}})\bM^\dagger (\bI_T\otimes \bSigma^{\frac12}) (\bF^\top \otimes \bI_{p})}^2\\
			\le~& \fnorm{(\bI_T\otimes \bX_{\hat{\mathscr{S}}})\bM^\dagger }^2
			\opnorm{(\bI_T\otimes \bSigma^{\frac12})(\bF^\top \otimes \bI_{p})}^2\\
			\le~& \fnorm{(\bI_T\otimes \bX_{\hat{\mathscr{S}}}) (\bI_T \otimes (\bX_{\hat{\mathscr{S}}}^\top\bX_{\hat{\mathscr{S}}} + \tau n \bP_{\hat{\mathscr{S}}})^\dagger )}^2
			\fnorm{\bF}^2 \opnorm{\bSigma}\\
			\le~& T\fnorm{\bX_{\hat{\mathscr{S}}} (\bX_{\hat{\mathscr{S}}}^\top\bX_{\hat{\mathscr{S}}} + \tau n \bP_{\hat{\mathscr{S}}})^\dagger}^2
			\fnorm{\bF}^2 \opnorm{\bSigma}\\
			\le~& T\trace\big[
			(\bX_{\hat{\mathscr{S}}}^\top\bX_{\hat{\mathscr{S}}} + \tau n \bP_{\hat{\mathscr{S}}})^\dagger
			\bX_{\hat{\mathscr{S}}}^\top\bX_{\hat{\mathscr{S}}}
			(\bX_{\hat{\mathscr{S}}}^\top\bX_{\hat{\mathscr{S}}} + \tau n \bP_{\hat{\mathscr{S}}})^\dagger\big]
			\fnorm{\bF}^2 \opnorm{\bSigma}\\
			\le~& T\trace\big[
			(\bX_{\hat{\mathscr{S}}}^\top\bX_{\hat{\mathscr{S}}} + \tau n \bP_{\hat{\mathscr{S}}})^\dagger\big]
			\fnorm{\bF}^2 \opnorm{\bSigma}\\
			\le~& T (\tau)^{-1}\fnorm{\bF}^2 \opnorm{\bSigma}\\
			\le~& T\trace\big[
			(\tau n \bP_{\hat{\mathscr{S}}})^\dagger\big]
			\fnorm{\bF}^2 \opnorm{\bSigma}\\
			\le~& T (\tau)^{-1}\fnorm{\bF}^2 \opnorm{\bSigma}\\
			=~& \frac{T}{\tau'} \fnorm*{\bF}^2,
		\end{align*}
		
		Therefore, 
		\begin{align*}
			\frac{1}{n}\sum_{ij}\norm*{\frac{\partial \bF}{\partial z_{ij}}}^2_{\rm F} 
			&\le 2\fnorm{\bH}^2 + 2
			(\tau')^{-1} (T\wedge \frac{p}{n})\fnorm*{\bF}^2/n \\
			&\le 2\max(1, (\tau')^{-1} (T\wedge \frac{p}{n})) (\fnorm*{\bF}^2/n + \fnorm{\bH}^2)\\
			&= 2\max(1, (\tau')^{-1} (T\wedge \frac{p}{n})) D^2.
		\end{align*}
		Now by product rule and triangle inequality 
		\begin{align*}
			&\frac{1}{n}\sum_{ij}\fnorm*{\frac{\partial \bF/D}{\partial z_{ij}}}^2 \\
			\le~&
			2 D^{-2}\frac{1}{n}\sum_{ij}\fnorm*{\frac{\partial \bF}{\partial z_{ij}}}^2
			+ 
			2 \frac{1}{n}\sum_{ij}\fnorm*{\bF\frac{\partial D^{-1}}{\partial z_{ij}}}^2\\
			=~& 2 D^{-2}\frac{1}{n}\sum_{ij}\fnorm*{\frac{\partial \bF}{\partial z_{ij}}}^2
			+ 
			2 D^{-4}\frac{1}{n}\fnorm{\bF}^2 \sum_{ij}\Big(\frac{\partial D}{\partial z_{ij}}\Big)^2\\
			\le~& 2 D^{-2}\frac{1}{n}\sum_{ij}\fnorm*{\frac{\partial \bF}{\partial z_{ij}}}^2
			+ 
			2 D^{-4}\frac{1}{n}\fnorm{\bF}^2 n^{-1} D^2 [4 \max(1, (2\tau')^{-1})]^2\\
			\le~& 2 D^{-2}\frac{1}{n}\sum_{ij}\fnorm*{\frac{\partial \bF}{\partial z_{ij}}}^2
			+ 
			2  n^{-1}  [4 \max(1, (2\tau')^{-1})]^2\\
			\le~& 4 \max(1, (\tau')^{-1} (T\wedge \frac{p}{n})) + 2  n^{-1}  [4 \max(1, (2\tau')^{-1})]^2\\
			:=~& f(\tau', T, n, p), 
		\end{align*}
		where the second inequality is by \Cref{cor: partialD}. 
		
		\item[(2)] For $\tau=0$, by \Cref{lem: lipschitz-Lasso}, in the event $U_1\cap U_2$, we obtain the same upper bounds as in the first case (1) with $\tau'$ replaced by $\eta$. To see this, 
		\begin{align*}
			&\fnorm{(\bI_T\otimes \bX_{\hat{\mathscr{S}}})\bM^\dagger (\bI_T\otimes \bSigma^{\frac12}) (\bF^\top \otimes \bI_{p})}^2\\
			\le~& \opnorm{(\bI_T\otimes \bX_{\hat{\mathscr{S}}})\bM^\dagger (\bI_T\otimes \bSigma^{\frac12})}^2
			\fnorm{(\bF^\top \otimes \bI_{p})}^2\\
			=~&  \opnorm{(\bI_T\otimes \bSigma^{\frac12})\bM^\dagger(\bI_T\otimes \bX_{\hat{\mathscr{S}}}^\top\bX_{\hat{\mathscr{S}}})\bM^\dagger (\bI_T\otimes \bSigma^{\frac12})}
			p\fnorm{\bF}^2\\
			\le~&  \opnorm{(\bI_T\otimes \bSigma^{\frac12})\bM^\dagger (\bI_T\otimes \bSigma^{\frac12})}
			p\fnorm{\bF}^2\\
			\le~& p\ \fnorm{\bF}^2 \frac{1}{n\eta}\\
			=~& \frac{p}{n\eta} \fnorm*{\bF}^2,
		\end{align*} 
		where the third inequality is by \Cref{lem: aux-tau=0}. 
		Also, we have 
		\begin{align*}
			&\fnorm{(\bI_T\otimes \bX_{\hat{\mathscr{S}}})\bM^\dagger (\bI_T\otimes \bSigma^{\frac12}) (\bF^\top \otimes \bI_{p})}^2\\
			\le~& \fnorm{(\bI_T\otimes \bX_{\hat{\mathscr{S}}})\bM^\dagger (\bI_T\otimes \bSigma^{\frac12})}^2 \opnorm{(\bF^\top \otimes \bI_{p})}^2\\
			\le~& \trace\big[(\bI_T\otimes \bX_{\hat{\mathscr{S}}}^\top\bX_{\hat{\mathscr{S}}})\bM^\dagger (\bI_T\otimes \bSigma_{\hat{\mathscr{S}},\hat{\mathscr{S}}})\bM^\dagger\big]
			\fnorm{\bF}^2 \\
			\le~& \trace\big[ (\bI_T\otimes \bSigma_{\hat{\mathscr{S}},\hat{\mathscr{S}}})\bM^\dagger\big]
			\fnorm{\bF}^2 \\
			\le~& \trace\big[ (\bI_T\otimes \bSigma_{\hat{\mathscr{S}},\hat{\mathscr{S}}}) (\bI_T\otimes (\bX_{\hat{\mathscr{S}}}^\top\bX_{\hat{\mathscr{S}}})^\dagger)\big]
			\fnorm{\bF}^2 \\
			=~& T \trace\big[  \bSigma_{\hat{\mathscr{S}},\hat{\mathscr{S}}} (\bX_{\hat{\mathscr{S}}}^\top\bX_{\hat{\mathscr{S}}})^\dagger\big]
			\fnorm{\bF}^2 \\
			\le~& T \trace\big[ (n\eta)^{-1} \bX_{\hat{\mathscr{S}}}^\top\bX_{\hat{\mathscr{S}}} (\bX_{\hat{\mathscr{S}}}^\top\bX_{\hat{\mathscr{S}}})^\dagger \big]
			\fnorm{\bF}^2 \\
			\le~& \frac{T}{\eta} \fnorm*{\bF}^2,
		\end{align*}
		where the penultimate inequality uses $\bSigma_{\hat{\mathscr{S}},\hat{\mathscr{S}}} \preceq (n\eta)^{-1} \bX_{\hat{\mathscr{S}}}^\top\bX_{\hat{\mathscr{S}}}$ in the event $U_1\cap U_2$. 
		Therefore, on $U_1\cap U_2$, we have
		\begin{align*}
			\frac{1}{n}\sum_{ij}\fnorm*{\frac{\partial \bF/D}{\partial z_{ij}}}^2 
			&\le 4 \max(1, (\eta)^{-1} (T\wedge \frac{p}{n})) + 2  n^{-1}  [4 \max(1, (2\tau')^{-1})]^2 \\
			&:= f(\eta, T, n, p),
		\end{align*}
		where the function $f$ is the same as in case (1). The only difference is that $\tau'$ in the upper bound for case (1) is replaced by $\eta$ in case (2). 
	\end{itemize}
	
\end{proof}

\subsection{Proofs of results in \saferef{Appendix}{sec:Lipschitz-fix-X}}

The following proof of \Cref{lem:JacobianE} relies on a similar argument as proof of \Cref{lem: Dijlt}, we present the proof here for completeness.
\begin{proof}[Proof of \Cref{lem:JacobianE}]
	Recall the KKT condtions for $\hbB$ defined in \eqref{eq: hbB}:
	\begin{itemize}
		\item[1)] For $k\in \hat{\mathscr{S}}$, \ie, $\hbB{}^\top \be_k \ne\mathbf{0}$,
		$$
		\be_k^\top\bX^\top\big[\bE-\bX(\hbB-\bB^*)\big] -n\tau\be_k^\top \hbB= \frac{n \lambda}{\|\hbB{}^\top\be_k\|_2} \be_k^\top \hbB
		\quad \in\R^{1\times T}.
		$$
		\item[2)] For $k\notin \hat{\mathscr{S}}$, \ie, $\hbB{}^\top \be_k = \mathbf 0$,
		$$
		\norm*{\be_k^\top\bX^\top\big[\bE-\bX(\hbB-\bB^*)\big] -n\tau\be_k^\top \hbB}< n\lambda.
		$$
		Here the strict inequality is guaranteed by Proposition 2.3  of \cite{bellec2020out}. 
	\end{itemize}
	Let $\ddot{\bB} = \frac{\partial \hbB}{\partial E_{it'}}$, $\dot\bE = \frac{\partial \bE}{\partial E_{it'}}$. Differentiation of the above display for $k\in\hat{\mathscr{S}}$ w.r.t. $E_{it'}$ yields
	$$\be_k^\top\bX^\top(\dot\bE-\bX \ddot\bB) - n\tau \be_k^\top \ddot\bB
	= n\be_k^\top \ddot\bB \bH^{(k)}$$
	with $\bH^{(k)}
	=
	\lambda
	\|\hbB{}^\top \be_k\|_2^{-1}\left(\bI_T - \hbB{}^\top\be_k \be_k^\top\hbB ~  \|\hbB{}^\top\be_k\|_2^{-2} \right)\in\R^{T\times T}$. Rearranging
	and using $\dot\bE = \be_i \be_{t'}^\top$, 
	$$\be_k^\top \bX^\top \be_i \be_{t'}^\top = \be_k^\top[n\ddot \bB \bH^{(k)} + (\bX^\top\bX+n\tau\bI_{p\times p})\ddot\bB].$$
	Recall $\bP_{\hat{\mathscr{S}}} = \sum_{k\in\hat{\mathscr{S}}} \be_k\be_k^\top\in\R^{p\times p}$. Multiplying by $\be_k$ to the left and summing over $k\in \hat{\mathscr{S}}$, we obtain
	$$\bP_{\hat{\mathscr{S}}} \bX^\top \be_i \be_{t'}^\top =  n \sum_{k\in\hat{\mathscr{S}}} \be_k\be_k^\top \ddot \bB \bH^{(k)} + \bP_{\hat{\mathscr{S}}} (\bX^\top\bX+n\tau\bI_{p\times p})\ddot\bB,$$
	which reduces to the following by $\supp(\ddot \bB)\subseteq \hat{\mathscr{S}}$ and $\bX\ddot\bB = \bX_{\hat{\mathscr{S}}}\ddot\bB$, 
	\begin{align*}
		\bX_{\hat{\mathscr{S}}}^\top \be_i \be_{t'}^\top &= n \sum_{k\in\hat{\mathscr{S}}} \be_k\be_k^\top\ddot \bB \bH^{(k)} + \bX_{\hat{\mathscr{S}}}^\top\bX_{\hat{\mathscr{S}}}\ddot\bB \bI_T + n\tau\bP_{\hat{\mathscr{S}}} \ddot\bB \bI_T\\
		&= n \sum_{k\in\hat{\mathscr{S}}} \be_k\be_k^\top\ddot \bB \bH^{(k)} + (\bX_{\hat{\mathscr{S}}}^\top\bX_{\hat{\mathscr{S}}} + n\tau\bP_{\hat{\mathscr{S}}})
		\ddot\bB \bI_T.
	\end{align*}
	Vectorizing the above yields
	\begin{align*}
		(\be_{t'} \otimes \bX_{\hat{\mathscr{S}}}^\top) \vec(\be_i) &= [n\sum_{k\in\hat{\mathscr{S}}} (\bH^{(k)}\otimes \be_k\be_k^\top) +\bI_T \otimes (\bX_{\hat{\mathscr{S}}}^\top\bX_{\hat{\mathscr{S}}} + n\tau\bP_{\hat{\mathscr{S}}})] \vec(\ddot\bB) \\
		&= (n \tbH +\bI_T \otimes (\bX_{\hat{\mathscr{S}}}^\top\bX_{\hat{\mathscr{S}}} + n\tau\bP_{\hat{\mathscr{S}}}) )\vec(\ddot\bB)\\
		&= \bM\vec(\ddot\bB).
	\end{align*}
	A similar argument as in Proof of \Cref{lem: Dijlt} leads to 
	\begin{align*}
		\vec(\ddot \bB) = \bM^\dagger\bM \vec(\ddot \bB)
		=\bM^{\dagger} (\be_{t'} \otimes \bX_{\hat{\mathscr{S}}}^\top) \be_i.
	\end{align*}
	
	Therefore, by $\bX\ddot{\bB}=\bX_{\hat{\mathscr{S}}}\ddot{\bB}$,
	\begin{align*}
		\frac{\partial F_{lt}}{\partial E_{it'} } 
		&= \be_l^\top \frac{\partial \bE - \bX(\hbB - \bB^*)}{\partial E_{it'}}\be_t\\
		&= \be_l^\top \big( \be_i\be_{t'}^\top - \bX \ddot{\bB}\big) \be_t\\
		&= \be_l^\top \be_i\be_{t'}^\top\be_t- \be_l^\top\bX \ddot{\bB}\be_t\\
		&= \be_l^\top \be_i\be_{t'}^\top\be_t- (\be_t^\top \otimes \be_l^\top\bX_{\hat{\mathscr{S}}})\vec(\ddot\bB)\\
		&= \be_l^\top \be_i\be_{t'}^\top\be_t- (\be_t^\top \otimes \be_l^\top\bX_{\hat{\mathscr{S}}}) \bM^{\dagger} (\be_{t'} \otimes \bX_{\hat{\mathscr{S}}}^\top) \be_i\\
		&= \be_l^\top \be_i\be_{t'}^\top\be_t- \be_l^\top(\be_t^\top \otimes \bX_{\hat{\mathscr{S}}}) \bM^{\dagger} (\be_{t'} \otimes \bX_{\hat{\mathscr{S}}}^\top) \be_i\\
		&= \be_l^\top \be_i\be_{t'}^\top\be_t- \be_l^\top(\be_t^\top \otimes \bX) \bM^{\dagger} (\be_{t'} \otimes \bX^\top) \be_i,
	\end{align*}
	where the last equality is due to $\bM^\dagger = (\bI_T \otimes \bP_{\hat{\mathscr{S}}}) \bM^\dagger  (\bI_T \otimes \bP_{\hat{\mathscr{S}}})$. 
	
	Now the calculation of $\sum_{i=1}^n\frac{\partial F_{it}}{\partial E_{it'}}$ is straightforward, 
	\begin{align*}
		\sum_{i=1}^n\frac{\partial F_{it}}{\partial E_{it'}} &= \sum_{i=1}^n \big[ \be_i^\top\be_i\be_t^\top\be_{t'}
		- \be_i^\top (\be_t^\top \otimes \bX)\bM^\dagger (\be_{t'} \otimes \bX^\top) \be_i\big]\\
		&= n\be_t^\top\be_{t'} - \trace[(\be_t^\top \otimes \bX)\bM^\dagger (\be_{t'} \otimes \bX^\top)]\\
		&= n\be_t^\top\be_{t'} - \be_t^\top \hbA \be_{t'}\\
		&= \be_t^\top (n\bI_T -\hbA)\be_{t'},
	\end{align*}
	where the third equality is due to the formula of $\hbA$ in \eqref{eq: hbA-matrix}.
	
	Noting that $\bF = \bE - \bZ\bH$, it follows that
	$\sum_{i=1}^n\frac{\partial \be_i^\top\bZ\bH \be_t}{\partial E_{it'}} = \be_t^\top \hbA\be_{t'}$. 
\end{proof}

\subsection{Proofs of results in \saferef{Appendix}{sec:proba-tools}}
\begin{proof}[Proof of \Cref{lem: stein-EF}]
	Let $\bz = \vec(\bE)$, then $\bz\sim \calN(\mathbf{0}, \bK)$ with $\bK = \bS \otimes \bI_n$ by Assumption~\ref{assu: noise}. 
	For each $t_0, t_0' \in [T]$, let 
	$\bG^{(t_0, t_0')} = \bF  \be_{t_0'}\be_{t_0}^\top$,  and $\bff(\bz)^{(t_0, t_0')}  = \vec(\bG) \tD^{-1} $. 
	For convenience, we will drop the superscript ${(t_0, t_0')}$ from $\bG^{(t_0, t_0')}$  and $\bff(\bz)^{(t_0, t_0')}$	in this proof. 
	By $\trace(\bA^\top\bB) = \vec(\bA)^\top \vec(\bB)$, we obtain 
	\begin{align}\label{eq: a1}
		\be_{t_0}^\top \bE^\top \bF \tD^{-1} \be_{t_0'} = \trace(\bE^\top \bF  \be_{t_0'}\be_{t_0}^\top) \tD^{-1}= \trace(\bE^\top \bG\tD^{-1}) = \bz^\top\bff(\bz).
	\end{align}
	By product rule, we have 
	\begin{align}\label{eq: nabla-f}
		\nabla \bff(\bz) = \frac{\partial \vec(\bG) }{\partial \vec(\bE) } \tD^{-1} + \underbrace{\vec(\bG) \frac{\partial \tD^{-1}}{\partial \vec(\bE) }}_{\Rem}, 
	\end{align}
	where $\Rem = \bu\bv^\top$ with $\bu = \vec(\bG)\in \R^{nT\times 1}$, $\bv^\top = \frac{\partial \tD^{-1}}{\partial \vec(\bE) }\in \R^{1\times nT}$. 
	It follows that 
	\begin{align}\label{eq: stein-tr1}
		\trace(\bK \nabla \bff(\bz)) = \trace\Big(\bK \frac{\partial \vec(\bG) }{\partial \vec(\bE)}\Big) \tD^{-1} + \trace(\bK \Rem). 
	\end{align}
	Since $\bK = \bS \otimes \bI_n$ and $\bG = \bF  \be_{t_0'}\be_{t_0}^\top$, 
	$\bK_{it, lt'}= S_{tt'}I(i=l)$, and $G_{it} = F_{it_0'} I(t=t_0)$. It follows 
	\begin{equation}\label{eq: stein-tr2}
		\begin{aligned}
			\trace\Big(\bK \frac{\partial \vec(\bG) }{\partial \vec(\bE)}\Big) 
			= \sum_{i,t}\sum_{l,t'} \bK_{it, lt'} \frac{\partial G_{it}}{\partial E_{lt'} }
			= \sum_{t'} S_{t_0t'} \sum_{i}\frac{\partial F_{it_0'}}{\partial E_{it'}}
			= \be_{t_0}^\top \bS (n\bI_T - \hbA) \be_{t_0'},
		\end{aligned}
	\end{equation}
	where the last equality used \Cref{lem:JacobianE} and that $\hbA$ is symmetric. 
	
	Now we rewrite the quantity we want to bound as 
	\begin{align} 
		&\E\Bigl[
		\fnorm{\bE^\top \bF/\tD - \bS (n\bI_T - \hbA )/\tD}^2\Bigr] \nonumber\\
		=~& \sum_{t_0,t_0'} \E \Big[\Big( \be_{t_0}^\top \bE^\top \bF \tD^{-1} \be_{t_0'} - \be_{t_0}^\top \bS (n\bI_T - \hbA) \be_{t_0'}\tD^{-1} \Big)^2\Big]\nonumber\\
		=~& \sum_{t_0,t_0'} \E \Big[ \big(\bz^\top \bff(\bz) - \trace(\bK \nabla \bff(\bz)) + \trace(\bK\Rem)\big)^2\Big] \nonumber\\
		\le~& 2\sum_{t_0,t_0'} \Big\{ \E \Big[ \big(\bz^\top \bff(\bz) - \trace(\bK \nabla \bff(\bz)) \big)^2\Big] +  \E\Big[ \big(\trace(\bK\Rem)\big)^2 \Big] \Big\}\label{eq: LHS} ,
	\end{align}
	where the second equality follows from \eqref{eq: a1}, \eqref{eq: stein-tr1} and \eqref{eq: stein-tr2}, and the last inequality uses elementary inequality $(a+b)^2 \le 2(a^2 + b^2)$. 
	We next bound the two terms in \eqref{eq: LHS}. 
	\paragraph{First term in \eqref{eq: LHS}.}
	By second-order Stein formula in \Cref{lem: 2nd-Stein}, 
	\begin{equation}\label{eq: stein}
		\sum_{t_0,t_0'} \E \big(\bz^\top \bff(\bz) - \trace(\bK \nabla \bff(\bz))\big)^2 
		= \sum_{t_0,t_0'} 
		\E \Big[\fnorm{\bK^{\frac12}\bff(\bz)}^2  +  \trace\big[\big(\bK \nabla \bff(\bz)\big)^2\big] \Big]. 
	\end{equation}
	Now we bound the two terms in the right-hand side of \eqref{eq: stein}. 
	For the first term, recall  $\bff(\bz) = \vec(\bG) \tD^{-1} $, and $\bG = \bF  \be_{t_0'}\be_{t_0}^\top$, we obtain
	\begin{align*}
		\fnorm{\bK^{\frac12}\bff(\bz)}^2 =  \tD^{-2} \fnorm{(\bS^{\frac12}\otimes \bI_n)\vec(\bG)}^2  =  \tD^{-2} \fnorm{\bG\bS^{\frac12}}^2  = \tD^{-2} \fnorm{\bS^{\frac12}\be_{t_0}}^2  \fnorm{\bF\be_{t_0'}}^2. 
	\end{align*}
	Summing over all $(t_0, t_0')\in [T] \times [T]$, we obtain 
	\begin{equation}\label{eq: stein-RHS1}
		\sum_{t_0, t_0'} \fnorm{\bK^{\frac12}\bff(\bz)}^2 =  \tD^{-2} \fnorm{\bF}^2 \trace(\bS). 
	\end{equation}
	For the second term in RHS of \eqref{eq: stein}, recall $ \nabla\bff(\bz) = \frac{\partial \vec(\bG) }{\partial \vec(\bE) } \tD^{-1} + \Rem$,
	\begin{align}
		&\trace\big[\big(\bK \nabla \bff(\bz)\big)^2\big] \nonumber\\
		=~& \tD^{-2}\trace \Big[\Big( \bK\frac{\partial \vec(\bG) }{\partial \vec(\bE)} \Big)^2\Big]  + \trace[(\bK\Rem)^2] + 2\tD^{-1}\trace\Big[ \bK\frac{\partial \vec(\bG) }{\partial \vec(\bE) } \bK \Rem\Big].\label{eq: *2s}
	\end{align}
	By property of vectorization operation, 
	$\vec(\bG) = \vec(\bF\be_{t_0'}\be_{t_0}^\top) = (\be_{t_0}\be_{t_0'}^\top \otimes \bI_n) \vec(\bF)$, 
	hence 
	$$\frac{\partial \vec(\bG) }{\partial \vec(\bE)} 
	= (\be_{t_0}\be_{t_0'}^\top \otimes \bI_n)  \frac{\partial \vec(\bF) }{\partial \vec(\bE)}, $$
	where $\opnorm{ \frac{\partial \vec(\bF) }{\partial \vec(\bE)} } \le 1$ since the map $\vec(\bE) \mapsto \vec(\bF)$ is 1-Lipschitz by \cite[proposition 3]{bellec2016bounds}. 
	
	Now  we bound the three terms in \eqref{eq: *2s}. For the first term, by Cauchy-Schwarz inequality, 
	\begin{align*}
		&\tD^{-2}\trace \Big[\Big( \bK\frac{\partial \vec(\bG) }{\partial \vec(\bE)} \Big)^2\Big]\\
		=~& \tD^{-2}\trace \Big( \bK (\be_{t_0}\be_{t_0'}^\top \otimes \bI_n) \frac{\partial \vec(\bF) }{\partial \vec(\bE)} \bK (\be_{t_0}\be_{t_0'}^\top \otimes \bI_n)\frac{\partial \vec(\bF) }{\partial \vec(\bE)}\Big)\\
		\le~& \tD^{-2}\fnorm{(\be_{t_0'}^\top \otimes\bI_n)\frac{\partial \vec(\bF) }{\partial \vec(\bE)} \bK (\be_{t_0}\otimes \bI_n)}^2. 
	\end{align*}
	For the second term in \eqref{eq: *2s}, recall $\Rem = \bu\bv^\top$, and $\bu = \vec(\bG)$, $\bv^\top = \frac{\partial \tD^{-1}}{\partial \vec(\bE) }$ from \eqref{eq: nabla-f}, then $\trace[(\bK\Rem)^2] = \trace( \bK \bu\bv^\top\bK \bu\bv^\top)= (\bv^\top\bK \bu)^2$, thus, 
	\begin{align*}
		&\trace[(\bK\Rem)^2] \\
		=~& \Big[ \frac{\partial \tD^{-1}}{\partial \vec(\bE) } \bK \vec(\bG)\Big]^2\\
		=~& \tD^{-6}  \Big[ \vec(\bF)^\top\frac{\partial \vec(\bF)}{\partial \vec(\bE)} \bK (\be_{t_0}\be_{t_0'}^\top \otimes \bI_n) \vec(\bF) \Big]^2\\
		\le~ & \tD^{-6} \norm{\vec(\bF)^\top\frac{\partial \vec(\bF)}{\partial \vec(\bE)} \bK (\be_{t_0} \otimes \bI_n)}^2 \norm{(\be_{t_0'}^\top \otimes \bI_n) \vec(\bF)}^2.
	\end{align*}
	where the first inequality uses $
	\frac{\partial \tD^{-1}}{\partial \vec(\bE)} = -\frac12\tD^{-3}\frac{\partial \fnorm{\bF}^2}{\partial \vec(\bE)} = -\tD^{-3}\vec(\bF)^\top\frac{\partial \vec(\bF)}{\partial \vec(\bE)}$ by chain rule, and the inequality uses Cauchy-Schwarz inequality. 
	
	For the third term in \eqref{eq: *2s}, recall $\Rem = \bu\bv^\top$, and $\bu = \vec(\bG)$, $\bv^\top = \frac{\partial \tD^{-1}}{\partial \vec(\bE) }$, then 
	$\trace\Big[ \bK\frac{\partial \vec(\bG) }{\partial \vec(\bE) } \bK \Rem\Big] = 
	\bv^\top\bK\frac{\partial \vec(\bG) }{\partial \vec(\bE) } \bK  \bu$, hence
	\begin{align*}
		&2\tD^{-1}\trace\Big[ \bK\frac{\partial \vec(\bG) }{\partial \vec(\bE) } \bK \Rem\Big] \\
		=~& 2\tD^{-1}  \frac{\partial \tD^{-1}}{\partial \vec(\bE) }\bK \frac{\partial \vec(\bG) }{\partial \vec(\bE) }\bK \vec(\bG)\\
		=~& -2\tD^{-4} \vec(\bF)^\top \frac{\partial \vec(\bF) }{\partial \vec(\bE)}\bK 
		(\be_{t_0}\be_{t_0'}^\top \otimes \bI_n)\frac{\partial \vec(\bF) }{\partial \vec(\bE)} \bK(\be_{t_0}\be_{t_0'}^\top \otimes \bI_n) \vec(\bF)\\
		\le~&2 \tD^{-4}  \fnorm{(\be_{t_0'}^\top \otimes \bI_n) \vec(\bF) \vec(\bF)^\top \frac{\partial \vec(\bF) }{\partial \vec(\bE)}\bK (\be_{t_0} \otimes \bI_n)} \\
		& \fnorm{(\be_{t_0'}^\top \otimes \bI_n) \frac{\partial \vec(\bF) }{\partial \vec(\bE)} \bK(\be_{t_0}\otimes \bI_n)}, 
	\end{align*}
	where the last inequality uses Cauchy-Schwarz inequality.  
	
	Summing over all $(t_0, t_0')\in [T]\times [T]$ for these three terms in \eqref{eq: *2s}, using $\opnorm{ \frac{\partial \vec(\bF) }{\partial \vec(\bE)} } \le 1$, $\bK = \bS \otimes \bI_n$, and $\fnorm{\bS} \le \fnorm{\bS^{\frac12}}^2= \trace(\bS)$, we obtain 
	\begin{align}
		&\sum_{t_0, t_0'} \trace\big[\big(\bK \nabla \bff(\bz)\big)^2\big] \nonumber\\
		\le~& \tD^{-2}\fnorm{\bK}^2 + \tD^{-6} \fnorm{\bF}^2 \fnorm{\bK}^2 \fnorm{\bF}^2 + 2 \tD^{-4} \fnorm{\bF}^2 \fnorm{\bK}^2\nonumber\\
		=~ & \tD^{-2}n\fnorm{\bS}^2 + \tD^{-6} \fnorm{\bF}^4 n\fnorm{\bS}^2  + 2 \tD^{-4} \fnorm{\bF}^2 n\fnorm{\bS}^2\nonumber\\
		\le~& \big[\tD^{-2}n\trace(\bS) + \tD^{-6} \fnorm{\bF}^4 n\trace(\bS)  + 2 \tD^{-4} \fnorm{\bF}^2 n\trace(\bS)\big] \trace(\bS).\label{eq: stein-RHS2}  
	\end{align}
	\paragraph{Second term in \eqref{eq: LHS}.}
	Recall that $\Rem = \bu\bv^\top$, hence $[\trace(\bK\Rem)]^2 =\trace[(\bK\Rem)^2]$. 
	By calculation of second term in  \eqref{eq: *2s}, we obtain 
	\begin{align}\label{eq: LHS2}
		\sum_{t_0,t_0'} [\trace(\bK\Rem)]^2 
		=\sum_{t_0, t_0'}  \trace[(\bK\Rem)^2] 
		\le  \tD^{-6} \fnorm{\bF}^4 n  \trace(\bS)^2.
	\end{align}
	
	Combining the results \eqref{eq: LHS}, \eqref{eq: stein}, \eqref{eq: stein-RHS1}, \eqref{eq: stein-RHS2},  \eqref{eq: LHS2}, we obtain 
	\begin{align*}
		&\E\Bigl[
		\fnorm{\bE^\top \bF/\tD - \bS (n\bI_T - \hbA )/\tD}^2\Bigr]\\
		\le~&  2\big[\tD^{-2} \fnorm{\bF}^2+\tD^{-2}n\trace(\bS) + 2\tD^{-6} \fnorm{\bF}^4 n\trace(\bS)  + 2 \tD^{-4} \fnorm{\bF}^2 n\trace(\bS)\big] \trace(\bS)\\
		\le~& 4 \trace(\bS), 
	\end{align*}
	thanks to $\tD^{2} = \fnorm{\bF}^2 + n\trace(\bS)$.
\end{proof}

\begin{proof}[Proof of \Cref{lem:steinX}]
	Apply \cite[Proposition 6.3]{bellec2020out} with $\brho = \bU\be_t, \bfeta = \bV\be_{t'}$, we obtain 
	\begin{align*}
		&\E\Big[\norm[\Big]{\bU^\top \bZ \bV - 
			\sum_{j=1}^p\sum_{i=1}^n
			\frac{\partial}{\partial z_{ij} }\Bigl(\bU^\top \be_i \be_j^\top \bV \Bigr)
		}_{\rm F}^2\Big]\\
		=~ &\sum_{t,t'=1}^T\E \big(\be_t^\top\bU^\top \bZ\bV\be_{t'} - \sum_{j=1}^p\sum_{i=1}^n
		\frac{\partial }{\partial z_{ij}} \be_{t}\bU^\top \be_i \be_j^\top \bV\be_{t'} \big)^2\\
		\le~ & \sum_{t,t'=1}^T \left[\E \big[\norm*{\bU\be_t}^2 \norm*{\bV\be_{t'}}^2\big] + \E
		\sum_{ij} \Big[2\norm*{\bV\be_{t'}}^2 \fnorm*{\frac{\partial \bU \be_t}{\partial z_{ij}}} + 2\norm*{\bU\be_{t}}^2 \fnorm*{\frac{\partial \bV \be_{t'}}{\partial z_{ij}}}\Big] \right]\\
		=~ & \E \fnorm*{\bU}^2 \fnorm*{\bV}^2+ \E \sum_{ij}\Big[
		2\fnorm*{\bV}^2\fnorm*{ \frac{\partial \bU}{\partial z_{ij}} }^2
		+ 2\fnorm*{\bU}^2\fnorm*{ \frac{\partial \bV}{\partial z_{ij}} }^2\Big].
	\end{align*}
\end{proof}

	\begin{proof}[Proof of \Cref{cor:steinX}]
	By Kirszbraun's theorem, there exists an $L_1$-Lipschitz function $\bar\bU: \R^{n\times p} \to \R^{n\times T}$ such that $\bar\bU = \bU$ on $\Omega$, and an $L_2$-Lipschitz function $\bar\bV: \R^{n\times p} \to \R^{n\times T}$ such that $\bar\bV = \bV$ on $\Omega$. By projecting $\bar\bU$ and $\bar\bV$ onto the Euclidean ball of radius 1 and $K$ if necessary, we assume without loss of generality that $\fnorm{\bar\bU}\le 1$ and $\fnorm{\bar\bV}\le K$. 
	Therefore, 
	\begin{align*}
		&\E\Big[I(\Omega) \fnorm{\bU^\top \bZ \bV - 
			\sum_{j=1}^p\sum_{i=1}^n
			\frac{\partial}{\partial z_{ij} }\Bigl(\bU^\top \be_i \be_j^\top \bV \Bigr)
		}^2\Big]\\
		=~&\E\Big[I(\Omega) \fnorm{\bar\bU^\top \bZ\bar \bV - 
			\sum_{j=1}^p\sum_{i=1}^n
			\frac{\partial}{\partial z_{ij} }\Bigl(\bar\bU^\top \be_i \be_j^\top \bar\bV \Bigr)
		}^2\Big]\\
		\le~&\E\Big[ \fnorm{\bar\bU^\top \bZ\bar \bV - 
			\sum_{j=1}^p\sum_{i=1}^n
			\frac{\partial}{\partial z_{ij} }\Bigl(\bar\bU^\top \be_i \be_j^\top \bar\bV \Bigr)
		}^2\Big]\\
		\le~& \E \Big[\big(\fnorm{\bar\bU}^2 \fnorm{\bar\bV}^2\big) + 
		2\sum_{ij}\Big(
		\fnorm{\bar\bV}^2\fnorm*{ \frac{\partial \bar\bU}{\partial z_{ij}} }^2
		+ \fnorm{\bar\bU}^2\fnorm*{ \frac{\partial \bar\bV}{\partial z_{ij}}
		}^2\Big)\Big]\\
		\le~& K^2 + 2\E \Big[
		\sum_{ij}\Big( K^2\fnorm*{\frac{\partial \bar\bU}{\partial z_{ij}} }^2 + \fnorm*{\frac{\partial \bar\bV}{\partial z_{ij}} }^2\Big) \Big]\\
		=~& K^2 + 2\E \Big[I(\Omega)
		\sum_{ij}\Big( K^2\fnorm*{\frac{\partial \bar\bU}{\partial z_{ij}} }^2 + \fnorm*{\frac{\partial \bar\bV}{\partial z_{ij}} }^2\Big) + I(\Omega^c)
		\sum_{ij}\Big( K^2\fnorm*{\frac{\partial \bar\bU}{\partial z_{ij}} }^2 + \fnorm*{\frac{\partial \bar\bV}{\partial z_{ij}} }^2\Big) \Big] \\
		=~& K^2 + 2\E \Big[I(\Omega)
		\sum_{ij}\Big( K^2\fnorm*{\frac{\partial \bU}{\partial z_{ij}} }^2 + \fnorm*{\frac{\partial \bV}{\partial z_{ij}} }^2\Big) + I(\Omega^c)
		\sum_{ij}\Big( K^2\fnorm*{\frac{\partial \bar\bU}{\partial z_{ij}} }^2 + \fnorm*{\frac{\partial \bar\bV}{\partial z_{ij}} }^2\Big) \Big] \\
		\le~& K^2 + 2\E \Big[I(\Omega)
		\sum_{ij}\Big( K^2\fnorm*{\frac{\partial \bU}{\partial z_{ij}} }^2 + \fnorm*{\frac{\partial \bV}{\partial z_{ij}} }^2\Big) \Big]
		+ 2C( K^2 L_1^2  + L_2^2 ),
	\end{align*}
	where the last inequality uses $\sum_{ij}\fnorm{\frac{\partial \bar\bU}{\partial z_{ij}} }^2\le nT (n^{-1/2}L_1)^2 = TL_1^2$, $\sum_{ij}\fnorm{\frac{\partial \bar\bV}{\partial z_{ij}} }^2\le TL_2^2$ by Lipschitz properties of $\bar\bU$, $\bar\bV$, and $P(\Omega^c)\le C/T$. 
\end{proof}

\begin{proof}[Proof of \Cref{lem: Chi2type}]
	For each $j\in[p]$, let $\E_j (\cdot) $ denote the conditional expectation $\E[\cdot |\{\bZ e_k, k\ne j\}]$. The left-hand side of the desired inequality can be rewritten as 
	\begin{align*}
		&\E\Big[
		\fnorm[\big]{ 
		p \bU^\top \bV  - \sum_{j=1}^p 
		(\E_j\bU^\top \bZ - \bL^\top) \be_j
		\be_j^\top (\bZ^\top \E_j\bV - \hat \bL)
	}
		\Big] 
	\end{align*}
	with $\bL\in\R^{p\times T}$ defined by
	$\bL^\top \be_j = \E_j\bU^\top \bZ \be_j - \bU^\top \bZ \be_j + \sum_{i=1}^n \partial_{ij} \bU^\top \be_i$
	and $\hat \bL$ defined similarly with $\bU$ replaced by $\bV $.
	We develop the terms in the sum over $j$ as follows:
	\begin{align}
		\nonumber
		&p \bU^\top \bV  - \sum_j
		(\E_j\bU^\top \bZ - \bL^\top) \be_j
		\be_j^\top (\bZ^\top \E_j\bV - \hat \bL)
		\\=\quad&
		\sum_j
		\Bigl(\bU^\top \bV  -  \E_j[\bU^\top]\E_j\bV\Bigr)
		\label{eq:first-term-swap-trick}
		\\&  + \sum_j
		\Bigl(
		\E_j[\bU^\top]\E_j\bV
		-
		\E_j\bU^\top \bZ \be_j \be_j^\top \bZ^\top\E_j\bV
		\Bigr)
		\label{eq:difficult-term}
		\\ & - \bL^\top \hat \bL \label{eq:Xi-hat-Xi-term}
		\\ & + \sum_j \Bigl(\E_j\bU^\top \bZ \be_j \be_j^\top \hat \bL\Bigr) 
		+ 
		\Bigl(\bL^\top \be_j \be_j^\top \bZ^\top \E_j[\bU]\Bigr).
		\label{eq:last-two-terms}
	\end{align}
	First, for \eqref{eq:Xi-hat-Xi-term},
	by the Cauchy-Schwarz inequality
	$\E\bigl[ \|\bL^\top \hat \bL\|_{\rm F}\bigr]
	\le \E\bigl[\|\bL\|_{\rm F}^2\bigr]^{\frac 12}\E\bigl[\|\hat\bL\|_{\rm F}^2\bigr]^{\frac 12}$.
	For a fixed $j\in[p]$ and $t\in [T]$, 
	\begin{align*}
		\E[(\be_j^\top \bL e_t)^2]
		&\le
		\sum_{i=1}^n
		\E[(\be_i^\top(\E_j[\bU] - \bU)e_t )^2]
		+
		\E
		\sum_{i=1}^n
		\sum_{l=1}^n
		\Bigl(
		\frac{\be_i^\top \partial \bU e_t }{\partial z_{lj}}
		\Bigr)^2\\
		&\le
		2
		\E
		\sum_{i=1}^n
		\sum_{l=1}^n
		\Bigl(
		\frac{\be_i^\top \partial \bU e_t }{\partial z_{lj}}
		\Bigr)^2,
	\end{align*}
	where the two inequalities are due to the second-order stein inequality in \Cref{lem: 2nd-Stein}, and Gaussian-Poincar\'e inequality in \Cref{lem: Gaussian-Poincare}, respectively. 
	Summing over $j\in[p]$ and $t\in[T]$ we obtain
	$\E[\|\bL\|_{\rm F}^2] \le 2 \E \sum_{lj} \| \partial_{lj} \bU \|_{\rm F}^2
	= 2 \|\bU\|_\partial^2$.
	Combined with the same bound for $\hat \bL$, we obtain
	$\E[\|\eqref{eq:Xi-hat-Xi-term}\|_{\rm F}^2] 
	\le
	2 \|\bU\|_\partial\|\bV \|_\partial$.
	We now turn to the two terms in \eqref{eq:last-two-terms}.
	By the triangle inequality for the Frobenius norm,
	\begin{align*}
		\E\Bigl[\|\sum_j \E_j\bU^\top \bZ \be_j \be_j^\top \hat \bL\|_{\rm F}\Bigr]
		&\le
		\sum_j \E\Bigl[\|\E_j\bU^\top \bZ \be_j\|_2 \|\be_j^\top \hat \bL\|_2\Bigr]
		\\ &\le \E[
		\sum_j \| \E_j\bU^\top \bZ \be_j\|_2^2
		]^{\frac 12}
		\E[ \sum_j \|\be_j^\top \hat \bL\|_2^2]^{\frac 12}
		\\&\le (p \E[\fnorm{\bU}^2])^{\frac 12}
		\E[ \|\hat \bL\|_{\rm F}^2]^{\frac 12}
	\end{align*}
	where we used that $\| \ba \bb^\top \|_{\rm F} = \|\ba\|_2\|\bb\|_2$ for two vectors $\ba,\bb$,
	the Cauchy-Schwarz inequality, $\E[\|\bA \bz_j\|_2^2|\bA] = \|\bA\|_{\rm F}^2$
	if matrix $\bA$ is independent of $\bz_j\sim \calN(0,I_n)$ (set $\bz_j=\bZ \be_j$),
	and Jensen's inequality.
	
	Next, we decompose \eqref{eq:first-term-swap-trick} as
	$\sum_j \bU^\top(\bV  - \E_j\bV )
	+
	\sum_j(\bU - \E_j\bU)^\top\E_j\bV 
	$.
	We have by the submultiplicativity of the Frobenius norm
	and the Cauchy-Schwarz inequality
	\begin{align*}
		\E[\|\bU^\top\sum_{j}(\bV  - \E_j\bV)\|_{\rm F}]
		& \le \E[\sum_j\fnorm{\bU} \| \bV  - \E_j\bV \|_{\rm F}]
		\\&\le \E[p \fnorm{\bU}^2]^{\frac 12} 
		\E[\sum_j\| \bV  - \E_j\bV \|_{\rm F}^2]^{\frac 12}.
	\end{align*}
	By the Gaussian Poincar\'e inequality applied $p$ times,
	$\E[\sum_j\| \bV  - \E_j\bV \|_{\rm F}^2] \le \|\bV  \|_{\partial}^2$,
	so that the previous display is bounded from above by $\sqrt{p}\|\bV  \|_{\partial}$.
	Similarly, $\E[\|\sum_j(\E_j[\bU] - \bU)^\top \E_j\bV \|_{\rm F}]
	\le \sqrt{p} \| \bU \|_{\partial}$
	and $\E[\|\eqref{eq:first-term-swap-trick}\|_{\rm F}] \le \sqrt p (\|\bU\|_{\partial}
	+ \|\bV \|_{\partial})$.
	
	For the last remaining term, \eqref{eq:difficult-term},
	we first use 
	$\E[\| \eqref{eq:difficult-term} \|_{\rm F}]
	\le
	\E[\| \eqref{eq:difficult-term} \|_{\rm F}^2]^{\frac 12}
	$ by Jensen's inequality and now proceed to bound
	$\| \eqref{eq:difficult-term} \|_{\rm F}^2$. We have
	$$
	\| \eqref{eq:difficult-term} \|_{\rm F}^2
	=
	\|\sum_j \E_j\bU^\top \E_j\bV - \E_j\bU^\top X \be_j \be_j^\top \bZ^\top \E_j\bV\|_{\rm F}^2
	=
	\sum_{j,k} \trace[\bM_j^\top \bM_k],
	$$
	where $\bM_j = \E_j\bU^\top \E_j\bV - \E_j\bU^\top \bZ \be_j \be_j^\top \bZ^\top \E_j\bV$. 
	We first bound $\sum_j \|\bM_j\|_{\rm F}^2$.
	Since the variance of $\ba^\top\bb - \ba^\top \bg \bg^\top \bb$ for standard normal $\bg\sim \calN(0,I_p)$
	is $2\|(\ba \bb^\top + \bb \ba^\top)/2\|_{\rm F}^2\le 2 \|\ba\|_2^2 \|\bb\|_2^2$, 
	applying this variance bound on each pair of coordinates $(t,t')\in[T]\times [T]$ 
	gives $\sum_j \|\bM_j\|_{\rm F}^2 \le 
	\sum_j 2 \|\E_j[\bU]\|_{\rm F}^2\|\E_j\bV\|_{\rm F}^2
	\le 2p$.
	
	We now bound $\sum_{j\ne k} \trace[\bM_j^\top\bM_k]$.
	Setting $\bz_j = \bZ \be_j \sim \calN(0,I_n)$ for every $j\in[p]$,
	we will use many times
	the identity
	\begin{equation}
		\label{stein}
		\E[(\bz_j^\top f(\bZ)  - \sum_i \partial_{ij} f(\bZ)^\top \be_i )g(\bZ)
		=
		\E[
		\sum_{i}
		f(\bZ)^\top \be_i \partial_{ij} g(\bZ)
		]
	\end{equation}
	which follows from Stein's formula
	for $f:\R^{n\times p}\to \R^n$ and $g:\R^{n\times p}\to \R$.
	With $f^{tt'}(\bZ) =(\bz_j^\top \E_j[\bU] \be_{t'}) \E_j\bV e_t$
	and $g^{tt'}(\bZ) = \be_{t'}^\top \bM_k e_t$,
	we find
	\begin{align*}
		&\E\trace[\bM_j^\top \bM_k]
		=\E\trace [\bM_j^\top \sum_t \be_{t'} \be_{t'}^\top \bM_k]
		=\E[\sum_{tt'} e_t^\top  \bM_j^\top \be_{t'} \be_{t'}^\top \bM_k e_t]
		\\&=\E
		\sum_{tt'}
		\Bigl(\bz_j^\top f^{tt'}(\bZ) - \sum_i \be_i^\top \partial_{ij} f^{tt'}(\bZ)\Bigr)
		g^{tt'}(\bZ)
		\\&
		=\E
		\sum_{tt'}\sum_i
		\be_i^\top f^{tt'}(\bZ) \partial_{ij} g^{tt'}(\bZ).
	\end{align*}
	where $g_{tt'}(\bZ) = 
	(
	e_t^\top \E_k \bV  ^\top \E_k \bV  e_t' 
	-
	e_t^\top \E_k   \bU^\top \bz_k \bz_k^\top \E_k \bV  e_t'
	)$ and
	$$\partial_{ij} g_{tt'} =
	\partial_{ij}
	\be_{t'}^\top \bM_k e_t
	=
	\be_{t'}^\top \partial_{ij}[\E_k \bU ^\top \E_k \bV ] e_t 
	-
	\bz_k^\top\partial_{ij}[\E_k \bU \be_{t'} \be_{t}^\top \E_k  \bU^\top] \bz_k
	$$
	Now define   $\tilde f^{tt'}(\bZ) =
	\partial_{ij}[\E_k \bU \be_{t'} \be_{t}^\top \E_k   \bU^\top] \bz_k$
	and $\tilde g^{tt'}(\bZ) = \sum_i \be_i^\top f^{tt'}(\bZ)$.
	By definition of $\tilde f^{tt'}(\bZ)$,
	the previous display is equal to
	$\bz_k^\top\tilde f^{tt'}(\bZ) - \sum_l \partial_{lk} \be_l^\top\tilde f^{tt'}(\bZ)$.
	We apply \eqref{stein} again with respect to $\bz_k$, so that
	\begin{align*}
		\E \trace[\bM_j^\top \bM_k]
		&=
		\sum_{il, tt'}
		\be_i^\top \partial_{lk}[f^{tt'}(\bZ)]
		\be_l^\top \tilde f^{tt'}(\bZ)
		\\&=
		\sum_{il, tt'}
		\Bigl(\be_i^\top \partial_{lk}\Bigl[\E_j\bV e_t \be_{t'}^\top \E_j\bU^\top\Bigr]\bz_j\Bigr)
		\Bigl(\be_l^\top \partial_{ij}\Bigl[\E_k[\bU]      \be_{t'} e_t^\top \E_k[\bV ]^\top\Bigr] \bz_k\Bigr).
	\end{align*}
	To remove the indices $t,t'$, we rewrite the above using $\sum_t \be_t \be_t^\top=I_T$
	and $\sum_{t'} \be_{t'} \be_{t'}^\top = I_T$ so that it equals
	$$
	\E\sum_{il}\trace
	\Bigl\{
	\partial_{lk}\Bigl[
	\E_j\bU^\top \bz_j \be_i^\top \E_j\bV 
	\Bigr]
	\partial_{ij}\Bigl[
	\E_k[\bV ]^\top \bz_k \be_l^\top \E_k[\bU] 
	\Bigr]
	\Bigr\}
	$$
	Summing over $j,k$,
	using $\trace [\bA^\top \bB ] \le \|\bA\|_{\rm F}\|\bB\|_{\rm F})$
	and the Cauchy-Schwarz inequality,
	the above is bounded from above by
	\begin{align*}
		&
		\Bigl\{
		\E\sum_{jk, il}
		\Big\|
		\partial_{lk}
		\Bigl[
		\E_j 
		\bU^\top
		\bz_j
		\be_i^\top
		\E_j
		[\bV ]
		\Bigr]
		\Big\|_{\rm F}^2
		\Bigl\}^{\frac 12}
		\Bigl\{
		\E
		\sum_{jk, il}
		\Big\|
		\partial_{ij}\Bigl[
		\E_k[\bV ]^\top \bz_k
		\be_l^\top
		\E_k[\bU]  
		\Bigr]
		\Big\|_{\rm F}^2
		\Bigr\}^{\frac 12}.
	\end{align*}
	At this point the two factors are symmetric,
	with $(\bV , \bU)$ in the left factor
	replaced with $(\bU,\bV )$ on the right factor.
	We focus on the left factor; similar bound will
	apply to the right one by exchanging the roles of $\bV $ and $\bU$.
	If $\bz_j$ is independent of matrices $A^{(q)}$
	$\E_j[\|\sum_{q=1}^n (\be_q^\top\bz_j) A^{(q)}\|_{\rm F}^2
	=
	\sum_{q=1}^n
	\|A^{(q)}\|_{\rm F}^2
	$ so that
	with $A^{(q)} =\partial_{lk}[\E_j \bU^\top \be_q \be_i^\top \E_j \bU]$, the
	first factor in the above display is equal to
	\begin{align*}
		&\quad~\Bigl\{
		\E\sum_{jk, ilq}
		\Big\|
		\partial_{lk}
		\Bigl(
		\E_j 
		\bU^\top
		\be_q
		\be_i^\top
		\E_j\bV
		\Bigr)
		\Big\|_{\rm F}^2
		\Bigl\}^{\frac 12}
		\\&\stackrel{\text{(i)}}{=}
		\Bigl\{
		\E\sum_{jk, ilq}
		\Big\|
		\partial_{lk}
		\Bigl(
		\E_j 
		\bU^\top
		\Bigr)
		\be_q
		\be_i^\top
		\E_j
		[\bV ]
		+
		\E_j 
		[\bU] ^\top
		\be_q
		\be_i^\top
		\partial_{lk}
		\Bigl(
		\E_j
		[\bV ]
		\Bigr)
		\Big\|_{\rm F}^2
		\Bigl\}^{\frac 12}
		\\&\stackrel{\text{(ii)}}{\le}
		\Bigl\{
		\E\sum_{jk, ilq}
		\Big\|
		\partial_{lk}
		\Bigl(
		\E_j 
		[\bU^\top]
		\Bigr)
		\be_q
		\be_i^\top
		\E_j
		[\bV ]
		\Big\|_{\rm F}^2
		\Bigl\}^{\frac 12}
		+
		\Bigl\{
		\E\sum_{jk, ilq}
		\Big\|
		\E_j 
		\bU^\top
		\be_q
		\be_i^\top
		\partial_{lk}
		\Bigl(
		\E_j
		[\bV ]
		\Bigr)
		\Big\|_{\rm F}^2
		\Bigl\}^{\frac 12}
		\\&\stackrel{\text{(iii)}}{=}
		\Bigl\{
		\E\sum_{jk, ilq}
		\Big\|
		\E_j [ \partial_{lk} \bU]^\top
		\be_q
		\Big\|_2^2
		\Big\|
		\be_i^\top
		\E_j\bV
		\Big\|_2^2
		\Bigl\}^{\frac 12}
		+
		\Bigl\{
		\E\sum_{jk, ilq}
		\Big\|
		\E_j 
		\bU^\top
		\be_q
		\Big\|_2^2
		\Big\|
		\be_i^\top
		\E_j
		[ \partial_{lk} \bV ]
		\Big\|_2^2
		\Bigl\}^{\frac 12}
		\\&\stackrel{\text{(iv)}}{=}
		\Bigl\{
		\E\sum_{jk, l}
		\Big\|
		\E_j [ \partial_{lk} \bU]^\top
		\Big\|_{\rm F}^2
		\Big\|
		\E_j\bV
		\Big\|_{\rm F}^2
		\Bigl\}^{\frac 12}
		+
		\Bigl\{
		\E\sum_{jk, l}
		\Big\|
		\E_j 
		\bU^\top
		\Big\|_{\rm F}^2
		\Big\|
		\E_j
		[ \partial_{lk} \bV ]
		\Big\|_{\rm F}^2
		\Bigl\}^{\frac 12}
	\end{align*}
	where (i) is the chain rule,
	(ii) the triangle inequality,
	(iii) holds
	provided that the order of the derivation $\partial_{lk}$
	and the expectation sign $\E_j$ can be switched and using
	$\|\ba \bb^\top\|_{\rm F}^2 = \|\ba\|_2^2 \|\bb\|_2^2$ for vectors $\ba, \bb$,
	and (iv) holds using
	$\sum_i \|\bA \be_i\|_2^2 = \|\bA\|_{\rm F}^2 = \sum_q \|\bA \be_q\|_2^2$ for a matrix $\bA$ with $n$ columns.
	Finally, by Jensen's inequality, the above display is bounded by
	\begin{equation*}
		\Bigl\{
		\E\sum_{k, l}
		\Big\|
		\partial_{lk}
		\bU
		\Big\|_{\rm F}^2
		\sum_j
		\Big\|
		\E_j
		[\bV ]
		\Big\|_{\rm F}^2
		\Bigl\}^{\frac 12}
		+
		\Bigl\{
		\E\sum_{k, l}
		\Big\|
		\partial_{lk}
		\bV 
		\Big\|_{\rm F}^2
		\sum_j
		\Big\|
		\E_j
		[\bU]
		\Big\|_{\rm F}^2
		\Bigl\}^{\frac 12}
		.
	\end{equation*}
	Since $\fnorm{\bU}\vee \|\bV \|_{\rm F} \le 1$ almost surely,
	the previous display is bounded by
	$\sqrt{p} (\|\bU\|_\partial + \|\bV \|_{\partial})$.
	In summary,
	$$
	\E[
	\|\eqref{eq:difficult-term}
	\|_{\rm F}^2
	]^{\frac 12}
	\le(2p + [2\sqrt{p}(\|\bU\|_{\partial} + \|\bV \|_{\partial})]^2)^{\frac 12}
	\le \sqrt{2p} + 2\sqrt{p}(\|\bU\|_{\partial} + \|\bV \|_{\partial}). 
	$$
	Combining the bounds on the terms
	\eqref{eq:first-term-swap-trick}-\eqref{eq:difficult-term}-\eqref{eq:Xi-hat-Xi-term}-\eqref{eq:last-two-terms}
	with the triangle inequality completes the proof.
\end{proof}

\begin{proof}[Proof of Proposition~\ref{prop: V1}]
Note that $\bQ_1 = \frac{\bE^\top\bF/\tD  - \bS(n\bI_T - \hbA)/\tD}{\fnorm{\bS^{\frac 12}}}$, where $\tD = \big(\fnorm*{\bF}^2 + n \trace(\bS)\big)^{\frac 12}$ is defined in \Cref{lem: stein-EF}.
 Now, Apply \Cref{lem: stein-EF}, we obtain
	\begin{align*}
		\E \| \bQ_1\|^2_{\rm F} 
		= \E \big[\| \bE^\top \bF/\tD - \bS (n\bI_T - \hbA )/\tD\|_{\rm F}^2\big] \frac{1}{\trace(\bS)}
		\le 4.
	\end{align*}
\end{proof}

Before proving Proposition~\ref{prop: V6}], we state the following  \Cref{lem: Rems}, whose proof is deferred to the end of this section. 

\begin{lemma}\label{lem: Rems}
	We have 
	\begin{align*}
		\sum_{ij}\bF^\top\bZ\be_j\be_i^\top\frac{\partial \bF}{\partial z_{ij}} 
		&= \bJ_1 - \bF^\top\bZ\bH (n\bI_T -\hbA),\\
		\sum_{ij}\Big(\frac{\partial \bF}{\partial z_{ij} }\Big)^\top
		\bZ \be_j \be_i^\top \bF
		&= \bJ_2 -  \hbA \bF^\top\bF.
	\end{align*}
	where $\bJ_1 = \sum_{ij}\bF^\top\bZ\be_j \sum_t \Delta_{ij}^{it} \be_t^\top$ with $\fnorm*{\bJ_1}\le n^{\frac12} \fnorm*{\bF}^2$, and  
	$\bJ_2=\sum_{ijlt}\be_t D_{ij}^{lt}\be_l^\top\bZ\be_j \be_i^\top\bF$ with $\fnorm*{\bJ_2}\le n^{\frac 12}\opnorm*{\bZ}\fnorm{\bH} \fnorm*{\bF}$.
%
\end{lemma}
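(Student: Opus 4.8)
The plan is to start from the derivative formula of \Cref{lem: Dijlt}, $\partial F_{lt}/\partial z_{ij}=D_{ij}^{lt}+\Delta_{ij}^{lt}$, and substitute it into each of the two sums. By the very definitions of $\bJ_1$ and $\bJ_2$, the $\Delta$-contraction of the first sum is exactly $\bJ_1$ and the $D$-contraction of the second sum is exactly $\bJ_2$, so the entire content of the two identities is to evaluate the complementary contractions. For the first identity I would compute the $(t',t)$ entry of the $D$-part, $\sum_{ij}(\be_{t'}^\top\bF^\top\bZ\be_j)\,D_{ij}^{it}$, summing first over $i$ via the identity $\sum_i D_{ij}^{it}=-\be_j^\top\bH(n\bI_T-\hbA)\be_t$ already recorded in \Cref{lem: Dijlt}, and then over $j$ using $\sum_j(\be_{t'}^\top\bF^\top\bZ\be_j)\,\be_j^\top=\be_{t'}^\top\bF^\top\bZ$. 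This collapses the $D$-part to $-\bF^\top\bZ\bH(n\bI_T-\hbA)$, giving the first identity.

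For the second identity the complementary ($\Delta$) contraction has $(t,t')$ entry $\sum_{ijl}\Delta_{ij}^{lt}(\be_l^\top\bZ\be_j)(\be_i^\top\bF\be_{t'})$. I would sum over $l$ first (which replaces the free row index $\be_l^\top$ by $(\bZ\be_j)^\top$), then over $j$; writing $\bR_{ab}:=(\be_a^\top\otimes\bI_p)\bM^\dagger(\be_b\otimes\bI_p)$ for the $(a,b)$ block of $\bM^\dagger$ and using $\bX=\bZ\bSigma^{1/2}$, the $j$-sum telescopes to $\sum_j(\bZ\be_j)^\top\bX\bR_{tb}\bSigma^{1/2}\be_j=\trace(\bX\bR_{tb}\bX^\top)$. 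Setting $\bN_{ab}:=\bX\bR_{ab}\bX^\top=(\be_a^\top\otimes\bX)\bM^\dagger(\be_b\otimes\bX^\top)$ for the $(a,b)$ block of $\bN$, the key algebraic fact is $\trace(\bN_{tb})=\hbA_{tb}$, which is immediate from $\hbA=\sum_i(\bI_T\otimes\be_i^\top\bX)\bM^\dagger(\bI_T\otimes\bX^\top\be_i)$ after writing $\trace=\sum_i\be_i^\top(\cdot)\be_i$. Summing the remaining factor over $i$ produces $(\bF^\top\bF)_{bt'}$, and the $\Delta$-part collapses to $-\hbA\bF^\top\bF$.

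For the norm bounds I would rewrite $\bJ_1,\bJ_2$ purely in terms of $\bN$. The same contractions applied to the defining ($\Delta$- resp. $D$-) parts give the closed form $[\bJ_1]_{t',t}=-\sum_b(\bF\be_b)^\top\bN_{bt}(\bF\be_{t'})$, and an analogous expression for $\bJ_2$ in which the factors $\bZ$ and the columns $\bH\be_a$ appear and $\bN$ is replaced by $\bI_{nT}-\bN$. I then intend to bound these using $\opnorm{\bN}\le1$ (\Cref{lem: MN}), hence $0\preceq\bN\preceq\bI_{nT}$ and $\opnorm{\bI_{nT}-\bN}\le1$, together with the domination $0\preceq\bM^\dagger\preceq\bM_1^\dagger=\bI_T\otimes(\bX_{\hat{\mathscr{S}}}^\top\bX_{\hat{\mathscr{S}}}+\tau n\bP_{\hat{\mathscr{S}}})^\dagger$ and the hat-matrix bound $\opnorm{\bX_{\hat{\mathscr{S}}}(\bX_{\hat{\mathscr{S}}}^\top\bX_{\hat{\mathscr{S}}}+\tau n\bP_{\hat{\mathscr{S}}})^\dagger\bX_{\hat{\mathscr{S}}}^\top}\le1$ established inside the proof of \Cref{lem: MN}. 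Reassembling the columns $\bF\be_b$ (resp. $\bH\be_a$) into the full matrices $\bF$ (resp. $\bH$) should produce the $\fnorm{\bF}$ (resp. $\fnorm{\bH}$) factors, the explicit $\bZ$ in $\bJ_2$ the factor $\opnorm{\bZ}$, and a dimension count the factor $\sqrt n$; concretely one can start from $\fnorm{\bJ_1}\le\opnorm{\bF}\bigl(\sum_t\|\sum_b\bN_{bt}\bF\be_b\|^2\bigr)^{1/2}$ and bound the inner quantity by $n\fnorm{\bF}^2$.

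The main obstacle is precisely this last bookkeeping. A naive entrywise Cauchy--Schwarz inserts a spurious factor $\sqrt T$ (and degrades $\opnorm{\bZ}$ into $\fnorm{\bZ}$), because independently summing the standard-basis factor $\be_b$ over $b\in[T]$ costs a full factor $T$; reaching the stated $n^{1/2}$ and $\opnorm{\bZ}$ therefore requires grouping the $b$-sum so that the columns of $\bF$ and $\bH$ recombine into the full matrices \emph{before} any norm is taken, while keeping the single application of $\opnorm{\bN}\le1$ (or $\opnorm{\bI_{nT}-\bN}\le1$) tight and exploiting the precise block structure of $\bN$ (through the $\bM_1^\dagger$-domination and the rank/trace of $\bN$) rather than $\opnorm{\bN}\le1$ alone. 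This is where the care lies; the argument is computational rather than conceptual once the closed forms above are in hand.
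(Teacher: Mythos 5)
Your proposal is correct, and for the two identities it follows essentially the paper's own route: substitute the derivative formula of \Cref{lem: Dijlt}, observe that the $\Delta$-contraction of the first sum and the $D$-contraction of the second sum are $\bJ_1$ and $\bJ_2$ by definition, collapse the $D$-part of the first sum via $\sum_i D_{ij}^{it}=-\be_j^\top\bH(n\bI_T-\hbA)\be_t$, and collapse the $\Delta$-part of the second sum to $-\hbA\bF^\top\bF$ (the paper does this by Kronecker-product manipulation and the formula \eqref{eq: hbA-matrix}; your block/trace bookkeeping with $\trace(\bN_{tb})=\hbA_{tb}$ is the same computation in different notation). Where you genuinely differ is the norm bounds. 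The paper applies the triangle inequality over the row index ($i$ for $\bJ_1$, $l$ for $\bJ_2$), uses $\opnorm{\bN(\bI_T\otimes\be_i)}\le 1$ term by term, and then Cauchy--Schwarz over the $n$ terms, which is exactly where its factor $\sqrt n$ comes from. Your plan instead recombines the columns before taking any norm: writing $\bJ_1=\pm\bF^\top\bU$ where the stacked columns of $\bU$ form $\bN\vec(\bF)$, a single application of $\opnorm{\bN}\le1$ gives $\fnorm{\bJ_1}\le\opnorm{\bF}\fnorm{\bF}\le\fnorm{\bF}^2$, and the analogous grouping for $\bJ_2$ (with $\bI_{nT}-\bN$ and $\vec(\bZ\bH)$) gives $\fnorm{\bJ_2}\le\opnorm{\bZ}\fnorm{\bH}\fnorm{\bF}$ --- both sharper than the stated bounds by a factor $\sqrt n$, hence a fortiori sufficient. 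One caveat: your closing worry that $\opnorm{\bN}\le1$ alone is not enough and that one must exploit the $\bM_1^\dagger$-domination or the rank/trace of $\bN$ is unfounded; both the paper's argument and your own recombination need nothing beyond \Cref{lem: MN} (and $\opnorm{\bI_{nT}-\bN}\le1$), and the spurious $\sqrt T$ you fear never appears once the $b$-sum is grouped as you describe (the block index-ordering $\bN_{bt}$ versus $\bN_{tb}$ in your closed form is immaterial for the Frobenius bound, by symmetry of $\bN$).
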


\begin{proof}[Proof of Proposition~\ref{prop: V6}]\label{proof: V6}
	We first apply \Cref{lem:steinX}. To be more specific, let $\bU = n^{-\frac 12}\bF/D$ and $\bV =n^{-\frac 12} \bZ^\top\bU$ with $D = (\fnorm*{\bF}^2/n + \fnorm{\bH}^2)^{\frac 12}$, then $\fnorm*{\bU}\le 1$, $\fnorm*{\bV}\le n^{-\frac12}\opnorm*{\bZ}$. \Cref{lem:steinX} yields  
	\begin{align}\label{eq: UZV}
		&\E \Big(\Big\|\bU^\top \bZ \bV - 
		\sum_{j=1}^p\sum_{i=1}^n
		\frac{\partial}{\partial z_{ij} }\Bigl(\bU^\top \be_i \be_j^\top \bV \Bigr)\Big\|_{\rm F}^2\Big)\\
		\le~ & \E [\fnorm*{\bU}^2 \fnorm*{\bV}^2]+ \E \sum_{ij}\Big[
		2\fnorm*{\bV}^2\fnorm*{ \frac{\partial \bU}{\partial z_{ij}} }^2
		+ 2\fnorm*{\bU}^2\fnorm*{ \frac{\partial \bV}{\partial z_{ij}} }^2\Big]\notag\\
		\le~&\E \big(n^{-1}\opnorm*{\bZ}^2\big) + 
		2\E \Big(n^{-1}\opnorm*{\bZ}^2\sum_{ij}
		\fnorm*{ \frac{\partial \bU}{\partial z_{ij}} }^2\Big)
		+ 2 \E \Big(\sum_{ij} \fnorm*{ \frac{\partial \bV}{\partial z_{ij}} }^2 \Big)\\ \notag 
		\le~&\frac{4p}{n} 
		+  \E \big(\frac 1n\opnorm{\bZ}^2\big) 
		+ 6\E \Big(n^{-1}\opnorm*{\bZ}^2\sum_{ij}
		\fnorm*{ \frac{\partial \bU}{\partial z_{ij}} }^2\Big),
	\end{align}
where the last inequality uses the following bound derived using $\bV = n^{-\frac 12} \bZ^\top\bU$, and $\fnorm{\bU}\le1$,
	\begin{align}
	\sum_{ij}
	\fnorm*{ \frac{\partial \bV}{\partial z_{ij}} }^2\notag
	=~& \sum_{ij}
	n^{-1}\fnorm*{\frac{\partial \bZ^\top}{\partial z_{ij}} \bU + \bZ^\top \frac{\partial \bU^\top}{\partial z_{ij}} }^2\notag\\
	\le~& 2{n}^{-1} \Big(p\fnorm*{\bU}^2 + \opnorm*{\bZ}^2 \sum_{ij} \fnorm*{ \frac{\partial \bU}{\partial z_{ij}} }^2\Big)\notag\\
	\le~& \frac{2p}{n} + 2n^{-1}\opnorm*{\bZ}^2 \sum_{ij} \fnorm*{ \frac{\partial \bU}{\partial z_{ij}} }^2. \label{eq: Vzij}
\end{align}
Note that $\E [(n^{-\frac12}\opnorm*{\bZ})^2] \le (1 + \sqrt{p/n})^2 + 1/n$ by \eqref{eq: opnorm-Z}. 
Now we establish the connection between $\bQ_2$ and the term inside Frobenius norm in \eqref{eq: UZV}. By definitions of $\bU$ and $\bV$, 
\begin{equation}\label{eq: uxv}
	\bU^\top \bZ \bV = n^{-\frac 32}D^{-2}\bF^\top \bZ \bZ^\top\bF.
\end{equation}
Next, by product rule,
\begin{align*}
	&\sum_{j=1}^p\sum_{i=1}^n\frac{\partial}{\partial z_{ij} }\Bigl(\bU^\top \be_i \be_j^\top \bV \Bigr)\\
	=~& n^{-\frac 32}\sum_{j=1}^p\sum_{i=1}^n\frac{\partial}{\partial z_{ij} }\Bigl(\bF^\top \be_i \be_j^\top \bZ^\top\bF D^{-2} \Bigr)\\
	=~& n^{-\frac 32}\sum_{j=1}^p\sum_{i=1}^n\Big( \underbrace{\frac{\partial \bF^\top}{\partial z_{ij}} \be_i \be_j^\top \bZ^\top\bF D^{-2}}_{(i)} +\underbrace{\bF^\top\be_i \be_j^\top\frac{\partial \bZ^\top\bF}{\partial z_{ij}}  D^{-2}}_{(ii)} +  \underbrace{\bF^\top\be_i \be_j^\top\bZ^\top\bF\frac{\partial D^{-2}}{\partial z_{ij}}}_{(iii)} \Big).
\end{align*}
We now rewrite the above three terms $(i), (ii)$ and $(iii)$.
\begin{enumerate}
	\item[(i)] For term ($i$), by \Cref{lem: Rems}, 
	\begin{align*}
		&n^{-\frac 32}D^{-2}\sum_{j=1}^p\sum_{i=1}^n\frac{\partial \bF^\top}{\partial z_{ij}} \be_i \be_j^\top \bZ^\top\bF\\
		=~& n^{-\frac 32}D^{-2} \bigl[ \bJ_1 - \bF^\top\bZ\bH(n\bI_T -\hbA) \bigr]^\top\\
		=~& n^{-\frac 32}D^{-2} \bigl[ \bJ_1^\top - (n\bI_T -\hbA) \bH^\top \bZ^\top \bF \bigr].
	\end{align*}
	
	\item[(ii)] For term ($ii$), by product rule and \Cref{lem: Rems}, 
	\begin{align*}
		&n^{-\frac 32}D^{-2}\sum_{j=1}^p\sum_{i=1}^n \bF^\top\be_i \be_j^\top\frac{\partial \bZ^\top\bF}{\partial z_{ij}} \\
		=~&n^{-\frac 32}D^{-2}\Big( p\bF^\top\bF +
		\sum_{j=1}^p\sum_{i=1}^n \bF^\top\be_i \be_j^\top\bZ^\top\frac{\partial \bF}{\partial z_{ij}} \Big)\\
		=~&n^{-\frac 32}D^{-2}\Big( p\bF^\top\bF +
		(\bJ_2 - \hbA\bF^\top\bF)^\top \Big)\\
		=~&n^{-\frac 32}D^{-2}\big( p\bF^\top\bF - \bF^\top\bF\hbA + \bJ_2^\top \big).
	\end{align*}
	
	\item[(iii)] For term ($iii$), by chain rule, 
		\begin{align*}
		&n^{-\frac 32}\sum_{j=1}^p\sum_{i=1}^n \bF^\top\be_i \be_j^\top\bZ^\top\bF\frac{\partial D^{-2}}{\partial z_{ij}}\\
		=~& -2n^{-\frac 32}D^{-3}\sum_{j=1}^p\sum_{i=1}^n \bF^\top\be_i \be_j^\top\bZ^\top\bF\frac{\partial D}{\partial z_{ij}}\\
		=~& -n^{-\frac 32}D^{-2} \underbrace{\Big(2D^{-1}\sum_{j=1}^p\sum_{i=1}^n \bF^\top\be_i \be_j^\top\bZ^\top\bF\frac{\partial D}{\partial z_{ij}}\Big)}_{\bJ_3}
	\end{align*}
\end{enumerate}
Combining \eqref{eq: uxv} and the above three expressions for (i)-(ii)-(iii), 
	\begin{align*}
		&\bU^\top \bZ \bV - \sum_{j=1}^p\sum_{i=1}^n \frac{\partial}{\partial z_{ij} }\Bigl(\bU^\top \be_i \be_j^\top \bV \Bigr)\\
		=~& n^{-\frac 32}D^{-2}\Bigl[\bF^\top \bZ \bZ^\top\bF  + (n\bI_T -\hbA) \bH^\top \bZ^\top \bF - p\bF^\top\bF + \bF^\top\bF\hbA\\
		& -\bJ_1^\top - \bJ_2^\top + \bJ_3\Bigr] \\
		=~& \bQ_2 - n^{-\frac 32}D^{-2} (\bJ_1^\top + \bJ_2^\top - \bJ_3).
	\end{align*}
That is, 
\begin{equation}\label{eq: V6=A + B}
	\bQ_2 =  \bU^\top \bZ \bV - \sum_{j=1}^p\sum_{i=1}^n \frac{\partial}{\partial z_{ij} }\Bigl(\bU^\top \be_i \be_j^\top \bV \Bigr) + n^{-\frac 32}D^{-2} (\bJ_1^\top + \bJ_2^\top - \bJ_3). 
\end{equation}
Note that \Cref{lem: Rems} implies that
\begin{align}\label{eq: steinX_4}
	n^{-\frac 32}D^{-2} \fnorm*{\bJ_1} \le \frac{\fnorm*{\bF}^2/n}{\fnorm*{\bF}^2/n + \fnorm{\bH}^2} \le 1, 
\end{align}
and 
\begin{align}\label{eq: steinX_3}
	n^{-\frac 32}D^{-2} \fnorm*{\bJ_2} \le \Big[n^{-\frac 12} \opnorm*{\bZ} \frac{\fnorm*{\bF}n^{-\frac 12} \fnorm{\bH}}{\fnorm*{\bF}^2/n + \fnorm{\bH}^2} \Big]
	\le \frac12 (n^{-\frac 12} \opnorm*{\bZ}). 
\end{align}
Since $\bJ_3=2D^{-1}\sum_{j=1}^p\sum_{i=1}^n \bF^\top\be_i \be_j^\top\bZ^\top\bF\frac{\partial D}{\partial z_{ij}}$, by Cauchy-Schwarz inequality
\begin{align*}
	n^{-3} D^{-4}\fnorm{\bJ_3}^2 = 
	&\sum_{t,t'} ([\bJ_3]_{t,t'})^2\\
	&= 4 n^{-3} D^{-6}\sum_{t,t'} \big[\sum_{i,j} \be_t^\top\bF^\top\be_i \be_j^\top\bZ^\top\bF\be_{t'}\frac{\partial D}{\partial z_{ij}}\big]^2\\
	&\le 4 n^{-3} D^{-6} \sum_{t,t'}  \Big[\sum_{i,j} [\be_t^\top\bF^\top\be_i \be_j^\top\bZ^\top\bF\be_{t'}]^2 \sum_{i,j}\big(\frac{\partial D}{\partial z_{ij}}\big)^2\Big]\\
	&= 4 n^{-3} D^{-6}
	\fnorm{\bF}^2 
	\fnorm{\bZ^\top\bF}^2  \sum_{i,j}\big(\frac{\partial D}{\partial z_{ij}}\big)^2\\
	&= 4 n^{-3} D^{-6}
	\fnorm{\bF}^4 \opnorm{\bZ}^2 \sum_{i,j}\big(\frac{\partial D}{\partial z_{ij}}\big)^2.
\end{align*}

\begin{itemize}
	\item[(1)] Under \Cref{assu: tau}(i) that $\tau>0$. By \Cref{cor: partialD}, we have $\sum_{i,j}\big(\frac{\partial D}{\partial z_{ij}}\big)^2\le C(\tau')n^{-1}D^2$. Then, 
	\begin{equation}\label{eq: steinX_5}
		n^{-3} D^{-4}\fnorm{\bJ_3}^2 
		\le C(\tau')n^{-4}D^2 \fnorm{\bF}^4 \opnorm{\bZ}^2 
		\le C(\tau')n^{-2} \opnorm{\bZ}^2,
	\end{equation}
	by $\fnorm*{\bF}^2/(nD^2) \le 1$. 
	
	By \eqref{eq: V6=A + B} and triangular inequality, we have
	\begin{equation}\label{eq: steinX_2}
		\begin{aligned}
			\E[\fnorm{\bQ_2}^2]
			\le~& 2 \E \Big[ \fnorm{\bU^\top \bZ \bV - \sum_{j=1}^p\sum_{i=1}^n \frac{\partial}{\partial z_{ij} }\Bigl(\bU^\top \be_i \be_j^\top \bV \Bigr)}^2 \Big]\\
			&+ 2\E [\fnorm{n^{-\frac 32}D^{-2} (\bJ_1^\top + \bJ_2^\top- \bJ_3)}^2].
		\end{aligned}
	\end{equation}
By \eqref{eq: steinX_4}, \eqref{eq: steinX_3}, \eqref{eq: steinX_5}, the second term in \eqref{eq: steinX_2} can be upper bounded by 
\begin{equation}
	6\E [( 1 + \frac 14 n^{-1}\opnorm{\bZ}^2) + C(\tau') n^{-2}\opnorm{\bZ}^2) ] \le C(\tau') (1 + \frac pn),
\end{equation}
where the last inequality uses \eqref{eq: opnorm-Z}. 

For the first term in \eqref{eq: steinX_2}, since $\sum_{ij} \fnorm{ \frac{\partial \bU}{\partial z_{ij}} }^2 \le C(\tau') (T\wedge \frac pn)$ by \Cref{lem: partialF}, we have 
\begin{align*}
	&\E \Big(\Big\|\bU^\top \bZ \bV - 
	\sum_{j=1}^p\sum_{i=1}^n
	\frac{\partial}{\partial z_{ij} }\Bigl(\bU^\top \be_i \be_j^\top \bV \Bigr)\Big\|_{\rm F}^2\Big) \\ 
	\le~&\frac{4p}{n} 
	+  \E \big(n^{-1}\opnorm{\bZ}^2\big) 
	+ 6\E \Big(n^{-1}\opnorm{\bZ}^2\sum_{ij}
	\fnorm*{ \frac{\partial \bU}{\partial z_{ij}} }^2\Big)\\
	\le~& \frac{4p}{n}  + [1 + C(\tau') (T\wedge \frac pn) ]\E \big(n^{-1}\opnorm{\bZ}^2\big) \\
	\le~& \frac{4p}{n}  + [1 + C(\tau') (T\wedge \frac pn) ] [(1 + \sqrt{p/n})^2 + 1/n]\\
	\le~& C(\tau')(T \wedge (1 + \frac pn))(1 + \frac pn). 
\end{align*}
Therefore, under \Cref{assu: tau}(i), we obtain
\begin{align*}
	\E [\fnorm{\bQ_2}^2] \le C(\tau')(T \wedge (1 + \frac pn))(1 + \frac pn).
\end{align*}
	
	\item[(2)] Under \Cref{assu: tau}(ii), let $\Omega = U_1\cap U_2\cap U_3$, then we have 
	$\P(\Omega^c) \le C(\gamma, c) \frac 1T$ by  \eqref{eq: P-Omega}. 
	 By 
	\Cref{lem: lipschitz-Lasso}, on $\Omega$, we have 
	(i) The map $\bZ\mapsto \bU$ is $n^{-1/2} L_1$-Lipschitz, where $L_1=8\max(1, (2\eta)^{-1})$, and $\fnorm{\bU}\le 1$, 
	(ii) The map $\bZ \mapsto \bV$ is $n^{-1/2} L_2$-Lipschitz, where $L_2 = (1 + (2 +\sqrt{p/n})L_1)$, and $\fnorm{\bV}\le (2 + \sqrt{p/n})$. 
	Applying \Cref{cor:steinX} with $K = 2 + \sqrt{p/n}$ yields
	\begin{align*}
		&\E\Big[I(\Omega) \fnorm{\bU^\top \bZ \bV - 
			\sum_{j=1}^p\sum_{i=1}^n
			\frac{\partial}{\partial z_{ij} }\Bigl(\bU^\top \be_i \be_j^\top \bV \Bigr)
		}^2\Big]\\
		\le~& K^2 + C(\gamma, c)( K^2 L_1^2  + L_2^2 ) + 2\E \Big[I(\Omega)
		\sum_{ij}\Big( K^2\fnorm*{\frac{\partial \bU}{\partial z_{ij}} }^2 + \fnorm*{\frac{\partial \bV}{\partial z_{ij}} }^2\Big) \Big]\\
		\le~& C(\gamma, c), 
	\end{align*}
where the last inequality holds because $K \le C(\gamma), L_1 = C(\gamma, c), L_2 = C(\gamma, c)$, and on $\Omega$, $\sum_{ij} \fnorm{\frac{\partial \bU}{\partial z_{ij}} }^2\le C(\gamma, c)$ from \Cref{lem: partialF}, and $\E [\fnorm{\frac{\partial \bV}{\partial z_{ij}} }^2 ]\le C(\gamma, c)$ by product rule. Therefore, under \Cref{assu: tau}(i), we obtain
\begin{align*}
	\E [I(\Omega)\fnorm{\bQ_2}^2] 
		\le~& 2 \E \Big[ \fnorm{\bU^\top \bZ \bV - \sum_{j=1}^p\sum_{i=1}^n \frac{\partial}{\partial z_{ij} }\Bigl(\bU^\top \be_i \be_j^\top \bV \Bigr)}^2 \Big]\\
	&+ 2\E [\fnorm{n^{-\frac 32}D^{-2} (\bJ_1^\top + \bJ_2^\top- \bJ_3)}^2]\\
		\le~& C(\gamma, c),
\end{align*}
where the last inequality used \eqref{eq: steinX_4}, \eqref{eq: steinX_3}, and that
$n^{-3} \E[D^{-4}\fnorm{\bJ_3}^2] \le  C(\gamma, c)$ in analogy to  \eqref{eq: steinX_5}. 

\end{itemize}
\end{proof}

\begin{proof}[Proof of Proposition~\ref{prop: V8}]\label{proof: V8}
	We will apply \Cref{lem: Chi2type}. 
	Let $\bU = \bV = n^{-\frac12} D^{-1} \bF$ with $D = (\fnorm*{\bF}^2/n + \fnorm{\bH}^2)^{\frac 12}$, then $\fnorm*{\bU}=\fnorm*{\bV}\le 1$. Let $\bW_0 = p \bU^\top \bU - \sum_{j=1}^p \big(\sum_{i=1}^n \frac{\partial \bU^\top\be_i}{\partial z_{ij}} - \bU^\top \bZ\be_j\big) \big(\sum_{i=1}^n \frac{\partial \bU^\top\be_i}{\partial z_{ij}} - \bU^\top \bZ\be_j\big)^\top$, 
	then \Cref{lem: Chi2type} gives 
	\begin{align*}
		\E [\fnorm*{\bW_0}] &\le 2 \|\bU\|_\partial \|\bV\|_\partial
		+ \sqrt p \bigl( \sqrt 2 + (3+\sqrt{2})(\|\bU\|_{\partial} + \|\bV\|_{\partial}) \bigr) \\
		&= 2\|\bU\|_\partial^2 + \sqrt{p} (\sqrt 2 + 2(3+\sqrt{2}) \|\bU\|_{\partial}).
	\end{align*}
We will prove under \Cref{assu: tau}(i), and the proof under \Cref{assu: tau}(ii) on set $\Omega = U_1\cap U_2\cap U_3$ follows from almost similar arguments with $\tau'$ replaced by $\eta$, which is a constant that depends only on $\gamma, c$.
	
	By definition of $\norm*{\bU}_{\partial}$ and \Cref{lem: partialF}, 
	$\norm{\bU}_{\partial}^2= \sum_{ij}\E  \fnorm{\frac{\partial \bU}{\partial z_{ij}} }^2 \le C(\gamma, \tau')$. Thus $\E [\fnorm*{\bW_0}] \le C(\tau', \gamma) \sqrt{p}$. 
	
	Now we establish the connection between $\bW_0$ and $\bQ_3$. Since $\bU = n^{-\frac12} D^{-1} \bF$, by product rule, 
	\begin{align*}
		&\sum_{i=1}^n \frac{\partial \bU^\top\be_i}{\partial z_{ij}} - \bU^\top \bZ\be_j \\
		=~& n^{-\frac12} \Big(\sum_{i=1}^n \frac{\partial D^{-1}\bF^\top \be_i}{\partial z_{ij}} - D^{-1}\bF^\top \bZ\be_j\Big)\\
		=~& n^{-\frac12}\Big(\sum_{i=1}^n D^{-1}\frac{\partial \bF^\top \be_i}{\partial z_{ij}} - D^{-1}\bF^\top \bZ\be_j\Big) + n^{-\frac12}\sum_{i=1}^n \bF^\top \be_i\frac{\partial D^{-1}}{\partial z_{ij}} \\
		=~& n^{-\frac12}D^{-1}\Big(\sum_{i=1}^n \frac{\partial \bF^\top \be_i}{\partial z_{ij}} - \bF^\top \bZ\be_j \Big) - n^{-\frac12}D^{-2}\sum_{i=1}^n\bF^\top \be_i\frac{\partial D}{\partial z_{ij}}.
	\end{align*}
	For the first term in the last display, we have by \Cref{lem: Dijlt} 
	\begin{align*}
		\sum_{i=1}^n \frac{\partial \be_i^\top \bF}{\partial z_{ij}} = \sum_{i=1}^n \sum_{t=1}^T \frac{\partial \be_i^\top \bF\be_t}{\partial z_{ij}}\be_t^\top = \sum_{it} (D_{ij}^{it} + \Delta_{ij}^{it})\be_t^\top = -\be_j^\top \bH (n\bI_T - \hbA) + 
		\sum_{it} \Delta_{ij}^{it}\be_t^\top. 
	\end{align*}
	Hence,
	\begin{align}
		& \sum_{i=1}^n \frac{\partial \bU^\top\be_i}{\partial z_{ij}} - \bU^\top \bZ\be_j \nonumber\\
		=~& n^{-\frac12}D^{-1}\big[ -(n\bI_T - \hbA) \bH^\top \be_j + \sum_{it} \Delta_{ij}^{it}\be_t - \bF^\top\bZ\be_j \big] - n^{-\frac12}D^{-2}\sum_{i=1}^n\bF^\top \be_i\frac{\partial D}{\partial z_{ij}}\nonumber\\
		=~& -n^{-\frac12}D^{-1}\big[ (n\bI_T - \hbA) \bH^\top + \bF^\top\bZ\big]\be_j + n^{-\frac12}D^{-1} \sum_{it} \Delta_{ij}^{it}\be_t - n^{-\frac12}D^{-2}\sum_{i=1}^n\bF^\top \be_i\frac{\partial D}{\partial z_{ij}}.\label{eq: V8_proof}
	\end{align}
	Let $\bW_1 = -n^{-\frac12}D^{-1}\big[ (n\bI_T - \hbA) \bH^\top + \bF^\top\bZ\big]$ be the first term in \eqref{eq: V8_proof}. 
	For the second term in \eqref{eq: V8_proof}, recall 
	$\Delta_{ij}^{lt} = -(\be_t^\top \otimes \be_l^\top)(\bI_T\otimes \bX)
	\bM^\dagger (\bI_T\otimes \bSigma^{\frac12})
	\bigl(\bF^\top \otimes \bI_{p}\bigr)(\be_i \otimes\be_j)$ in \Cref{lem: Dijlt},
	\begin{align*}
		& n^{-\frac12}D^{-1} \sum_{it} \Delta_{ij}^{it}\be_t \\
		=~& - n^{-\frac12}D^{-1}\sum_{it} \be_t (\be_t^\top \otimes \be_i^\top)
		(\bI_T\otimes \bX)
		\bM^\dagger (\bI_T\otimes \bSigma^{\frac12}) \bigl(\bF^\top\be_i \otimes \be_j \bigr)\\
		=~& - n^{-\frac12}D^{-1}\sum_{i} (\bI_T \otimes \be_i^\top\bX)
		\bM^\dagger (\bI_T\otimes \bSigma^{\frac12}) \bigl(\bF^\top\be_i \otimes  \bI_p\bigr)\be_j\\
		=~& -n^{-\frac12}D^{-1} \sum_{i=1}^n(\bI_T \otimes \be_i^\top\bX) \bM^\dagger (\bI_T\otimes \bSigma^{\frac12}) (\bF^\top \be_i\otimes \bI_p) \be_j\\
		=~& \bW_2 \be_j,
	\end{align*}
	where $\bW_2 =  -n^{-\frac12}D^{-1} \sum_{i=1}^n(\bI_T \otimes \be_i^\top\bX) \bM^\dagger (\bI_T\otimes \bSigma^{\frac12}) (\bF^\top \be_i\otimes \bI_p)$. 
	For the third term in \eqref{eq: V8_proof}, 
	$$- n^{-\frac12}D^{-2}\bF^\top\sum_{i=1}^n \be_i\frac{\partial D}{\partial z_{ij}} = \bW_3 \be_j,$$
	where $\bW_3 = - n^{-\frac12}D^{-2} \bF^\top \frac{\partial D}{\partial \bZ}$, here we slightly abuse the notation and let $\frac{\partial D}{\partial \bZ}$ denote the $n\times p$ matrix with $(i,j)$-th entry being $\frac{\partial D}{\partial z_{ij}}$.
	Therefore, \eqref{eq: V8_proof} can be simplified as 
	\begin{align*}
		\sum_{i=1}^n \frac{\partial \bU^\top\be_i}{\partial z_{ij}} - \bU^\top \bZ\be_j = \big[\bW_1 + \bW_2 + \bW_3 \big]\be_j.
	\end{align*}
	Furthermore,
	\begin{align*}
		\bW_0 &= p \bU^\top \bU - \sum_{j=1}^p\Big(\sum_{i=1}^n \frac{\partial \bU^\top\be_i}{\partial z_{ij}} - \bU^\top \bZ\be_j\Big) \Big(\sum_{i=1}^n \frac{\partial \bU^\top\be_i}{\partial z_{ij}} - \bU^\top \bZ\be_j\Big)^\top\\
		&= p \bU^\top \bU - [\bW_1 + \bW_2 + \bW_3][\bW_1 + \bW_2 + \bW_3]^\top\\
		&= n^{\frac 12} \bQ_3 - \bW_1(\bW_2+\bW_3)^\top -(\bW_2+\bW_3)\bW_1^\top - (\bW_2+\bW_3) (\bW_2+\bW_3)^\top,
	\end{align*}
	where the last equality is due to 
	\begin{align*}
		&p \bU^\top \bU - \bW_1\bW_1^\top \\
		=~& n^{-1} D^{-2} \Big[p\bF^\top\bF -\bF^\top \bZ \bSigma^{-1}\bZ^\top \bF  - (n\bI_T -\hbA)\bH^\top\bH (n\bI_T -\hbA) \\
		& - (n\bI_T -\hbA)\bH^\top\bZ^\top\bF - \bF^\top\bZ\bH(n\bI_T -\hbA)\Big]\\
		=~& n^{\frac12} \bQ_3.
	\end{align*}
	Therefore, 
	\begin{align}\label{eq: V8-W}
		\bQ_3 = n^{-\frac 12} \Big[ \bW_0 + 
		\bW_1(\bW_2+\bW_3)^\top + (\bW_2+\bW_3)\bW_1^\top + (\bW_2+\bW_3) (\bW_2+\bW_3)^\top
		\Big].
	\end{align}
	We then bound the norms of $\bW_1, \bW_2, \bW_3$. For $\bW_1$, 
	\begin{align*}
		\fnorm*{\bW_1} &= n^{-\frac12}D^{-1} \fnorm{(n\bI_T - \hbA) \bH^\top + \bF^\top\bZ} \\
		&\le n^{\frac12} \big(D^{-1} \fnorm{\bH} + n^{-1}D^{-1} \fnorm*{\bF}\opnorm*{\bZ} \big)\\
		&\le n^{\frac12} + n^{\frac12}  \big( D^{-1} \fnorm*{\bF}/\sqrt{n}\opnorm*{\bZ/\sqrt{n}}\big)\\
		&\le n^{\frac12} ( 1 + \opnorm{\bZ/\sqrt{n}}),
	\end{align*}
	where we used $\opnorm*{\bI_T - \hbA/n}\le 1$ by \Cref{lem: aux-tau>0} , $D^{-1} \fnorm{\bH} \le 1$, and $D^{-1} \fnorm*{\bF}/\sqrt{n} \le 1$. 
	
For $\bW_2$,  
		\begin{align*}
		\opnorm*{\bW_2} 
		&= n^{-\frac12} D^{-1}\opnorm[\Big]{\sum_{i=1}^n(\bI_T \otimes \be_i^\top\bX) \bM^\dagger (\bI_T\otimes \bSigma^{\frac12}) (\bF^\top \be_i\otimes \bI_p)}\\
		&\le n^{-\frac12} D^{-1}\sum_{i=1}^n\opnorm{(\bI_T \otimes \be_i^\top\bX) \bM^\dagger (\bI_T\otimes \bSigma^{\frac12}) (\bF^\top \be_i\otimes \bI_p)}\\
		&\le n^{-\frac12} D^{-1} \sum_{i=1}^n  \opnorm{(\bI_T \otimes \be_i^\top\bX) \bM^\dagger (\bI_T\otimes \bSigma^{\frac12}) }
		\opnorm{(\bF^\top \be_i\otimes \bI_p)}\\
		&\le n^{-\frac12} \opnorm{\bSigma}^{\frac12}D^{-1} \sum_{i=1}^n  \opnorm{(\bI_T \otimes \be_i^\top\bX) (\bI_T \otimes (\bX_{\hat{\mathscr{S}}}^\top\bX_{\hat{\mathscr{S}}} + n\tau\bP_{\hat{\mathscr{S}}})^{\dagger})}
		\opnorm{(\bF^\top \be_i\otimes \bI_p)}\\
		&= n^{-\frac12} \opnorm{\bSigma}^{\frac12}D^{-1} \sum_{i=1}^n  \opnorm{\bI_T \otimes [\be_i^\top\bX_{\hat{\mathscr{S}}}(\bX_{\hat{\mathscr{S}}}^\top\bX_{\hat{\mathscr{S}}} + n\tau\bP_{\hat{\mathscr{S}}})^{\dagger}]}
		\norm{\bF^\top \be_i}\\
		&\le n^{-\frac12} \opnorm{\bSigma}^{\frac12}D^{-1} \opnorm{\bX_{\hat{\mathscr{S}}}(\bX_{\hat{\mathscr{S}}}^\top\bX_{\hat{\mathscr{S}}} + n\tau\bP_{\hat{\mathscr{S}}})^{\dagger}}
		\sum_{i=1}^n \norm{\bF^\top \be_i}\\
		&\le n^{-\frac12}D^{-1} n^{-\frac12} (\tau/\opnorm{\bSigma})^{-\frac12}
		n^{\frac 12} \fnorm{\bF}\\
		&\le (\tau')^{-\frac12},
	\end{align*}
	where the third inequality uses $\bM^\dagger \preceq \bI_T \otimes (\bX_{\hat{\mathscr{S}}}^\top\bX_{\hat{\mathscr{S}}} + n\tau\bP_{\hat{\mathscr{S}}})^{\dagger}$, 
	the fourth inequality uses the result that $\opnorm{\be_i^\top\bA} \le \opnorm{\bA}$, 
	the penultimate inequality uses $\opnorm{\bX_{\hat{\mathscr{S}}}(\bX_{\hat{\mathscr{S}}}^\top\bX_{\hat{\mathscr{S}}} + n\tau\bP_{\hat{\mathscr{S}}})^{\dagger}} \le (n\tau)^{-1/2}$ and the Cauchy-Schwarz inequality, 
	the last inequality follows from $n^{-1/2}D^{-1}\fnorm{\bF}\le 1$. 
	It immediately follows that $\fnorm{\bW_2} \le \sqrt{T}C(\tau')$ since the rank of $\bW_2$ is at most $T$. 
	
	For $\bW_3$, using $n^{-\frac12} D^{-1} \fnorm*{\bF}\le 1$, and $\fnorm*{ \frac{\partial D}{\partial \bZ}}\le n^{-\frac12} D C(\tau')$ from \Cref{cor: partialD}, we obtain
	\begin{align*}
		\fnorm*{\bW_3}
		= n^{-\frac12} D^{-2} \fnorm{\bF^\top  \frac{\partial D}{\partial \bZ}} \le n^{-\frac12} D^{-2} \fnorm*{\bF}\fnorm*{ \frac{\partial D}{\partial \bZ}} 
		\le D^{-1}\fnorm*{ \frac{\partial D}{\partial \bZ}}
		\le n^{-1/2} C(\tau'),
	\end{align*}

	The desired inequality follows by combining \eqref{eq: V8-W} and the bounds for $\bW_0, \bW_1, \bW_2, \bW_3$.
\end{proof}

\begin{proof}[Proof of \Cref{lem: Rems}]
	By \Cref{lem: Dijlt}, $\frac{\partial F_{lt}}{\partial z_{ij}} = D_{ij}^{lt} + \Delta_{ij}^{lt}$, where 
	\begin{align}
		D_{ij}^{lt} &= -(\be_j^\top\bH \otimes \be_i^\top)(\bI_{nT} - \bN)(\be_t\otimes \be_l),\label{eq: D_ijlt}\\
		\Delta_{ij}^{lt} &= -(\be_t^\top \otimes \be_l^\top)(\bI_T\otimes \bX)
		\bM^\dagger (\bI_T\otimes \bSigma^{\frac12})
		\bigl(\bF^\top \otimes \bI_{p}\bigr)(\be_i \otimes\be_j). \label{eq: Delta_ijlt}
	\end{align}
	
	For the first equality, since $\be_i^\top \frac{\partial \bF}{\partial z_{ij} }=\sum_t (D_{ij}^{it} + \Delta_{ij}^{it}) \be_t^\top$, we have
	\begin{align*}
		&\sum_{i=1}^n \sum_{j=1}^p\bF^\top\bZ\be_j\be_i^\top\frac{\partial \bF}{\partial z_{ij}}\\
		=~& \sum_{ij}\bF^\top\bZ\be_j \sum_{t=1}^T (D_{ij}^{it} + \Delta_{ij}^{it}) \be_t^\top\\
		=~& \sum_{ij}
		\bF^\top \bZ \be_j \sum_{t} D_{ij}^{it} \be_t^\top 
		+ \underbrace{\sum_{ij} \bF^\top \bZ \be_j \sum_{t} p \Delta_{ij}^{it} \be_t^\top}_{\bJ_1},
	\end{align*}
	where the first term can be simplified as below
	\begin{align*}
		&\sum_{j=1}^p\sum_{i=1}^n
		\bF^\top \bZ \be_j \sum_{t=1}^T D_{ij}^{it} \be_t^\top\\
		=~&-\sum_{j=1}^p\sum_{i=1}^n
		\bF^\top \bZ \be_j \sum_{t=1}^T(\be_j^\top\bH \otimes \be_i^\top)(\bI_{nT} - \bN)(\be_t\otimes \be_i)\be_t^\top\\
		=~& -
		\bF^\top \bZ \bH
		\Bigl[
		\sum_i
		(
		\bI_T \otimes \be_i^\top
		)
		(\bI_{nT} - \bN)
		(\bI_T \otimes \be_i)
		\Bigr]\\
		=~& -
		\bF^\top \bZ\bH(n\bI_T - \hbA).
	\end{align*}
	
	For the second equality, since $\frac{\partial \bF}{\partial z_{ij} }
	=\sum_{lt} \be_l (D_{ij}^{lt} + \Delta_{ij}^{lt}) \be_t^\top$,
	\begin{align*}
		\sum_{ij}\Big(\frac{\partial \bF}{\partial z_{ij} }\Big)^\top
		\bZ \be_j \be_i^\top \bF
		= \underbrace{\sum_{ijtl} \be_t D_{ij}^{lt} \be_l^\top\bZ\be_j \be_i^\top \bF}_{\bJ_2} +\sum_{ijtl} \be_t \Delta_{ij}^{lt} \be_l^\top\bZ\be_j \be_i^\top \bF, 
	\end{align*}
	where the second term can be simplified as below,
	\begin{align*}
		&\sum_{ijtl} \be_t \Delta_{ij}^{lt} \be_l^\top\bZ\be_j \be_i^\top \bF\\
		~=& \sum_{ijtl} \be_t\Delta_{ij}^{lt}(\be_i\otimes \be_j)^\top (\bF \otimes \bZ^\top\be_l)\\
		~=& -\sum_{ijtl} \be_t(\be_t^\top \otimes \be_l^\top)(\bI_T\otimes \bX)
		\bM^\dagger (\bI_T\otimes \bSigma^{\frac12})\bigl(\bF^\top \otimes \bI_{p}\bigr) (\be_i \otimes\be_j)(\be_i\otimes \be_j)^\top (\bF \otimes \bZ^\top\be_l)\\
		~=& -\sum_{l} (\bI_T \otimes \be_l^\top)(\bI_T\otimes \bX)
		\bM^\dagger (\bI_T\otimes \bSigma^{\frac12}) (\bF^\top\otimes \bI_p) (\bF \otimes \bZ^\top\be_l)\\
		~=& -\sum_{l} (\bI_T \otimes \be_l^\top)(\bI_T\otimes \bX)
		\bM^\dagger \bigl(\bF^\top\bF \otimes \bX^\top\be_l\bigr) \\
		~=& -\sum_{l} (\bI_T \otimes \be_l^\top)(\bI_T\otimes \bX)
		\bM^\dagger \bigl(\bI_T \otimes \bX^\top\be_l\bigr) \bF^\top\bF\\
		~=& - \hbA \bF^\top \bF,
	\end{align*}
	where the last line uses the expression of $\hbA$ in \eqref{eq: hbA-matrix}.
	
	It remains to bound the norm of $\bJ_1$ and $\bJ_2$. 
	To bound $\fnorm{\bJ_2}$, recall the definition of $\bJ_2$, 
	\begin{align*}
		\bJ_2 &= \sum_{ijlt} \be_t D_{ij}^{lt} \be_l^\top\bZ\be_j \be_i^\top\bF\\
		&= - \sum_{ijlt} \be_t (\be_j^\top\bH \otimes \be_i^\top) (\bI_{nT} - \bN ) (\be_t\otimes \be_l) \be_l^\top\bZ\be_j \be_i^\top\bF\\
		&= - \sum_{ijlt} \be_t (\be_t^\top\otimes \be_l^\top) (\bI_{nT} - \bN) (\bH^\top\be_j \otimes \be_i) \be_l^\top\bZ\be_j \be_i^\top\bF\\
		&= - \sum_{ijl} (\bI_T\otimes \be_l^\top) (\bI_{nT} - \bN) (\bH^\top\be_j \otimes \be_i) \be_l^\top\bZ\be_j \be_i^\top\bF\\
		&= - \sum_{l} (\bI_T\otimes \be_l^\top) (\bI_{nT} - \bN) (\bH^\top\bZ^\top\be_l \otimes \bF),
	\end{align*}
	Since $\bN$ is non-negative definite with $\opnorm*{\bN} \le 1$, $ \opnorm*{\bI_{nT} - \bN} \le 1$, 
	\begin{align*}
		\fnorm*{\bJ_2} &\le \sum_{l} \fnorm*{(\bI_T\otimes \be_l^\top) (\bI_{nT} - \bN) (\bH^\top\bZ^\top\be_l \otimes \bF)}\\
		&\le \sum_{l} \opnorm*{(\bI_T\otimes \be_l^\top) (\bI_{nT} - \bN)}\fnorm*{(\bH^\top\bZ^\top\be_l \otimes \bF)}\\
		&\le \sum_{l} \fnorm*{(\bH^\top\bZ^\top\be_l \otimes \bF)}\\
		&= \sum_{l} \fnorm*{\bH^\top\bZ^\top\be_l} \fnorm*{\bF}\\
		&\le n^{\frac 12} \fnorm{\bZ\bH} \fnorm{\bF}\\
		&\le n^{\frac 12} \opnorm*{\bZ}\fnorm{\bH} \fnorm*{\bF}.
	\end{align*}
	
	To bound $\fnorm{\bJ_1}$, recall the definition of $\bJ_1$, 
	$$\bJ_1 = \sum_{ij}\bF^\top\bZ\be_j \sum_t \Delta_{ij}^{it} \be_t^\top.$$
	For each $t, t'\in [T]$, 
	\begin{align*}
		\be_{t'}^\top\bJ_1 \be_{t} & = \be_{t'}^\top \big[\sum_{ij}\bF^\top\bZ\be_j \sum_t \Delta_{ij}^{it} \be_t^\top\big]\be_t\\
		&=\sum_{j=1}^p\sum_{i=1}^n
		\be_{t'}^\top \bF^\top \bZ \be_j  \Delta_{ij}^{it}
		\\&
		=
		-\sum_i
		(\be_i^\top\bF \otimes \be_{t'}^\top \bF^\top)
		\bN
		(\be_t \otimes \be_i)\\
		&=
		-\be_{t'}^\top\sum_i
		(\be_i^\top\bF \otimes \bF^\top)
		\bN
		( \bI_T\otimes \be_i)\be_t,
	\end{align*}
	Thus,  
	$
	\bJ_1 = \sum_i
	(\be_i^\top\bF \otimes \bF^\top)
	\bN
	( \bI_T\otimes \be_i),
	$
	and hence
	\begin{align*}
		\fnorm{\bJ_1} &= \fnorm{\sum_i(\be_i^\top\bF \otimes \bF^\top)\bN(\bI_T\otimes \be_i)}\\
		&\le \sum_i\fnorm{(\be_i^\top\bF \otimes \bF^\top)\bN(\bI_T\otimes \be_i)}\\
		&\le \sum_i\fnorm{(\be_i^\top\bF \otimes \bF^\top)}\opnorm{\bN(\bI_T\otimes \be_i)}\\
		&\le \sum_i\norm{\be_i^\top\bF} \fnorm{\bF}\\
		&\le n^{\frac 12}\fnorm{\bF}^2,
	\end{align*}
	where the first inequality is by sub-additivity of Frobenius norm, the second inequality uses $\fnorm{\bA_1\bA_2}\le \fnorm{\bA_1}\opnorm{\bA_2}$ for any matrices $\bA_1,\bA_2$ with appropriate dimensions, the third inequality is by $\opnorm{\bN}\le1$ from \Cref{lem: MN}, the last inequality is by Cauchy-Schwarz inequality. 
\end{proof}

\end{document}